\documentclass[a4paper, english, 11pt]{amsart}
\usepackage[top = 3cm, bottom = 3cm, left = 3cm, right = 3cm]{geometry}
\usepackage[english]{babel}
\usepackage{lmodern}

\usepackage{amssymb}
\usepackage{amsmath}
\usepackage{array}
\usepackage{amsthm}
\usepackage{bbm}
\usepackage{hyperref}
\usepackage{tikz-cd}
\usepackage{upgreek}
\usetikzlibrary{babel}
\usepackage{tcolorbox}
\usepackage{enumitem}

\usepackage{import}
\usepackage{xifthen}
\usepackage{pdfpages}
\usepackage{color,transparent}
\usepackage{graphicx}
\usepackage{calc}
\usepackage{subcaption}

\newtheorem{thm}{Theorem}[section]

\newtheorem*{thmasterisco}{Theorem}
\newtheorem*{propasterisco}{Proposition}
\newtheorem*{lemmaasterisco}{Lemma}

\newtheorem{manualtheoreminner}{Theorem}
\newenvironment{thmsinnum}[1]{%
	\renewcommand\themanualtheoreminner{#1}%
	\manualtheoreminner
}{\endmanualtheoreminner}

\newtheorem{prop}[thm]{Proposition}
\newtheorem{lemma}[thm]{Lemma}
\newtheorem{cor}[thm]{Corollary}

\newtheorem*{claimsinnum}{Claim}
\theoremstyle{definition}

\newtheorem{rmk}{Remark}[section]

\newcommand{\cl}{\mathrm{Cl}}

\title{Annular Chaos for non-wandering homeomorphisms}
\author{Alejandro Passeggi}
\address{Facultad de Ciencias, UdelaR, Montevideo Uruguay; apasseggi@cmat.edu.uy.}
\author{Favio Pir\'an}
\address{Facultad de Ingenier\'ia, UdelaR, Montevideo Uruguay; fpiran@fing.edu.uy.}

\begin{document}

\begin{abstract}
We study topological conditions ensuring the presence of rotational chaos for non-wandering or area-preserving annular homeomorphisms. Compared to previous criteria, our main result provides a simpler alternative that avoids the need to locate periodic points, requiring only knowledge of the behavior of certain open sets. This feature is crucial for enabling concrete applications to Poincaré return maps arising in Hamiltonian systems with two degrees of freedom.

The resulting topological criterion admits straightforward numerical implementation: a computer can verify all the required conditions using a simple algorithm that relies solely on basic data from the map. We illustrate this approach with the so-called \emph{driven pendulum}.
\end{abstract}

\maketitle

\section{Introduction}

The study of chaos is one of the central concerns in dynamical systems: how to define it, establish its existence, and describe the structure underlying this phenomenon. The theory of surface dynamics has a strong historical connection with this goal, dating back to Poincaré's study of the three-body problem and the Van der Pol equation, where surfaces naturally appear as return sections of the associated flow\footnote{Extended flow in the non-autonomous case.}. As in certain restricted versions of the three-body problem and other renowned problems in mechanics, we focus on the study of conservative dynamics on the annulus.

\medskip

Despite the large and well-developed body of models for chaotic systems, classical techniques for proving chaos have often proved difficult to implement in concrete systems. As a consequence, many physically motivated examples still lack a rigorous proof of chaotic behavior---despite a vast literature in which numerical simulations suggest, or even non-rigorously claim, its presence. This issue goes beyond the matter of definition: it reveals how limited our understanding remains of the dynamics in key situations that originally inspired the field. Indeed, if we cannot even rigorously establish the existence of chaos in such systems, we are even further from identifying a geometric structure that supports it.
See the detailed discussion in the introduction of \cite{passeggi2023weak}.

\medskip

Recently, the aforementioned article established results linking the existence of a rotational horseshoe with finitary conditions based on simple properties of the given maps. This development brings the state of the art in surface dynamics closer to practical applications. Furthermore, building upon these criteria, \cite{condannchaoscap} presents a list of computer-assisted proofs of chaos for well-known families of maps, across large regions in parameter space, demonstrating the effectiveness of the new method. As emphasized in these works, a key motivation is to find applications of the largely developed topological theory of surface dynamics to concrete systems.

\medskip

Nevertheless, a key limitation arises in many relevant cases in these previous results: the need to locate two fixed points of the map with different rotation numbers. While this requirement poses no difficulty for maps defined by explicit formulas, such as those considered in \cite{condannchaoscap}, it becomes a major obstacle when dealing with Poincaré return maps. To illustrate this, one may consider one of the simplest physical systems suspected to exhibit chaotic behavior: the \emph{driven pendulum}, governed by the equation
$$
\ddot{q} = -\frac{g}{l} \sin(q) + A \sin(\omega t).
$$
This system can be studied via its Poincaré return map, corresponding to the time-$\frac{2\pi}{\omega}$ map of the extended flow. However, finding a pair of fixed points for such a map remains a significant challenge.

\medskip

In this article, we present a generalization of Theorem~A from \cite{passeggi2023weak}, providing conditions for annular chaos that do not require the existence of periodic points. These new conditions can be implemented in a completely straightforward manner, once the map is known, using basic computational tools.

\smallskip

As a first meaningful application, we revisit the driven pendulum and show, by a methodology involving a computer-assisted proof developed in \cite{capllavpass}, that our theorem detects topological chaos for physically relevant parameters. We refer to Section~4 of \cite{capllavpass} for a more detailed exposition.

\subsection{Main theorem}

Let us consider, as usual, the topological model of the annulus given by $\mathbb{A}=S^1\times \mathbb{R}$. Denote by $\mathrm{Homeo}_0(\mathbb{A})$ the set of homeomorphisms in the homotopy class of the identity. We now define the \emph{rotational difference} of a pair of closed topological disks $U_0, U_1$ for $f\in\mathrm{Homeo}_0(\mathbb{A})$, generalizing the corresponding notion for fixed points of annular maps. To do so, we require that

\begin{itemize}[leftmargin=2cm]
	\item $\mathrm{Int}(U_0) \cap \mathrm{Int}(f(U_0)) \neq \emptyset$, $\mathrm{Int}(U_1) \cap \mathrm{Int}(f(U_1)) \neq \emptyset$,
	\item $U_0\cup f(U_0)$ and $U_1\cup f(U_1)$ are disjoint and both inessential sets.
\end{itemize}

We call such a configuration a \emph{disjoint pair of disks}. Define the \emph{rotational difference} for this pair as
\[ \rho(U_0,U_1)=k_1-k_0, \]
where $k_0, k_1$ are the integers, for a given lift $F$ of $f$, such that
\[ F(\tilde{U}_0)\cap \left(\tilde{U}_0+k_0\right)\neq \emptyset \quad \text{and} \quad F(\tilde{U}_1)\cap \left(\tilde{U}_1+k_1\right)\neq \emptyset \]
for any lifts $\tilde{U}_0, \tilde{U}_1$ of $U_0, U_1$, respectively. Although each of these integers depends on $F$, their difference does not; we therefore denote it simply by $\rho(U_0,U_1)$.

The last ingredient needed to state our main result is the following. Given two closed disks $U_0, U_1\subset\mathbb{A}$, we say that $U_0$ \emph{visits} $U_1$ whenever some forward orbit of a point in the interior of $U_0$ meets the interior of $U_1$. Note that for non-wandering maps, if $U_0$ visits $U_1$, then $U_1$ also visits $U_0$.

\smallskip

We now state a simplified version of our main result in the non-wandering setting.  We denote by $\mathrm{Homeo}_{\,0,nw}(\mathbb{A})$ the class of non-wandering homeomorphisms in $\mathrm{Homeo}_0(\mathbb{A})$.

\begin{thmasterisco}
	
	Let $f \in \mathrm{Homeo}_{\,0,nw}(\mathbb{A})$ and assume that for some disjoint pair of disks $U_0, U_1$ one has $\rho:= \rho(U_0,U_1)\geq 3$ and that $U_0$ visits $U_1$. Then $f$ exhibits rotational chaos.
	
	Moreover, there exists an instability region $\mathcal{I}$ supporting the rotational horseshoe and intersecting both $U_0$ and $U_1$. This region satisfies that its rotation set, for some lift $F$, contains the interval $(1,\rho -1)$. In particular, every rational point in this interval is realized by some periodic orbit contained in the instability region. 
	
\end{thmasterisco}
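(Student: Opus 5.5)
The plan is to reduce the statement to Theorem~A of \cite{passeggi2023weak}. That theorem produces rotational horseshoes for non-wandering annular homeomorphisms from two fixed points with rotation numbers differing by at least $2$, together with a transitivity-type connection between them. The disks $U_0,U_1$ will play the role of the fixed points, with a loss of one unit of rotation on each side. This loss is why the hypothesis reads $\rho\geq 3$ and the conclusion gives $(1,\rho-1)$ rather than $(0,\rho)$.

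\emph{Step 1 (from disks to points via Brouwer theory).} Fix a lift $F$ with $k_0=0$, so that $k_1=\rho$. Since $\tilde U_0\cup F(\tilde U_0)$ projects into an inessential set and $F(\tilde U_0)$ meets $\tilde U_0$, every point $z\in\mathrm{Int}(\tilde U_0)\cap F^{-1}(\mathrm{Int}\,\tilde U_0)$ has displacement controlled by the diameter of the disk in the horizontal direction. Concretely, iterating and concatenating, any orbit segment that returns to $U_0$ $n$ times has horizontal displacement in $(-n,n)$, up to a bounded error. The analogous statement holds for $U_1$ around $\rho n$.

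I would rather not work with these estimates directly. Instead I would apply a perturbation (or Franks' lemma type) argument: since $f$ is non-wandering, $\mathrm{Int}(U_i)\cap f(\mathrm{Int}\,U_i)\neq\emptyset$ lets one compose $f$ with a homeomorphism supported in a small disk inside $U_i\cup f(U_i)$ and isotopic to the identity there. This creates a fixed point $p_i\in U_i$ of the modified map $g$ with rotation number in $[k_i-1,k_i+1]$. The rotation number of $p_i$ is pinned down by the inessentiality of $U_i\cup f(U_i)$.

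\emph{Step 2 (transfer back).} The fixed points of $g$ then have rotation numbers differing by at least $\rho-2\ge 1$, and after a careful choice the difference can be made to be at least $2$ for the relevant sub-annulus. The visiting hypothesis, which is symmetric by non-wandering, gives an orbit from $U_0$ to $U_1$ and back, hence a connection between $p_0$ and $p_1$ for $g$. Theorem~A then gives an instability region and a horseshoe for $g$. Since $g=f$ outside $U_0\cup U_1$ up to small supports, one has to argue that the horseshoe and its rotation set $(1,\rho-1)$ persist for $f$.

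\emph{Main obstacle.} Step 2 is delicate, and I expect it to be the main obstacle. I would first try to avoid perturbation entirely by redoing the core of the proof of Theorem~A, namely the construction via Le Calvez--Tal forcing theory, using transverse foliations of a maximal isotopy. Then $U_0,U_1$ supply transverse loops of the prescribed rotation directly. A point in $\mathrm{Int}\,U_i\cap f^{-1}(\mathrm{Int}\,U_i)$ has a transverse trajectory that can be closed up within the inessential disk $U_i\cup f(U_i)$, yielding a transverse loop homotopic to $k_i\pm 1$ times the generator.

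The visiting orbits give a transverse path joining these loops. Since the loops have rotations separated by at least $\rho-2\ge1$, they intersect $F$-transversally. The forcing theorem then produces a rotational horseshoe realizing all rationals in $(1,\rho-1)$. This horseshoe lies in the instability region given by the connected component of the non-wandering set meeting both disks.
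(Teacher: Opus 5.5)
There is a genuine gap, and it is exactly the step you flag as the main obstacle. The perturbed map $g=\varphi\circ f$ is in general no longer non-wandering. It therefore fails the hypotheses of Theorem~A of \cite{passeggi2023weak}, and also those of every rotation constraint used in this paper. Even if you got a horseshoe for $g$, nothing transfers it back to $f$: a perturbation supported in $U_0\cup f(U_0)$ and $U_1\cup f(U_1)$ may itself create the horseshoe (Remarks~\ref{r.noperturb} and~\ref{remarkcontraejemplo}). The paper transfers back only \emph{periodic points} of prescribed rotation number. Franks' periodic disk chains (Proposition~\ref{proposicionFranks}) for the lifts $G^n-(1,0)$ and $G^n-(n\rho-1,0)$ give periodic points of $g$ with rotation numbers $1/n$ and $\rho-1/n$. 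These cannot meet the support: a point $\tilde z\in\tilde U_0$ with $G^n(\tilde z)=\tilde z+(1,0)$ would force $\bigcup_{i=1}^nG^i(\tilde U_0)$ to meet $\tilde U_0+(1,0)$, contradicting the inessentiality in the $n$-dpd condition (Claim~1 in the proof of Proposition~\ref{propbirkhoffrelated}). For a disjoint pair of disks ($n=1$) this gives fixed points of $f$ with rotation numbers $1$ and $\rho-1$. That is where $\rho\ge 3$ and the interval $(1,\rho-1)$ come from, not from losing one unit when locating fixed points in $U_i$. In fact, the fixed point you create in $U_i$ has rotation number exactly $k_i$ for the lift agreeing with $F$ off the support.

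The forcing-theory fallback has the same hole. Closing the transverse trajectory of $z\in\mathrm{Int}\,U_i\cap f^{-1}(\mathrm{Int}\,U_i)$ into a transverse loop needs a recurrence-type condition: $f^q(z)$ must return near $z$ in a chart trivializing $\mathcal F$. It is not enough that $z$ and $f(z)$ lie in a common inessential disk, and a recurrent point of $U_i$ may return after any number of turns. Remark~\ref{remarkcontraejemplo} shows that $F(\tilde U)\cap\tilde U\neq\emptyset$ with $U\cup f(U)$ inessential carries no rotational information by itself. It becomes usable only by exploiting non-wandering through the maximal cross-section argument (Proposition~\ref{proppancitasincluidas}, Theorem~\ref{primeendsrotationthmconservative}). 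Two further claims also fail. Distinct rotation numbers do not by themselves give an $\mathcal F$-transverse intersection, since integrable twist maps have zero entropy. And ``the component of the non-wandering set meeting both disks'' is all of $\mathbb A$, which may contain invariant circloids and then is not an instability region.

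The missing core is the construction of $\mathcal I$ for $f$ itself.
\begin{enumerate}
\item Theorem~\ref{primeendsrotationthmconservative} forces invariant circloids meeting $U_0$ (resp.\ $U_1$) to rotate near $0$ (resp.\ near $\rho$). So no invariant circloid meets both disks, and by the visiting hypothesis none separates them.
\item Limiting arguments that again use these constraints produce a minimal invariant circloid $\mathcal C_0$ meeting $U_0$ and a maximal invariant circloid $\mathcal C_1$ below it. The degenerate cases are handled by $\mathcal U^-(\mathcal C_0)$ or by the filled orbit of the disks.
\item The regular annulus $\mathcal R(\mathcal C_0,\mathcal C_1)$ contains no invariant circloid, so its ends are Birkhoff-related (Proposition~\ref{equivinstabilityregion}). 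It also contains the periodic points above (Proposition~\ref{propbirkhoffrelated}).
\item Only at that point is Proposition~\ref{propDFabioPatrice} applied.
\end{enumerate}
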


Although the objects appearing in the previous statement are well known in the field, we highlight the following terminology. We say that a homeomorphism of the annulus $f$ exhibits \emph{rotational chaos} if some iterate of $f$ admits a topological rotational horseshoe, which in particular implies positive topological entropy and the existence of periodic orbits of arbitrarily large period. We call a \emph{Birkhoff instability region}, or simply an \emph{instability region}, any $f$-invariant open subannulus containing two periodic points with different rotation numbers whose ends are Birkhoff-related (see Section~\ref{s.pre}).

\begin{rmk}

The result is purely topological.

\end{rmk}

\begin{rmk}
An inspection of the literature rigorously addressing the concept of chaos reveals that, for the first time, a finitary criterion is provided whose application does not require the prior localization of periodic orbits. This marks a significant departure from classical approaches, where chaotic dynamics are typically inferred through the detection of homoclinic orbits or braid configurations involving periodic orbits. On the other hand, once the introduced conditions are fulfilled, one obtains periodic orbits exhibiting nontrivial rotational behavior, together with exponential growth of periodic orbits with respect to the period.
\end{rmk}

\begin{rmk}\label{r.noperturb}
At first glance, one might attempt to extend Theorem~A from \cite{passeggi2023weak} to obtain our main result by constructing a suitable perturbation $g$ of the given map $f$, supported inside the sets $U_0 \cup f(U_0)$ and $U_1 \cup f(U_1)$. Unfortunately, this strategy does not work, as such a perturbation could itself create the desired rotational horseshoe. We will return to this point later in the paper.
\end{rmk}

\begin{rmk}\label{r.compuntingU0U1}
As we will show in the example of a periodically forced pendulum, the existence of the pair of sets $U_0,U_1$ can be reliably verified using numerical methods. It is worth noting that the periodically forced pendulum was also the example used by Franks in his celebrated article \cite{Franks(PoincBirkh)} to illustrate the applicability of his techniques for establishing the existence of infinitely many periodic orbits. Here, we build upon these techniques, together with recent developments in the topological theory of surface dynamics, to establish the existence of chaos.

\end{rmk}

\begin{rmk}\label{r.chaoticsea}
Under the hypotheses of the main theorem, there exists an instability region intersecting both open sets $U_0$ and $U_1$. This region can be characterized as an open invariant subannulus with nontrivial rotation behavior and containing no invariant essential curves or \emph{circloids}. In particular, the existence of a rotational horseshoe follows from the rotational properties of this instability region. The localization of an instability region intersecting both $U_0$ and $U_1$ was not available in the previous result of \cite{passeggi2023weak}.
\end{rmk}

\begin{rmk}\label{r.poinbirk}
The statement provides a version of the classical Poincaré--Birkhoff theorem in which it is not necessary to first identify a pair of periodic orbits with different rotation numbers. The classical twist condition is replaced by the existence of a disjoint pair of disks with nontrivial rotational difference. Following the ideas of \cite{Franks(PoincBirkh)}, the hypotheses on the disks $U_0$ and $U_1$ allow us to choose suitable lifts of the map so as to obtain both positively and negatively returning disks. This guarantees the existence of fixed points in the universal cover for different lifts of the map.
\end{rmk}

\begin{rmk}\label{r.endsrelatedver}
As in the driven pendulum case, one sometimes encounters dynamical systems preserving an area form that, although invariant, is not finite; hence the dynamics need not be non-wandering. Fortunately, this issue has been addressed in the theory of surface dynamics, and alternative hypotheses have been developed for settings that fall outside the non-wandering framework (see \cite{fabiopatricehorseshoes}). One such setting concerns an annular homeomorphism $f$ that fixes the ends, where the ends are said to be \emph{Birkhoff-related}. This means that, given arbitrary neighborhoods of the two ends, there exist positive orbits from one neighborhood to the other. Below, we state the previous result also for this type of map.
\end{rmk}

\begin{rmk}
Once some regularity of the map is assumed, such as smoothness, as is the case for usual Poincar\'e return maps, Pesin theory allows us to derive rich hyperbolic structures from topological rotational horseshoes. 
\end{rmk}

\subsection{Detailed version of the main theorem}

We now present a more detailed statement of our main result, which also considers the cases where the rotational difference between the open sets $U_0$ and $U_1$ equals $1$ or $2$. To this end, we extend the notion of a disjoint pair of disks as follows.

Given $f \in \mathrm{Homeo}_0(\mathbb{A})$, we say that a pair of closed topological disks $U_0$ and $U_1$ is an $n$-\emph{disjoint pair of disks} for $n \in \mathbb{N}$ (an $n$-\emph{dpd}) whenever

\begin{itemize}[leftmargin=2cm]
	\item $\mathrm{Int}(U_0) \cap \mathrm{Int}(f(U_0)) \neq \emptyset$, $ \mathrm{Int}(U_1)\cap \mathrm{Int}(f(U_1)) \neq \emptyset$;
	\item ${f^i(U_0)} \cap {f^j(U_1)} = \emptyset$ for $i,j \in [1,n] \cap \mathbb{N}$;
	\item $\bigcup_{i=0}^{n} f^i(U_0)$ and $\bigcup_{i=0}^{n} f^i(U_1)$ are inessential.
\end{itemize}

The version of the main theorem in the non-wandering case follows.

\begin{thmsinnum}{\textbf{A}}
	\label{maintheorem}
	Let $f \in \mathrm{Homeo}_{0,nw}(\mathbb{A})$ and assume that, for some $n \ge 3$, there exists an $n$-dpd $U_0, U_1$ with nonvanishing rotational difference $\rho \in \mathbb{N}^*$ such that $U_0$ visits $U_1$.
	
	Then $f$ exhibits rotational chaos and the rotational horseshoe for a power of $f$ can be considered included in a Birkhoff instability region $\mathcal{I}$ meeting both $U_0$ and $U_1$. Moreover, for some lift $F$ of $f$,
	\[
	[1/n, \rho - 1/n] \subseteq  \rho_{\overline{\mathcal{I}}}(F)
	\]
	and every rational number in $(1/n,\rho-1/n)$ is realized by a periodic point in $\mathcal{I}$.
	
	If $\rho \ge 2$, the result remains valid for $n \ge 2$; and if $\rho \ge 3$, it remains valid for $n \ge 1$.
\end{thmsinnum}

\begin{figure}[h!]
	
	\centering
	\includegraphics[width=390 pt]{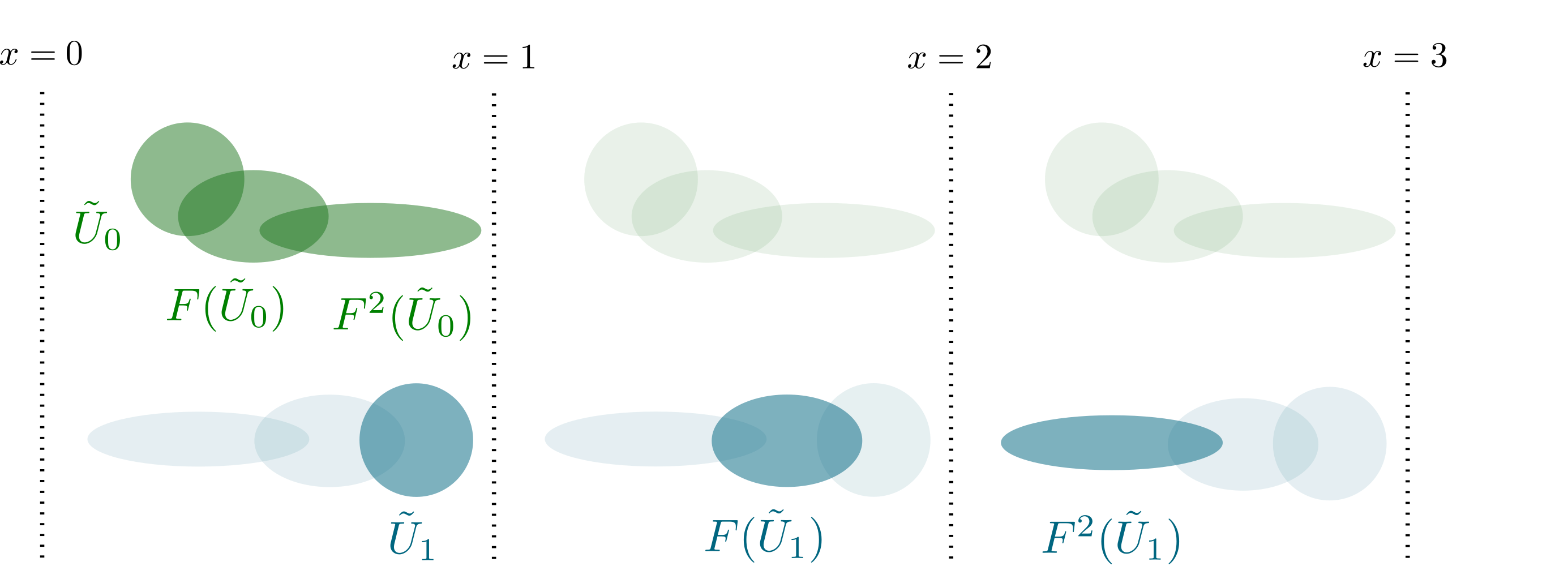}
	
	\caption{\centering Illustration in the universal cover of a $3$-dpd with rotational difference equal to $1$}
\end{figure}

As explained in Remark~\ref{r.endsrelatedver}, certain applications require working with systems preserving an infinite measure $\lambda$. In this setting, an additional assumption on the relation between the ends of the annulus is needed. Given $f\in\mathrm{Homeo}_0(\mathbb{A})$, we say that the ends of $\mathbb{A}$ are \emph{strongly non-related} if there exist disjoint neighborhoods $V^+$ and $V^-$ of $+\infty$ and $-\infty$, respectively, such that the forward orbit of each neighborhood is disjoint from the other. On the other hand, the ends are Birkhoff-related if, as explained in the aforementioned remark, every pair of disjoint neighborhoods of the two ends has the property that the forward orbit of each neighborhood meets the other.

These hypotheses arise naturally in the presence of certain symmetries, as detailed in the driven pendulum example presented immediately after the formal statement of our second main result. Given a measure $\lambda$ that is absolutely continuous with respect to the Lebesgue measure, we denote by $\mathrm{Homeo}_{\,0,\lambda}(\mathbb{A})$ the class of identity-isotopic homeomorphisms preserving $\lambda$.

\begin{thmsinnum}{\textbf{B}}
\label{maintheorembr}	
	Let $f \in \mathrm{Homeo}_{\,0,\lambda}(\mathbb{A})$, and let $U_0, U_1$ form an $n$--dpd for some $n \geq 3$, where $U_0$ visits $U_1$, $U_1$ visits $U_0$, and $\rho_f(U_0, U_1) = \rho \in \mathbb{N}^*$. Assume that either
	\begin{enumerate}
		\item the ends of $\mathbb{A}$ are Birkhoff-related, or
		\item the ends of $\mathbb{A}$ are strongly non-related.
	\end{enumerate}
	
	Then $f$ exhibits rotational chaos and the rotational horseshoe for a power of $f$ can be considered included in an instability region $\mathcal I$ meeting both $U_0$ and $U_1$. Moreover, for some lift $F$ of $f$, the rotation set of $\mathcal{I}$ satisfies
	\[
	\bigl[1/n,\, \rho - 1/n\bigr]\subseteq  \rho_{\overline{\mathcal{I}}}(F),
	\]
	and every rational number in $(1/n,\rho-1/n)$ is realized by a periodic point in $\mathcal{I}$.
	
	Furthermore, if $\rho \geq 2$, the result remains valid for $n \geq 2$, and if $\rho \geq 3$, it holds for every $n \in \mathbb{N}$.
\end{thmsinnum}

\subsection{Chaos in the driven Pendulum}

As a main application of this result, we show how the tool developed here---grounded in the vast body of topological theory of surface dynamics---allows the implementation of very simple computer-assisted proofs of the existence of rotational horseshoes for the so-called \emph{driven pendulum}. This application can be viewed as a continuation of the study of this type of differential equation presented in the renowned article by John Franks~\cite{Franks(PoincBirkh)}, see Section~5.

Recall that the driven pendulum is given by
$$\ddot{q} = -\frac{g}{l} \sin(q) + A \sin(\omega t).$$
Here we introduce a sample of ongoing work (\cite{capllavpass}), based on the CAPD library and built upon interval arithmetic (see~\cite{condannchaoscap}).

\smallskip

In light of Theorem~\ref{maintheorembr} and in order to ensure topological chaos, we need to detect a $1$-dpd, namely $U_0, U_1$ with $\rho(U_0,U_1) \ge 3$, for the Poincaré map $f \in \mathrm{Homeo}_0(\mathbb{A})$ associated with the differential equation, given by the time-$T$ map of the family of solutions starting at time~$0$, where $T = \frac{2\pi}{\omega}$. Figure~\ref{imagen1sofia} shows, in blue and pink, such a pair $U_0, U_1$ in the annulus and in the universal cover~$\mathbb{R}^2$. Their respective images are drawn in the same colors, where $F$ denotes a lift of $f$, from which one can observe that $\rho(U_0,U_1) = 4$. After a few iterates, one observes that the forward orbit of each disk intersects the other.

\begin{figure}[h!]
	
	\centering
	\includegraphics[width=390 pt]{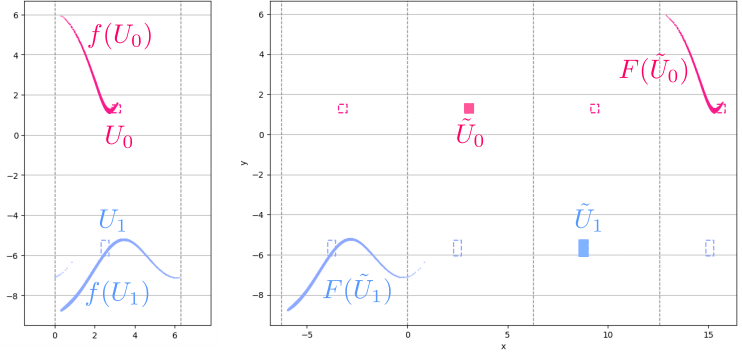}
	
	\caption{On the left, a representation in the annulus; on the right, its lift to the universal cover. The images of the simulation were produced by Sofía Llavayol, and the parameters used are \( g = 9.81 \), \( L = 1.0 \), \( A = 3.0 \), and \( T = 2.5 \).
	}
	\label{imagen1sofia}
\end{figure}

Then, if every orbit has uniformly bounded vertical displacement, we can apply case~(2) of Theorem~\ref{maintheorembr} and deduce the existence of a rotational horseshoe. On the other hand, if there exist orbits with arbitrarily large vertical displacement, we apply case~(1) of Theorem~\ref{maintheorembr}. Indeed, the symmetry of the differential equation ensures that every positive vertical displacement is accompanied by a corresponding negative one, yielding the hypothesis required in case~(1).

\smallskip

The rigorous existence of these boxes and of the orbit dynamically connecting them is established in the aforementioned work in progress. Altogether, we obtain the following.

\begin{thmasterisco}[Theorem 1.1 of \cite{capllavpass} ]	
Consider the differential equation above with
$$g = 9.8,\quad l = 1,\quad A = 3,\quad T = 2.5$$
Then, the associated Poincaré map
admits a rotational horseshoe.

Moreover, it is contained
inside an instability region $\mathcal{I}$ which meets the boxes\footnote{In fact, the side lengths of the boxes are specified with a precision of 12 decimal digits.}
\begin{gather*}
	U_0=[2.8710, 3.2711]\times[1.0928, 1.4929], \\
	U_1=[8.5937, 8.9938]\times[-6.0503, -5.2704]
\end{gather*}

and the length of its rotation interval $\rho_{\mathcal{I}}(f)$ is at least 2.

In particular, for a lift $F$ of $f$ such
that $\rho_{\mathcal{I}}(F)\supset [0,2]$, every rational point $\frac{p}{q}$ in the rotation set is realized by a periodic 
orbit of $f$ belonging to $\mathcal{I}$.
	
\end{thmasterisco}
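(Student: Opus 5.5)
The plan is to deduce the statement from Theorem~\ref{maintheorembr} in its case $n=1$, $\rho\ge 3$. All the hypotheses that involve the map will be certified by rigorous interval-arithmetic integration of the flow with the CAPD library. The structural hypotheses, namely isotopy to the identity, preservation of $\lambda$, and the condition on the ends, will come from the form of the equation.

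\textbf{Step 1: structural hypotheses.} Write the equation as the non-autonomous Hamiltonian system
\[
\dot q=p,\qquad \dot p=-\tfrac{g}{l}\sin q+A\sin(\omega t),
\]
on $S^1\times\mathbb{R}$, where $q$ is taken mod $2\pi$. The vector field is divergence free, so by Liouville the time-$T$ map $f$ preserves Lebesgue measure $\lambda$; this measure is infinite. The map $f$ is isotopic to the identity through the time-$s$ maps, $s\in[0,T]$.

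\textbf{Step 2: certified disk conditions.} Using validated enclosures of $F(\tilde U_0)$ and $F(\tilde U_1)$ for the two boxes in the statement, I would certify the following:
\begin{itemize}
\item $\mathrm{Int}(U_i)\cap\mathrm{Int}(f(U_i))\neq\emptyset$, by exhibiting a small box whose enclosed image lies inside both $U_i$ and $f(U_i)$;
\item $U_0\cup f(U_0)$ and $U_1\cup f(U_1)$ are disjoint and inessential, because their enclosures have horizontal extent less than $2\pi$ and are separated;
\item the integers $k_0,k_1$, obtained by locating the translate of $\tilde U_i$ that $F(\tilde U_i)$ meets.
\end{itemize}
This should give $\rho(U_0,U_1)=4\ge 3$, so $U_0,U_1$ form a $1$-dpd.

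\textbf{Step 3: visiting.} Next I would certify that $U_0$ visits $U_1$ and that $U_1$ visits $U_0$. For each direction, pick a tiny box in $\mathrm{Int}(U_0)$ (respectively $\mathrm{Int}(U_1)$) and propagate it for a few iterates. The check succeeds when the enclosure of the final iterate lies inside the interior of the other box.

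\textbf{Step 4: the ends.} Case~(1) or~(2) of Theorem~\ref{maintheorembr} must hold, and I would argue by a dichotomy.
\begin{itemize}
\item If the vertical displacement $|p_k-p_0|$ is uniformly bounded along all orbits, then neighborhoods $\{p>R\}$ and $\{p<-R\}$ with $R$ large have forward orbits that never meet. So the ends are strongly non-related, which is case~(2).
\item Otherwise there are orbits with arbitrarily large vertical displacement. The symmetry $(q,p,t)\mapsto(-q,-p,t+T/2)$ preserves the equation, since $\sin(\omega(t+T/2))=-\sin(\omega t)$. This symmetry conjugates $f$ to a map whose displacements are reversed. From this I would extract, for every $R$, forward orbits from $\{p<-R\}$ into $\{p>R\}$ and conversely.
\end{itemize}
The second case is the main obstacle. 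Large displacement in one direction, say upward from some point, must be turned into an orbit connecting neighborhoods of the two ends. I would try to combine the symmetry with measure preservation: a Poincaré-recurrence or Franks-type argument on finite-measure sub-regions should let an orbit travel from near $-\infty$ to near $+\infty$, and not merely move a bounded distance upward. This would give the ends Birkhoff-related, which is case~(1).

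\textbf{Step 5: conclusion.} Theorem~\ref{maintheorembr} with $n=1$ and $\rho=4$ then yields a rotational horseshoe inside an instability region $\mathcal I$ meeting $U_0$ and $U_1$, with $[1,3]\subseteq\rho_{\overline{\mathcal I}}(F)$. The rotation interval therefore has length at least $2$. Composing $F$ with the deck translation $(x,y)\mapsto(x-1,y)$ gives a lift whose rotation set contains $[0,2]$. Realization of rationals by periodic points in $\mathcal I$ is part of the same theorem.
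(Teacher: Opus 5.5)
Your overall route is the paper's. You apply Theorem~B with $n=1$ and $\rho=4$, certify the $1$-dpd and visiting conditions by validated integration, and handle the ends through the bounded/unbounded vertical-displacement dichotomy combined with the half-period symmetry.

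\textbf{The gap.} The second branch of Step~4 is left open, and the rescue you sketch (Poincaré recurrence or a Franks-type argument on finite-measure subregions) cannot close it. Unbounded vertical displacement does not force any orbit from $\{p<-R\}$ to $\{p>R\}$, even for symmetric area-preserving maps. For example, take invariant essential curves at heights $\pm k^2$, with orbits crossing most of each gap. Displacements are then unbounded, yet the ends are strongly non-related. So no argument can extract case~(1) from that hypothesis alone: the dichotomy itself is the wrong one.

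\textbf{The fix.} Branch instead on whether the ends are strongly non-related. If they are, you are in case~(2); bounded displacement is just one way to see this. If they are not, then for every $R$ at least one of the crossings $\{p>R\}\to\{p<-R\}$ or $\{p<-R\}\to\{p>R\}$ occurs under forward iteration. The symmetry converts either crossing into the other.

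\begin{itemize}
\item Let $S(q,p)=(-q,-p)$ and let $a$ be the flow map from time $0$ to time $T/2$. Your symmetry gives $SfS=afa^{-1}$.
\item Hence $\sigma:=a^{-1}\circ S$ commutes with $f$, swaps the ends, and satisfies $|p(\sigma(x))+p(x)|\le C:=(g/l+A)\,T/2$.
\item If $p(x)>R+C$ and $p(f^k(x))<-R-C$, then $p(\sigma(x))<-R$ and $p(f^k(\sigma(x)))>R$; the other direction is symmetric.
\end{itemize}

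Therefore both crossings occur for every $R$, i.e.\ the ends are Birkhoff-related, and no recurrence argument is needed. This is the content of the paper's one-line remark that every positive displacement is accompanied by a negative one. The paper also phrases its dichotomy via displacement, so it should be read this way. With this replacement, your Steps~1--3 and~5 yield the statement exactly as in the paper.
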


If one disregards historical connections, physics, topological dynamics, and numerical methods might \emph{a priori} appear to be unrelated fields. Nevertheless, the results presented here reveal a nontrivial and meaningful intersection among these three areas.

\smallskip
\section{Outline of the proofs of Theorems~A and~B} 

The main conclusions of both theorems are the existence of \emph{rotational horseshoes} and \emph{instability regions}. The statements concerning the realization of rotation vectors and the estimates for the rotation set of the instability region will not be discussed in this summary. Recall that a \emph{circloid} is an essential continuum whose complement consists of two subannuli, each containing one end of the annulus, and which is minimal among continua with this property. When the circloid is invariant under some $f\in\mathrm{Homeo}_{\,0}(\mathbb{A})$, the prime-ends rotation number associated with one of the boundary components of the complementary subannuli and a lift $F$ of $f$ is defined as the rotation number of the corresponding boundary component after a suitable compactification, called the \emph{prime-ends compactification}, in which the boundary becomes a topological circle.

\medskip

We highlight three key ingredients in the proofs.

\medskip

The first ingredient is that, under the hypotheses of both theorems, every $f$-invariant circloid intersecting $U_0$ or $U_1$ satisfies strong rotational constraints. More precisely, for a suitable lift $F$ of $f$, the assumptions on the disks imply that any $f$-invariant circloid intersecting $U_0$ has a prime-ends rotation number close to $0$, while any circloid intersecting $U_1$ has a prime-ends rotation number close to $\rho$, where $\rho$ denotes the rotational difference between $U_0$ and $U_1$. The techniques developed to prove this also show that every $f$-invariant circloid necessarily has a singleton rotation set. The non-wandering or area-preserving hypothesis is essential at this stage. See Section~\ref{seccionRotationsets} for further details and a counterexample showing that the conclusion fails without these assumptions.

\medskip

The second ingredient, proved in the final section, states that the hypotheses of both theorems imply that every $f$-invariant open subannulus meeting both $U_0$ and $U_1$ contains two periodic orbits with different rotation numbers. The proof relies on performing small perturbations that create fixed points inside $U_0$ and $U_1$, even though these perturbations may destroy the non-wandering or area-preserving property. Inspired by ideas of Franks from~\cite{Franks(PoincBirkh)}, we use the perturbed map to construct suitable periodic chains of disks for appropriate lifts, yielding periodic points with different rotation numbers. Since these periodic points cannot lie in the support of the perturbation, they must also exist for the original map $f$. Consequently, the rotation set of the corresponding invariant subannulus is non-trivial.

\medskip

The third ingredient is a result of Le Calvez and Tal \cite{fabiopatricehorseshoes}, which asserts that every Birkhoff instability region in either the non-wandering or the area-preserving setting admits a rotational horseshoe for some iterate of $f$. Thus, in both theorems, the existence of rotational chaos is reduced to proving the existence of an instability region intersecting both $U_0$ and $U_1$.

\medskip
For the non-wandering case, namely Theorem~\ref{maintheorem}, the instability region must in general be constructed as follows. Since $U_0$ and $U_1$ are Birkhoff related, no invariant essential continuum can separate them. Moreover, by the rotational constraints discussed above, no $f$-invariant circloid can intersect both disks simultaneously.

Assume that $F$ is a lift of $f$ such that, for every lift $\widetilde{U}_0$ of $U_0$,
\[
F(\widetilde{U}_0)\cap\widetilde{U}_0\neq\emptyset,
\]
and that the union of $U_0$ with its first $n$ iterates remains inessential.

Consider the family of all $f$-invariant circloids. If this family is empty, then, by the realization result mentioned above, the whole annulus is an instability region and the conclusion follows. Otherwise, assume that no invariant circloid meets either $U_0$ or $U_1$. One can show that, since one disk visits the other, both disks are contained in an $f$-invariant open subannulus containing no invariant circloids. By the rotational constraints established above for invariant subannuli meeting $U_0$ and $U_1$, this subannulus is necessarily an instability region.

Finally, suppose that some invariant circloid intersects $U_0$ or $U_1$, say $U_0$. Notice that the rotation set of this circloid must be close to $0$ because of the rotational constraints discussed above. One first shows that every invariant circloid meeting $U_0$ leaves $U_1$ on the same side; assume it is the lower side. By introducing a natural total order on the family of invariant circloids meeting $U_0$, a limiting argument produces a minimal such circloid $\mathcal{C}_0$. Repeating the argument with invariant circloids lying below $\mathcal{C}_0$ but disjoint from $U_0$, another limiting argument yields an open annulus bounded above by $\mathcal{C}_0$ and containing no invariant circloid in its interior. This annulus meets both $U_0$ and $U_1$, and is therefore an instability region.

\bigskip

For the area-preserving case, namely Theorem~\ref{maintheorembr}, we must study separately the two situations appearing in the statement. If the ends of the annulus are Birkhoff related, then the whole annulus is itself an instability region, and the conclusion follows immediately from the result of Le Calvez and Tal mentioned above. If there exists a bounded $f$-invariant subannulus intersecting both $U_0$ and $U_1$, then the restriction of $f$ to this subannulus is non-wandering, so the ideas presented for Theorem~\ref{maintheorem} also hold.

It therefore remains to consider the case where the ends are strongly non-related and no bounded invariant subannulus intersects both $U_0$ and $U_1$. In this case, the assumption on the ends implies that $U_0$ and $U_1$ are contained in an invariant subannulus bounded from below or above. A similar argument of the non-wandering case, considering all $f$-invariant circloids, shows that one can extract such a subannulus whose interior contains no invariant circloid. We then prove that this subannulus is a Birkhoff instability region, where the area-preserving hypothesis plays a crucial role.

\bigskip 

We would like to thank Fabio Armando Tal and Pierre--Antoine Guihéneuf for insightful technical discussions related to several proofs, as well as for valuable comments that improved this article.

\section{Preliminaries}\label{s.pre}

\subsection{Topological preliminaries}

Throughout this article, we model the annulus by
\[
\mathbb{A}:=S^1\times\mathbb{R},
\]
where $S^1=\mathbb{R}/\sim$, with $x\sim y$ if and only if $x-y\in\mathbb{Z}$.

We write $\operatorname{pr}_i:\mathbb{R}^2\to\mathbb{R}$ for the projection onto the $i$th coordinate, $i=1,2$. The universal cover of $\mathbb{A}$ is $\mathbb{R}^2$, with covering projection $\pi:\mathbb{R}^2\to\mathbb{A}$ defined by $\pi(x,y)=([x],y)$. We denote the two ends of $\mathbb{A}$ by $+\infty$ and $-\infty$, according to the second coordinate. We now fix some terminology for subsets of $\mathbb{A}$.

\begin{itemize}
	\item An \emph{essential set} is any connected set that cannot be contained in any open topological disk of $\mathbb{A}$.
	\item A \emph{continuum} is a compact connected subset of $\mathbb{A}$.
	\item An \emph{essential continuum} is a continuum that separates the two ends of $\mathbb{A}$.
	\item An \emph{annular continuum} is an essential continuum whose complement consists of two connected components, each being a neighborhood of one end of the annulus.
	\item A \emph{circloid} is an annular continuum that contains no proper essential annular continuum.
	\item A \emph{cofrontier} is a circloid with empty interior.
	\item A \emph{regular annulus} is an open essential subannulus whose boundary components  in $\mathbb{A}\cup \{+\infty\} \cup \{-\infty\}$ are either circloids in $\mathbb{A}$ or an end of $\mathbb{A}$.
\end{itemize}

Note that the entire annulus $\mathbb{A}$ or $S^1\times \mathbb{R}^{>0}$ are regular annuli. On the other hand, $\mathbb{A}\setminus \left( \{ [0] \} \times\left[0,+\infty \right)\right)$ or $S^1\times (0,+\infty) \setminus \left( \{[0]\}\times [0,1] \right)$ are not.

Circloids are objects that have proved to be fundamental in several developments in the field. In particular, they have been intensively studied over the past few decades. See, for instance, \cite{JagerToralHomeo, passpotrsamb, passeggi2023weak, JagerKoro} and the references therein. In general, the topology of circloids and co-frontiers can be very complicated. In fact, in several relevant dynamical situations, the latter class often appears in the form of indecomposable continua. As an example, the reader can have in mind the so-called \emph{pseudo-circle}.

Given an annular continuum $\mathcal{A}$, we denote by $\mathcal{U}^+(\mathcal{A})$ and $\mathcal{U}^-(\mathcal{A})$ the connected components of the complement of $\mathcal{A}$ accumulating at the ends $+\infty$ and $-\infty$, respectively.

We recall some elementary properties of complementary components that will be used throughout the article. In particular, if $\mathcal{A}$ and $\mathcal{B}$ are annular continua with $\mathcal{A}\subseteq \mathcal{B}$, then
\[
\mathcal{U}^-(\mathcal{B})\subseteq\mathcal{U}^-(\mathcal{A}),
\qquad
\mathcal{U}^+(\mathcal{B})\subseteq\mathcal{U}^+(\mathcal{A}).
\]
Corollary~3.3 of \cite{JagerToralHomeo} states that, for every circloid $\mathcal{C}$,
\[
\mathcal{U}^+\bigl(\mathcal{U}^-(\mathcal{C})\bigr)=\mathcal{U}^+(\mathcal{C}),
\qquad
\mathcal{U}^-\bigl(\mathcal{U}^+(\mathcal{C})\bigr)=\mathcal{U}^-(\mathcal{C}).
\]

There exists a natural partial order $\preccurlyeq$ on circloids. Given two circloids $\mathcal{C}_1$ and $\mathcal{C}_2$, we write
\[
\mathcal{C}_1 \preccurlyeq \mathcal{C}_2
\quad \text{iff} \quad
\mathcal{C}_1 \subseteq \overline{\mathcal{U}^-(\mathcal{C}_2)}.
\]

The following standard fact follows easily from Lemma~2.4 of \cite{JagerToralHomeo}, after adapting the notation to our setting.

\begin{lemma}
	\label{lemaunicocircloidborde}
	Let $\mathcal{A}$ be an annular continuum. Then there exists a unique maximum circloid $\mathcal{C}^+_{\mathcal{A}}\subseteq \mathcal{A}$ and a unique minimum circloid $\mathcal{C}^-_{\mathcal{A}}\subseteq \mathcal{A}$, called the upper and lower circloids of $\mathcal{A}$, respectively. Specifically, every circloid $\mathcal{C}\subseteq \mathcal{A}$ satisfies
		\[
		\mathcal{C}^-_{\mathcal{A}} \preccurlyeq \mathcal{C} \preccurlyeq \mathcal{C}^+_{\mathcal{A}}.
		\]
	
\end{lemma}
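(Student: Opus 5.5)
The plan is to deduce the statement from Lemma~2.4 of \cite{JagerToralHomeo}, whose content is that every annular continuum $\mathcal{A}$ has a canonical lower and upper boundary piece. I would build the upper circloid directly. Set $\mathcal{A}^+ := \mathbb{A}\setminus \mathcal{U}^+(\mathcal{A})$. This is a closed set containing $\mathcal{A}$, and it is bounded above. Take
\[
\mathcal{C}^+_{\mathcal{A}} := \partial \mathcal{U}^-\bigl(\mathcal{U}^+(\mathcal{A})\bigr) \cap \overline{\mathcal{U}^+(\mathcal{A})},
\]
or, more cleanly, define $\mathcal{C}^+_{\mathcal{A}}$ as the complement of $\mathcal{U}^+(\mathcal{A})\cup \mathcal{U}^-(\overline{\mathcal{U}^+(\mathcal{A})})$. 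This is the set that Lemma~2.4 of \cite{JagerToralHomeo} identifies as a circloid: the ``lower hull'' of the upper complementary component. The lower circloid $\mathcal{C}^-_{\mathcal{A}}$ is obtained symmetrically by exchanging the roles of $\pm\infty$.

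The key steps, in order, are as follows.
\begin{enumerate}
\item Check that $\mathcal{C}^+_{\mathcal{A}}\subseteq \mathcal{A}$. The points of $\mathcal{C}^+_{\mathcal{A}}$ lie in $\partial\mathcal{U}^+(\mathcal{A})$ or are separated from $-\infty$ by it. Since $\partial \mathcal{U}^+(\mathcal{A})\subseteq\mathcal{A}$ and $\mathcal{U}^-(\mathcal{A})\subseteq \mathcal{U}^-(\overline{\mathcal{U}^+(\mathcal{A})})$, the inclusion follows.
\item Check that $\mathcal{C}^+_{\mathcal{A}}$ is a circloid. This is the cited lemma. I would take the statement of Lemma~2.4 as given rather than reprove it.
\item Show maximality. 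Let $\mathcal{C}\subseteq\mathcal{A}$ be any circloid. Then $\mathcal{U}^+(\mathcal{A})\subseteq\mathcal{U}^+(\mathcal{C})$, by the monotonicity of complementary components recalled above. Hence $\mathcal{C}$ is disjoint from $\mathcal{U}^+(\mathcal{A})$, and in fact
\[
\mathcal{C}\subseteq \overline{\mathcal{U}^-(\mathcal{U}^+(\mathcal{C}))}=\overline{\mathcal{U}^-(\mathcal{C})}\cup\mathcal{C}.
\]
This last identity uses Corollary~3.3 of \cite{JagerToralHomeo}. From $\mathcal{U}^+(\mathcal{A})\subseteq \mathcal{U}^+(\mathcal{C})$ we get $\mathcal{U}^-(\mathcal{U}^+(\mathcal{C}))\subseteq \mathcal{U}^-(\overline{\mathcal{U}^+(\mathcal{A})})\cup \mathcal{C}^+_{\mathcal{A}}$. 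Combining these gives $\mathcal{C}\subseteq \overline{\mathcal{U}^-(\mathcal{C}^+_{\mathcal{A}})}$, that is, $\mathcal{C}\preccurlyeq \mathcal{C}^+_{\mathcal{A}}$.
\item Deduce uniqueness. Two maximal circloids $\mathcal{C}_1\preccurlyeq\mathcal{C}_2\preccurlyeq\mathcal{C}_1$ must coincide. Indeed, $\mathcal{C}_1\subseteq\overline{\mathcal{U}^-(\mathcal{C}_2)}$ and $\mathcal{C}_2\subseteq\overline{\mathcal{U}^-(\mathcal{C}_1)}$. So $\mathcal{C}_1\cup\mathcal{C}_2$ lies in the closure of the region between them, and each contains the other's boundary relative to the ends. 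The minimality clause in the definition of a circloid then forces equality.
\end{enumerate}

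The main obstacle is the bookkeeping in step~3. The relation $\preccurlyeq$ is phrased via closures of complementary components, and a circloid may have nonempty interior, so the containments must be handled carefully. Corollary~3.3 of \cite{JagerToralHomeo} is exactly the tool for this. Everything else is formal, and the lower circloid follows by the symmetry $(x,y)\mapsto(x,-y)$.
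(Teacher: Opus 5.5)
Your plan to reduce the lemma to Lemma~2.4 of \cite{JagerToralHomeo} matches the paper, which gives no argument beyond that citation. The gap is that the set you commit to is not a circloid in general, so step~2 cannot be delegated to the citation.

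\textbf{The candidate set.} Write $U^+:=\mathcal{U}^+(\mathcal{A})$ and $V:=\mathcal{U}^-(\overline{U^+})$. Your ``clean'' set $\mathbb{A}\setminus(U^+\cup V)$ only trims $\mathcal{A}$ from below, so pieces of $\mathcal{A}$ sticking into $U^+$ survive. Take $\mathcal{A}=S^1\times[0,1]\cup\{[0]\}\times[1,2]$. Then $\overline{U^+}=S^1\times[1,+\infty)$ and $V=S^1\times(-\infty,1)$. Your set is $S^1\times\{1\}\cup\{[0]\}\times[1,2]$, which properly contains the annular continuum $S^1\times\{1\}$. Your first formula is a different set, namely $\partial V$, and it is also wrong: if $\mathcal{A}$ is itself a circloid with nonempty interior, it returns a proper subset of $\mathcal{A}$.

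The missing idea is to take the hull a second time. Put $W:=\mathcal{U}^+(\overline V)$ and $\mathcal{C}^+_{\mathcal{A}}:=\mathbb{A}\setminus(V\cup W)$.
\begin{itemize}
\item $\mathcal{U}^-(\mathcal{A})\subseteq V$ and $U^+\subseteq W$, so $\mathcal{C}^+_{\mathcal{A}}\subseteq\mathcal{A}$.
\item $\mathcal{C}^+_{\mathcal{A}}$ is an annular continuum, by unicoherence of the sphere.
\item $\mathcal{U}^-(\overline W)=V$.
\end{itemize}
Minimality follows. Let $C'\subseteq\mathcal{C}^+_{\mathcal{A}}$ be an annular continuum. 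Since $\mathcal{U}^-(C')\supseteq V$, we get $\mathcal{U}^+(C')\cap\overline V=\emptyset$, hence $\mathcal{U}^+(C')\subseteq W$. Symmetrically, $\mathcal{U}^-(C')\subseteq\mathcal{U}^-(\overline W)=V$. Therefore $C'=\mathcal{C}^+_{\mathcal{A}}$.

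\textbf{Step~3.} There is a separate problem here. Circloids can have nonempty interior; this is why the paper singles out cofrontiers. For such a circloid, Corollary~3.3 gives $\overline{\mathcal{U}^-(\mathcal{U}^+(\mathcal{C}))}=\overline{\mathcal{U}^-(\mathcal{C})}$, and this set misses $\operatorname{Int}(\mathcal{C})$. So your displayed identity fails. The ``combining'' step then only yields $\mathcal{C}\subseteq V\cup\mathcal{C}^+_{\mathcal{A}}$, not $\mathcal{C}\subseteq\overline V$.

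In fact, $\mathcal{C}\subseteq\overline{\mathcal{U}^-(\mathcal{C}^+_{\mathcal{A}})}$ is false for $\mathcal{C}=\mathcal{C}^+_{\mathcal{A}}$ whenever this circloid has interior. So the order must be read as $\mathcal{U}^+(\mathcal{C}^+_{\mathcal{A}})\subseteq\mathcal{U}^+(\mathcal{C})$, equivalently $\mathcal{C}\cap\mathcal{U}^+(\mathcal{C}^+_{\mathcal{A}})=\emptyset$. With the corrected definition, this follows by applying Corollary~3.3 twice:
\begin{itemize}
\item $U^+\subseteq\mathcal{U}^+(\mathcal{C})$ gives $\mathcal{U}^-(\mathcal{C})=\mathcal{U}^-(\overline{\mathcal{U}^+(\mathcal{C})})\subseteq V$;
\item then $W=\mathcal{U}^+(\overline V)\subseteq\mathcal{U}^+(\overline{\mathcal{U}^-(\mathcal{C})})=\mathcal{U}^+(\mathcal{C})$.
\end{itemize}
Uniqueness is then the antisymmetry you sketch, made precise by Corollary~3.3: equal upper components force equal lower components, hence equal circloids.
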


Whenever a circloid $\mathcal{C}$ has empty interior, it is a cofrontier and, moreover,
\[
\mathcal{C}=\partial\mathcal{U}^+(\mathcal{C})=\partial\mathcal{U}^-(\mathcal{C}).
\]
This also follows from Lemma~3.4 of \cite{JagerToralHomeo}.

	Given two different circloids $\mathcal{C}^-$ and $\mathcal{C}^+$ with $\mathcal{C}^-\preccurlyeq\mathcal{C}^+$, we define the \emph{region between $\mathcal{C}^-$ and $\mathcal{C}^+$} as the open set
\[
\mathcal{R}(\mathcal{C}^-,\mathcal{C}^+)
:=
\mathcal{U}^+(\mathcal{C}^-)
\cap
\mathcal{U}^-(\mathcal{C}^+),
\]
and the \emph{annular continuum delimited by $\mathcal{C}^-$ and $\mathcal{C}^+$} as 
\[
\mathcal{A}(\mathcal{C}^-,\mathcal{C}^+)
:=
\overline{\mathcal{U}^+(\mathcal{C}^-)}
\cap
\overline{\mathcal{U}^-(\mathcal{C}^+)}.
\]
In particular,
\[
\mathcal{R}(\mathcal{C}^-,\mathcal{C}^+)
\subseteq
\operatorname{Int}\bigl(\mathcal{A}(\mathcal{C}^-,\mathcal{C}^+)\bigr)
\subseteq
\mathcal{A}(\mathcal{C}^-,\mathcal{C}^+).
\]

The region $\mathcal{R}(\mathcal{C}^-,\mathcal{C}^+)$ is nonempty. Indeed, the proof of Proposition~3.7 in \cite{JagerToralHomeo} shows, using Corollary~3.3, that if $\mathcal{U}^+(\mathcal{C}^-)\cap\mathcal{U}^-(\mathcal{C}^+)
=
\mathcal{U}^-(\mathcal{C}^-)\cap\mathcal{U}^+(\mathcal{C}^+)
=
\emptyset$, 
then $\mathcal{C}^-=\mathcal{C}^+$. Since $\mathcal{C}^-\preccurlyeq\mathcal{C}^+$ implies that $\mathcal{U}^-(\mathcal{C}^-)\cap\mathcal{U}^+(\mathcal{C}^+)=\emptyset$, it follows that $\mathcal{U}^+(\mathcal{C}^-)\cap\mathcal{U}^-(\mathcal{C}^+)\neq\emptyset$ whenever the circloids are different.

\medskip

When $\mathcal{C}^-$ and $\mathcal{C}^+$ meet, $\mathcal{R}(\mathcal{C}^-,\mathcal{C}^+)$ is a union of open disks, while when they are disjoint, $\mathcal{R}(\mathcal{C}^-,\mathcal{C}^+)$ is a regular annulus.

\medskip

The following lemma describes the monotonicity of these regions. Its proof, although simple, is deferred to the final section.

\begin{lemma}
	\label{lemmapreliminaries-regionentrecircloids}

		Consider two different circloids $\mathcal{C}^-$ and $\mathcal{C}^+$ with $\mathcal{C}^-\preccurlyeq\mathcal{C}^+$. Let $\mathcal{E}$ be a circloid such that $\mathcal{C}^+\preccurlyeq\mathcal{E}$ and $\mathcal{C}^+\neq\mathcal{E}$. Then
		\[
		\mathcal{R}(\mathcal{C}^-,\mathcal{C}^+)
		\subsetneq
		\mathcal{R}(\mathcal{C}^-,\mathcal{E}).
		\]
\end{lemma}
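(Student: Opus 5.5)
The claim has two parts: the inclusion $\mathcal{R}(\mathcal{C}^-,\mathcal{C}^+)\subseteq\mathcal{R}(\mathcal{C}^-,\mathcal{E})$ and its strictness. Since both regions contain the factor $\mathcal{U}^+(\mathcal{C}^-)$, the inclusion reduces to
\[
\mathcal{U}^-(\mathcal{C}^+)\subseteq\mathcal{U}^-(\mathcal{E}).
\]

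\textbf{Inclusion.} From $\mathcal{C}^+\preccurlyeq\mathcal{E}$ we have $\mathcal{C}^+\subseteq\overline{\mathcal{U}^-(\mathcal{E})}$. This closure is an annular-type set whose complement is $\mathcal{U}^+(\mathcal{E})$, because the closure of $\mathcal{U}^-(\mathcal{E})$ does not reach into the open set $\mathcal{U}^+(\mathcal{E})$. Hence $\mathcal{C}^+\cap\mathcal{U}^+(\mathcal{E})=\emptyset$.

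Now $\mathcal{U}^+(\mathcal{E})$ is connected, disjoint from $\mathcal{C}^+$, and accumulates at $+\infty$. So it lies in the component of the complement of $\mathcal{C}^+$ containing $+\infty$:
\[
\mathcal{U}^+(\mathcal{E})\subseteq\mathcal{U}^+(\mathcal{C}^+).
\]
To pass to the lower components, I would use Corollary~3.3 of \cite{JagerToralHomeo}, which gives $\mathcal{U}^-(\mathcal{C})=\mathcal{U}^-(\mathcal{U}^+(\mathcal{C}))$. Here $\mathcal{U}^-(\mathcal{U}^+(\mathcal{C}))$ is the component containing $-\infty$ of the complement of $\overline{\mathcal{U}^+(\mathcal{C})}$. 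Since $\overline{\mathcal{U}^+(\mathcal{E})}\subseteq\overline{\mathcal{U}^+(\mathcal{C}^+)}$, complements reverse, and the connected set $\mathcal{U}^-(\mathcal{C}^+)$ avoids $\overline{\mathcal{U}^+(\mathcal{E})}$ while accumulating at $-\infty$. Therefore
\[
\mathcal{U}^-(\mathcal{C}^+)=\mathcal{U}^-\bigl(\mathcal{U}^+(\mathcal{C}^+)\bigr)\subseteq\mathcal{U}^-\bigl(\mathcal{U}^+(\mathcal{E})\bigr)=\mathcal{U}^-(\mathcal{E}).
\]

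\textbf{Strictness.} We need a point of $\mathcal{U}^+(\mathcal{C}^-)\cap\mathcal{U}^-(\mathcal{E})$ that is not in $\mathcal{U}^-(\mathcal{C}^+)$. Points of $\mathcal{C}^+$ are natural candidates. By the nonemptiness argument in the preliminaries (Proposition~3.7 of \cite{JagerToralHomeo}), since $\mathcal{C}^+\neq\mathcal{E}$ and $\mathcal{C}^+\preccurlyeq\mathcal{E}$, the region $\mathcal{R}(\mathcal{C}^+,\mathcal{E})=\mathcal{U}^+(\mathcal{C}^+)\cap\mathcal{U}^-(\mathcal{E})$ is nonempty. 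Its points are not in $\mathcal{U}^-(\mathcal{C}^+)$, since they lie in the other complementary component $\mathcal{U}^+(\mathcal{C}^+)$.

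It remains to check that such points also lie in $\mathcal{U}^+(\mathcal{C}^-)$. The key step is
\[
\mathcal{U}^+(\mathcal{C}^+)\subseteq\mathcal{U}^+(\mathcal{C}^-).
\]
This is the symmetric statement of the inclusion argument above, applied to $\mathcal{C}^-\preccurlyeq\mathcal{C}^+$. First, $\mathcal{C}^-\cap\mathcal{U}^+(\mathcal{C}^+)=\emptyset$. Then the connected set $\mathcal{U}^+(\mathcal{C}^+)$, which accumulates at $+\infty$ and misses $\mathcal{C}^-$, lies in $\mathcal{U}^+(\mathcal{C}^-)$. Consequently,
\[
\emptyset\neq\mathcal{R}(\mathcal{C}^+,\mathcal{E})\subseteq\mathcal{R}(\mathcal{C}^-,\mathcal{E})\setminus\mathcal{R}(\mathcal{C}^-,\mathcal{C}^+),
\]
which gives strictness.

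\textbf{Main obstacle.} The delicate point is the use of Corollary~3.3 to transfer inclusions between upper and lower complementary components. One must correctly identify $\mathcal{U}^-$ of an open set as the component of the complement of its closure containing $-\infty$. One must also ensure that $\overline{\mathcal{U}^-(\mathcal{E})}$ is disjoint from $\mathcal{U}^+(\mathcal{E})$, which holds because the two are disjoint open components with $\mathcal{U}^+(\mathcal{E})$ open. I would write out this transfer carefully once as a small claim and then apply it in both steps.
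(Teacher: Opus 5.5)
Your proof is correct and follows the paper's argument: the inclusion comes from $\mathcal{U}^-(\mathcal{C}^+)\subseteq\mathcal{U}^-(\mathcal{E})$, and strictness comes from the nonempty region $\mathcal{R}(\mathcal{C}^+,\mathcal{E})$, which lies in $\mathcal{R}(\mathcal{C}^-,\mathcal{E})$ but misses $\mathcal{R}(\mathcal{C}^-,\mathcal{C}^+)$; you also justify (via Corollary~3.3 of \cite{JagerToralHomeo}) the two inclusions the paper states without proof. One harmless slip: the complement of $\overline{\mathcal{U}^-(\mathcal{E})}$ need not equal $\mathcal{U}^+(\mathcal{E})$ when $\mathcal{E}$ has nonempty interior, but you only use that $\overline{\mathcal{U}^-(\mathcal{E})}$ and $\mathcal{U}^+(\mathcal{E})$ are disjoint, which is true.
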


\medskip

To finish this subsection on topological preliminaries, let us introduce the final objects used throughout the article.

For a closed connected set $V\subset\mathbb{A}$, we define $\operatorname{Fill}(V)$ as the union of $V$ with all the bounded connected components of its complement.

\smallskip

We call a \emph{vertical line} the image of a proper embedding
$v:\mathbb{R}\to\mathbb{A}$ such that
\[
\lim_{t\to\pm\infty}v(t)=\pm\infty.
\]

We call an \emph{ends-connector} any connected set accumulating at both ends of the annulus $\mathbb{A}$ and whose complement consists of an open topological disk which admits a vertical line.

\subsection{Rotation theory}

We denote by $\mathrm{Homeo}_0(\mathbb{A})$ the set of homeomorphisms isotopic to the identity. If $f\in \mathrm{Homeo}_0(\mathbb{A})$ and $F:\mathbb{R}^2\to\mathbb{R}^2$ is a lift of $f$, then $F$ preserves orientation and satisfies
\[
F(x+(1,0))=F(x)+(1,0)
\]
for every $x\in\mathbb{R}^2$. Every other lift of $f$ differs from $F$ by an integer translation. For each lift $F$, its \emph{rotation set} $\rho(F)$ is defined by
\[
\rho(F)=\left\{\lim_i\frac{\mathrm{pr}_1\left(F^{n_i}(x_i)-x_i\right)}{n_i}: \ n_i\nearrow+\infty,\ x_i\in\mathbb{R}^2\right\}.
\]
The rotation set is always a closed interval, possibly trivial or unbounded. The rotation set associated with any other lift of $f$ is obtained from $\rho(F)$ by an integer translation.

If $\mathcal{X}\subseteq\mathbb{A}$ is an $f$-invariant set, one can define the rotation set relative to $\mathcal{X}$ by restricting the definition above to $\mathcal{X}$, and denote it by $\rho_{\mathcal{X}}(F)$. Whenever $\mathcal{X}$ is compact, so is $\rho_{\mathcal{X}}(F)$. When $\mathcal{X}$ is a singleton, say $\mathcal{X}=\{x\}$, necessarily $x$ is a fixed point. In this case, we call $\rho_x(F)\in\mathbb{Z}$ the \emph{rotation number} of $x$. Conversely, we say that $x$ realizes the rotation number $\rho$ if $\rho=\rho_x(F)$. The same terminology applies to periodic points.

Although the rotation number of a periodic point depends on the choice of the lift $F$, the difference between the rotation numbers of any two periodic points is independent of the chosen lift.

\subsection{Prime-ends compactification}

The prime-ends compactification provides an abstract compactification of an open topological disk in which the added boundary is a circle. The construction extends to arbitrary open connected sets (see \cite{Koropecki_2015}). In this article, we only need the case where the disk is obtained by adding an end of the annulus to a neighborhood of that end. We now describe this construction in our setting

An \emph{arc} in $\mathbb{A}$ is a continuous and injective map $\gamma:[0,1]\to \mathbb{A}$.
If $K$ is an essential annular continuum and $A =\mathcal{U}^+(K) \subseteq \mathbb{A}$, a \emph{crosscut} in $A$ is an arc $\gamma$ in $\mathbb{A}$ such that $\gamma((0,1)) \subseteq U$ and $\gamma(0), \gamma(1) \in \partial A$. Every crosscut divides $A$ into two connected components, called \emph{cross-sections}, one bounded and the other unbounded.

A \emph{fundamental chain} in $A$ is a decreasing sequence $\{R_n\}$ of cross-sections such that, if $\partial R_n \cap A= A_n$ denotes the corresponding sequence of crosscuts, then $A_n \cap A_m = \emptyset$ for $n \neq m$, and $\operatorname{diam}(A_n) \to 0$ as $n \to \infty$. We say that two fundamental chains $\{R_n\}$ and $\{R_n'\}$ are \emph{equivalent} if every $R_n$ is contained in some $R_j'$ and vice versa. Each equivalence class $\mathcal{E}$ of fundamental chains is called a \emph{prime end}.

Adding $+\infty$ to $A$, we obtain an open topological disk
\[
D := A \sqcup \{+\infty\}.
\]
The set
\[
\widehat{D} = D \sqcup \{ \mathcal{E} \mid \mathcal{E} \text{ is a prime end} \},
\]
equipped with a suitable topology, is a compactification of $D$ (and hence of $A$), called the \emph{prime-ends compactification}. This compactification is homeomorphic to a closed disk.

We endow $D$ with the complex structure inherited from $S^2$, regarding $D$ as a subset of
\[
\mathbb{A}\sqcup \{+\infty,-\infty\} \simeq S^2.
\]
By the Riemann mapping theorem, if $\mathbb{D}$ denotes the open unit disk, there exists a conformal homeomorphism
\[
\varphi:\mathbb{D}\to D
\]
such that $\varphi(0)=+\infty$, and $\varphi$ extends to a homeomorphism
\[
\overline{\varphi}:\overline{\mathbb{D}}\to\widehat{D}.
\]

Under this identification, each prime end corresponds to a point of $\partial\mathbb{D}$. If $f$ is any map in $\mathrm{Homeo}_0(\mathbb{A})$ and $K$ is $f$-invariant, we can extend $f|_A$ to $D=\mathcal{U}^+(K) \sqcup \{+\infty\}$. As $f$ sends crosscuts to crosscuts and cross sections to cross sections, $f|_D$ extends to a homeomorphism $\widehat{f}: \widehat{D} \to \widehat{D}$. 
More details can be found in \cite{milnor1990dynamicscomplexvariableintroductory}, \cite{Carathéodory1913a}, and \cite{Carathéodory1913b}.

A \emph{ray} is a continuous and injective map $r:(0,+\infty)\to A$ such that
\[\lim_{t\to+\infty}r(t)\]
exists and belongs to $\partial A$. A point $x\in\partial A$ is called \emph{accessible} if there exists a ray $r$ satisfying
\[\lim_{t\to+\infty}r(t)=x.\]
In particular, every crosscut joins two accessible points. Moreover, the set of accessible points is dense in $\partial A$.

Viewing $A$ as a subset of $\widehat{D}$, every ray in $A$ can also be regarded as a ray in $\widehat{D}$ (or, equivalently, via the homeomorphism $\varphi$, as the ray $\ell=\varphi^{-1}(r)$ in $\mathbb{D}$). This point of view gives rise to the following observations:
\begin{itemize}
	\item If $r$ is a ray in $A$ landing at $x\in\partial A$, then the same ray, viewed in $\widehat{D}$, converges to a prime end $\mathcal{E}_x$. In this case, we say that $\mathcal{E}_x$ is an \emph{accessible prime end}.
	\item If $r$ and $r'$ are rays in $A$ landing at different points $x,x'\in\partial A$, then $\mathcal{E}_x\neq\mathcal{E}_{x'}$.
\end{itemize}

The set of accessible prime ends is dense in $\partial\widehat{D}$ (see Subsection~3.1 of \cite{Koropecki_2015}). 

\bigskip

As the prime-ends boundary is a circle, every homeomorphism isotopic to the identity extends to this compactification with a well-defined rotation number on the boundary circle, depending on the chosen lift of the map. To relate this prime-ends rotation number with the rotation set of $\partial A$, we first describe the lift involved in its definition.

Consider $f\in\mathrm{Homeo}_0(\mathbb{A})$, let $K$ be an $f$-invariant essential annular continuum, and set $A=\mathcal{U}^+(K)$, which is also $f$-invariant. Define $D=A\sqcup\{+\infty\}$. Let $\varphi:\mathbb{D}\to D$ be a homeomorphism sending $0$ to $+\infty$. Notice that $\mathbb{D}^*=\mathbb{D}\setminus\{(0,0)\}\simeq S^1\times(0,1)$ is identified with $A$ through $\varphi$.

Let $\mathrm{pr}:\mathbb{H}\to\mathbb{D}^*$ be the universal covering map of $\mathbb{D}^*$, for which the deck transformations correspond to integer horizontal translations, and let $\pi_A:\pi^{-1}(A)\to A$ denote the universal covering map of $A$. Consider a lift
\[
\phi:\mathbb{H}\to\pi^{-1}(A)
\]
of $\varphi|_{\mathbb{D}^*}$. Since $\phi$ preserves orientation and induces an isomorphism between the groups of deck transformations, the generator corresponding to the unit horizontal translation is mapped to the generator corresponding to the unit horizontal translation. Hence,
\[
\phi(x+n,y)=\phi(x,y)+(n,0)
\]
for every $n\in\mathbb{Z}$.

If
\[
g:=\varphi^{-1}\circ f\circ\varphi:\mathbb{D}^*\to\mathbb{D}^*
\]
and $F$ is a lift of $f$, then
\[
G:=\phi^{-1}\circ F\circ\phi
\]
is the lift of $g$ corresponding to the lift $F$. The other lifts of $g$ are of the form
\[
G':=\phi^{-1}\circ F\circ\psi,
\]
where $\psi(x,y)=\phi(x,y)+(m,0)$ for some $m\in\mathbb{Z}$.

Since $g$ extends continuously to the prime-ends compactification of $\mathbb{D}^*$, the lift $G$ also extends continuously to $\overline{\mathbb{H}}$.

\[
\begin{tikzcd}
	\arrow[loop left, looseness=10, "G"] \mathbb{H} \arrow[r, "\phi"] \arrow[d, "\mathrm{pr}"']
	& \pi^{-1}(A) \arrow[d, "\pi_A"] \arrow[loop right, looseness=4, "F"] \\
	\arrow[loop left, looseness=8, "g"] \mathbb{D}^* \arrow[r, "\varphi"]
	& A \arrow[loop right, looseness=12, "f"]
\end{tikzcd}
\]

The restriction of $G$ to $\partial\mathbb{H}$ is a lift of the restriction of the prime-ends extension of $g$ to $\partial\mathbb{D}$. Therefore, the rotation number
\[\rho_{\partial\mathbb{D}}(G)=\lim_{n\to+\infty}\frac{\mathrm{pr}_1\bigl(G^n(x,1)-(x,1)\bigr)}{n}\]
is well defined and does not depend on the choice of $x\in\partial\mathbb{H}$.

We define the \emph{upper prime-ends rotation number} of $K$ associated with the lift $F$ by
\[\rho_K^+(F):=\rho_{\partial\mathbb{D}}(G).\]

Similarly, considering $\mathcal{U}^-(K)$, one defines the \emph{lower prime-ends rotation number} of $K$.

Using the density of accessible points together with the fact that the dynamics of rays is compatible with the prime-ends compactification, one obtains
\[\rho_K^\pm(F)\in\rho_K(F).\]
A simple proof of this fact can be found in \cite{HERNÁNDEZ-CORBATO_2017}.

\subsection{Rotational horseshoes}

In this article, we focus on annulus maps exhibiting \emph{rotational horseshoes} in the sense of \cite{fabiopatricehorseshoes}, \cite{PAMiliton}, and \cite{passpotrsamb}, which are closely tied to the topology of the annulus.

Let us recall a standard definition, inspired by Smale's classical visual presentation: it involves a topological rectangle whose image under the map exhibits two or more Markovian intersections with itself. Since we are working in the $C^0$ setting, we do not expect hyperbolicity to be present. Given the central role these objects play in our work, we now proceed to carefully introduce their precise formulation.

A \emph{rectangle} is a topological closed disk in $\mathbb{R}^2$ or $\mathbb{A}$ together with four arcs
	$R_u$, $R_r$, $R_d$, and $R_l$ (up, right, down, left) whose concatenation 
	in this order gives a simple loop. Note that the image of such loop must coincide
	with the boundary of $R$. In this work, we usually abuse notation by calling the rectangle by the same name as the topological disk. Further, given any homeomorphism $f$ on the
	space where the rectangle lies, we can naturally consider the new rectangle $f(R)$ as given
	by the image of the topological disk, and the composition of the arcs of $R$ with $f$.

Given two closed rectangles $R$ and $R'$ in $\mathbb{R}^2$, we say that $R\cap R'$ defines a \emph{Markovian intersection} if there exists a connected component $H\subset R\cap R'$ such that
\begin{enumerate}
	\item $\partial H$ contains subarcs $H_u\subset R'_u$ and $H_d\subset R'_d$, both joining $R_l$ to $R_r$.
	\item $R'_l$ and $R'_r$ do not intersect the interior of the subrectangle of $R$ determined by $H_u$ and $H_d$.
\end{enumerate}

Observe that the sides of a rectangle may be relabeled by exchanging the roles of the upper and lower sides with those of the left and right sides. Although this is topologically irrelevant, the definition of a Markovian intersection depends on the chosen labeling.

Let $f\in\mathrm{Homeo}_{\,0}(\mathbb{A})$, let $F$ be a lift of $f$, and let $k\ge2$ be an integer. We say that $f$ presents a \emph{rotational horseshoe} with $k$ symbols and period $n>1$ if, for some fixed $\ell\in\mathbb{Z}$, there exists a rectangle $\tilde{R}$ in $\mathbb{R}^2$ such that, for every $i\in\{1,\dots,k\}$,
\[
F^n(\tilde{R})\cap\bigl(\tilde{R}+(i+\ell,0)\bigr)
\]
contains a Markovian intersection.

If $\tilde{R}$ projects onto a rectangle $R\subset\mathbb{A}$, we say that the horseshoe is \emph{supported on} $R$.

\begin{figure}[h!]
	
	\centering
	\includegraphics[width=230 pt]{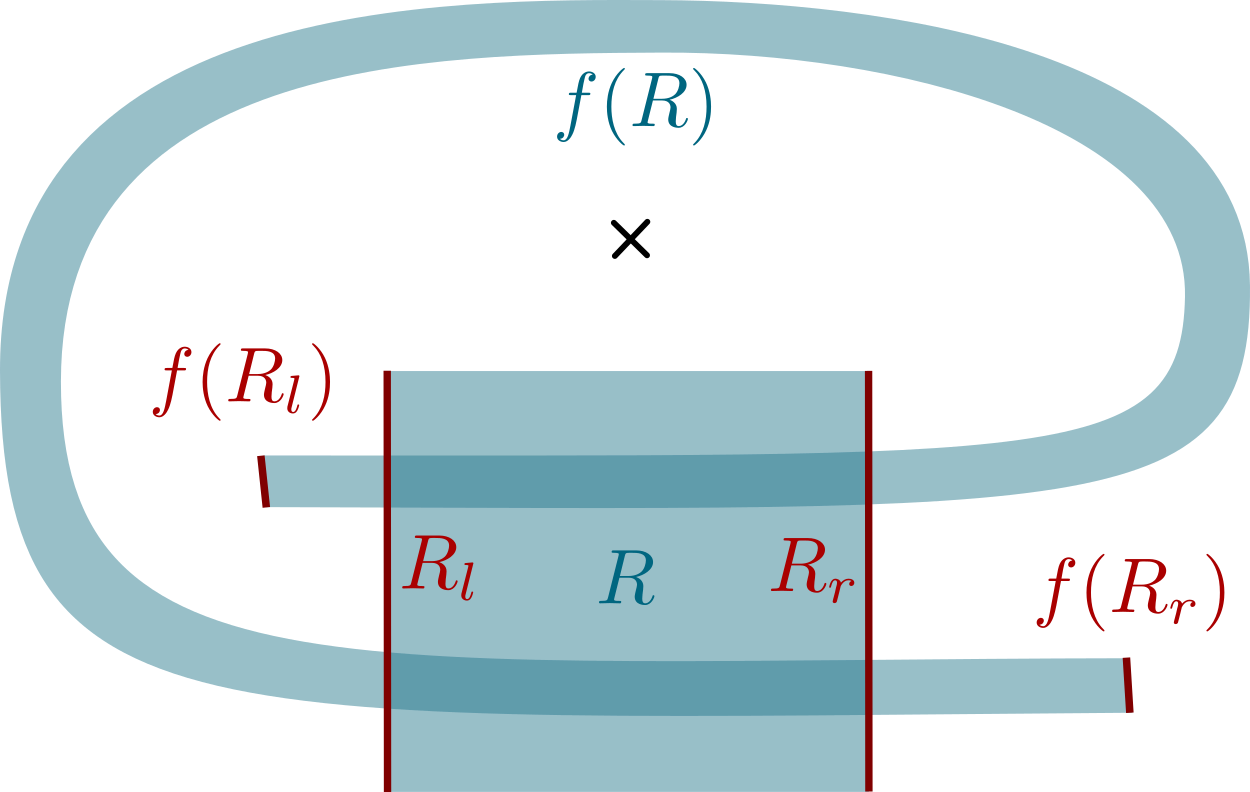}
	
	\caption{\small{The sets $R$ and $f(R)$ producing a rotational horseshoe with two symbols are shown in blue. If $H_0, H_1$ are the upper and lower components of $R\cap f(R)$, respectively, notice that $f^{-1}(H_0)$ and $f^{-1}(H_1)$ are two vertical rectangles in $R$. For a point $x$ whose full orbit is contained in the intersection of these four subrectangles, we can associate to $x$ the sequence $\sigma_x \in \{0,1\}^{\mathbb{Z}}$ given by its itinerary, i.e., $\sigma_x(n)=i$ if $f^n(x)\in H_i$.}}
	
\end{figure}

The following proposition is an adaptation of Proposition~9.16 in \cite{PAMiliton} to the annulus setting.

\begin{prop}
	Suppose that $f$ carries a rotational horseshoe with $k$ symbols and period $n$. Then there exist an $f$-invariant compact set $K\subset\mathbb{A}$, an $F^n$-invariant compact lift $\tilde{K}\subset\mathbb{R}^2$, and a surjective continuous map
	\[
	h\colon\tilde{K}\to\{1,\dots,k\}^{\mathbb{Z}}
	\]
	such that, denoting by $\tilde g=F^n|_{\tilde K}$, the following diagram commutes:
	\[
	\begin{tikzcd}
		\{1,\dots,k\}^{\mathbb{Z}} \arrow[r, "\sigma"]
		& \{1,\dots,k\}^{\mathbb{Z}} \\
		\arrow[d, "\pi"] \arrow[u, "h"] \tilde{K} \arrow[r, "\tilde{g}"]
		& \arrow[d, "\pi"] \arrow[u, "h"] \tilde{K} \\
		K \arrow[r, "f^n"]
		& K
	\end{tikzcd}
	\]
	where $\pi$ is the covering projection and $\sigma$ is the Bernoulli shift. Moreover, the preimage under $h$ of every $p$-periodic sequence for $\sigma$ contains a point that projects to a $p$-periodic point for $f^n$.
\end{prop}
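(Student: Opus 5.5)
The plan is to reduce everything to the standard symbolic construction for topological (non-hyperbolic) horseshoes, working in the universal cover. Throughout, write $T$ for the unit horizontal translation $T(x,y)=(x+1,y)$. For each symbol $i\in\{1,\dots,k\}$, set
\[
G_i:=T^{-(i+\ell)}\circ F^n .
\]
The hypothesis says that $G_i(\tilde R)\cap \tilde R$ contains a Markovian intersection $H_i$, a horizontal strip of $\tilde R$ bounded by subarcs of the upper and lower sides of $G_i(\tilde R)$. Let $V_i:=G_i^{-1}(H_i)\subset\tilde R$ be the corresponding vertical strip.

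\textbf{Step 1 (disjointness).} First I would check that the $V_i$ are pairwise disjoint compact sets. Since $F^n$ is injective and the sets $T^{i+\ell}(\tilde R)$ for distinct $i$ are disjoint translates, the pieces $F^{-n}(\tilde R+(i+\ell,0))\cap\tilde R$ are disjoint. So a point of $\tilde R$ determines at most one symbol at each time.

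\textbf{Step 2 (composition of Markovian intersections).} Next I would prove that Markovian intersections compose. For a finite word $s_{-m}\dots s_{m}$, the set of points of $\tilde R$ whose $G$-orbit follows the word, namely
\[
x\in V_{s_0},\quad G_{s_0}(x)\in V_{s_1},\quad \dots,
\]
and likewise backwards, is nonempty. The forward constraints cut out a nested family of vertical sub-strips, each containing a continuum joining $R_u$ to $R_d$; the backward constraints give horizontal sub-strips, each containing a continuum joining $R_l$ to $R_r$. A horizontal crossing continuum and a vertical crossing continuum of a closed disk must meet, by a plane-separation (Janiszewski-type) argument. Compactness then gives, for every $s\in\{1,\dots,k\}^{\mathbb Z}$, a nonempty compact set $K_s$ of points with itinerary $s$.

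\textbf{Step 3 (the invariant set and the semiconjugacy).} Put $\tilde K=\bigcup_s K_s\subset\tilde R$, which is compact. Let $h(x)=s$ be the itinerary map, and let $\tilde g(x)=G_{s_0}(x)$, which differs from $F^n$ by the deck translation $T^{-(s_0+\ell)}$. Then $h\circ\tilde g=\sigma\circ h$. The map $h$ is surjective by Step 2, and continuous because the $V_i$ are disjoint compact sets. Projecting by $\pi$ kills the deck translations, so $K=\pi(\tilde K)$ is $f^n$-invariant and the diagram commutes (replacing $K$ by $\bigcup_{j<n}f^j(K)$ gives an $f$-invariant set if needed).

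\textbf{Step 4 (periodic points).} For a $p$-periodic sequence $s$, consider the composition
\[
\Phi=G_{s_{p-1}}\circ\dots\circ G_{s_0}.
\]
By Step 2, $\Phi(\tilde R)\cap\tilde R$ contains a Markovian intersection whose preimage is the strip coded by $s_0\dots s_{p-1}$. I would then invoke the fixed-point theorem for Markovian self-intersections of a rectangle, either by a Brouwer/Lefschetz index computation on the rectangle or by a Kennedy--Yorke crossing argument, to get $\tilde x$ with $\Phi(\tilde x)=\tilde x$. Then $\tilde x$ lies in $h^{-1}(s)$, and $F^{np}(\tilde x)=\tilde x+(\sum_j(s_j+\ell),0)$, so $\pi(\tilde x)$ is $p$-periodic for $f^n$.

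I expect the main obstacles to be Step 2 and Step 4. In the $C^0$ setting there is no hyperbolicity, so these must be carried out purely topologically, with care about condition (2) in the definition of a Markovian intersection (that $R'_l,R'_r$ avoid the interior of the sub-rectangle), which is exactly what makes the compositions again Markovian. I would follow the argument of Proposition~9.16 of \cite{PAMiliton} verbatim there, only tracking the deck translations.
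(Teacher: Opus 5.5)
Your overall route is the one the paper intends. The paper gives no argument of its own and only presents the statement as an adaptation of Proposition~9.16 of \cite{PAMiliton}, whose mechanism is yours: strips $V_i$, itinerary coding, composition of Markovian crossings, and fixed points for Markovian self-crossings. Using $\tilde g(x)=G_{s_0}(x)$ instead of the literal $F^n|_{\tilde K}$ is also correct, and in fact forced. For $x\in h^{-1}(i^\infty)$ one has $F^{nm}(x)=\tilde g^m(x)+(m(i+\ell),0)$, and $1+\ell$ and $2+\ell$ cannot both vanish, so some point would have an unbounded $F^n$-orbit inside a compact $F^n$-invariant set. You should say explicitly that this corrected version is what you prove.

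The genuine gap is Step~1. You use that the translates $\tilde R+(i+\ell,0)$, $1\le i\le k$, are pairwise disjoint. The definition of a rotational horseshoe does not give this, and the paper itself stresses that $\tilde R$ need not project injectively to $\mathbb{A}$. Once $\tilde R\cap(\tilde R+(j,0))\neq\emptyset$ for some $0<j<k$, Markovian components lying in different translates can overlap or even coincide. Here is a construction of the latter (with $n=1$, $\ell=-1$, $k=2$; your argument never uses $n>1$):
\begin{itemize}
\item Take $\tilde R=[0,10]\times[0,1]$, with sides labelled so that $\tilde R_l=[0,3]\times\{1\}$ and $\tilde R_r=[4,7]\times\{0\}$.
\item Let $F$ map $\mathbb{R}\times[0,1]$ equivariantly onto a zigzag band whose turns lie outside $\mathbb{R}\times[0,1]$.
\item Arrange that $F$ sends the strip $[3.2,3.8]\times[0,1]$ onto a descending arm $H$, with the vertical sides of the strip going to the ends $[1.9,2.1]\times\{1\}$ and $[5.9,6.1]\times\{0\}$ of $H$.
\end{itemize}
These ends lie on both $\tilde R_l$ and $\tilde R_l+(1,0)$, respectively on both $\tilde R_r$ and $\tilde R_r+(1,0)$. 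Hence $H$ is a Markovian component of both $F(\tilde R)\cap\tilde R$ and $F(\tilde R)\cap(\tilde R+(1,0))$, and so $V_1=V_2$.

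In such a situation ``the symbol of $x$ is the $i$ with $x\in V_i$'' is meaningless. The same problem can affect the strips $H_i$ needed to define $\tilde g^{-1}$, and your continuity argument for $h$, which rests on the $V_i$ being disjoint compact sets, no longer holds.

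As written, your proof is complete only under the extra hypothesis $\tilde R\cap(\tilde R+(j,0))=\emptyset$ for $0<|j|<k$, for instance when the horseshoe is supported on a rectangle $R\subset\mathbb{A}$. In general you must either impose this hypothesis or change the coding. One option is to code orbits by the translate in which $F^n(x_m)$ lands; in the example this works because $V\cap(V+(1,0))=\emptyset$. That, however, requires proving that the relevant strips are disjoint from their nontrivial integer translates. This is exactly the annulus-specific point that ``only tracking the deck translations'' skips.
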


\medskip

Observe that the compact invariant set $K$ in the previous proposition necessarily has a nontrivial rotation set. The symbolic itinerary of a point in $K$ determines its rotation around the annulus; in particular, the rotation number of every periodic orbit in $K$ can be computed directly from its symbolic itinerary. Another important remark is that if $f$ has a rotational horseshoe of period $n$ with $k$ symbols, then the topological entropy of $f$ is positive. Indeed,
\[
h_{\mathrm{top}}(f)\ge\frac{\log(k)}{n}.
\]
It is also possible to construct examples of rotational horseshoes for which arbitrarily large iterates of $f$ are required before the Markovian intersections appear. In other words, the topological entropy can be made arbitrarily small if one lacks information about \emph{when} these Markovian intersections occur (see, for instance, \cite{passpotrsamb}).

As in the case of the classical Smale horseshoe, if $\tilde{R}$ projects injectively onto a rectangle $R$ carrying a horseshoe for $f$, then for every negative semi-infinite sequence $\theta^-$ we can consider the local unstable set consisting of points sharing this semi-infinite sequence. This defines inside $R$ a local unstable set $X_{\theta^-}$ consisting of continua connecting $R_l$ to $R_r$. Moreover, each $X_{\theta^-}$ is contained in a larger unstable set $W^u(X_{\theta^-})$.

Since our maps need not be hyperbolic---we work in a purely topological setting---they may contain invariant disks, often referred to as \emph{elliptic islands}.

Finally, note that $\tilde{R}$ does not always project injectively onto a rectangle of $\mathbb{A}$. However, there always exists a finite intermediate cover $\check{\mathbb{A}}$ of $\mathbb{A}$ in which $\tilde{R}$ projects onto a rectangle $\check{R}$. If $\check{f}$ is the lift of $f$ to this finite cover, then the topological entropies of $\check{f}$ and $f$ coincide. In this case, we work in $\check{\mathbb{A}}$, which contains the projection of $\tilde{K}$ inside $\check{R}$, and the sets $X_{\theta^-}$ are defined as the projections of the corresponding sets $\check{X}_{\theta^-}$, which are continua connecting the left and right sides of $\check{R}$.

\subsection{Instability regions}

We use the term \emph{Birkhoff instability region}, or simply \emph{instability region} throughout this article, to refer to an $f$-invariant open subannulus $A \subseteq \mathbb{A}$ containing two periodics points realizing different rotation numbers and whose ends are \emph{Birkhoff-related}, i.e., for every pair of disjoint neighborhoods of the ends of $A$, the forward orbit of one intersects the other. As proved later in Proposition~\ref{equivinstabilityregion}, in the non-wandering setting, the condition that the ends are Birkhoff-related is equivalent to requiring that $A$ contains no $f$-invariant circloid.

 A crucial result establishes that rotational horseshoes and instability regions are closely related: every Birkhoff instability region contains a rotational horseshoe by results of Le Calvez and Tal in \cite{fabiopatricehorseshoes} (see Section~\ref{sec.Instability} for more details).

\section{Rotation sets of annular boundaries}
\label{seccionRotationsets}

In this section, we develop the key result that allows us to prove our main theorem. It concerns how the rotation set of an invariant annular continuum is influenced by the dynamics of topological disks intersecting it. More precisely, we establish the following.

\begin{thm}
	\label{primeendsrotationthmconservative}
	Let $f\in \mathrm{Homeo}_{\,0,nw}(\mathbb{A}) \cup \mathrm{Homeo}_{\,0,\lambda}(\mathbb{A})$.
	Let $A\subseteq \mathbb{A}$ be an $f$-invariant open annulus bounded below, and let $\mathcal{X}$ be the lower boundary component of $A$. Let $U$ be a closed disk meeting $\mathcal{X}$ such that
	\begin{itemize}
		\item $\operatorname{Int}(U \cap f(U)) \cap A \neq \emptyset$,
		\item $\bigcup_{i=0}^n f^i(U)$ is inessential for some $n\in \mathbb{N}$.
	\end{itemize}
	If $\tilde{U}$ is a lift of $U$ and $F$ is a lift of $f$ for which $F(\tilde{U}) \cap \tilde{U} \neq \emptyset$, then the corresponding prime ends rotation number from $A$ satisfies
	$$\rho^+_{\mathcal{X}}(F) \in [-1/n,1/n].$$
\end{thm}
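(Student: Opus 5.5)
Set $D=A\sqcup\{+\infty\}$ with its prime-ends compactification $\widehat D$, and write $\hat g$ for the extension of $g=\varphi^{-1}\circ f\circ\varphi$ to $\overline{\mathbb D}$, with $G$ the lift to $\overline{\mathbb H}$ associated with $F$. We argue by contradiction: assume $\rho^+_{\mathcal X}(F)>1/n$; the case $<-1/n$ is symmetric. We work with the circle homeomorphism on $\partial\mathbb D$ given by prime ends. Since its rotation number is $>1/n$, for every $x\in\partial\mathbb H$ we have $\mathrm{pr}_1(G^{m}(x)-x)>m/n - 1$ for all $m$. Moreover, by standard circle-homeomorphism facts (compare with the rotation $R_{\rho}$), some fixed iterate $G^{N}$ satisfies $G^N(x)\ge x+N/n+\delta$ along the boundary. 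The key is to transport this boundary drift into $A$ near $\mathcal X$.

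\textbf{Step 1: a uniform drift collar.} By continuity of $G$ on $\overline{\mathbb H}$ and $\mathbb Z$-periodicity, we find a strip $S=\mathbb R\times(1-\epsilon,1]$ on which $\mathrm{pr}_1(G^{j}(z)-z)$ is close to its boundary values for $j\le N$. This alone does not control long orbits, since points leave $S$. So we combine it with the recurrence hypothesis.

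\textbf{Step 2: using $U$.} Let $W=\operatorname{Int}(U\cap f(U))\cap A\neq\emptyset$. Because $U$ meets $\mathcal X$, and by the non-wandering (or $\lambda$-preserving, where recurrence holds for $\lambda$-a.e. point by Poincaré recurrence in the finite-measure region between) hypothesis, we choose $U$-points close to $\mathcal X$, i.e.\ in $\varphi(S)$, inside $A$. The hypothesis $F(\tilde U)\cap\tilde U\neq\emptyset$, together with the inessentiality of $\bigcup_{i\le n}f^i(U)$, implies that $\mathrm{Fill}\bigl(\bigcup_{i\le n}F^i(\tilde U)\bigr)$ is a bounded set disjoint from its integer translates. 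Consequently, any point $z\in\tilde U$ satisfies $|\mathrm{pr}_1(F^i(z)-z)|<1+\operatorname{diam}$ for $i\le n$, with no full turn. The precise statement we want is the following: for $z\in\tilde U\cap\pi^{-1}(A)$, the points $F^{i}(z)$ with $0\le i\le n$ stay within the same fundamental lift of the disk chain $\tilde U\cup\dots\cup F^n(\tilde U)$, so their total horizontal displacement is less than $1$.

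\textbf{Step 3: contradiction via accessible prime ends and crosscuts.} We take an accessible point $p\in\mathcal X\cap U$ (accessible points are dense, and $U$ meets $\mathcal X$ in a set with nonempty relative interior in the sense of $W$ being close). We use a ray in $W$ landing at $p$, corresponding to a prime end $e$. Iterating $n$ times, the rays $f^i(r)$ land at $f^i(p)$, and their tails lie in $f^i(U)$. In the universal cover these lifts all lie in the inessential union, so in $\overline{\mathbb H}$ the lifted prime end satisfies $G^n(\tilde e)-\tilde e\in(-1,1)$. The danger is that $G$ could wind through a crosscut, but this is excluded because the tails of the rays remain in a simply connected lifted region. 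Iterating along the orbit is impossible directly, so instead we use the following bound: for circle homeomorphisms, if one point $\tilde e$ has $|G^n(\tilde e)-\tilde e|<1$, then $|\rho|\le 1/n$. This holds since otherwise $G^n(x)-x>1$ for all $x$, by the standard fact that $G^n-\mathrm{id}-n\rho$ takes values in a band whose width, combined with the integer-crossing argument, forces $\lfloor n\rho\rfloor \le G^n(x)-x$. Hence $\rho^+_{\mathcal X}(F)\in[-1/n,1/n]$.

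\textbf{Main obstacle.} The hard step is making Step 3 rigorous: we need to guarantee that an accessible prime end is reachable by a ray whose first $n$ images stay inside $f^i(U)$, and that the lifts are chosen consistently with $F(\tilde U)\cap\tilde U\ne\emptyset$ across all $i$. This is where the nonwandering or area hypothesis is needed, through the interior intersection in $A$, and it must be handled via crosscuts inside $U\cap A$.
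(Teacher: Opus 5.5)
There is a genuine gap, located exactly at what you call the main obstacle. The claim in Step~2 that for $z\in\tilde U\cap\pi^{-1}(A)$ the points $F^i(z)$, $0\le i\le n$, stay in one fundamental domain of $\pi^{-1}(A)$ does not follow from your hypotheses. Neither does the Step~3 consequence that the lifted prime end satisfies $G^n(\tilde e)-\tilde e\in(-1,1)$.

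Inessentiality of $\bigcup_{i\le n}f^i(U)$ together with $F(\tilde U)\cap\tilde U\neq\emptyset$ only says that the connected set $\bigcup_{i\le n}F^i(\tilde U)$ is disjoint from its integer translates. That set can still cross several fundamental domains of $\pi^{-1}(A)$ bounded by consecutive lifts of a ray landing on $\mathcal X$. Moreover, a Euclidean displacement bound in $\mathbb R^2$ does not transfer to the $\mathbb H$-coordinates of prime ends.

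Remark~\ref{remarkcontraejemplo} is a counterexample to precisely this step. There $U\cup f(U)$ is inessential and $F(\tilde U)\cap\tilde U\neq\emptyset$. Yet every component of $\tilde U\cap\pi^{-1}(A)$ is translated by a positive number of fundamental domains, and $\rho_{\mathcal X}(F)=2$. Your Steps~1--3 never use non-wandering or area preservation in an operative way; the recurrent points chosen in Step~2 play no role afterwards. So your argument would also ``prove'' the bound in that wandering example.

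Separately, choosing a ray inside $W=\operatorname{Int}(U\cap f(U))\cap A$ that lands on $\mathcal X$ is unjustified. Nothing forces $U\cap f(U)$ to accumulate on $\mathcal X$.

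The missing idea is to control the maximal cross-sections of $U$ in $A$ rather than individual points or rays of $U$.
\begin{enumerate}
\item Let $\mathcal W_U$ be the union of the closures of the maximal cross-sections cut out by $\partial U$ in $A$.
\item Take an ends-connector $K$ disjoint from $\operatorname{Fill}\bigl(\bigcup_{i\le n}f^i(U)\bigr)$ that does not separate maximal crosscuts (Proposition~\ref{lematecnicoseparacionpancitas}, Corollary~\ref{corodisjointK}). Then each $F^i(\widetilde{\mathcal W}_U)$ lies in a single translate of the region between consecutive lifts of $K\cap A$.
\item Proposition~\ref{proppancitasincluidas} shows that if $F(\widetilde{\mathcal W}_U)\cap\widetilde{\mathcal W}_U=\emptyset$, then all maximal crosscuts are pushed strictly inside, or all strictly outside. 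Hence $f(\mathcal W_U\cap A)\subset\operatorname{Int}(\mathcal W_U\cap A)$ or the reverse inclusion holds. This is the one place where non-wandering or area preservation enters, and it is what excludes the configuration of Remark~\ref{remarkcontraejemplo}.
\item Consequently the sets $F^i(\widetilde{\mathcal W}_U)$, $0\le i\le n$, all lie in the same translate. For a ray $r$ chosen close to $K$, some component $\tilde V$ of $\tilde U\cap\tilde A_r$ satisfies $F(\tilde V)\subseteq\tilde A_r$.
\item Proposition~\ref{primeendsrotationproposition} then finishes. The boundary arc of $\tilde V$ in $\partial\mathbb H$ and its $G^n$-image lie in one fundamental interval. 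The circle-map fact at the end of your Step~3 is correct: a point with $|G^n(x)-x|\le 1$ forces $|\rho(G)|\le 1/n$. This gives $\rho^+_{\mathcal X}(F)\in[-1/n,1/n]$.
\end{enumerate}
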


The main ingredient in the proof of this result is to show that every lifted connected component of $U\cap A$ satisfies the property that its first $n$ iterates remain within the same \emph{fundamental domain} of $A$. To achieve this, we establish the main technical result of this article, Proposition~\ref{proppancitasincluidas} in Subsection~\ref{subsec.maximalcross}. In fact, we show that it suffices to study the behavior of the \emph{maximal} connected components of $U\cap A$.

A crucial remark here (see Remark~\ref{remarkcontraejemplo}) is that this type of result fails without the non-wandering or area-preserving hypotheses. The mechanism behind this failure also shows that our main theorem cannot be obtained as a direct corollary of Theorem~A in \cite{passeggi2023weak} by means of a naive perturbative argument.

The section concludes with two corollaries of Theorem~\ref{primeendsrotationthmconservative} concerning $f$-invariant circloids. In particular, we prove that every $f$-invariant circloid for a non-wandering or area-preserving map has a trivial rotation set. This generalizes the corresponding result for area-preserving maps established in Section~2.7.2 of \cite{Conejeros2019Applications} as a consequence of the main theorem of \cite{FRANKSLECALVEZ2003} and Theorem~2.8 of \cite{KoropeckiRealizing}.

\subsection{Constraints on Prime-Ends Rotation Numbers}

A fundamental result in \cite{passeggi2023weak} asserts, roughly speaking, that if a fixed point lies ``above'' and sufficiently close to a bounded annular continuum, then the upper prime-ends rotation number of the continuum is close to the rotation number of the fixed point. More precisely, one assumes the existence of a neighborhood of the fixed point such that the union of its forward iterates is inessential and intersects the continuum. Theorem~\ref{primeendsrotationthmconservative} considerably extends this result.

As mentioned above, it is not always possible to derive the results of \cite{passeggi2023weak} by means of a purely perturbative argument. Nevertheless, under suitable additional hypotheses, such an approach can indeed be carried out. We begin by considering a situation in which the desired conclusion can be obtained by adapting the strategy of \cite{passeggi2023weak}. However, as illustrated in Figure~\ref{ejemplopancitas}, it is not always possible to perturb the system without altering the invariant continuum.

The following proposition deals with the case in which a closed disk plays the role of a neighborhood of a fixed point near an $f$-invariant annular continuum.

\begin{prop}
	\label{primeendsrotationproposition}
	Let $f\in\mathrm{Homeo}_0(\mathbb{A})$, let $A\subseteq\mathbb{A}$ be an $f$-invariant open essential annulus bounded below and unbounded above, and let $U$ be a closed disk whose interior intersects $\partial A$ such that
	\begin{itemize}
		\item $f(U)\cap U\cap A\neq\emptyset$,
		\item $\displaystyle\bigcup_{i=0}^n f^i(U)$ is inessential for some $n\in\mathbb{N}$.
	\end{itemize}
	Consider a ray $r\subset A$ from $+\infty$ landing at an accessible point $r_0\in\partial A$ and disjoint from $\bigcup_{i=0}^n f^i(U)$. Choose a lift $\tilde U$ of $U$ meeting the region $\tilde A_r$ between two consecutive lifts $\tilde r$ and $\tilde r+(1,0)$ of $r$. Let $F$ be the lift of $f$ satisfying $F(\tilde U)\cap\tilde U\neq\emptyset$, and let $\tilde V$ be a connected component of $\tilde U\cap\tilde A_r$.
	
	If $F(\tilde V)\subseteq\tilde A_r$, then the upper prime-ends rotation number of $A$ satisfies
	\[
	\rho_A^+(F)\in[-1/n,\,1/n].
	\]
\end{prop}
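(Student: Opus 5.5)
The plan is to transport the problem to the prime-ends compactification and reduce it to a displacement estimate for a lift of a circle homeomorphism. We use the fact that if $H$ is a lift of a circle homeomorphism and $|H(x)-x|\le 1$ for some $x\in\mathbb{R}$, then the rotation number of $H$ lies in $[-1,1]$. We apply this to $H=G^n$, where $G=\phi^{-1}\circ F\circ\phi$ is the lift of $g=\varphi^{-1}\circ f\circ\varphi$ attached to $F$, as in the subsection on prime ends. Since $\rho_{\partial\mathbb{D}}(G^n)=n\,\rho^+_A(F)$, it suffices to find a point $\xi\in\partial\mathbb{H}$ with $|\mathrm{pr}_1(G^n(\xi)-\xi)|\le 1$. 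That gives $\rho_A^+(F)\in[-1/n,1/n]$.

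\emph{Step 1 (the fundamental domain in $\overline{\mathbb{H}}$).} The ray $r$ lands at the accessible point $r_0$. Hence $\ell=\varphi^{-1}(r)$ converges to an accessible prime end, and a lift $\tilde\ell=\phi^{-1}(\tilde r)$ is a curve in $\mathbb{H}$ landing at some $x_0\in\partial\mathbb{H}$. By the equivariance $\phi(x+m,y)=\phi(x,y)+(m,0)$, the region $\phi^{-1}(\tilde A_r)$ is the strip $S$ bounded by $\tilde\ell$ and $\tilde\ell+1$. Its closure meets $\partial\mathbb{H}$ exactly in $[x_0,x_0+1]$. Any connected subset of $\pi^{-1}(A)$ disjoint from all translates of $\tilde r$ therefore lies in a single translate $\tilde A_r+(k,0)$.

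\emph{Step 2 (iterates stay in the same domain).} The set $\bigcup_{i=0}^nf^i(U)$ is disjoint from $r$, so each $F^i(\tilde V)$ is a connected subset of $\pi^{-1}(A)$ avoiding every lift of $r$. By Step 1, $F^i(\tilde V)\subseteq\tilde A_r+(k_i,0)$ for some integer $k_i$, with $k_0=0$ and, by hypothesis, $k_1=0$. I would prove $k_i=0$ for all $i\le n$ by induction. Since $f(U)\cap U\cap A\ne\emptyset$, the set $\tilde V\cup F(\tilde V)$ is a connected set in $\tilde A_r$. Now $F^{i}(\tilde V)$ and $F^{i+1}(\tilde V)$ are the images under $F^{i}$ of $\tilde V$ and $F(\tilde V)$, which lie in the same domain $\tilde A_r$. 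They are also both contained in the connected inessential set $\bigcup_{j\le n}F^j(\tilde U)$, which avoids all lifts of $\tilde r$. Combining these two facts should force them into the same domain, so $k_{i+1}=k_i$.

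This step is the main obstacle. The lifted union of the iterates of $\tilde U$ may leave $\pi^{-1}(A)$ across $\partial A$, where the rays no longer separate. So I would argue through the $F^i$-images of the strip instead. $F^i(\tilde A_r)$ is the region between $F^i(\tilde r)$ and $F^i(\tilde r)+1$, and both of these curves avoid the iterates of $\tilde V$ involved. I would then compare the curves $F^i(\tilde r)$ with $\tilde r$ using the inessentiality of the union, to show that no translate of $\tilde r$ can separate consecutive iterates inside $A$.

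\emph{Step 3 (a boundary point).} The interior of $U$ meets $\partial A$, so I would choose $\tilde V$, or replace it by a component of $\tilde U\cap\tilde A_r$ with this property, so that $\overline{\tilde V}$ meets $\pi^{-1}(\partial A)$. Then $\phi^{-1}(\tilde V)$ accumulates at some point $\xi\in\partial\mathbb{H}\cap[x_0,x_0+1]$. The extension of $G$ to $\overline{\mathbb{H}}$ is continuous, and the sets $\phi^{-1}(F^i(\tilde V))=G^i(\phi^{-1}(\tilde V))$ lie in $S$ for $i\le n$. Hence $G^n(\xi)\in\overline{S}\cap\partial\mathbb{H}=[x_0,x_0+1]$, so $|G^n(\xi)-\xi|\le 1$, and the circle-homeomorphism fact from the first paragraph yields the claim.
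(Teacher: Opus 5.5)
Your route is the paper's route. You conjugate to $G$ on $\overline{\mathbb H}$ via $\Phi$, work in the strip between $\tilde\ell$ and $\tilde\ell+(1,0)$, take $\xi\in\overline{\tilde V}\cap\partial\mathbb H\subset[x_0,x_0+1]$, and bound $\rho(G^n)$ by a displacement estimate. Your intermediate-value fact about circle lifts is correct, and it is equivalent to the paper's observation that $G^n([\tilde\ell_0,\tilde\ell_0+1])$ meets $[\tilde\ell_0,\tilde\ell_0+1]$. In Step 3 you should not replace $\tilde V$, since the hypothesis $F(\tilde V)\subseteq\tilde A_r$ concerns the given component. You also do not need to: $U$ is a continuum not contained in the open set $A$, so by boundary bumping every component of $U\cap A$ already accumulates on $\partial A$.

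The genuine gap is Step 2, which is exactly what the estimate needs for $n\ge 2$; for $n=1$ it is just the hypothesis. Your sentence ``since $f(U)\cap U\cap A\neq\emptyset$, $\tilde V\cup F(\tilde V)$ is connected'' is unjustified. That hypothesis only produces a point of $F(\tilde V_2)\cap\tilde V_1$ for some components $\tilde V_1,\tilde V_2$ of $\tilde U\cap\pi^{-1}(A)$, and these need not equal $\tilde V$. Moreover, $\tilde V$ and $F(\tilde V)$ are relatively closed in $\pi^{-1}(A)$, so their union is connected only if $F(\tilde V)\cap\tilde V\neq\emptyset$, and that is not assumed. Your fallback through the connected set $\bigcup_{j\le n}F^j(\tilde U)$ fails for the reason you yourself give. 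The paper notes that even $\tilde U$ can meet two different strips, passing between them through $\mathbb A\setminus A$. ``Comparing $F^i(\tilde r)$ with $\tilde r$'' is not yet an argument. As written, your proposal proves only the case $n=1$.

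The paper disposes of this step in one sentence: the lift containing $\tilde V\cup F(\tilde V)$ lies in $\tilde A_r$, ``hence'' all iterates do. Made precise, this is a chaining argument. If $\tilde V\cup F(\tilde V)$ is connected, then for $0\le i<n$ the set $F^i(\tilde V)\cup F^{i+1}(\tilde V)$ is a connected subset of $\pi^{-1}(A)$ missing every lift of $r$, so it lies in one strip, and induction gives $\bigcup_{i\le n}F^i(\tilde V)\subseteq\tilde A_r$. Your unproved connectedness claim, if true, would close your induction in the same way.

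To finish, you need one of two inputs. Either assume $F(\tilde V)\cap\tilde V\neq\emptyset$, or assume $F^i(\tilde V)\subseteq\tilde A_r$ for all $i\le n$ directly. The second is what is actually available where the proposition is used. In the proof of Theorem~\ref{primeendsrotationthmconservative}, Proposition~\ref{proppancitasincluidas} gives $F(\widetilde{\mathcal W}_U)\cap\widetilde{\mathcal W}_U\neq\emptyset$. Hence consecutive sets $F^i(\widetilde{\mathcal W}_U)$ intersect, and each lies in a single translate of $\widetilde A_{\widetilde K}$, so all of them lie in the same one. With that input, your Steps 1 and 3 complete the proof.
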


\bigskip

To illustrate the idea behind Proposition~\ref{primeendsrotationproposition}, consider the following simplified setting. Let $\pi$ denote the covering projection of our annular model, and assume that $\partial A=\pi(\mathbb{R}\times\{0\})$ is a simple closed curve with rotation number $\rho$ for a lift $F$ as in the proposition. Let $V=\pi(\widetilde V)$ be the connected component given by the proposition, assume that $n=1$, and suppose that
\[
F(\widetilde V)\cap\widetilde V\neq\emptyset.
\]

Now consider the set $F(\widetilde V)\cap(\mathbb{R}\times\{0\})$. Since $F(\widetilde V)$ intersects $\widetilde V$, this set is contained in an interval of length less than one and meets $[0,1]\times\{0\}$. Iterating this observation, the image of $[0,1]\times\{0\}$ can drift by at most one fundamental domain at each iterate. Consequently,
\[
\frac{1}{i}\,\mathrm{pr}_1\!\left(F^i([0,1]\times\{0\})\right)
\subset
\left[-1,1+\frac1i\right],
\]
which implies that $\rho\in[-1,1]$.

In general, however, one cannot guarantee the existence of a connected set $V$ satisfying the hypotheses of Proposition~\ref{primeendsrotationproposition}. If one only assumes that $U$ intersects one of its iterates, without imposing any condition on the connected components of $U\cap A$, then the rotation of the continuum can no longer be controlled. This is precisely why Proposition~\ref{primeendsrotationproposition} requires the component $V$ to remain in the same region $\widetilde A_r$. The following remark presents an example showing that the conclusion of the proposition fails in this more general setting.

\bigskip

\begin{rmk}
\label{remarkcontraejemplo}	

Let us introduce the following example. Consider
\[
A=\pi\bigl(\mathbb{R}\times\mathbb{R}^{>0}\bigr), \qquad
\mathcal{X}=\partial A.
\]
Let $U$ be a disk that ``winds around'' $\mathcal{X}$ three times, generating three connected components of $U\cap A$, one of which is contained in the filling of another. Let $f$ be the identity composed with a perturbation supported on a closed disk, such that $U\cup f(U)$ is inessential and the image under $f$ of the ``larger'' component intersects the ``smaller'' one, as shown in Figure~\ref{ejemplopancitas} in the universal covering.

\begin{figure}[h!]
	
	\centering
	\includegraphics[width=360 pt]{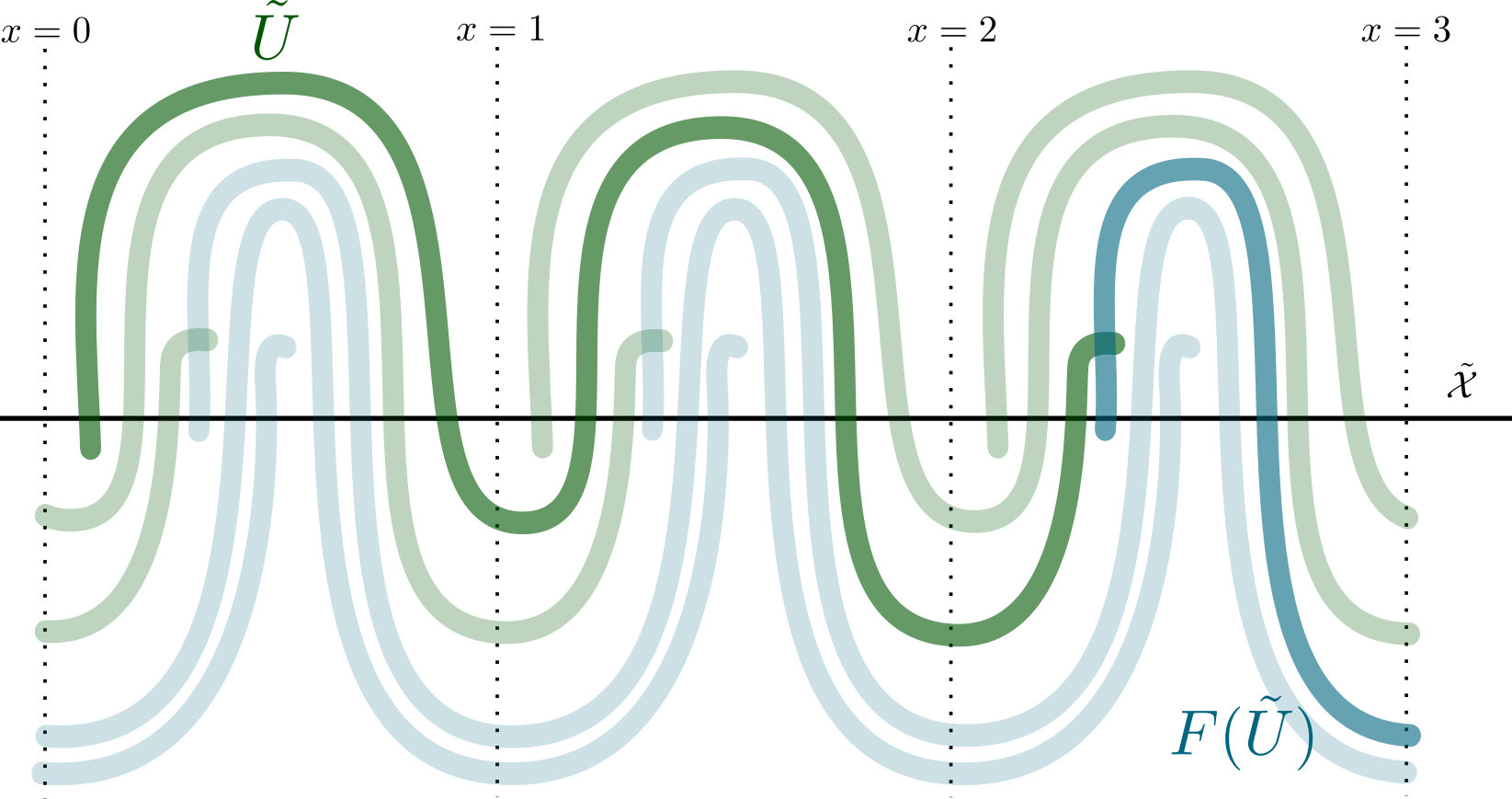}
	
	\caption{\small{The set $\widetilde{U}$ is shown in dark green, while the other lifts of $U$ are shown in light green. The image $F(\widetilde{U})$, shown in dark blue, intersects $\widetilde{U}$; however, the rotation number of $\mathcal{X}$ is equal to $2$. The other lifts of $f(U)$ are shown in light blue.}}
	\label{ejemplopancitas}
\end{figure}

Choose the lift $F$ so that, for every lift $\widetilde{U}$ of $U$, one has
$$
F(\widetilde{U})\cap\widetilde{U}\neq\emptyset.
$$
Then $F$ must have a fixed point on $\mathcal{X}$ with rotation number equal to $2$. Indeed, the larger component of $U\cap A$ determines a disk in its complement, namely its filling. The segment $I$ of $\mathcal{X}$ contained in the boundary of this disk is mapped by $f$ strictly inside itself, forcing the existence of a fixed point $x$ with $\rho_x(F)=2$.

In particular, this example shows that we are not capturing a certain notion of ``rotation'' of the disk $U$ around $\mathcal{X}$. Indeed, it is possible that $U\cup f(U)$ is inessential while, for some lift $F$, we simultaneously have
$$
F(\widetilde{U})\cap\widetilde{U}\neq\emptyset
\quad\text{and}\quad
\rho_{\mathcal{X}}(F)=2.
$$
Moreover, one can construct examples with ``more laps'', obtaining $\rho_{\mathcal{X}}(F)=k$ for arbitrarily large values of $k$. We also make the following remarks:

\begin{itemize}
	\item The hypothesis that there exists a connected component $V$ such that $F(\widetilde{V})$ remains in the same fundamental domain of the annulus is necessary in Proposition~\ref{primeendsrotationproposition}. In this example, every connected component is translated by a positive amount. Moreover, the figure shows that if one attempts to perturb $U$ in order to create a fixed point, then either $U\cup f(U)$ becomes essential or the perturbation must cross $\mathcal{X}$.
	
	\item As mentioned above, one can find disks in $A$ that are mapped strictly inside themselves by $f$, forcing the existence of wandering points. This shows that the non-wandering hypothesis in our main theorem cannot be removed.
\end{itemize}

\end{rmk}

\medskip

The proof of Proposition~\ref{primeendsrotationproposition} now follows.

\begin{proof}
	Denote by $\pi:\mathbb{R}^2\to\mathbb{A}$ the covering projection of the annulus, and by $\pi_A:\pi^{-1}(A)\to A$ its restriction. We first fix some notation and recall the prime-ends constructions described in the Preliminaries.
	
	\begin{itemize}
		\item By adding the end $+\infty$ to $A$, we identify $A\cup\{+\infty\}$ with an open disk and consider its prime-ends compactification. We then consider the prime-ends compactification of $A$. More precisely, there exists a homeomorphism $\varphi:\mathbb{D}\to A\cup\{+\infty\}$ satisfying $\varphi(0)=+\infty$, such that $\varphi^{-1}\circ f\circ\varphi$ extends continuously to a homeomorphism $g:\overline{\mathbb{D}}\to\overline{\mathbb{D}}$. Let $\ell=\varphi^{-1}(r)$ be the ray from $0$ landing at a point $\ell_0\in\partial\mathbb{D}$. We also write $\mathbb{D}^*=\mathbb{D}\setminus\{(0,0)\}$ so that $\varphi:\mathbb{D}^*\to A$.
		
		\item Let $\mathrm{pr}:\mathbb{H}\to\mathbb{D}^*$ be the universal covering projection. Choose lifts $\tilde r$ and $\tilde\ell$ of the rays $r$ and $\ell$, respectively. Then there exists a unique lift $\Phi:\mathbb{H}\to\pi^{-1}(A)$ of $\varphi$ satisfying $\Phi(\tilde\ell)=\tilde r$. Recall that, once $\Phi$ and the lift $F$ of $f$ are fixed, the lift $G$ of $g$ is uniquely determined so as to preserve the rotation information. More precisely,
		\[
		\Phi\circ G=F\circ\Phi.
		\]
		From now on, we identify every subset of $A$ with its image under $\varphi^{-1}$ and, similarly, every lift in $\pi^{-1}(A)$ with its image under $\Phi^{-1}$. In particular, we continue to denote by $V$ the corresponding subset of $\mathbb{D}$ and by $\widetilde V$ its lift in $\mathbb{H}$ between $\tilde\ell$ and $\tilde\ell+(1,0)$.
	\end{itemize}
	
	We therefore obtain the following commutative diagram
	
	\[
	\begin{tikzcd}
		\arrow[loop left, looseness=10, "G"] \mathbb{H}
		\arrow[r, "\Phi"]
		\arrow[d, "\mathrm{pr}"']
		&
		\pi^{-1}(A)
		\arrow[d, "\pi_A"]
		\arrow[loop right, looseness=4, "F"]
		\\
		\mathbb{D}^*
		\arrow[r, "\varphi"]
		\arrow[loop left, looseness=8, "g"]
		&
		A
		\arrow[loop right, looseness=12, "f"]
	\end{tikzcd}
	\]
	
	Since $\bigcup_{i=0}^n f^i(U)$ is inessential and disjoint from $r$, the lift containing $\widetilde V\cup F(\widetilde V)$ lies in $\widetilde A_r$. Hence $\bigcup_{i=0}^n F^i(\widetilde V)\subseteq \widetilde A_r$.
	Equivalently,
	\[
	\bigcup_{i=0}^n G^i(\widetilde V)
	\]
	is contained in the region of $\mathbb{H}$ between $\tilde\ell$ and $\tilde\ell+(1,0)$.
	
	Recall that the restriction of $g$ to $\partial\mathbb{D}$ defines the upper prime-ends rotation number $\rho_{\partial\mathbb{D}}(G)=\rho_A^+(F)$. It is well known (see the Preliminaries) that $\rho_A^+(F)\in\rho_A(F)$.

\begin{figure}[h!]
	
	\centering
	\hspace*{-15 pt}
	\includegraphics[width=500 pt]{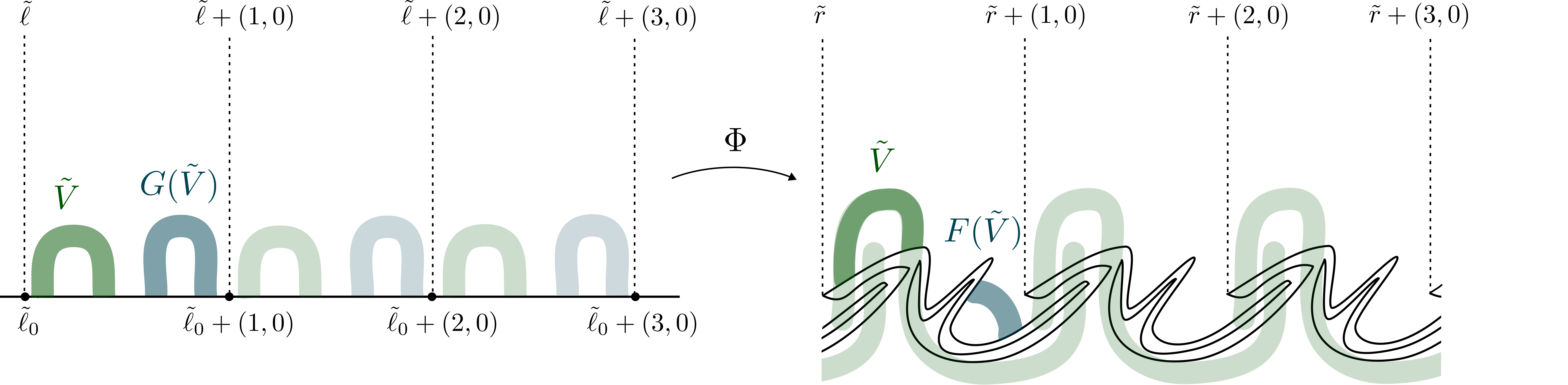}
	
	\caption{\small{Illustration of the behaviour in the universal cover for $n=1$}}
	\label{}
\end{figure}

If $\widetilde{\ell}_0=\Phi^{-1}(\tilde r_0)$ denotes the endpoint of $\widetilde{\ell}$, then $\mathcal{X}_V=\overline{\widetilde V}\cap\partial\mathbb{H}$ is contained in the interval $[\widetilde{\ell}_0,\widetilde{\ell}_0+(1,0)]\times\{0\}$. Since $\bigcup_{i=0}^n G^i(\widetilde V)$ remains between $\widetilde{\ell}$ and $\widetilde{\ell}+(1,0)$, we obtain
\[
G^n(\mathcal{X}_V)\subseteq[\widetilde{\ell}_0,\widetilde{\ell}_0+(1,0)]\times\{0\}.
\]
Hence
\[
G^n\!\left([\widetilde{\ell}_0,\widetilde{\ell}_0+(1,0)]\right)
\cap
[\widetilde{\ell}_0,\widetilde{\ell}_0+(1,0)]\times\{0\}
\neq\emptyset,
\]
and therefore $|\rho_{\partial\mathbb{D}}(G^n)|\le1$. Since
\[
G^n\!\left([\widetilde{\ell}_0,\widetilde{\ell}_0+(1,0)] \times\{0\} \right)
\subset
[\widetilde{\ell}_0-(n+1,0),\,\widetilde{\ell}_0+(n+1,0)]\times\{0\},
\]
we conclude that
\[
|\rho_{\partial\mathbb{D}}(G)|\le\frac1n.
\]

\end{proof}

\subsection{The set $\mathcal{W}$ of maximal cross-section}
\label{subsec.maximalcross}

As observed in the previous example, the map $f$ has a wandering point, since one can find a disk that is mapped strictly inside itself by $f$. We now adopt a more precise approach to understand the behavior of these disks---formally called cross-sections---which will allow us to prove that, for area-preserving or non-wandering maps, Proposition~\ref{primeendsrotationproposition} can indeed be applied to establish the main theorem of this section.

\medskip

Let $A$ be an $f$-invariant subannulus bounded below, and let $\mathcal{X}=\partial A$ denote its boundary. Let $U$ be a closed disk such that $\operatorname{Int}(U)$ intersects $\mathcal{X}$. The set $\partial U\cap A$ consists of a collection of open arcs, called \emph{crosscuts of $U$ in $A$}, which are pairwise disjoint in their interiors and land at two different points of $\partial A$.

Each crosscut determines a bounded disk in $A$, called a \emph{cross-section}. This allows us to define a partial order on the family of crosscuts by inclusion of their corresponding cross-sections. We denote by $\{C_i\}_{i\in I}$ the family of maximal crosscuts with respect to this order, and by $\{W_i\}_{i\in I}$ the corresponding family of cross-sections. We denote by
\[
\mathcal{W}_U=\bigcup_{i\in I}\overline{W_i}
\]
the union of the maximal cross-sections.

We record the following elementary observations.

\begin{itemize}
	
	\item Each cross-section satisfies $\partial W_i=\partial_{\mathcal X}W_i\cup C_i$,	where $\partial_{\mathcal X}W_i=\partial W_i\cap\mathcal X$.
	
	\item $W_i\cap W_j=\emptyset$ whenever $i\neq j$.
	
	\item The set $A\setminus\mathcal W_U$ is an open annulus and contains no point of $U$.
	
	\item A lift $\widetilde U$ of $U$ naturally determines the corresponding lifts $\{\widetilde C_i\}_{i\in I}$, $\{\widetilde W_i\}_{i\in I}$, and $\widetilde{\mathcal W}_U$.
	
\end{itemize}

\medskip

The aim of this subsection is to show that, whenever Proposition~\ref{primeendsrotationproposition} cannot be applied, one has
\[
f(\mathcal W_U)\subsetneq\mathcal W_U
\quad\text{or}\quad
\mathcal W_U\subsetneq f(\mathcal W_U),
\]
neither of which is possible for non-wandering or area-preserving maps.

We need to construct a suitable fundamental domain containing the maximal cross-sections. In many cases, it is possible to consider a vertical line from $+\infty$ to $-\infty$, disjoint from $U$, such that this line does not separate the maximal crosscuts in $A\setminus\mathcal{W}_U$. For instance, when $\mathcal{X}$ is a simple closed curve, one can always find such a vertical line. This allows us to localize the lift $\widetilde{\mathcal{W}}_U$ between two lifted rays determined by the vertical line.

However, in the general case, such a vertical line may not exist, as the boundary $\mathcal{X}$ can be arbitrarily complicated. An example of this behavior is given in Section~\ref{sec.toplem}. Nevertheless, it is possible to consider an ends-connector with analogous properties (see Section~\ref{s.pre} for the relevant definitions).

Given an ends-connector $K$, we say that $K$ does not separate two maximal crosscuts in $A$ if they can be joined by an arc contained in $(A\setminus\mathcal W_U)\setminus K$. The following proposition formalizes this construction by providing a continuum that plays the role of the vertical line from the simpler setting. Its proof is postponed to the final section. 

\begin{prop}
	\label{lematecnicoseparacionpancitas}
	Let $A$ be a subannulus bounded below, and let $U$ be a closed disk whose interior intersects $\mathcal{X}=\partial A$. Then there exists an ends-connector $K$ satisfying the following properties:
	\begin{itemize}
		\item $K$ is disjoint from $U$;
		\item $K$ does not separate maximal crosscuts of $U\cap A$ in $A\setminus\mathcal{W}_U$;
		\item $K\cap A$ is connected.
	\end{itemize}
\end{prop}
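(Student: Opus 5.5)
We construct $K$ by hand from a vertical line, modifying it only inside the closed set $\overline{\mathcal W_U}\cup(\text{lower complementary region})$ so that it avoids $U$ and does not cut the annulus $A\setminus\mathcal W_U$ in a bad way.

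\emph{Step 1: the lower part.} Let $\mathcal U^-$ be the component of $\mathbb{A}\setminus\overline{A}$ (or of $\mathbb{A}\setminus A$) containing $-\infty$. We will take $K\setminus A$ to be a connected set inside $\mathbb{A}\setminus A$ reaching $-\infty$ and $\mathcal X$. Concretely, we let $K^-$ be the closure of $\mathbb{A}\setminus A$ minus the filled part of $U$ below $\mathcal X$. More precisely, we replace $U$ by $\operatorname{Fill}(U\cup\mathcal W_U)$ relative to $A$, which is inessential because $U$ is a disk and the $W_i$ are bounded disks attached to $U$. Then we choose a point $p\in\mathcal X$ not in this filled set; such a point exists since $\mathcal X$ is essential and the filled set is inessential. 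We join $p$ to $-\infty$ inside $(\mathbb{A}\setminus A)\cup\{p\}$ avoiding $U$. If $\mathbb{A}\setminus A$ is not path-connected near $p$, we instead take $K^-$ to be the whole complement of $A$ with a small neighbourhood of $U$ removed, arranged to stay connected. This is where the ends-connector, rather than a line, is needed.

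\emph{Step 2: the upper part.} In the open annulus $A':=A\setminus\mathcal W_U$, which is an open annulus disjoint from $U$, each maximal crosscut $C_i$ is part of the lower boundary of $A'$. We choose an arc $\gamma\subset A'$ from a point accessible from $A'$ near $p$ (a ray landing at $p$, chosen outside $\overline{\mathcal W_U}$) up to $+\infty$. Since $A'$ is an open annulus, $A'\setminus\gamma$ is a simply connected open set. Hence all crosscuts $C_i$, lying on the lower boundary of $A'$, are accessible from the single component $A'\setminus\gamma$ and can be joined pairwise by arcs there. We set $K=K^-\cup\{p\}\cup\gamma$; then $K\cap A=\gamma$ is connected.

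\emph{Step 3: verification.} We check that $K\cap U=\emptyset$, using $p\notin\operatorname{Fill}$, $\gamma\subset A'$, and $K^-\cap U=\emptyset$. We then check that $K$ accumulates on both ends and that $\mathbb{A}\setminus K$ is an open disk containing a vertical line. The complement is connected and simply connected because $K$ is a connected set joining the two ends. The vertical line can be obtained by pushing a parallel copy of $\gamma$ and of a path in $K^-$ slightly off $K$.

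The main obstacle is Step 1 and the accessibility of $p$ from $A'$. The boundary $\mathcal X$ can be wild, for instance indecomposable, so there may be no arc from $-\infty$ landing at $p$. That is exactly why $K$ must be allowed to be a continuum-type connector instead of a line. To handle this, we would take $p$ accessible from $A$ via the prime-ends density of accessible points, chosen in a complementary interval of the finitely many prime-end intervals meeting $U$; by compactness only finitely many crosscuts have diameter above any threshold. Then we would take $K^-$ to be $\overline{\mathcal U^-}$ union the impression of that prime end, checking that it misses $U$ and keeps $\mathbb{A}\setminus K$ a disk.
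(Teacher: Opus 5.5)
\textbf{There is a genuine gap.} Your construction does work in the tame case. Suppose some $p\in\mathcal X$ is the landing point both of a ray $\gamma\subset A\setminus\mathcal W_U$ from $+\infty$ and of an arc from $-\infty$ inside $(\mathbb A\setminus A)\setminus U$. Then $K$ is a line, $\mathbb A\setminus K$ is a disk containing a parallel vertical line, and your observation that a single ray never disconnects the annulus $A\setminus\mathcal W_U$ gives the non-separation. The proposition, however, is about the case where no such $p$ exists, and the proposal does not handle it.

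First, your selection criterion $p\notin\operatorname{Fill}(U\cup\mathcal W_U)$ does not produce the lower arc. Take $\mathcal X=S^1\times\{0\}$, $A=S^1\times(0,+\infty)$, and let $U$ be a thickened ``U'' whose two legs cross $\mathcal X$ and whose bar lies below $\mathcal X$. A point $p\in\mathcal X$ between the legs lies outside the fill and is accessible from $A\setminus\mathcal W_U$. Yet its component in $(\mathbb A\setminus A)\setminus U$ is bounded by the legs, the bar and $\mathcal X$, so it cannot reach $-\infty$.

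The correct requirement is the escape condition from the proof of Lemma~\ref{lemma ends connector}. For any two rays $r_i,r_j$ landing on maximal crosscuts, one of the two regions $D_{i,j}$ of $(\hat A\setminus\mathcal W_U)\setminus(r_i\cup r_j)$ is covered by $U$: every path from it to $-\infty$ avoiding $U$ must cross $r_i\cup r_j$. So $K$ has to pass through the complement of $\bigcup_{i,j}D_{i,j}$.

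Second, even with this requirement, that escape region may contain no accessible point of $\mathcal X$ at all. There can be infinitely many maximal crosscuts accumulating from both sides on a single prime end, possibly non-accessible, whose impression is a nondegenerate subcontinuum of $\mathcal X$ (Figure~\ref{fig:endsconnector}). Then no ray lands where it should, so $K\cap A$ cannot be an arc. Density of accessible prime ends does not help, since the gap is a single prime end. Likewise ``finitely many prime-end intervals'' is not available, since there may be infinitely many intervals.

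Your fallbacks do not repair this. The sets $\overline{\mathcal U^-}$ together with an impression, or $(\mathbb A\setminus A)\setminus N$ with $N$ a neighbourhood of $U$, are essential neighbourhoods of $-\infty$. Every vertical line meets them, so $\mathbb A\setminus K$ contains no vertical line and $K$ is not an ends-connector; such a $K$ also generally meets $U$. Your claim in Step~3 that a connected set joining both ends has simply connected complement is also false in general.

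What is needed is the paper's ``thick ray''
\[
K^+=(\hat A\setminus\operatorname{Int}(\mathcal W_U))\setminus\bigcup_{i,j}D_{i,j},
\]
which is a decreasing intersection of closed disks joining $\mathcal X$ to $+\infty$ and may accumulate on a continuum in $\mathcal X$. The paper then proceeds as follows:
\begin{enumerate}
\item $K^+$ is inessential because it misses an auxiliary vertical line that enters $U$ from a covered region $D_{i,j}$ and exits below $\mathcal X$.
\item $K$ is completed by the initial segment of a vertical line avoiding $U$ and that auxiliary line, taken up to its first hit of $K^+$.
\item Non-separation holds because $K$ misses every $D_{i,j}$.
\item Since $K^+$ may touch $\partial U$, the whole construction is applied to a slightly larger disk $U'\supset U$, which gives $K\cap U=\emptyset$.
\end{enumerate}
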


\medskip

For any other closed disk $V$ whose interior intersects $\mathcal{X}$, we define $\mathcal{W}_V$ analogously. In particular, we may consider the families of maximal crosscuts and cross-sections determined by either $U$ or $f(U)$ whenever their interiors intersect $\mathcal{X}$.

The following lemma provides the correspondence between the maximal crosscuts and cross-sections determined by $U$ and those determined by $f(U)$. Its proof is an immediate verification.

\begin{lemma}
	\label{lematecnicoimagenpancitas}
	It holds that
	\[
	f(\mathcal{W}_U)=\mathcal{W}_{f(U)}.
	\]
	Moreover, $f$ maps maximal crosscuts to maximal crosscuts, and maximal cross-sections to maximal cross-sections.
\end{lemma}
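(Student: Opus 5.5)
The plan is to exploit the fact that $f$ is a homeomorphism of $\mathbb{A}$ leaving $A$ invariant, and hence also its lower boundary $\mathcal{X}=\partial A$ invariant. Everything in the definition of $\mathcal{W}_U$ is built from topological data relative to the pair $(A,\mathcal{X})$, and $f$ transports all of these data to the corresponding data for $f(U)$. I will therefore proceed through the successive layers of the construction, checking at each one that $f$ carries the objects defined by $U$ onto the objects defined by $f(U)$.

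\emph{Step 1: crosscuts.} Since $f(A)=A$, we get
\[
f(\partial U\cap A)=\partial f(U)\cap A.
\]
Because $f$ is a homeomorphism, it sends the components of $\partial U\cap A$ (the open arcs) bijectively onto the components of $\partial f(U)\cap A$. Endpoints landing on $\mathcal{X}$ are sent to endpoints landing on $f(\mathcal{X})=\mathcal{X}$. Thus the crosscuts of $U$ in $A$ correspond bijectively, via $f$, to the crosscuts of $f(U)$ in $A$. Note also that $\operatorname{Int} f(U)=f(\operatorname{Int} U)$ meets $\mathcal{X}$, so $\mathcal{W}_{f(U)}$ is defined.

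\emph{Step 2: cross-sections and maximality.} A crosscut $C$ splits $A$ into two components, exactly one of which is bounded; this bounded one is the cross-section. The map $f$ sends the two components of $A\setminus C$ to the two components of $A\setminus f(C)$. Since $f$ fixes each end of $\mathbb{A}$ (being isotopic to the identity) and preserves the annulus $A$, it sends the unbounded component, which accumulates on $+\infty$, to the unbounded component. Hence $f(W_C)=W_{f(C)}$. Inclusion between cross-sections is preserved by $f$, and likewise by $f^{-1}$, so the partial order on crosscuts is carried isomorphically. It follows that maximal crosscuts go to maximal crosscuts and maximal cross-sections go to maximal cross-sections.

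\emph{Step 3: conclusion.} Taking closures, which commute with homeomorphisms, and then unions over the maximal family gives
\[
f(\mathcal{W}_U)=\mathcal{W}_{f(U)}.
\]

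The only step requiring care is the boundedness in Step 2, that is, making sure $f$ does not exchange the bounded and unbounded sides. To handle it, I would use that the unbounded side contains a neighbourhood of $+\infty$ in $A$, and that $f$ maps neighbourhoods of $+\infty$ to neighbourhoods of $+\infty$; this holds because $f$ is a homeomorphism homotopic to the identity, so it extends to the two-point compactification fixing both ends.
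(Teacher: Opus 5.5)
Your proof is correct and is exactly the routine check the paper has in mind; the paper gives no argument beyond calling it an immediate verification. Since $f(A)=A$ forces $f(\mathcal{X})=\mathcal{X}$, $f$ carries crosscuts, cross-sections, their inclusion order and closures of $U$ to those of $f(U)$. The one point you flag is even easier than you make it: any homeomorphism of $\mathbb{A}$ maps relatively compact sets to relatively compact sets, so the bounded component of $A\setminus C$ must go to the bounded component of $A\setminus f(C)$ without any appeal to the ends.
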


\medskip

Suppose that $\bigcup_{i=0}^n f^i(U)$ is inessential, and define
\[
V=\operatorname{Fill}\left(\bigcup_{i=0}^n f^i(U)\right).
\]

\begin{cor}
	\label{corodisjointK}
	One can choose an ends-connector $K$, disjoint from $V$, that does not separate the maximal crosscuts of $V\cap A$.
\end{cor}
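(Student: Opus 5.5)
The plan is to apply Proposition~\ref{lematecnicoseparacionpancitas} with the closed disk $U$ replaced by $V=\operatorname{Fill}\bigl(\bigcup_{i=0}^n f^i(U)\bigr)$. Afterwards we only need to check that the conclusions of that proposition give what the corollary asks for.

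First we check that $V$ is a legitimate input. The union $\bigcup_{i=0}^n f^i(U)$ is a finite union of closed disks. It is connected, because the hypothesis $\operatorname{Int}(U)\cap\operatorname{Int}(f(U))\neq\emptyset$ gives $f^{i}(U)\cap f^{i+1}(U)\neq\emptyset$ for every $i$. It is compact and, by assumption, inessential. Its filling $V$ is therefore a compact, connected, inessential set with connected complement in $\mathbb{A}\cup\{\pm\infty\}$ minus the ends. In other words, $V$ is a non-separating continuum contained in an open disk.

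Proposition~\ref{lematecnicoseparacionpancitas} is stated for a closed topological disk. We expect its proof to use only the following: $\partial V\cap A$ decomposes into crosscuts, the maximal cross-sections are well defined, and $\mathcal W_V$ is defined as before. This is the step we expect to be the main obstacle, because $\partial V$ need not be a Jordan curve.

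If one wants a genuine disk, we would proceed as follows.

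1. Take a closed topological disk $D$ with $V\subset D$, where $D$ is a small regular neighbourhood of $V$. We may choose $D$ so that its boundary is a Jordan curve in general position with respect to $\mathcal X$. This is possible since $V$ is a non-separating planar continuum, so it is an intersection of closed disks.
2. Choose $D$ close enough to $V$ that its maximal crosscuts in $A$ are in bijection with those of $V$. Each maximal cross-section of $V$ is then contained in exactly one maximal cross-section of $D$, and $A\setminus\mathcal W_D$ deformation retracts onto $A\setminus\mathcal W_V$ near $\mathcal{X}$.
3. Apply Proposition~\ref{lematecnicoseparacionpancitas} to $D$. This gives an ends-connector $K$ disjoint from $D$, hence disjoint from $V$, with $K\cap A$ connected, and such that $K$ does not separate the maximal crosscuts of $D$ in $A\setminus\mathcal W_D$.

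It remains to transfer the non-separation property from $D$ to $V$. Take two maximal crosscuts of $V$ and the corresponding maximal crosscuts of $D$. The latter are joined by an arc in $(A\setminus\mathcal W_D)\setminus K$. We then extend this arc at both ends by short arcs running inside $\mathcal W_D\setminus\mathcal W_V$, which is a thin collar around the crosscuts of $V$, up to the crosscuts of $V$. These short arcs avoid $K$, because $K\cap D=\emptyset$. The result is an arc in $(A\setminus\mathcal W_V)\setminus K$ joining the two crosscuts of $V$.

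The delicate point is to ensure that the approximating disk $D$ neither merges nor splits maximal cross-sections. A single crosscut of $D$ might enclose two maximal cross-sections of $V$, in which case the bijection in step 2 fails. If that happens, our fallback is to rerun the proof of Proposition~\ref{lematecnicoseparacionpancitas} directly with $V$ in place of $U$. That argument should use only the fact that $U$ is a compact, connected, filled, inessential set meeting $\mathcal X$, and $V$ has all these properties.
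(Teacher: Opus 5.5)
Your core plan is the paper's: its entire proof of this corollary is ``apply Proposition~\ref{lematecnicoseparacionpancitas} to the disk $V$''.

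The obstacle you worry about does not arise, because $V$ really is a closed disk. Since $\operatorname{Int}(f^i(U))\cap\operatorname{Int}(f^{i+1}(U))\neq\emptyset$ for each $i$, the union $X=\bigcup_{i=0}^n f^i(U)$ is a locally connected continuum without cut points. Indeed, suppose $X\setminus\{p\}=P\sqcup Q$ were a separation. Each connected set $f^i(U)\setminus\{p\}$ lies in $P$ or in $Q$, and consecutive ones lie in the same piece because they share an open set, so one piece would be empty. Torhorst's theorem says that a locally connected plane continuum without cut points has every complementary domain bounded by a simple closed curve. Hence the boundary of the complementary domain of $X$ in $\mathbb{A}\cup\{\pm\infty\}$ that contains both ends is a Jordan curve. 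Now $V=\operatorname{Fill}(X)$ is exactly the complement of that domain, so $V$ is a closed disk by Schoenflies. The paper uses this tacitly, for instance when it calls $\operatorname{Fill}(U\cup f(U))$ a closed disk in the proof of Proposition~\ref{proppancitasincluidas}. So steps 1--3 and the fallback can simply be dropped.

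If you keep the approximation route anyway, it needs two repairs.

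First, the bijection in step 2 may fail, but you do not need it. It suffices that each maximal cross-section of $V$ lies in some maximal cross-section of $D$. If two crosscuts of $V$ sit inside the same cross-section of $D$, join both of them to that cross-section's crosscut. This is exactly how the paper derives Proposition~\ref{lematecnicoseparacionpancitas} from Lemma~\ref{lemma ends connector} using the thickened disk $U'$.

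Second, and more importantly, the short arcs in $\mathcal W_D\setminus\mathcal W_V$ do not avoid $K$ ``because $K\cap D=\emptyset$''. Maximal cross-sections of $D$ are generally not contained in $D$: for an arch-shaped $D$, the maximal cross-section contains the hole under the arch. The correct reason uses that $K\cap A$ is connected and has points outside every bounded subset of $A$ near $+\infty$. So if $K$ met a cross-section $W'$ of $D$, it would have to cross the crosscut $\partial W'\cap A\subset\partial D$, contradicting $K\cap D=\emptyset$.
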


\begin{proof}
	It suffices to apply Proposition~\ref{lematecnicoseparacionpancitas} to the disk $V$.
\end{proof}

The following lemma is straightforward but will play an important role in the proof of Proposition~\ref{proppancitasincluidas}. We omit its proof.

\begin{lemma}
	\label{crosscutsdisjuntos}
	Assume that no crosscut of $U\cap A$ intersects a crosscut of $f(U)\cap A$. Let $V=\operatorname{Fill}(U\cup f(U))$.
	Then every maximal crosscut (respectively, cross-section) of $V\cap A$ is a maximal crosscut (respectively, cross-section) of either $U\cap A$ or $f(U)\cap A$.
\end{lemma}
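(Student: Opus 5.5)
The plan is to use the fill $V=\operatorname{Fill}(U\cup f(U))$ and to identify its maximal crosscuts in $A$ with those of $U$ or of $f(U)$. First observe that $V$ is a closed disk: $U\cup f(U)$ is connected, since $\operatorname{Int}(U)\cap\operatorname{Int}(f(U))\neq\emptyset$ in the situations of interest (connectedness is in any case implicit in the definition of $\operatorname{Fill}$). It is also inessential, so filling bounded complementary components gives a full continuum, which we treat as a closed disk. Since $\partial V\subseteq\partial U\cup\partial f(U)$, every point of $\partial V\cap A$ lies on a crosscut of $U\cap A$ or of $f(U)\cap A$.

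The key step is to show that each crosscut $\gamma$ of $V\cap A$ is entirely contained in a single crosscut of $U$ or of $f(U)$. Write $\gamma\subseteq\partial U\cup\partial f(U)$ and consider the open arc $\gamma$. Crosscuts of $U$ and of $f(U)$ are pairwise disjoint by hypothesis, and crosscuts of the same disk are disjoint from one another. Hence $\gamma$ is covered by the family of crosscuts of $U$ and $f(U)$, which are pairwise disjoint relatively closed subsets of $A$. Each is a closed subset of $\gamma$, and there are countably many. By connectedness of $\gamma$ and a Sierpiński-type argument (a continuum cannot be a countable disjoint union of at least two nonempty closed sets), $\gamma$ lies in a single crosscut $c$. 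To justify that each crosscut meets $\gamma$ in a closed set, use that crosscuts accumulate only on $\partial A$, so their traces on $\gamma$ are closed in $\gamma$.

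Since $\gamma$ is a crosscut landing on $\partial A$ at both ends, it must coincide with $c$. Indeed, $\gamma\subseteq c$, both are arcs open in $A$ with endpoints on $\mathcal X$, and an open sub-arc of $c$ whose closure reaches $\mathcal X$ at both ends must be all of $c$.

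Now take a maximal crosscut $\gamma=c$ of $V$ and assume, say, that $c$ is a crosscut of $U$. Its cross-section $W$ is the bounded component of $A\setminus c$, and it is the same set whether $c$ is viewed through $U$ or through $V$. Suppose $c$ were not maximal for $U$, so that $W\subsetneq W'$ for the cross-section $W'$ of some crosscut $c'$ of $U$. Then $c'\subseteq U\subseteq V$, and $W'$ is a bounded region whose points near $c'$ on the inner side lie either in $U$ or in its complement. I would argue that $W'\cup c'$ together with $V$ forces $c\subseteq\operatorname{Int}$ of a larger region. This requires the main check: that some crosscut of $V$ bounds a cross-section containing $W'$. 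Tracing $\partial V$, the boundary of $V$ near $W'$ consists of crosscuts of $V$, and the maximal one containing $\overline{W}$ strictly contains $W$, contradicting maximality of $c$ for $V$. The clean way to do this is to note that cross-sections of $V$ are exactly the components of $A\setminus V$ that are bounded, together with regions enclosed, and that $W'\supseteq W$ is carried into one cross-section of $V$ by monotonicity of cross-sections under $U\subseteq V$.

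This monotonicity step is the main obstacle. Every cross-section of $U$ should be contained in some cross-section of $V$, because $A\setminus V\subseteq A\setminus U$ and cross-sections are determined by the unbounded component of the complement. I would prove it first as a separate claim. It then gives maximality immediately, and the statement for cross-sections follows from the statement for crosscuts.
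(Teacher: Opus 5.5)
Your argument is correct, with one step only sketched. The paper gives no proof to compare with: it calls the lemma straightforward and omits the proof. Your outline is the natural verification, and it matches how the paper argues elsewhere. In the proof of the existence of the ends-connector, the paper uses without proof that maximal cross-sections of a disk lie inside maximal cross-sections of a larger disk, because maximal crosscuts are the ones accessible from $+\infty$.

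The Sierpi\'nski argument is more than you need. The sets $\partial U\cap A$ and $\partial f(U)\cap A$ are disjoint and relatively closed in $A$, and their components are by definition the crosscuts. So the connected set $\gamma$ lies in one of them, and then in a single component.

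The monotonicity step you defer closes in a few lines. Discard the inaccurate description of cross-sections of $V$ as ``bounded components of $A\setminus V$''; it is not true and not needed.

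Let $C_i$, $W_i$ be the maximal crosscuts and cross-sections of $V$. The paper records as an elementary observation that $A\setminus\mathcal{W}_V$ contains no point of $V$; this is exactly your ``determined by the unbounded component'' heuristic. Hence $V\cap A\subseteq\bigcup_i (W_i\cup C_i)$.

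By your first step, each $C_i$ is a whole crosscut of $U$ or of $f(U)$. By the disjointness hypothesis, a crosscut $c'$ of $U$ therefore either equals some $C_i$ or is disjoint from all of them. In the second case, the connected set $c'$ lies in the union of the pairwise disjoint open sets $W_i$, hence in a single $W_i$.

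It follows that $W_{c'}\subseteq W_i$. Indeed, the unbounded side of $C_i$ together with $C_i$ is a connected set containing a neighbourhood of $+\infty$ and disjoint from $c'$, so it lies in the unbounded component of $A\setminus c'$.

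This gives the monotonicity you need. Your maximality argument then goes through verbatim for $U$, and symmetrically for $f(U)$.
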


This ends-connector allows us to localize the set of maximal cross-sections $\mathcal{W}_U$ in the universal covering, as stated in the following corollary.

\begin{cor}
	\label{coropancitasencerradas}
	Let $K$ be the ends-connector given by Proposition~\ref{lematecnicoseparacionpancitas}, let $K^+$ be the connected component of $K\cap A$ accumulating at $+\infty$, which is disjoint from $\mathcal{W}_U$, and let $\widetilde A$ be the lift of $A$. Fix a lift $\widetilde U$ of $U$, let $\widetilde{\mathcal W}_U$ be the associated lift of $\mathcal W_U$, and let $\widetilde K^+$ be a lift of $K^+$. Denote by $\widetilde A_{\widetilde K}$ the connected component of $\widetilde A\setminus\left(\widetilde K^+\cup(\widetilde K^++(1,0))\right)$ lying between $\widetilde K^+$ and $\widetilde K^++(1,0)$. Then there exists a unique $k\in\mathbb Z$ such that
	\[
	\widetilde{\mathcal W}_U\subseteq \widetilde A_{\widetilde K}+(k,0).
	\]
\end{cor}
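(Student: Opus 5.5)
The plan is to use the properties of $K$ from Proposition~\ref{lematecnicoseparacionpancitas}, together with the fact that $\widetilde{\mathcal W}_U$ has a connected complement piece in $\widetilde A$, to trap all lifted maximal cross-sections associated with $\widetilde U$ between two consecutive lifts of $K^+$.

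First, I describe the pieces in the cover. Since $K$ is an ends-connector whose complement is an open disk admitting a vertical line, and $K\cap A$ is connected, $K^+=K\cap A$ is a connected closed subset of $A$ accumulating at $+\infty$. It is disjoint from $U$ and does not separate maximal crosscuts, so it can be taken disjoint from $\mathcal W_U$: a point of $K$ inside some $W_i$ would force $K$, being connected and reaching $+\infty$, to cross $C_i\subset U$. Its lifts $\widetilde K^++(j,0)$, $j\in\mathbb Z$, are pairwise disjoint and each one separates $\widetilde A$. This separation follows because $K$ meets every essential loop, since its complement is a disk, so every lift of $K^+$ meets every path in $\widetilde A$ going from far left to far right. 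Consequently, the components $\widetilde A_{\widetilde K}+(k,0)$ tile $\widetilde A\setminus\pi^{-1}(K^+)$ and are pairwise disjoint.

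Second, I show that each single lifted cross-section lies in one strip. Each $\widetilde W_i$ is connected, since it is a lift of the disk $W_i$, and it is disjoint from $\pi^{-1}(K^+)$. Hence it lies in exactly one translate $\widetilde A_{\widetilde K}+(k_i,0)$. The same holds for its closure away from $\mathcal X$, and the crosscut $\widetilde C_i$ lies in the same strip.

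Third, I show that the $k_i$ all agree. Fix two maximal crosscuts $C_i,C_j$. By the non-separation property there is an arc $\gamma\subset (A\setminus\mathcal W_U)\setminus K$ joining $C_i$ to $C_j$. I lift $\gamma$ starting at the endpoint on $\widetilde C_i$. Because $\gamma$ avoids $\pi^{-1}(K^+)$, the lift stays in the strip $\widetilde A_{\widetilde K}+(k_i,0)$, so it ends on some lift $\widetilde C_j+(m,0)$ lying in the same strip. The remaining issue is to check that $m=0$, meaning that this is the lift of $C_j$ determined by $\widetilde U$. For that I use a path inside $\widetilde U$ joining $\widetilde C_i$ to $\widetilde C_j$, concatenated with $\tilde\gamma$. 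The result is a lift of a loop $\beta\subset U\cup\gamma$, and I must show that $\beta$ is inessential. Here $U$ is a disk, hence inessential. Moreover $\gamma\cup U$ does not meet $K$: the arc $\gamma$ avoids $K$ by construction, and $U$ is disjoint from $K$. Since $\mathbb A\setminus K$ is an open disk, $\beta$ is null-homotopic in $\mathbb A$. Therefore its lift closes up, giving $m=0$ and $k_j=k_i$.

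Uniqueness of $k$ is immediate since the strips are disjoint and $\widetilde{\mathcal W}_U\neq\emptyset$. The main obstacle is the second half of the first step: verifying that each lift of $K^+$ actually separates $\widetilde A$ into a left and a right part, so that the strips are well defined, when $K$ is only an ends-connector and not a line. I would try to derive this from the vertical line contained in the disk $\mathbb A\setminus K$: its lifts are disjoint from $\pi^{-1}(K)$ and alternate with the lifts of $K$, and these lines do separate $\mathbb R^2$. Combining the two, the lifts of $K^+$ are ordered between the lifted lines and yield the required separation inside $\widetilde A$.
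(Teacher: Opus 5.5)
Your proposal is correct and follows essentially the same route as the paper. Both arguments join any two maximal crosscuts by an arc in $(A\setminus\mathcal W_U)\setminus K$, conclude that all lifted cross-sections determined by $\widetilde U$ lie in one complementary component of the lifts of $K^+$, and obtain uniqueness from the disjointness of the translates $\widetilde A_{\widetilde K}+(m,0)$; beyond that, your null-homotopy argument for the loop in $U\cup\gamma\subset\mathbb{A}\setminus K$ (ensuring the lifted arc ends on the lift of $C_j$ attached to $\widetilde U$, not a translate) makes explicit a step the paper leaves implicit, and the separation/tiling property you single out as the main obstacle is simply presupposed by the paper when it defines $\widetilde A_{\widetilde K}$.
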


\begin{proof}
	By Proposition~\ref{lematecnicoseparacionpancitas}, any two maximal crosscuts $C_1$ and $C_2$ can be joined by an arc contained in $A\setminus(\mathcal W_U\cup K)$. Consequently, all the lifts of the maximal crosscuts (and therefore of the maximal cross-sections) determined by $\widetilde U$ belong to the same connected component of the complement of the lifts of $K^+$. Since the translates
	\[
	\widetilde A_{\widetilde K}+(m,0), \qquad m\in\mathbb Z,
	\]
	are pairwise disjoint, this proves the uniqueness of $k$.
\end{proof}

\begin{figure}[h!]
    \centering
     \includegraphics[width=210 pt]{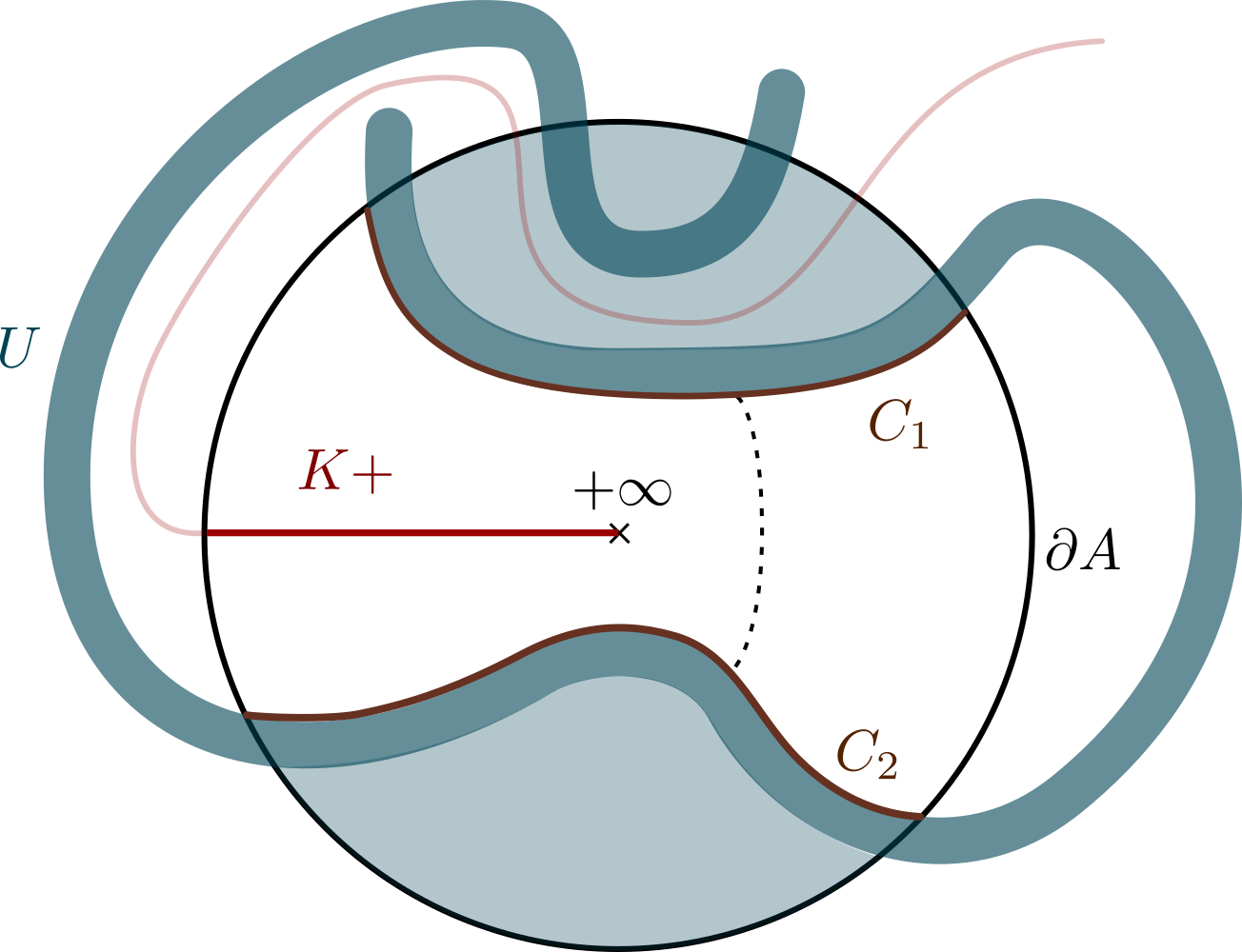}
    
    \caption{\small{The maximal crosscuts $C_1,C_2$ can be connected in $A\setminus (\mathcal{W}\cup K^+)$}}   
\end{figure}

The fact that $\widetilde{W}$ intersects only one lift of $\tilde{A}_{\tilde{K}}$ does not imply that $\widetilde{U}$ does as well. See for instance the case illustrated in Figure \ref{proppancitas}. In this situation, the disk $\widetilde{U}$ intersects two different lifts of $\tilde{A}_{\tilde{K}}$. 

\begin{figure}[h!]
    \centering
     \includegraphics[width=320 pt]{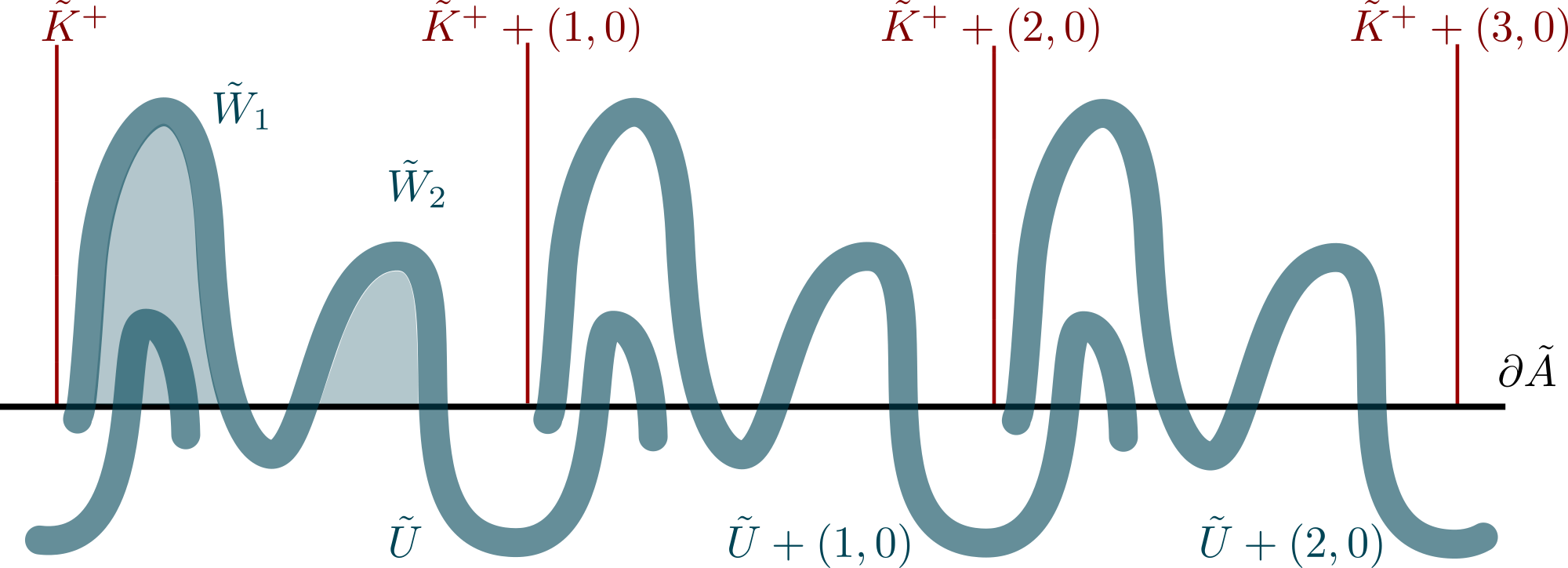}
    
    \caption{\small{The set $\tilde{\mathcal{W}}$ is between two lifts of $K^+$}}
    \label{proppancitas}
\end{figure}

\medskip

The following proposition formalizes the behavior exhibited by $f$ in Example~\ref{ejemplopancitas}. It is the main tool in the proof of Theorem~\ref{primeendsrotationthmconservative} and highlights the role of the non-wandering and area-preserving hypotheses.

\begin{prop}
	\label{proppancitasincluidas}
	Consider $f\in\mathrm{Homeo}_0(\mathbb{A})$ and an $f$-invariant open subannulus $A\subseteq\mathbb{A}$ bounded below. Assume that $U\subseteq\mathbb{A}$ is a closed disk satisfying $\mathrm{Int}(U\cap f(U))\cap A\neq\emptyset$ and $U\cup f(U)$ is inessential.
	Given a lift $\widetilde U$ of $U$, write $\widetilde{\mathcal W}_U$ for the corresponding lift of $\mathcal W_U$. If $F$ is a lift of $f$ such that $F(\widetilde U)\cap\widetilde U\neq\emptyset$ and $F(\widetilde{\mathcal W}_U)\cap\widetilde{\mathcal W}_U=\emptyset$,	then
	\[
	f(\mathcal W_U\cap A)\subset\operatorname{Int}(\mathcal W_U\cap A)
	\quad\text{or}\quad
	\mathcal W_U\cap A\subset\operatorname{Int}\bigl(f(\mathcal W_U\cap A)\bigr).
	\]
	Consequently, $f$ is neither area-preserving nor non-wandering.
\end{prop}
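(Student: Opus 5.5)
We work in the universal cover, with the ends-connector $K$ of Corollary~\ref{corodisjointK} applied to $V=\operatorname{Fill}(U\cup f(U))$. The plan is to show that the cross-sections of $\widetilde{\mathcal W}_U$ and of $F(\widetilde{\mathcal W}_U)=\widetilde{\mathcal W}_{f(U)}$ (Lemma~\ref{lematecnicoimagenpancitas}) must be \emph{nested} rather than side by side along $\widetilde{\mathcal X}$. Nestedness then forces strict inclusion one way or the other.

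\textbf{Step 1: localization.} Since $F(\widetilde U)\cap\widetilde U\neq\emptyset$ and $U\cup f(U)$ is inessential, $\widetilde U\cup F(\widetilde U)$ lies in a single lift of $V$. The maximal cross-sections of $\widetilde V\cap\widetilde A$ therefore lie in one fundamental strip $\widetilde A_{\widetilde K}+(k,0)$ by Corollary~\ref{coropancitasencerradas}. Every maximal cross-section of $U$ or $f(U)$ is contained in some maximal cross-section of $V$. Hence both $\widetilde{\mathcal W}_U$ and $F(\widetilde{\mathcal W}_U)$ lie in this same strip.

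\textbf{Step 2: crosscuts do not cross.} Consider the boundaries $\partial_{\mathcal X}$ on $\widetilde{\mathcal X}$, or on the prime-ends line $\partial\mathbb H$, of the lifted maximal cross-sections of $\widetilde U$ and $F(\widetilde U)$. Each such boundary is an interval. The union of these intervals for $\widetilde U$ sits in an order-interval $J$ determined by the strip, and likewise $F(J)$ for $F(\widetilde U)$.

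The hypothesis $F(\widetilde{\mathcal W}_U)\cap\widetilde{\mathcal W}_U=\emptyset$ says no maximal cross-section of $U$ meets one of $f(U)$. Yet $\operatorname{Int}(U\cap f(U))\cap A\neq\emptyset$, and $A\setminus\mathcal W_U$ contains no point of $U$. This is the key tension, and I expect it to be the main obstacle: the common interior point of $U\cap f(U)\cap A$ must lie in $\mathcal W_U$, since that set contains $U\cap A$, and also in $\mathcal W_{f(U)}$. So the two sets must intersect, not as lifts in the chosen strip but after some deck translation.

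I would therefore reinterpret: the intersection $\mathcal W_U\cap f(\mathcal W_U)\neq\emptyset$ in $A$ occurs between $\widetilde{\mathcal W}_U$ and $F(\widetilde{\mathcal W}_U)+(j,0)$ with $j\neq0$. By Step~1, both of these lie in adjacent copies of the strip, or in the same one. I would use this to argue that the crosscut structures are nested. Two maximal cross-sections in $A$ are either disjoint or one contains the other, because their crosscuts are arcs in $\partial U$ and $\partial f(U)$. When the crosscuts intersect, I would apply Lemma~\ref{crosscutsdisjuntos} after a small adjustment, or argue directly via planar separation. In the model picture of Remark~\ref{remarkcontraejemplo}, this shows that a maximal cross-section $W$ of $U$ is strictly contained in a maximal cross-section $f(W')$ with $W'\neq W$, or the reverse.

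\textbf{Step 3: propagate to all of $\mathcal W_U$.} Using the linear order of the finitely many (after compactness) maximal cross-sections along $\partial\mathbb H$ inside the strip, together with the fact that $F$ preserves this order and maps maximal cross-sections bijectively to maximal cross-sections, a pigeonhole/monotonicity argument gives the global conclusion. If one $f(W_i)$ is strictly contained in some $W_j$ and they are not translates, then every $f(W_i)\subset\operatorname{Int}(W_{\sigma(i)})$, yielding $f(\mathcal W_U\cap A)\subset\operatorname{Int}(\mathcal W_U\cap A)$; the symmetric case gives the other inclusion.

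\textbf{Step 4: conclusion.} Finally, $\mathcal W_U\cap A$ is a set of finite positive area. A strict inclusion into its interior contradicts area preservation, since the difference contains an open set. It also produces a wandering open set, namely $\mathcal W_U\setminus \overline{f(\mathcal W_U)}$, whose forward images stay inside $f(\mathcal W_U)$. This contradicts the non-wandering hypothesis.
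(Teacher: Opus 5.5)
There is a genuine gap, and it is in Step~1. The conclusion that $\widetilde{\mathcal W}_U$ and $F(\widetilde{\mathcal W}_U)$ lie in the same strip is not just unproved; it contradicts the hypotheses, as your own Step~2 shows. A point of $\operatorname{Int}(U\cap f(U))\cap A$ forces $F(\widetilde{\mathcal W}_U)$ to meet $\widetilde{\mathcal W}_U+(k,0)$ for some $k$, and $k\neq0$ because $F(\widetilde{\mathcal W}_U)\cap\widetilde{\mathcal W}_U=\emptyset$. If both sets lay in one strip $S$, then $\widetilde{\mathcal W}_U+(k,0)\subset S+(k,0)$, which is disjoint from $S\supset F(\widetilde{\mathcal W}_U)$.

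The faulty inference is lifting ``each maximal cross-section of $f(U)$ lies in a maximal cross-section of $V$'', which is true in $\mathbb A$, to the lifts determined by $F(\widetilde U)$ and $\widetilde V$. Since $\widetilde V\not\subset\widetilde A$, it can leave a lifted cross-section through $\widetilde{\mathcal X}$. So the cross-section determined by $F(\widetilde U)$ may sit inside a nontrivial translate of the one determined by $\widetilde V$. This is exactly the configuration of Remark~\ref{remarkcontraejemplo}, where the lifted large cross-section is sent into a translate by $2$. What is true, from Corollary~\ref{coropancitasencerradas} applied to $U$ and to $f(U)$ separately, is $\widetilde{\mathcal W}_U\subset\widetilde A_{\widetilde K}$ and $F(\widetilde{\mathcal W}_U)\subset\widetilde A_{\widetilde K}+(k,0)$. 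The argument above then gives $k\neq0$, and this nonzero $k$ drives the whole proof.

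Even with Step~1 corrected, Steps~2 and~3 lack their mechanisms.

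\emph{Nesting.} Nesting does not follow from the crosscuts being arcs of $\partial U$ and $\partial f(U)$, since these curves may cross in $A$. Lemma~\ref{crosscutsdisjuntos} presupposes exactly the disjointness you need. Nesting comes instead from $k\neq0$. The sets $F(\widetilde U)$ and $\widetilde U+(k,0)$ lie in different lifts of the inessential set $U\cup f(U)$, so $F(\partial\widetilde U)\cap(\partial\widetilde U+(k,0))=\emptyset$. Together with the strip localization, this makes the maximal crosscuts of $U$ and $f(U)$ disjoint in $A$. Hence any $F(\widetilde W_i)$ meeting $\widetilde W_j+(k,0)$ is strictly nested with it.

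\emph{Uniform behaviour.} In Step~3 there may be countably many maximal crosscuts, since compactness gives no finiteness. No order or pigeonhole argument is actually given that rules out mixed behaviour. The missing idea is as follows. Suppose some $C_1$ were strictly outside $\mathcal W_{f(U)}$ and some $f(C_2)$ strictly outside $\mathcal W_U$. Then both are accessible from $+\infty$ in $A\setminus\mathcal W_V$, so they can be joined by an arc $\alpha\subset A\setminus(K\cup\mathcal W_V)$. Lifting the inessential set $U\cup\alpha\cup f(U)$ through $\widetilde U$ shows that $\widetilde C_1\cup\widetilde\alpha\cup F(\widetilde C_2)$ is a lift of a connected subset of $A\setminus K$, so it lies in a single strip. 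This contradicts $\widetilde C_1\subset\widetilde A_{\widetilde K}$ and $F(\widetilde C_2)\subset\widetilde A_{\widetilde K}+(k,0)$ with $k\neq0$.

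Your Step~4 is fine once the dichotomy is established.
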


\begin{proof}
	
	Assume first that there is only one maximal cross-section $W$ associated with the maximal crosscut $C$ of $U$. The hypothesis implies that $F(\tilde{W})$ meets $\tilde{W}+(k,0)$ for some $k\neq0$. However,
	\[
	F(\tilde{C})\cap\bigl(\tilde{C}+(k,0)\bigr)=\emptyset,
	\]
	which implies that
	\[
	f(W\cap A)\subset\operatorname{Int}(W\cap A)
	\quad\text{or}\quad
	W\cap A\subset\operatorname{Int}\bigl(f(W)\cap A\bigr).
	\]
	We now assume that there is more than one maximal crosscut.
	
	The set $V=\operatorname{Fill}(U\cup f(U))$ is a closed disk. By Proposition~\ref{lematecnicoseparacionpancitas}, we can consider an ends-connector $K$ disjoint from $V$, with the property that the component $K^+$ does not intersect $\mathcal{W}_V$.	Note that every cross-section of $U$ or $f(U)$ in $A$ is contained in one of the cross-sections of $V$ in $A$. 
	
	 By Corollary~\ref{corodisjointK}, the set $K$ does not intersect either $U$ or $f(U)$. Moreover, it does not separate the maximal crosscuts of $V$, then, it is easy to see that $K$ does not separate the maximal crosscuts of $U$ in $A\setminus\mathcal{W}_U$, and does not separate those of $f(U)$ in $A\setminus\mathcal{W}_{f(U)}$.
	
	Consider the lift $\widetilde A$ of $A$. By Corollary~\ref{coropancitasencerradas}, we can choose a lift $\widetilde K^+$ of $K^+$ such that $\widetilde{\mathcal{W}}_U$ is contained in the region $\widetilde A_{\widetilde K}\subset\widetilde A$ between $\widetilde K^+$ and $\widetilde K^++(1,0)$. By the same corollary and Proposition~\ref{lematecnicoimagenpancitas}, there exists some $k\in\mathbb{Z}$ such that
	\[
	\widetilde{\mathcal{W}}_{f(U)}
	=
	F(\widetilde{\mathcal{W}}_U)
	\subset
	\widetilde A_{\widetilde K}+(k,0).
	\]
	
	If $\{C_i\}_{i\in I}$ and $\{W_i\}_{i\in I}$ denote the collections of maximal crosscuts and cross-sections of $\mathcal{W}_U$, respectively, with $C_i=\partial W_i\cap A$, then we have the following properties.
	
	\begin{enumerate}[label=(\roman*)]
		\item Since, by hypothesis $F(\widetilde{\mathcal{W}}_U)\cap\widetilde{\mathcal{W}}_U=\emptyset$,
		it follows that for all $i,j\in I$,
		\[
		F(\widetilde W_i)\cap\widetilde W_j=\emptyset.
		\]
		
		\item Since $F(\widetilde{\mathcal{W}}_U)\subset\widetilde A_{\widetilde K}+(k,0)$ and $F(\widetilde U)\cap\widetilde U\cap\widetilde A\neq\emptyset$, there exist $i,j\in I$ such that
		\[
		F(\widetilde W_i)\cap\bigl(\widetilde W_j+(k,0)\bigr)\neq\emptyset.
		\]
	\end{enumerate}
	
	Combining these two points, we obtain $k\neq0$. 
	Therefore, since $U\cup f(U)$ is inessential and $F(\tilde{U})\cap \tilde{U}\neq \emptyset$, we have $F(\partial\widetilde U)\cap\bigl(\partial\widetilde U+(k,0)\bigr)=\emptyset.$	Consequently, if for some $i,j\in I$ one has
	\[
	F(\widetilde W_i)\cap\bigl(\widetilde W_j+(k,0)\bigr)\neq\emptyset,
	\]
	then
	\[
	F(\widetilde C_i)\cap\bigl(\widetilde C_j+(k,0)\bigr)=\emptyset,
	\]
	leading to two possibilities:
	\[
	F(\widetilde W_i\cap\widetilde A)
	\subset
	\operatorname{Int}\bigl((\widetilde W_j\cap\widetilde A)+(k,0)\bigr)
	\quad\text{or}\quad
	(\widetilde W_j\cap\widetilde A)+(k,0)
	\subset
	\operatorname{Int}\bigl(F(\widetilde W_i)\cap\widetilde A\bigr).
	\]
	
	In other words, whenever a maximal cross-section of $F(\widetilde U)$ meets one of $\widetilde U+(k,0)$, since their boundaries in $\widetilde A_{\widetilde K}$ are disjoint, one is strictly contained in the interior of the cross-section determined by the other. We claim that all crosscuts have the same behavior: they are either all mapped strictly inside or all mapped strictly outside the corresponding family of crosscuts of $f(U)$.
	
	Whenever a crosscut is mapped strictly inside, so is its associated cross-section. Since the crosscuts of $U$ and $f(U)$ are disjoint in $A$, being strictly inside or outside admits the following equivalent characterization: a crosscut $C$ of $\mathcal{W}_U$ is strictly outside $\mathcal{W}_{f(U)}$ if it contains a point accessible from $+\infty$ in $A\setminus\mathcal{W}_{f(U)}$. On the contrary, it is strictly inside $\mathcal{W}_{f(U)}$ if it contains no point accessible from $+\infty$ in $A\setminus\mathcal{W}_{f(U)}$.
	
	Assume, by contradiction, that the conclusion of the proposition does not hold. Then, the previous discussion implies that there exist two crosscuts $C_1$ and $C_2$ such that $C_1$ is strictly outside $\mathcal{W}_{f(U)}$ and $f(C_2)$ is strictly outside $\mathcal{W}_U$. By Lemma~\ref{crosscutsdisjuntos}, both $C_1$ and $f(C_2)$ contain points accessible from $+\infty$ for the subannulus $A\setminus\mathcal{W}_V$. Therefore, it is possible to consider an arc
	\[
	\alpha\subset A\setminus(K\cup\mathcal{W}_V)
	\]
	joining $C_1$ to $f(C_2)$.
	
	Since $\alpha$ does not meet $K$, the set $U\cup\alpha\cup f(U)$ is inessential. Therefore, we can choose a lift $\widetilde\alpha$ such that
	\[
	\widetilde U\cup\widetilde\alpha\cup F(\widetilde U)
	\]
	projects onto the connected set $U\cup\alpha\cup f(U)$. In particular, $\widetilde\alpha$ is an arc connecting $\widetilde C_1$ to $F(\widetilde C_2)$. Since $\widetilde C_1$ is contained in $\widetilde A_{\widetilde K}$ and $\widetilde C_1\cup\widetilde\alpha\cup F(\widetilde C_2)$ is the lift of a connected set in $A\setminus K$, it must also be contained in the same region $\widetilde A_{\widetilde K}$. This contradicts the inclusion
	\[
	F(\widetilde C_2)\subset\widetilde A_{\widetilde K}+(k,0),
	\]
	where $k\neq0$.
	
	Finally, notice that a crosscut is mapped strictly outside if and only if it is mapped strictly inside by $f^{-1}$. Since the crosscuts of $f(U)$ are disjoint from those of $U$ in $A$, all crosscuts have the same behavior: they are either all mapped strictly inside or all mapped strictly outside the corresponding family of crosscuts of $f(U)$. In the first case, the associated cross-sections are also mapped strictly inside, yielding
	\[
	f(\mathcal{W}_U\cap A)\subset\operatorname{Int}(\mathcal{W}_U\cap A).
	\]
	In the second case, the previous observation applied to $f^{-1}$ gives
	\[
	\mathcal{W}_U\cap A\subset\operatorname{Int}\bigl(f(\mathcal{W}_U\cap A)\bigr).
	\]
	This completes the proof.
\end{proof}

\begin{rmk}
	The previous proposition describes the behavior of $\mathcal{W}_U$ under the assumption that $F(\widetilde{\mathcal{W}}_U)\cap\widetilde{\mathcal{W}}_U=\emptyset$. However, this is not the only possible configuration. For instance, the image of a maximal cross-section may intersect several maximal cross-sections. Nevertheless, if $F(\widetilde{\mathcal{W}}_U)\cap\widetilde{\mathcal{W}}_U\neq\emptyset$, then $F(\widetilde{\mathcal{W}}_U)$ is still contained in the same region between the lifted ends-connectors as $\widetilde{\mathcal{W}}_U$.
\end{rmk}

 \medskip

\subsection{Proof of Theorem~\ref{primeendsrotationthmconservative}}

In the non-wandering or area-preserving setting, the previous results allow us to estimate the prime ends rotation number of the boundary of an invariant essential annulus, provided that a closed disk $U$ intersects the boundary and remains inessential under its initial iterates.

We are now ready to prove Theorem~\ref{primeendsrotationthmconservative}, which is a key step toward the proof of the main theorem of this article.

\begin{proof}
	
	We may assume that $A=\mathcal{U}^+(\mathcal{X})$.
	
	Let $\widetilde{\mathcal W}_U$ be the lift of $\mathcal W_U$ determined by $\widetilde U$.
	
	By Corollaries~\ref{corodisjointK} and~\ref{coropancitasencerradas}, there exists an ends-connector $K$, disjoint from $\bigcup_{i=0}^n f^i(U)$, such that $\widetilde{\mathcal W}_U$ is contained in the region $\widetilde A_{\widetilde K}$ between two lifts of $K^+$, where $K^+$ denotes the connected component of $K\cap A$ accumulating at $+\infty$.
	
	Since $f$ is non-wandering or area-preserving Proposition~\ref{proppancitasincluidas} yields
	\[
	F(\widetilde{\mathcal W}_U)\cap\widetilde{\mathcal W}_U\neq\emptyset.
	\]
	Moreover, for each $i=0,\ldots,n$, Corollary~\ref{coropancitasencerradas} implies that $F^i(\widetilde{\mathcal W}_U)$ is contained in a unique translate of $\widetilde A_{\widetilde K}$. Since $f^i(U)\cap f^{i+1}(U)\cap A\neq\emptyset$,
	consecutive sets $F^i(\widetilde{\mathcal W}_U)$ and $F^{i+1}(\widetilde{\mathcal W}_U)$ intersect. Hence, all these translates coincide, and therefore
	\[
	F^i(\widetilde{\mathcal W}_U)\subset\widetilde A_{\widetilde K},
	\qquad i=0,\dots,n.
	\]
	
	Choose a maximal cross-section $W\subset\mathcal W_U$ such that
	\[
	F(\widetilde W)\subset\widetilde A_{\widetilde K},
	\]
	and let $V$ be the connected component of $U\cap A$ associated with $W$, namely the one satisfying $\partial W\cap A=\partial V\cap A$.	It follows that
	\[
	F(\widetilde V)\subset\widetilde A_{\widetilde K}.
	\]
	
	Since $K^+$ is disjoint from $\bigcup_{i=0}^n f^i(U)$, one can choose a ray $r\subset A$, contained in a sufficiently small neighborhood of $K^+$, disjoint from $\bigcup_{i=0}^n f^i(U)$, and landing on $\mathcal X$. Let $\widetilde r$ be the lift of $r$ close to $\widetilde K^+$, and let $\widetilde A_r$ denote the region between $\widetilde r$ and $\widetilde r+(1,0)$. Since $r$ can be chosen arbitrarily close to $K^+$,
	\[
	\tilde{V}\subset \widetilde A_r \quad \text{and} \quad F(\widetilde V)\subset\widetilde A_r.
	\]
	
	Proposition~\ref{primeendsrotationproposition} now completes the proof.
	
\end{proof}

\subsection{Consequences for invariant circloids}

We conclude this section with three consequences of Theorem~\ref{primeendsrotationthmconservative}. The first extends a result known for area-preserving maps (see \cite{Conejeros2019Applications}, \cite{KoropeckiRealizing}), showing that every invariant circloid of a non-wandering or area-preserving homeomorphism has trivial rotation set. The second and the third will be relevant in Section~\ref{sec.Instability}. 

\begin{prop}
	\label{propcircloidsrotaciontrivial}
	Let $f\in\mathrm{Homeo}_{\,0,nw}(\mathbb{A})\cup\mathrm{Homeo}_{\,0,\lambda}(\mathbb{A})$, and let $\mathcal X$ be an $f$-invariant circloid. Then $\rho_{\mathcal X}(F)=\{\rho\}$, where $\rho=\rho_{\mathcal X}^+(F)=\rho_{\mathcal X}^-(F)$.
\end{prop}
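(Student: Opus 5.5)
The argument is by contradiction, using Theorem~\ref{primeendsrotationthmconservative} at every point of the circloid. Suppose $\rho_{\mathcal X}(F)$ is a nontrivial interval. Since $\rho^\pm_{\mathcal X}(F)\in\rho_{\mathcal X}(F)$, it suffices to show that every point of $\rho_{\mathcal X}(F)$ equals $\rho^+_{\mathcal X}(F)$. By symmetry the same argument gives equality with $\rho^-_{\mathcal X}(F)$.

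\textbf{Step 1: reduce to periodic orbits.} If $\rho_{\mathcal X}(F)$ is nontrivial, I first want periodic points on $\mathcal X$ with distinct rotation numbers. The cleanest route is to pick two ergodic invariant measures on $\mathcal X$ with different rotation numbers and take generic points $x,y\in\mathcal X$ for them. I would prefer to work with recurrent points of $\mathcal X$ rather than periodic ones, so that no realization result on the circloid is needed.

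\textbf{Step 2: build a disk at $x$.} The key observation is the following. Let $x\in\mathcal X$ be a recurrent point with rotation number $\rho_x$ in the sense of Birkhoff averages. Then for every $\varepsilon>0$ there are $q$ and $p$ with $|p/q-\rho_x|<\varepsilon$, and a small closed disk $U$ around $x$, such that $f^q(U)\cap U$ has interior meeting $A=\mathcal U^+(\mathcal X)$. Moreover $F^q(\tilde U)\cap(\tilde U+(p,0))\neq\emptyset$, and $\bigcup_{i\le m} f^{qi}(U)$ is inessential for some large $m$. This step is where I would use the non-wandering or area-preserving hypothesis: $\operatorname{Int}(U)\cap A$ is a nonempty open set, so it returns to itself under $f$. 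The return time and displacement are chosen close to those of $x$ by taking $U$ small and using continuity of $F$ along the orbit of $x$ up to time $q$. Inessentiality of the first $m$ iterates of $f^q$ follows because $U$ is tiny and $\mathcal X$, viewed in the universal cover, stays bounded over finitely many iterates; I will choose $m$ first and then shrink $U$.

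\textbf{Step 3: apply the theorem.} Apply Theorem~\ref{primeendsrotationthmconservative} to $f^q$, which is again non-wandering or area-preserving, with the lift $F^q-(p,0)$, the annulus $A$, and $n=m$. This gives $|q\rho^+_{\mathcal X}(F)-p|\le 1/m$, so $|\rho^+_{\mathcal X}(F)-\rho_x|\le\varepsilon+1/(qm)$. Letting $\varepsilon\to0$ yields $\rho_x=\rho^+_{\mathcal X}(F)$.

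\textbf{Step 4: conclude.} Since the endpoints of $\rho_{\mathcal X}(F)$ are realized by ergodic measures, and generic points of those measures are recurrent with the corresponding rotation number, the rotation set collapses to $\{\rho^+_{\mathcal X}(F)\}$. The lower prime-ends rotation number also lies in $\rho_{\mathcal X}(F)$, so it coincides with the upper one.

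The main obstacle is Step 2, specifically guaranteeing that $\operatorname{Int}(U\cap f^q(U))\cap A\neq\emptyset$ with the correct displacement $p$. Non-wandering of $\operatorname{Int}(U)\cap A$ gives some return time, but not one tied to the recurrence time of $x$ or to the right lift. To handle this, I would consider the connected component of $\operatorname{Int}(U)\cap A$ accumulating at $x$, together with a nearby small disk around $f^q(x)$. By continuity, $f^q$ maps points of $A$ near $x$ into $A$ near $f^q(x)\in U$. The intersection of $f^q(U)$ with $U$ inside $A$ then comes from points of $A$ arbitrarily close to $x$, because $A$ is invariant and accumulates on $x$. This gives the interior intersection with displacement $p$ directly, without invoking the non-wandering hypothesis, which is needed only inside Theorem~\ref{primeendsrotationthmconservative}.
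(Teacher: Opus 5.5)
There is a genuine gap in Step~2, at exactly the point where the paper uses periodic rather than recurrent points. For a non-periodic recurrent $x$ the quantifiers are circular. To get $f^q(U)\cap U\neq\emptyset$ for a small disk $U\ni x$, you must choose the return time $q$ \emph{after} $U$, and shrinking $U$ forces $q$ to grow. Nothing then controls $f^q(U)$: it may be a long thin disk that winds around the annulus and re-enters $U$. In that case $U\cup f^q(U)$ is essential, and $F^q(\tilde U)$ meets several translates $\tilde U+(p',0)$, so the displacement $p$ is not even singled out. Theorem~\ref{primeendsrotationthmconservative} then does not apply. If you fix $q$ first instead, you cannot take $U$ smaller than about $d(f^q(x),x)$, and at that scale $f^q$ is again uncontrolled. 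Your justification ``choose $m$ first and then shrink $U$'' overlooks that $q$ depends on $U$. The size of $m$ is not the issue: $m=1$ would suffice, since $|\rho^+_{\mathcal X}(F)-p/q|\le 1/q$ and $q\to\infty$. The problem is that even $U\cup f^q(U)$ need not be inessential. Your final paragraph only secures $\operatorname{Int}(U\cap f^q(U))\cap A\neq\emptyset$, not this.

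There is a second gap. Circloids can have nonempty interior, and a generic point of an ergodic measure on $\mathcal X$ may lie in $\operatorname{Int}(\mathcal X)$. Small disks about such a point meet neither $\mathcal U^+(\mathcal X)$ nor $\mathcal U^-(\mathcal X)$, so Step~2 cannot start there.

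The paper avoids both problems with the realization result you wanted to bypass: every rational in $\rho_{\mathcal X}(F)$ is realized by a periodic point in $\partial\mathcal X$ \cite{koropeckipasseggi}. Passing to $g=f^p$ makes three such points fixed. For a tiny neighborhood $U$ of a fixed point, all $g^i(U)$ with $i\le 3$ stay in a small disk, so inessentiality is automatic. Moreover, $\operatorname{Int}(U\cap g(U))$ meets whichever of $\mathcal U^\pm(\mathcal X)$ accumulates on that point. A pigeonhole argument gives two such neighborhoods meeting the same side. Theorem~\ref{primeendsrotationthmconservative} then places $\rho^+_{\mathcal X}(G)$ within $1/3$ of two distinct integers, a contradiction. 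Keeping recurrent points would require a closing or periodic-approximation step, which is essentially a realization argument anyway.
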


\begin{proof}
	Assume, by contradiction, that $\rho_{\mathcal X}(F)$ is not a singleton. Since $\rho_{\mathcal X}(F)$ is an interval, there exist three rational numbers in $\rho_{\mathcal X}(F)$. It is well known that every rational number in $\rho_{\mathcal X}(F)$ is realized by a periodic point. Moreover, as proved in \cite{koropeckipasseggi}, these periodic points may be chosen in $\partial\mathcal X$.
	
	Choose periodic points $x_0,x_1,x_2\in\partial\mathcal X$ satisfying
	\[
	\rho_{x_0}(F)<\rho_{x_1}(F)<\rho_{x_2}(F).
	\]
	Replacing $f$ by a suitable iterate $g=f^p$, we may assume that all three points are fixed. Writing $G=F^p$, their rotation numbers become integers satisfying
	\[
	\rho_{x_0}(G)<\rho_{x_1}(G)<\rho_{x_2}(G).
	\]
	
	Choose pairwise disjoint neighborhoods $U_0,U_1,U_2$ of $x_0,x_1,x_2$, respectively, such that
	\[
	\bigcup_{i=0}^3g^i(U_j)
	\]
	is inessential for each $j=0,1,2$, and these three unions are pairwise disjoint.
	
	Every neighborhood of a point in $\partial\mathcal X$ intersects at least one of the components $\mathcal U^+(\mathcal X)$ and $\mathcal U^-(\mathcal X)$. Hence, at least two of the neighborhoods $U_0,U_1,U_2$ intersect the same component. Without loss of generality, assume that both $U_0$ and $U_1$ intersect $\mathcal U^+(\mathcal X)$. Since $x_0$ and $x_1$ are fixed by $g$,
	\[
	U_i\cap g(U_i)\cap\mathcal U^+(\mathcal X)\neq\emptyset,
	\qquad i=0,1.
	\]
	Therefore, Theorem~\ref{primeendsrotationthmconservative} implies that
	\[
	\rho_{\mathcal X}^+(G)\in
	[\rho_{x_i}(G)-1/3,\rho_{x_i}(G)+1/3],
	\qquad i=0,1.
	\]
	This is impossible because these intervals are disjoint, as $\rho_{x_0}(G)$ and $\rho_{x_1}(G)$ are different integers. Hence, $\rho_{\mathcal X}(F)$ is a singleton.
	
	Therefore, by~\cite{HERNÁNDEZ-CORBATO_2017}, $\rho_{\mathcal X}^+(F),\, \rho_{\mathcal X}^-(F)\in\rho_{\mathcal X}(F)$. 
	Hence,
	\[
	\rho_{\mathcal X}^+(F)=
	\rho_{\mathcal X}^-(F),
	\]
	and both coincide with the unique element of $\rho_{\mathcal X}(F)$.
\end{proof}

Recall that, given two different circloids $\mathcal{C}^-\preccurlyeq\mathcal{C}^+$, we define
\[
\mathcal{R}(\mathcal{C}^-,\mathcal{C}^+)
:=
\mathcal{U}^+(\mathcal{C}^-)\cap\mathcal{U}^-(\mathcal{C}^+),
\quad
\mathcal{A}(\mathcal{C}^-,\mathcal{C}^+)
:=
\overline{\mathcal{U}^+(\mathcal{C}^-)}
\cap
\overline{\mathcal{U}^-(\mathcal{C}^+)},
\]
where $\mathcal{R}(\mathcal{C}^-,\mathcal{C}^+)$ is the region between the circloids and $\mathcal{A}(\mathcal{C}^-,\mathcal{C}^+)$ is the annular continuum delimited by them.

\begin{cor}
	\label{corolariodiscoentrecircloids}
	Let $f\in\mathrm{Homeo}_{0,nw}(\mathbb{A})\cup\mathrm{Homeo}_{0,\lambda}(\mathbb{A})$, and let $\mathcal{X}^-$ and $\mathcal{X}^+$ be two different $f$-invariant circloids such that
	$\mathcal{X}^-\preccurlyeq\mathcal{X}^+$ and
	$\mathcal{X}^-\cap\mathcal{X}^+\neq\emptyset$.
	Then, for every lift $F$ of $f$, the rotation set
	\[
	\rho_{\mathcal{A}(\mathcal{X}^-,\mathcal{X}^+)}(F)
	\]
	is a singleton consisting of a rational number. In particular, the rotation numbers of $\mathcal{X}^-$ and $\mathcal{X}^+$ coincide.
\end{cor}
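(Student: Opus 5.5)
The plan is to combine Proposition~\ref{propcircloidsrotaciontrivial} with a fixed point argument at an intersection point of the two circloids.

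\textbf{Step 1: a common rotation number.} By Proposition~\ref{propcircloidsrotaciontrivial}, $\rho_{\mathcal X^-}(F)=\{\rho^-\}$ and $\rho_{\mathcal X^+}(F)=\{\rho^+\}$. Pick a point $z\in\mathcal X^-\cap\mathcal X^+$ and consider its orbit, which lies in both circloids since both are invariant. Any limit of averaged displacements along this orbit belongs to both rotation sets. Hence $\rho^-=\rho^+=:\rho$.

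\textbf{Step 2: the annular continuum $\mathcal A=\mathcal A(\mathcal X^-,\mathcal X^+)$.} This set is invariant and compact. We have
\[
\mathcal A=\mathcal X^-\cup\mathcal X^+\cup\overline{\mathcal R(\mathcal X^-,\mathcal X^+)}.
\]
Since the circloids meet, $\mathcal R=\mathcal R(\mathcal X^-,\mathcal X^+)$ is a union of open disks, as noted in the preliminaries. Each component $D$ of $\mathcal R$ is bounded and inessential. Its boundary lies in $\mathcal X^-\cup\mathcal X^+$.

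Because the circloids are invariant, $f$ permutes the components of $\mathcal R$. In the non-wandering or area-preserving setting, each component is periodic: a wandering disk is excluded, and in the $\lambda$ case the components of a fixed bounded region have positive measure in a set of finite measure.

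For a component $D$ with $f^p(D)=D$, take a lift $\tilde D$. Since $D$ is inessential, $F^p(\tilde D)=\tilde D+(m,0)$ for some integer $m$. Every orbit in $D$ therefore has rotation number $m/p$, uniformly. Indeed, $\overline{\tilde D}$ is compact and each point's displacement after $kp$ iterates equals $km$ up to a bounded error $\operatorname{diam}(\tilde D)$. Taking a boundary point of $D$, which lies in $\mathcal X^\pm$, gives $m/p=\rho$. In particular $\rho$ is rational.

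\textbf{Step 3: uniformity.} The main obstacle is to show that $\rho_{\mathcal A}(F)=\{\rho\}$. The difficulty is controlling sequences $x_i$ with times $n_i\to\infty$ spread over infinitely many components, whose periods need not be bounded. The plan is to lift to $F^q$-type estimates using the inessentiality of each component. For $x\in D$, the displacement of $F^{n}(x)$ differs from that of a point $y\in\partial D\subset\mathcal X^-\cup\mathcal X^+$ by at most $\operatorname{diam}(\tilde D)+\operatorname{diam}(F^n(\tilde D))$. The key observation is that $F^n(\tilde D)$ is again a lift of a component of $\mathcal R$. These lifts are contained in the region between lifts of $\mathcal X^-$ and $\mathcal X^+$, and this region lies in a bounded horizontal strip.

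I would argue that all but finitely many components have diameter less than $1/2$. This holds because, for continua in a compact set, only finitely many pairwise disjoint components can have diameter above a fixed $\epsilon$ that is not a limit of smaller ones. Failing this, I would use the fact that each lifted component has horizontal extent bounded by some uniform constant $L$: the lifted component cannot contain a horizontal translate of itself, so its horizontal width is controlled by that of $\mathcal A$ through inessentiality.

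With $\operatorname{diam}$ bounded uniformly, the displacement of $x$ is within a constant of the displacement of a point of $\mathcal X^-\cup\mathcal X^+$. Dividing by $n_i$, the limit lies in $\rho_{\mathcal X^-}(F)\cup\rho_{\mathcal X^+}(F)=\{\rho\}$. Points of $\mathcal A$ not in $\mathcal R$ lie in the circloids, which are already handled. This gives $\rho_{\mathcal A}(F)=\{\rho\}$ with $\rho\in\mathbb Q$, and the rotation numbers of $\mathcal X^-$ and $\mathcal X^+$ coincide by Step 1.
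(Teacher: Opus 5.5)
Your Step~1 is correct, and it proves the final assertion more cleanly than the paper does. A point of $\mathcal X^-\cap\mathcal X^+$ has its whole orbit in both circloids, so the two singleton rotation sets given by Proposition~\ref{propcircloidsrotaciontrivial} must coincide.

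The problem is that Steps~2 and~3 rest entirely on horizontal boundedness of lifts of components of $\mathcal R$, which you assert but do not prove. In Step~2 you claim that $\overline{\tilde D}$ is compact. An open disk in $\mathbb A$ with compact closure can have an unbounded lift, for instance a thin strip that spirals infinitely often around the annulus. Nothing in your argument rules this out for a component $D$ of $\mathcal R(\mathcal X^-,\mathcal X^+)$ when the circloids are wild (indecomposable cofrontiers, prime ends with large principal sets, and so on). Without boundedness, $F^p(\tilde D)=\tilde D+(m,0)$ no longer forces boundary points of $D$ to have rotation number $m/p$, so your identification $m/p=\rho$ fails. A priori the interior of a periodic disk and its boundary can rotate at different speeds; compare Remark~\ref{remarkcontraejemplo}. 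Notice that you never use non-wandering or area preservation for this link. That is exactly where the paper needs it. It takes a periodic point $x_0\in D$ of period $m$ and joins it, inside $D$, by arcs to points $z^\pm\in\mathcal X^\pm\setminus\mathcal X^\mp$. It then applies Theorem~\ref{primeendsrotationthmconservative} to thin neighbourhoods of these arcs, for $g=f^m$ and every $n$, to get $\rho_{\mathcal X^\pm}(G)=\rho_{x_0}(G)=0$.

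Step~3 has a second, independent gap. It is false that only finitely many pairwise disjoint components can have diameter at least $\epsilon$: the strips $(0,1)\times\bigl(\tfrac{1}{k+1},\tfrac1k\bigr)$ are infinitely many disjoint open disks of diameter at least $1$. Also, the fact that $\tilde D\cap(\tilde D+(1,0))=\emptyset$ gives no bound on the horizontal extent of $\tilde D$ (again, think of a spiral). The missing idea is that no uniform control is needed. By Lemma~1(a) of \cite{BoylandAndreHall}, the extremal points of the rotation set of the compact invariant set $\mathcal A$ are realized by recurrent points. Suppose $x\in D$ is recurrent and $D$ has period $p$. Then $f^j(x)$ can return close to $x$ only when $p\mid j$, and in that case $F^j(\tilde x)$ lies near $\tilde x+(jm/p,0)$ inside $\tilde D+(jm/p,0)$. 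Hence $\rho_x(F)=m/p$, with no diameter bound needed. Recurrent points of $\mathcal A\setminus\mathcal R\subseteq\mathcal X^-\cup\mathcal X^+$ have rotation number $\rho$. Combining this with a genuine proof that $m/p=\rho$ for every component, which is what Theorem~\ref{primeendsrotationthmconservative} supplies, gives $\rho_{\mathcal A}(F)=\{\rho\}$ with $\rho\in\mathbb Q$. This is the paper's route.
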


\begin{proof}

Set
\[
\mathcal{R}:=\mathcal{R}(\mathcal{X}^-,\mathcal{X}^+),
\quad
\mathcal{A}:=\mathcal{A}(\mathcal{X}^-,\mathcal{X}^+).
\]
Since $\mathcal{X}^-$ and $\mathcal{X}^+$ meet, as commented in the Preliminaries each connected component of $\mathcal{R}$ is an open disk. Let $D$ be any such component. Since $\mathcal{R}$ is $f$-invariant, $f$ permutes its connected components. As $f$ is non-wandering on $\mathcal{A}$, the disk $D$ must therefore be periodic and hence contains a periodic point. Let $x_0\in D$ be a periodic point of period $m$, and define $g:=f^m$.

Let $G$ be the lift of $g$ satisfying $\rho_{x_0}(G)=0$.

We claim that there exist two arcs $r^-$ and $r^+$ joining $x_0$ to points
\[
z^-\in\mathcal{X}^-\cap(\mathcal{X}^+)^c
\qquad\text{and}\qquad
z^+\in\mathcal{X}^+\cap(\mathcal{X}^-)^c,
\]
respectively, whose interiors are contained in $D$. Indeed, take a ray starting at $x_0$ and accumulating at $-\infty$, disjoint from $\mathcal{X}^+$, and another ray starting at $x_0$ and accumulating at $+\infty$, disjoint from $\mathcal{X}^-$. Since $x_0\in\mathcal{R}(\mathcal{X}^-,\mathcal{X}^+)$, the first ray must intersect $\mathcal{X}^-$ and the second one must intersect $\mathcal{X}^+$. Truncating each ray at its first intersection with the corresponding circloid gives the desired arcs $r^-$ and $r^+$.

Since $g(D)=D$ and both circloids are $g$-invariant, for every $i\in\mathbb{N}$ the arcs $g^i(r^\pm)$ have interiors contained in $D$ and
\[
g^i(z^\pm)\in\mathcal{X}^\pm\cap(\mathcal{X}^\mp)^c.
\]

Fix $n\in\mathbb{N}$. We may choose sufficiently small neighborhoods $U^\pm$ of $r^\pm$ such that they are topological disks satisfying
\[
U^\pm\cap g(U^\pm)\neq\emptyset
\]
and
\[
\bigcup_{i=0}^n g^i(U^\pm)
\]
is inessential. Moreover, every lift $\widetilde U^\pm$ of $U^\pm$ satisfies
\[
G(\widetilde U^\pm)\cap\widetilde U^\pm\neq\emptyset.
\]

Therefore, applying Theorem~\ref{primeendsrotationthmconservative} to $U^+$ and $U^-$ gives
\[
\rho_{\mathcal{X}^+}(G)\in[-1/n,1/n]
\qquad\text{and}\qquad
\rho_{\mathcal{X}^-}(G)\in[-1/n,1/n].
\]
Since $n$ is arbitrary,
\[
\rho_{\mathcal{X}^+}(G)
=
\rho_{\mathcal{X}^-}(G)
=
0.
\]
Hence,
\[
\rho_{x_0}(G)
=
\rho_{\mathcal{X}^+}(G)
=
\rho_{\mathcal{X}^-}(G)
=
0.
\]

Since $D$ is $g$-invariant and $G$ is the lift for which $\rho_{x_0}(G)=0$, the lift of $D$ containing a lift of $x_0$ is $G$-invariant. Recall that for a point $x$ with bounded orbit, whenever the limit
\[
\lim_{n\to+\infty}\frac{\operatorname{pr}_1(G^n(\tilde{x})-\tilde{x})}{n}
\]
exists, we define its rotation number $\rho_x(G)$ as this limit. In particular, if $x$ is a recurrent point in $D$ whose rotation number exists, then $\rho_x(G)=0$. Indeed, it is enough to take a sequence $n_k\to+\infty$ such that $g^{n_k}(x)\to x$. Since $G$ leaves invariant the lift of $D$ containing $\tilde{x}$, we must have
\[
G^{n_k}(\tilde{x})\to\tilde{x},
\]
and hence $\rho_x(G)=0$.

On the other hand, since the rotation sets of the invariant circloids $\mathcal{X}^+$ and $\mathcal{X}^-$ are both the singleton $\{0\}$, any recurrent point in either of these circloids whose rotation number exists must also have rotation number zero.

We conclude that, since $D$ was an arbitrary connected component of $\mathcal{R}$ and
\[
\mathcal{A}\setminus\mathcal{R}\subseteq\mathcal{X}^-\cup\mathcal{X}^+,
\]
every recurrent point in $\mathcal{A}$ whose rotation number exists has rotation number zero for $G$.

To show that $\rho_{\mathcal{A}}(G)=\{0\}$, assume by contradiction that this rotation set is not a singleton. Since the rotation set of any compact invariant continuum is a compact interval---see, for instance, \cite{passeggi2023weak}---the rotation set of $\mathcal{A}$ is a nondegenerate interval $[a,b]$, with at least one of its endpoints different from $0$, say $b\neq0$. It is shown in Lemma~1(a) of \cite{BoylandAndreHall} that the extremal points of the Misiurewicz--Ziemian rotation set---the one considered in this article---of a compact metric space are realized by recurrent points. In our setting, the compact metric space is $\mathcal{A}$ and we consider the extremal point $b\neq0$. Thus, there exists a recurrent point $z\in\mathcal{A}$ such that
\[
\rho_z(G)
=
\lim_{n\to+\infty}
\frac{\operatorname{pr}_1(G^n(\tilde{z})-\tilde{z})}{n}
=
b
\]
for any lift $\tilde{z}$ of $z$.

But, as observed above, $z$ can belong neither to $\mathcal{R}$ nor to $\mathcal{X}^+\cup\mathcal{X}^-$, leading to a contradiction.

It follows that
\[
\rho_{\mathcal{A}}(G)=\{0\}.
\]

Therefore, for every lift $F$ of $f$, the rotation set
\[
\rho_{\mathcal{A}}(F)
\]
is a singleton. Since it coincides with the rotation number of the periodic point $x_0$, this singleton consists of a rational number.

\end{proof}

\bigskip

We obtain the following two useful lemmas.

\begin{lemma}
	\label{lemacircloidsdisjuntosdosdiscos}
	Let $f\in\mathrm{Homeo}_{0,nw}(\mathbb{A})
	\cup
	\mathrm{Homeo}_{0,\lambda}(\mathbb{A})$, and let $U_0,U_1$ form an $n$-dpd for some $n\geq 3$, with $\rho_f(U_0,U_1)=\rho\in\mathbb{N}^*$.
	Let $\mathcal{X}^-$ and $\mathcal{X}^+$ be two different $f$-invariant circloids such that $\mathcal{X}^-\preccurlyeq\mathcal{X}^+$. If both disks $U_0,U_1$ meet the annular continuum $\mathcal{A}(\mathcal{X}^-,\mathcal{X}^+)$, then
	\[
	\mathcal{X}^-\cap\mathcal{X}^+=\emptyset.
	\]
	In particular, $\mathcal{R}(\mathcal{X}^-,\mathcal{X}^+)$ is a regular annulus.
	
	Furthermore, if $\rho \geq 2$, the result remains valid for $n \geq 2$, and if $\rho \geq 3$, it holds for every $n \in \mathbb{N}$.
\end{lemma}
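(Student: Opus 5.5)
Argue by contradiction: assume $\mathcal{X}^-\cap\mathcal{X}^+\neq\emptyset$. Corollary~\ref{corolariodiscoentrecircloids} then says that, for a fixed lift $F$, the rotation set $\rho_{\mathcal{A}}(F)$ of $\mathcal{A}=\mathcal{A}(\mathcal{X}^-,\mathcal{X}^+)$ is a single rational number $\{p/q\}$. In particular $\rho_{\mathcal{X}^\pm}(F)=\{p/q\}$, and by Proposition~\ref{propcircloidsrotaciontrivial} the prime-ends rotation numbers of both circloids equal $p/q$. The strategy is to show that the rotation number of the circloid meeting $U_0$ lies within $1/n$ of $k_0$, and the one meeting $U_1$ lies within $1/n$ of $k_1$, where $k_0,k_1$ are the integers defining $\rho(U_0,U_1)=k_1-k_0$. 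This is incompatible with both being equal to $p/q$ once $\rho>2/n$, which explains the thresholds: $n\ge3$ for $\rho\ge1$, $n\ge2$ for $\rho\ge2$, and any $n$ for $\rho\ge3$. (In the borderline cases the intervals $[k_0-1/n,k_0+1/n]$ and $[k_1-1/n,k_1+1/n]$ could only touch at a point $k_0+1/n=k_1-1/n$; this is excluded either because the inequality $\rho>2/n$ is strict, or by a short extra remark.)

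\textbf{Step 1: move the disks onto a boundary.} Each $U_j$ meets $\mathcal{A}$, so either it meets $\mathcal{X}^-\cup\mathcal{X}^+$, or it lies entirely inside a component $D$ of $\mathcal{R}(\mathcal{X}^-,\mathcal{X}^+)$, which is a disk. In the second case, $D$ is periodic, and the proof of Corollary~\ref{corolariodiscoentrecircloids} shows that a lift of $D$ is invariant under the corresponding lift $G$ of $f^m$. Since $U_j\cap f(U_j)$ has nonempty interior, $f(D)=D$, and therefore the lift $F+(k_j,0)$ fixes the lift of $D$ containing $\tilde U_j$. As the boundary of that lift of $D$ meets the lifts of $\mathcal{X}^\pm$, this forces $\rho_{\mathcal{A}}(F)=\{k_j\}$. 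So whenever $U_j$ is interior to $\mathcal{R}$ we obtain $p/q=k_j$ directly.

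\textbf{Step 2: apply Theorem~\ref{primeendsrotationthmconservative}.} Now suppose $U_j$ meets, say, $\mathcal{X}^+$. Since $U_j$ also meets the component of $\mathcal{R}$ (or of $\mathcal{U}^+(\mathcal{X}^+)$) adjacent to that point, we can apply Theorem~\ref{primeendsrotationthmconservative} to the invariant annulus $A=\mathcal{U}^+(\mathcal{X}^+)$ or $A=\mathcal{U}^-(\mathcal{X}^+)$ (the latter after the reflection $y\mapsto -y$, so that it is bounded below). The lift is $F-(k_j,0)$ and the disk is $U_j$, whose first $n$ iterates form an inessential union by the $n$-dpd hypothesis. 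This yields $|p/q-k_j|\le 1/n$.

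\textbf{Main obstacle.} The hypothesis $\operatorname{Int}(U_j\cap f(U_j))\cap A\neq\emptyset$ is not automatic: $U_j$ might touch $\mathcal{X}^+$ only at points where $U_j\cap f(U_j)$ does not reach $A$. To handle this, I would first shrink or enlarge $U_j$ slightly, keeping the dpd properties, which are open conditions. Alternatively, I would choose the side of the circloid correctly: the interior of $U_j\cap f(U_j)$ is a nonempty open set meeting $\mathcal{A}$ or its complement, and every open set meeting $\mathcal{X}^\pm$ meets one of the complementary components. Note also that if $U_j$ lies in $\mathcal{R}$ without meeting the circloids, Step 1 already applies. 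Combining Steps 1 and 2 for $j=0,1$ gives $|k_1-k_0|\le 2/n$, which is the desired contradiction. Finally, once $\mathcal{X}^-\cap\mathcal{X}^+=\emptyset$, the preliminaries show that $\mathcal{R}(\mathcal{X}^-,\mathcal{X}^+)$ is a regular annulus.
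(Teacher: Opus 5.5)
Your proof is correct and follows the paper's argument. Assuming $\mathcal{X}^-\cap\mathcal{X}^+\neq\emptyset$, Corollary~\ref{corolariodiscoentrecircloids} makes $\rho_{\mathcal{A}}(F)$ a single number; each disk then forces this number to equal $k_j$ (when the disk lies in a component of $\mathcal{R}$) or to lie within $1/n$ of $k_j$ (via Theorem~\ref{primeendsrotationthmconservative}), and $\rho>2/n$ gives the contradiction, with your remark about choosing the side of the circloid met by $\operatorname{Int}(U_j\cap f(U_j))$ making explicit a detail the paper leaves implicit. One small slip: in Step~1 the lift that fixes the lift of $D$ containing $\tilde U_j$ is $F-(k_j,0)$, not $F+(k_j,0)$, and that is the lift your conclusion $\rho_{\mathcal{A}}(F)=\{k_j\}$ and Step~2 actually use.
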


\begin{proof}
	Assume, by contradiction, that
	$\mathcal{X}^-\cap\mathcal{X}^+\neq\emptyset$. By Corollary~\ref{corolariodiscoentrecircloids}, the annular continuum $\mathcal{A}(\mathcal{X}^-,\mathcal{X}^+)$ has a trivial rotation set.

	Let $F$ be the lift of $f$ for which
	\[
	F(\widetilde{U}_0)\cap\widetilde{U}_0\neq\emptyset
	\]
	for every lift $\widetilde{U}_0$ of $U_0$. Since $U_0$ intersects $\mathcal{A}(\mathcal{X}^-,\mathcal{X}^+)$, either it is contained in a disk component of its interior or it intersects its boundary. In the first case, the periodicity of this disk component implies that the unique rotation number of $\mathcal{A}(\mathcal{X}^-,\mathcal{X}^+)$ is $0$, while in the second case Theorem~\ref{primeendsrotationthmconservative} gives
	\[
	\rho_{\mathcal{A}(\mathcal{X}^-,\mathcal{X}^+)}(F)
	\in[-1/n,1/n].
	\]
	Thus, in either case,
	\[
	\rho_{\mathcal{A}(\mathcal{X}^-,\mathcal{X}^+)}(F)
	\in[-1/n,1/n].
	\]

	The same argument applied to $U_1$ gives
	\[
	\rho_{\mathcal{A}(\mathcal{X}^-,\mathcal{X}^+)}(F)
	\in[\rho-1/n,\rho+1/n],
	\]
	which is impossible since $n\geq3$ and $\rho\in\mathbb{N}^*$. Hence
	\[
	\mathcal{X}^-\cap\mathcal{X}^+=\emptyset.
	\]
	Therefore, $\mathcal{R}(\mathcal{X}^-,\mathcal{X}^+)$ is a regular annulus.

\end{proof}

\begin{lemma}
	\label{lematecnicocircloidmaximal}
	Consider $f \in \mathrm{Homeo}_{\,0,nw}(\mathbb{A})\cup \mathrm{Homeo}_{\,0,\lambda}(\mathbb{A})$ and a closed disk $U$ such that $\operatorname{Int}(U\cap f(U))\neq\emptyset$. Let $F$ be the lift of $f$ for which every lift of $U$ meets its image under $F$.
	Then, for every integer $N>0$, there exists a neighborhood $\mathcal{V}$ of the upper end of $\mathbb{A}$ such that any $f$-invariant circloid $\mathcal{C}$ satisfying $\mathcal{C}\cap\mathcal{V}\neq\emptyset$ and $U\subset\mathcal{U}^+(\mathcal{C})$ verifies
	\[
	\rho_{\mathcal{C}}(F)\in[-1/N,1/N].
	\]
\end{lemma}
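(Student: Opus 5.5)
The plan is to reduce the statement to Theorem~\ref{primeendsrotationthmconservative}, applied to a small disk built near the upper end, and then use Proposition~\ref{propcircloidsrotaciontrivial} to pass from the prime-ends rotation number to the full rotation set. The delicate point is that $U$ need not meet $\mathcal{C}$ at all. So the disk fed into the theorem cannot be $U$ itself; it must be a modification of $U$ that reaches up to $\mathcal{C}$ while keeping the same ``return'' lift $F$.

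\textbf{Step 1: a ray from $U$ to the upper end.} Fix $N$. Pick a point $x\in\operatorname{Int}(U\cap f(U))$ and a ray $\gamma$ from $x$ to $+\infty$ that is disjoint from $U$ except near $x$. Since $\gamma$ has a bounded piece and the rest lies in a neighborhood of $+\infty$, we can choose $\mathcal V$ as follows. First take a compact initial segment $\gamma_0\subset\gamma$ that ends at height $h$. Then take $\mathcal V=S^1\times(h',+\infty)$ with $h'$ so large that the first $N$ iterates of a thin closed neighborhood $T$ of $\gamma_0$, together with $U$, remain inside a compact region. The more natural choice is actually the reverse: choose $\mathcal V$ first (say above all of $\bigcup_{i\le N}f^i(U)$), and then, given $\mathcal{C}$ meeting $\mathcal V$, use the fact that $\mathcal{C}$ is essential and $U\subset\mathcal U^+(\mathcal{C})$.

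\textbf{Step 2: the key dichotomy and the main obstacle.} The hard part is that the circloid may lie arbitrarily high, so we must produce a disk meeting $\mathcal{C}$ whose first $N$ iterates stay inessential, uniformly in $\mathcal{C}$. I would exploit invariance of $\mathcal U^+(\mathcal{C})$. Since $U\subset \mathcal U^+(\mathcal{C})$ and $\mathcal{C}$ meets $\mathcal V$, $\mathcal{C}$ must ``dip'' from high up down below $U$. Hence $\mathcal{C}$ contains points arbitrarily close to $U$, or at least $\overline{\mathcal U^-(\mathcal{C})}$ reaches into $\mathcal V$ and then comes down around $U$.

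Here is how I would obtain uniformity. By continuity, $f^i$ for $i\le N$ moves points of a fixed compact set $K_0\supset U$ by a bounded amount. Choose $\mathcal V$ so that any essential continuum meeting both $\mathcal V$ and the region below $U$ contains a ``vertical'' subcontinuum. Then thicken $U$ along an arc $\beta\subset\mathcal U^+(\mathcal{C})$ from $U$ to a point of $\mathcal{C}$ near $U$, producing a closed disk $U'=U\cup N_\epsilon(\beta)$ with $\operatorname{Int}(U'\cap f(U'))\cap\mathcal U^+(\mathcal{C})\neq\emptyset$.

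The inessentiality of $\bigcup_{i\le N}f^i(U')$ is what I expect to be the real obstacle. My first attempt is to take $\beta$ inside $\operatorname{Fill}$ of a small neighborhood of $U$. This works if $\mathcal{C}$ comes within $\delta$ of $U$, where $\delta$ is chosen so that the $\delta$-neighborhood $U_\delta$ of $U$ still has $\bigcup_{i\le N} f^i(U_\delta)$ inessential and every lift of $U_\delta$ returns under $F$. That choice of $\delta$ is possible when $N$-fold iterates of $U$ are inessential, which is implicit in applying the theorem. The problematic case is when $\mathcal{C}$ stays $\delta$-far from $U$ while still lying below and above it. In that case I would argue by compactness and contradiction: take a sequence of invariant circloids $\mathcal{C}_k$ meeting $S^1\times(k,\infty)$ and staying $\delta$-away from $U$ yet having $U\subset\mathcal U^+(\mathcal{C}_k)$. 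Their parts below height $h$ would accumulate to a continuum that, together with the unbounded upward parts, yields a contradiction with non-wandering or area preservation. The reason is that the region $\mathcal U^-(\mathcal{C}_k)$ would contain a long invariant ``tongue'' of large area squeezed around $U$.

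\textbf{Step 3: conclude.} With $U'$ meeting $\mathcal{C}$, and with $F(\tilde U')\cap\tilde U'\neq\emptyset$ because it contains $\tilde U$, apply Theorem~\ref{primeendsrotationthmconservative} to the annulus $A=\mathcal U^+(\mathcal{C})$, with $n=N$, to get $\rho^+_{\mathcal{C}}(F)\in[-1/N,1/N]$. Proposition~\ref{propcircloidsrotaciontrivial} then gives $\rho_{\mathcal{C}}(F)=\{\rho^+_{\mathcal{C}}(F)\}\subset[-1/N,1/N]$.
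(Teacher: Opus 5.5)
Your architecture is the paper's: enlarge $U$ by an arc reaching $\mathcal{C}$, apply Theorem~\ref{primeendsrotationthmconservative} to $A=\mathcal{U}^+(\mathcal{C})$ with $n=N$, then use Proposition~\ref{propcircloidsrotaciontrivial}. Step~3 is fine. Step~2, however, has a genuine gap. You only treat circloids that come $\delta$-close to $U$. You propose to exclude the rest (circloids meeting $S^1\times(k,+\infty)$, passing below $U$, but staying $\delta$-far from it) by contradicting non-wandering or area preservation. No such contradiction exists. Take $f=\mathrm{id}$, which is both non-wandering and area-preserving, so every circloid is invariant. An essential simple closed curve passing well below a small disk $U$, with a thin spike up to height $k$ at an angle away from $U$, gives exactly such a sequence $\mathcal{C}_k$. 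An invariant ``tongue'' of large area is perfectly compatible with area preservation, so the far case occurs and must be handled, not ruled out. Relatedly, the inessentiality of $\bigcup_{i\le N}f^i(U)$ is not a hypothesis of the lemma and cannot be ``implicit''; it has to be derived. Also, choosing $\mathcal{V}$ merely above $\bigcup_{i\le N}f^i(U)$, as in your Step~1, is too weak for what is needed.

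The missing idea is that the connecting arc may be long: inessentiality comes from a separation argument, not from smallness.

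\begin{itemize}
\item Choose $r_0$ with $U\subset S^1\times(-\infty,r_0)$, and $K$ with $\bigcup_{i=0}^N f^i\bigl(S^1\times(-\infty,r_0)\bigr)\subset S^1\times(-\infty,K)$. Put $\mathcal{V}=S^1\times(K,+\infty)$.
\item Suppose $\mathcal{C}$ meets $\mathcal{V}$ and $U\subset\mathcal{U}^+(\mathcal{C})$. Then no essential closed curve in $\mathcal{U}^+(\mathcal{C})$ lies below height $K$: such a curve would separate $\mathcal{C}$ from $+\infty$ and force $\mathcal{C}\subset S^1\times(-\infty,K)$. Hence every component of $\mathcal{U}^+(\mathcal{C})\cap\bigl(S^1\times(-\infty,K)\bigr)$ is inessential.
\item For the same reason, the horizontal circle $S^1\times\{s\}$ through a point of $U$ must meet $\mathcal{C}$. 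So it contains an arc $\beta$, with interior in $\mathcal{U}^+(\mathcal{C})$, joining $U$ to $\mathcal{C}$; this arc may go far around the annulus.
\item Since $U\cup\beta\subset S^1\times(-\infty,r_0)$ and consecutive iterates of $U$ meet, $\bigcup_{i=0}^N f^i(U\cup\beta)$ lies, apart from its endpoints on $\mathcal{C}$, in one inessential component of that set. This also yields the inessentiality of $\bigcup_{i\le N}f^i(U)$.
\end{itemize}

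A thin closed neighborhood of $U\cup\beta$ then satisfies the hypotheses of Theorem~\ref{primeendsrotationthmconservative} uniformly in $\mathcal{C}$, and your Step~3 finishes the proof. This is exactly the paper's argument.
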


\begin{proof}
	
	Fix an integer $N>0$. For each $r\in\mathbb{R}$, let
	\[
	\gamma_r=S^1\times\{r\},
	\]
	and define
	\[
	W_r^+:=\mathcal{U}^+(\gamma_r),\qquad
	W_r^-:=\mathcal{U}^-(\gamma_r).
	\]
	
	Choose $r_0$ such that $U\subset W_{r_0}^-$, and then choose $K$ sufficiently large so that
	\[
	\bigcup_{i=0}^Nf^i(W_{r_0}^-)\subset W_K^-.
	\]
	
	Set $\mathcal{V}:=W_K^+$, and let $\mathcal{C}$ be an $f$-invariant circloid satisfying $\mathcal{C}\cap\mathcal{V}\neq\emptyset$ and $U\subset\mathcal{U}^+(\mathcal{C})$. Since $\bigcup_{i=0}^Nf^i(U)\cap\mathcal{V}=\emptyset$, the connected component of $\operatorname{Int}\bigl(\mathcal{U}^+(\mathcal{C})\cap W^-_K\bigr)$ containing $\bigcup_{i=0}^Nf^i(U)$ is a disk. In particular, $\bigcup_{i=0}^Nf^i(U)$	is inessential.
	
	Next choose $s<r_0$ such that $\gamma_s\cap U\neq\emptyset$, and let $\hat U$ be a sufficiently small neighborhood of the connected component of
	\[
	U\cup\bigl(\gamma_s\cap\mathcal{U}^+(\mathcal{C})\bigr)
	\]
	containing $U$, such that $\bigcup_{i=0}^Nf^i(\hat U)$	is inessential and contained in $W_K^-$.
	
	Since $\gamma_s$ intersects $\mathcal{C}$, the interior of $\hat U$ also intersects $\mathcal{C}$. Applying Theorem~\ref{primeendsrotationthmconservative}, we obtain
	\[
	\rho_{\mathcal{C}}(F)\in[-1/N,1/N].
	\]
	
\end{proof}

\section{Rotational chaos and instability regions}
\label{sec.Instability}

In this section, we prove Theorems~\ref{maintheorem} and~\ref{maintheorembr}, which establish the existence of rotational chaos under suitable conditions on the disks $U_0$ and $U_1$. A key ingredient is Theorem~\ref{primeendsrotationthmconservative}.

One consequence---although not an immediate one---of the existence of two Birkhoff-related disks is that the rotation set of the map must be nontrivial. More precisely, in a general setting, without assuming that the map is non-wandering, the existence of two disjoint disks $U_0$ and $U_1$ with positive rotational difference that are $3$--Birkhoff related implies the existence of two periodic points with different rotation numbers, and hence a nontrivial rotation set.

In the conservative setting, this conclusion can be strengthened: every invariant subannulus intersecting both disks must itself have a nontrivial rotation set and contain at least two periodic points with different rotation numbers. For readers familiar with the subject, this result is perhaps not unexpected. The proof, deferred to Section~\ref{sec.toplem}, is based on ideas from the celebrated article of Franks~\cite{Franks(PoincBirkh)} involving \emph{periodic chains of disks}. The precise statement is given in the proposition below. Moreover, under the assumptions of the main theorems, it establishes the realization of every rational number in a suitable interval by periodic points. This part of the argument relies on results of Le Calvez~\cite{LeCalvez2005IHES}.

Recall that a homeomorphism $f\in\mathrm{Homeo}_{\,0}(\mathbb{A})$ satisfies the \emph{curve intersection property} if every essential simple closed curve $\gamma\subset\mathbb{A}$ satisfies
\[
f(\gamma)\cap\gamma\neq\emptyset.
\]
We also recall that a regular annulus is an open essential subannulus bounded either by circloids or by the ends of $\mathbb{A}$.

\begin{prop}
	\label{propbirkhoffrelated}
Consider $f \in \text{Homeo}_{\,0,nw}(\mathbb{A}) \cup \text{Homeo}_{\,0,\lambda}(\mathbb{A})$. Let $\overline{A} \subset \mathbb{A}$ be the closure of a regular $f$--invariant subannulus $A$, and let $U_0, U_1$ be an $n$--dpd with $n \geq 3$ and $\rho_f(U_0,U_1) = \rho \in \mathbb{N}^*$, such that both $U_0$ and $U_1$ intersect $A$.
Assume that $U_0$ visits $U_1$ and $U_1$ visits $U_0$. If $F$ is the lift of $f$ for which $F(\tilde{U}_0)\cap \tilde{U}_0 \neq \emptyset$, then
\[
[1/n,\, \rho - 1/n] \subseteq \rho_{A}(F),
\]
and $A$ contains at least two periodic points $z_1$ and $z_2$ with different rotation numbers. Furthermore
\begin{itemize}
	\item If $A$ is bounded below or above or,
	\item if $A=\mathbb{A}$ and verifies the curve intersection property
\end{itemize}
Then every rational in $(1/n,\rho-1/n)$ is realized by a periodic point in $A$.
\medskip

Moreover, if $U_0$ and $U_1$ do not meet $\partial A$, the existence of such periodic points does not require the non-wandering or area-preserving assumption, nor the regularity of $A$. More precisely, for any $f\in\mathrm{Homeo}_{\,0}(\mathbb{A})$ and any $f$-invariant open subannulus $A$ containing $U_0$ and $U_1$, we can ensure the existence of periodic points $z_1,z_2\in A$ realizing the rotation numbers $1/n$ and $\rho-1/n$, respectively, for the lift $F$.
\medskip

If $\rho \geq 2$, the result remains valid for $n \geq 2$, and if $\rho \geq 3$, it is valid for $n \geq 1$.
\end{prop}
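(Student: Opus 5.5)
The plan follows Franks' strategy from \cite{Franks(PoincBirkh)}. We perturb $f$ so that it acquires fixed points inside $U_0$ and $U_1$, produce periodic orbits with prescribed rotation numbers for the perturbed map by means of periodic disk chains, and then check that these orbits avoid the support of the perturbation, so they are orbits of $f$ itself.

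\emph{Perturbation.} Pick $y_0\in\operatorname{Int}(U_0)$ with $f(y_0)\in\operatorname{Int}(U_0)$, and likewise $y_1$ for $U_1$. Let $h$ be a homeomorphism supported in two small disks, one in $\operatorname{Int}(U_0)$ and one in $\operatorname{Int}(U_1)$, sending $f(y_i)$ to $y_i$. Set $g=h\circ f$.
\begin{itemize}
\item Then $y_0,y_1$ are fixed by $g$. With the lift $G=\widetilde h\circ F$, they have rotation numbers $0$ and $\rho$, by the definition of $\rho(U_0,U_1)$ and the choice of $F$.
\item The $n$-dpd conditions (disjointness of $f^i(U_0)$ and $f^j(U_1)$, inessentiality of the unions of the first $n$ iterates) are designed so that the lifted supports of $h$ move by at most a controlled amount during $n$ iterates.
\end{itemize}

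\emph{Periodic chains of disks.} Fix a rational $p/q\in(1/n,\rho-1/n)$ and consider the lift $T^{-p}G^q$ of $g^q$. Since $U_0$ visits $U_1$ and $U_1$ visits $U_0$, there are orbit segments of $g$ (equal to orbit segments of $f$ outside the supports) going from near $y_0$ to near $y_1$ and back. Concatenating these with many turns around the fixed points $y_0$ (rotation $0$) and $y_1$ (rotation $\rho$), and thickening into small free disks, we obtain a periodic disk chain for $T^{-p}G^q$. This also gives positively and negatively returning disks for suitable lifts. Franks' lemma then yields a fixed point of $T^{-p}G^q$, that is, a $g$-periodic point realizing $p/q$.

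\emph{Removing the perturbation.} A periodic orbit passing through the support of $h$ near $U_0$ would shadow $U_0\cup\dots\cup f^n(U_0)$ for $n$ iterates. By inessentiality and $F(\widetilde U_0)\cap\widetilde U_0\neq\emptyset$, its displacement there is within $1$ per $n$ steps of rotation $0$; the analogous statement holds near $U_1$ with rotation $\rho$. This is the step I expect to be the main obstacle: turning this into the statement that such an orbit cannot have rotation number in $(1/n,\rho-1/n)$. I would try to do it through a counting argument on the fundamental domain crossings of the lifted orbit, which is exactly where the thresholds $n\ge3$, $n\ge2$, $n\ge1$ enter.

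Granting this, these orbits are $f$-orbits, and since $U_0,U_1\subset A$ and $A$ is invariant, they lie in $A$. The same construction at the endpoints gives points of rotation $1/n$ and $\rho-1/n$, so $[1/n,\rho-1/n]\subseteq\rho_A(F)$ by convexity of the rotation set. This argument uses neither non-wandering nor regularity, which is the ``Moreover'' part of the statement.

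\emph{Disks meeting $\partial A$.} When $U_0$ or $U_1$ meets $\partial A$, the chains must be built inside $A$ itself. Here we use Theorem~\ref{primeendsrotationthmconservative} and Proposition~\ref{propcircloidsrotaciontrivial}: a boundary circloid meeting $U_0$ has trivial rotation set within $[-1/n,1/n]$, and one meeting $U_1$ has trivial rotation set within $[\rho-1/n,\rho+1/n]$. So the boundary rotations lie on the correct sides of the interval, and the chains can be confined to $A$.

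\emph{Realization of every rational.} In the two listed cases ($A$ bounded on one side, or $A=\mathbb{A}$ with the curve intersection property), we would invoke Le Calvez's realization theorem \cite{LeCalvez2005IHES}, using the two periodic points $z_1,z_2$ of different rotation numbers and the nonwandering or area-preserving structure, to realize every rational in $(1/n,\rho-1/n)$ by a periodic point in $A$.
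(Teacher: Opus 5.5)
There is a genuine gap, and it sits at the step you flagged. It is not a matter of finding a cleverer counting argument: for $p/q$ with $q>n$ the scheme fails in two places.

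First, Franks' chain has to be built from the large disks $U_0,g(U_0),\dots$ and $U_1,g(U_1),\dots$. Small disks around $y_0,y_1$ are not joined by genuine orbits to the points $x_0,x_1$ that realize the visits, so they cannot serve as links. Freeness of $\widetilde U_0$ for $T^{-p}G^q$ is exactly what inessentiality of $\bigcup_{i\le q}f^i(U_0)$ gives, and the $n$-dpd provides this only for $q\le n$.

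Second, for $q>n$ nothing prevents a $g$-periodic orbit of rotation $p/q$ from spending $n$ steps in the lifted union over $U_0$ with displacement near $0$, then travelling to $U_1$ and moving with displacement near $\rho$ there. Franks' lemma does not localize the fixed point, so it may well lie in the support of $h$. Note that your argument would realize every intermediate rational for arbitrary $f\in\mathrm{Homeo}_0(\mathbb{A})$, which the statement deliberately does not claim.

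For $q\le n$ both problems disappear, and that is all you need. Use only $H=T^{-1}G^n$ and $H=T^{-(n\rho-1)}G^n$. The fixed points in $U_0$ and $U_1$ give displacements of opposite signs, which lets you close the chain with translates. Removing the perturbation is then automatic. If $\tilde z\in\widetilde U_0$ and $G^n(\tilde z)=\tilde z+(1,0)$, then $\tilde z+(1,0)$ lies both in the connected lift $\bigcup_{i=0}^nG^i(\widetilde U_0)$ of an inessential set and in its translate by $(1,0)$, which is impossible. The same works in $U_1$ because $1-n\rho\neq 0$. Since every point of the $g$-orbit is fixed by $H$, the whole orbit misses the support.

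This yields periodic points of rotation exactly $1/n$ and $\rho-1/n$. The intermediate rationals must then come from Le Calvez's theorem (Proposition~\ref{realizacionPatrice}) applied to the unperturbed $f$. That is precisely why the curve intersection property is needed: it is automatic for non-wandering maps and follows from an area argument when $A$ is bounded on one side. Also, these orbits avoid $U_0\cup U_1$, so invariance of $A$ alone does not place them in $A$. The chain must live in, and Franks' lemma must be applied on, the universal cover of $A$.

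Your paragraph on disks meeting $\partial A$ is a second gap: knowing the boundary rotation numbers does not let you ``confine the chains to $A$''. If $U_1$ meets $\partial A$, then $U_1\cap A$ need not be a disk and may have infinitely many components. Moreover $y_1$ may lie outside $A$, and you cannot create a fixed point of the right lift inside $U_1\cap A$. So $U_1$ cannot be a link of a chain in the universal cover of $A$. The paper splits this case:
\begin{itemize}
\item If both disks meet $\partial A$, Theorem~\ref{primeendsrotationthmconservative} forces them to meet different boundary circloids, with rotation numbers in $[-1/n,1/n]$ and $[\rho-1/n,\rho+1/n]$. Then $A$ is bounded and twist-type realization (Corollary~\ref{corolariorealizacionPatrice}) applies directly.
\item If only $U_1$ meets $\partial A$, one perturbs only inside $U_0$ and replaces $U_1$ by the disk $M=\widetilde B\cup\operatorname{Int}(\widetilde W_0\cup\widetilde W_1)$. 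Here $W_0,W_1$ are the maximal cross-sections of $U_1\cap A$ containing the visiting points, and $B$ is a thin collar of the boundary in prime-end coordinates. That $M$ is free for $T^{-1}G^n$ and that its image meets a positive translate of $M$ rely on Proposition~\ref{proppancitasincluidas}. This is where the non-wandering or area-preserving hypothesis is genuinely used.
\end{itemize}
The second case yields only the orbit of rotation $1/n$, since $M$ need not be free for $T^{-(n\rho-1)}G^n$. The other end of the interval comes from the boundary circloid's rotation number, which lies in $[\rho-1/n,\rho+1/n]$, via Lemma~\ref{lemmacorolariorealizacionPatrice}. There the prime-ends rotation number replaces one of the recurrent points in Le Calvez's theorem.
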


\begin{rmk}
	Recall that a \emph{Birkhoff instability region} is an $f$-invariant subannulus containing two periodic points with different rotation numbers whose ends are Birkhoff-related, i.e., every pair of disjoint neighborhoods of its ends satisfies that the forward orbit of one intersects the other. The existence of a Birkhoff instability region implies the existence of a rotational horseshoe. Therefore, the proofs of Theorems~\ref{maintheorem} and~\ref{maintheorembr} reduce to establishing the existence of an instability region intersecting both $U_0$ and $U_1$.
\end{rmk}

\begin{rmk}
	\label{remarkCIPinstaregions}
	Observe that every essential simple closed curve $\gamma$ contained in a Birkhoff instability region $A$ satisfies $f(\gamma)\cap\gamma\neq\emptyset$. 
	Indeed, otherwise one end of $A$ cannot be visited from the other, contradicting the fact that they are Birkhoff-related. Therefore, every Birkhoff instability region satisfies the curve intersection property.
\end{rmk}

\bigskip

\subsection{Proof of Theorem~\ref{maintheorem}}

For the reader's convenience, we recall the statement of Theorem~\ref{maintheorem}.

\begin{thmsinnum}{\textbf{A}}
	Let $f \in \mathrm{Homeo}_{0,nw}(\mathbb{A})$ and assume that, for some $n \ge 3$, there exists an $n$-dpd $U_0, U_1$ with nonvanishing rotational difference $\rho \in \mathbb{N}^*$ such that $U_0$ visits $U_1$.
	
	Then $f$ exhibits rotational chaos and the rotational horseshoe for a power of $f$ can be considered included in a Birkhoff instability region $\mathcal{I}$ meeting both $U_0$ and $U_1$. Moreover, for some lift $F$ of $f$,
	\[
	[1/n, \rho - 1/n] \subseteq  \rho_{\overline{\mathcal{I}}}(F)
	\]
	and every rational number in $(1/n,\rho-1/n)$ is realized by a periodic point in $\mathcal{I}$.
	
	If $\rho \ge 2$, the result remains valid for $n \ge 2$; and if $\rho \ge 3$, it remains valid for $n \ge 1$.
\end{thmsinnum}

\bigskip

We will make use of the following result, which appears as Proposition~D in the article by Tal and Le Calvez \cite{fabiopatricehorseshoes} and implies that every Birkhoff instability region carries a topological rotational horseshoe. In the mentioned article the rotation number of a point is generalized to non-escaping points, but in our context we only need the classic rotation number associated to periodic points. We refer the reader to \cite{fabiopatricehorseshoes} for further details.

\begin{prop}[Proposition D in \cite{fabiopatricehorseshoes}]
	\label{propDFabioPatrice}
	Let $f \in \mathrm{Homeo}_{\,0}(\mathbb{A})$ and let $F$ be a lift of $f$. Assume that:
	\begin{itemize}
		\item $F$ has at least two different rotation numbers; and
		\item the ends of $\mathbb{A}$ belong to the same Birkhoff recurrence class.
	\end{itemize}
	Then some iterate of $f$ admits a topological rotational horseshoe.
\end{prop}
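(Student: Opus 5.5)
This is Proposition~D of \cite{fabiopatricehorseshoes}, which the paper quotes rather than proves. If I had to reprove it, I would use the forcing theory of Le Calvez and Tal (maximal isotopies, transverse foliations and transverse trajectories), together with their criterion that an $\mathcal{F}$-transverse self-intersection of a transverse trajectory (or of a trajectory with a translate of itself) in the universal cover forces a topological horseshoe for some iterate.

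\textbf{Setup.} Compactify $\mathbb{A}$ to $S^2$ by adding the two ends $N=+\infty$ and $S=-\infty$, which become fixed points. Take a maximal isotopy $I$ from the identity to $f$ whose fixed-point set contains $N$ and $S$ and whose restriction to $\mathbb{A}$ lifts to an isotopy ending at $F$. Le Calvez's equivariant Brouwer theory then gives a singular foliation $\mathcal{F}$ that is transverse to $I$ on the complement of the singular set. Using the hypothesis on rotation numbers (realized by periodic points, or more generally by non-escaping points), pick points $z_1,z_2$ with different rotation numbers for $F$. Their whole transverse trajectories $I^{\mathbb{Z}}_{\mathcal{F}}(z_i)$ then wind differently around the annulus. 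In the periodic case each is a transverse loop, and at least one of them, say the loop $\Gamma$ of $z_1$, is essential in $\mathbb{A}$ with nonzero winding relative to the other.

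\textbf{Main step.} Show that, in the universal cover of the annulus, some transverse trajectory meets a nontrivial translate of itself (or of $\Gamma$) $\mathcal{F}$-transversally. I would argue by contradiction. If no such intersection exists, then the lifts of $\Gamma$ and its translates are pairwise non-crossing in the $\mathcal{F}$-transverse sense. In that case the set of leaves met by $\Gamma$ forms an essential open annulus $B\subset\mathbb{A}$, and every leaf of $B$ is crossed by orbits in one direction only, from left to right of $\Gamma$.

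\textbf{Conclusion.} From this one-sided crossing, build an essential region adjacent to one end that is forward invariant, or at least cannot be entered from the other side. For example, take the union of the leaves on one side of $\Gamma$ together with their positive saturation, and check that it is disjoint from a neighbourhood of the opposite end. This produces neighbourhoods of $N$ and $S$ with no orbit going from one to the other, contradicting the assumption that the ends lie in the same Birkhoff recurrence class. Hence a transverse self-intersection with a nontrivial translate exists. The rotational-horseshoe version of the forcing theorem then gives, for some iterate $f^q$ and a lift $F^q-(p,0)$, a rectangle $\tilde R$ with at least two Markovian intersections of its image with distinct horizontal translates of $\tilde R$. The translation amounts come from the translate that was intersected, which is exactly the definition of a rotational horseshoe.

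\textbf{Expected obstacle.} The hardest point is the dichotomy in the main step. One must show that the absence of transverse intersections genuinely forces a topological barrier between the ends. The transverse trajectories of $z_1$ and $z_2$ need not be simple, and in the non-periodic case one only has non-escaping points, so one first has to extract an essential transverse loop from a recurrent trajectory. I would first treat the case where $z_1,z_2$ are periodic, then reduce the general case to it by approximating recurrent transverse trajectories by periodic transverse loops, using the fact that a transverse trajectory returning close to itself can be closed into a loop without creating new intersections.
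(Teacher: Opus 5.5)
The paper gives no proof of this statement; it imports it from \cite{fabiopatricehorseshoes}, so your sketch has to stand on its own. Forcing theory is the right framework, but the step meant to use the Birkhoff hypothesis fails.

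Under your contradiction hypothesis you only extract information about $\Gamma$: the set $B$ of leaves met by $\Gamma$ is an essential open annulus, and orbits cross its leaves in one direction. The second fact is just the transversality of $I$ to $\mathcal{F}$, and it is no barrier between the ends. Each leaf of $B$ crosses $\Gamma$, so it runs from one boundary component of $B$ to the other. In the universal cover, $\widetilde B$ is a strip foliated by lines joining its upper side to its lower side. A transverse path can enter $\widetilde B$ through one side and leave through the other while crossing every leaf from right to left. Nor is $\Gamma$ a barrier: it is transverse to $\mathcal{F}$, not a leaf, and orbits may cross it in both directions.

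A model shows this concretely. Take the leaves $\{x=c\}$ oriented downward and the isotopy $(x,y)\mapsto(x+t,\,y+t\,h(x,y))$, $t\in[0,1]$, with $h$ bounded, $1$-periodic in $x$, and $|\partial_y h|<1$. This isotopy is maximal and transverse to the foliation. The loop $\Gamma=\{y=0\}$ has no transverse intersection with its translates, and $B$ is the whole annulus. Yet $h$ may push points up in some places and down in others. So your region (leaves on one side of $\Gamma$ plus their positive saturation) has no reason to miss a neighbourhood of the other end, and no contradiction with the Birkhoff recurrence of $N$ and $S$ is reached.

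What is missing is an argument that uses the orbits travelling between the ends, that is, one applying your contradiction hypothesis to trajectories other than that of $z_1$. The Birkhoff recurrence of $N$ and $S$ provides trajectories going from arbitrarily small neighbourhoods of $N$ to arbitrarily small neighbourhoods of $S$ and back. Each of them must traverse $B$, and a transverse path that enters $\widetilde B$ through one side and leaves through the other is $\widetilde{\mathcal F}$-transverse to the line $\widetilde\Gamma$. You then need the fundamental proposition of forcing theory (realization of concatenations at $\widetilde{\mathcal F}$-transverse intersections). With it, splice the trajectory drawing $\Gamma$, repeated using the periodicity of $z_1$, with these crossing trajectories. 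The goal is a single transverse trajectory having an $\widetilde{\mathcal F}$-transverse intersection with a nontrivial translate of itself; only then does the rotational horseshoe criterion apply. This is the real difficulty, not the passage from non-escaping to periodic points that you flag.

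Before all this you should also normalize the lift. Pick $p/q$ strictly between the two rotation numbers and work with $f^q$ and the lift $F^q-(p,0)$, using a maximal isotopy fixing $N,S$ that lifts to it. Then $z_1,z_2$ are not singular and draw essential transverse loops of opposite orientation. Your phrase ``nonzero winding relative to the other'' does not by itself fix the lift.
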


Notice that the assumption that the ends of $\mathbb{A}$ belong to the same Birkhoff recurrence class is weaker than requiring them to be Birkhoff related (see \cite{fabiopatricehorseshoes} for the definitions). Consequently, every Birkhoff instability region as defined here carries a rotational horseshoe.

A useful observation, proved below, is that in the non-wandering setting there is an equivalent characterization of Birkhoff instability regions: they are precisely the open subannuli with non-trivial rotation set that contain no $f$-invariant circloid.

\begin{prop}
	\label{equivinstabilityregion}
	Let $f \in \mathrm{Homeo}_{0,nw}(\mathbb{A})$, and let $A$ be an open $f$-invariant subannulus. Then the following are equivalent:
	\begin{enumerate}
		\item The ends of $A$ are Birkhoff-related.
		\item The subannulus $A$ contains no $f$-invariant circloid.
	\end{enumerate}
	In particular, if $A$ contains periodic points with different rotation numbers, then $A$ is a Birkhoff instability region if and only if it contains no $f$-invariant circloid.
\end{prop}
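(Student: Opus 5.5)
The plan is to prove the two implications separately. The direction (1)$\Rightarrow$(2) is elementary. For (2)$\Rightarrow$(1) I would build an invariant annular continuum from a forward-invariant neighbourhood of one end, using the non-wandering hypothesis, and then extract a circloid from it with Lemma~\ref{lemaunicocircloidborde}. The final sentence of the statement follows at once from the equivalence and the definition of a Birkhoff instability region.

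\emph{(1)$\Rightarrow$(2).} Suppose $\mathcal C\subset A$ is an $f$-invariant circloid. Then $\mathcal U^+(\mathcal C)\cap A$ and $\mathcal U^-(\mathcal C)\cap A$ are disjoint open neighbourhoods of the two ends of $A$. Each is $f$-invariant, because $f$ preserves $\mathcal C$, fixes the ends and preserves $A$. The forward orbit of the first therefore never meets the second, so the ends of $A$ are not Birkhoff-related.

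\emph{(2)$\Rightarrow$(1).} Suppose the ends are not Birkhoff-related, say there are disjoint neighbourhoods $V^+,V^-$ of the upper and lower ends of $A$ with $f^k(V^+)\cap V^-=\emptyset$ for all $k\ge0$ (the other case is symmetric). We may take $V^\pm$ to be essential open subannuli whose frontiers in $A$ are compact.
\begin{enumerate}
\item Set $O=\bigcup_{k\ge0}f^k(V^+)$. Let $W$ be its connected component containing $V^+$, filled in $A$: add every component of $A\setminus W$ that does not contain $V^-$. Then $W$ is open and connected, $f(W)\subset W$, $W\cap V^-=\emptyset$, and $W$ contains a neighbourhood of the upper end.
\item Upgrade forward invariance to invariance using the non-wandering hypothesis. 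If an open set $D$ lay in $f^{-1}(W)\setminus\overline W$, then $f^k(D)\subset W$ for all $k\ge1$, so $D$ would never return to itself and would be wandering. Hence $f^{-1}(W)\subset\overline W$. Replacing $W$ by the regular open set $W'=\operatorname{Int}(\overline W)$ then gives $f(W')=W'$. To keep $W'$ away from the lower end I would shrink $V^-$ slightly, so that $\overline W\cap V^-=\emptyset$.
\item Let $L$ be the union of the components of $A\setminus\overline{W'}$ containing the lower end, and fill it in the same way as in step 1. Then $L$ is an invariant open neighbourhood of the lower end. The set $\Lambda=A\setminus(W'\cup L)$ is compact, since it lies between the compact frontiers of $V^+$ and $V^-$. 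It is also essential, connected and invariant, and its complement has exactly the two invariant components $W'$ and $L$, each containing one end. So $\Lambda$ is an invariant annular continuum contained in $A$.
\item By Lemma~\ref{lemaunicocircloidborde}, $\Lambda$ contains a unique maximal circloid $\mathcal C^+_\Lambda$. Since $f(\Lambda)=\Lambda$ and $f$ maps circloids in $\Lambda$ to circloids in $\Lambda$ while preserving $\preccurlyeq$ (it fixes the ends), uniqueness gives $f(\mathcal C^+_\Lambda)=\mathcal C^+_\Lambda$. This is an $f$-invariant circloid in $A$, contradicting (2).
\end{enumerate}

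The main obstacle is step 2 together with the topological bookkeeping in step 3. One must check that the regularisation $W'$ and the filling operations really yield a connected invariant essential continuum whose complement has exactly two end-containing components. In particular one must control the components of $A\setminus\overline{W'}$ that contain neither end: they must be absorbed into $\Lambda$ (or into $W'$) without destroying invariance. Invariance is preserved here because $f$ permutes these bounded components. I would handle this first by working with $\operatorname{Fill}$ applied to the closed set $A\setminus L$ and checking invariance component by component.
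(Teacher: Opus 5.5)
Your plan is the paper's proof. Both fill the forward orbit of $V^+$, use the non-wandering hypothesis to make its closure invariant, extract an invariant annular continuum, and take its extremal circloid, which is invariant by the uniqueness in Lemma~\ref{lemaunicocircloidborde}. Your (1)$\Rightarrow$(2) is the paper's. Your step~2 is correct and more explicit than the paper's one-line claim: an open subset of $f^{-1}(W)\setminus\overline W$ would be wandering, so $f(\overline W)=\overline W$. Shrinking $V^-$ is unnecessary, since an open set disjoint from $W$ is disjoint from $\overline W$.

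\textbf{The gap: $\Lambda$ need not be connected.} Step~3 asserts that $\Lambda$ is connected, and this can fail. Although $W$ is filled, $W'=\operatorname{Int}(\overline W)$ need not be, because taking the interior of the closure seals slits. For an example, set
\[
R=[-a,a]\times[2,+\infty),\qquad Q=[-a,a]\times[2,Y],\qquad \sigma=\{0\}\times[0,2].
\]
Let $f$ be the identity on $(S^1\times(-\infty,0])\cup R\cup\sigma$. On the open disk $G=S^1\times(0,+\infty)\setminus(R\cup\sigma)$, let $f$ be the time-one map of a flow whose orbits are closed curves accumulating on $\partial G$, with speed tending to $0$ at $\partial G$ (rotations of the unit disk transported by a Riemann map). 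This $f$ is non-wandering. Because $\partial G$ reaches $+\infty$, the orbits close to $\partial G$ all cross $V^+=S^1\times(Y,+\infty)$. With $V^-=S^1\times(-\infty,-1)$, your construction gives:
\begin{itemize}
\item $W=G\cup(R\cap V^+)$;
\item $W'=S^1\times(0,+\infty)\setminus Q$, since the interior of the slit $\sigma$ has been absorbed;
\item $L=S^1\times(-\infty,0)$;
\item $\Lambda=(S^1\times\{0\})\sqcup Q$.
\end{itemize}
This $\Lambda$ is not a continuum, so Lemma~\ref{lemaunicocircloidborde} cannot be applied to it. The paper's $\operatorname{Fill}(\partial\mathcal V^+)$ is exactly this disconnected set in the example, so the paper's proof is silent on the same point.

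\textbf{Your remedy does not help.} The closed set $A\setminus L$ has the single complementary component $L$, which contains the lower end, so filling it changes nothing. The difficulty is connectedness, not invariance. The regions enclosed by $W'$ must be absorbed into $W'$, not left in $\Lambda$.

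\textbf{The repair is short.} Let $D$ be the component of the closed set $A\setminus W'$ containing $V^-$, and put $W''=A\setminus D$. Then:
\begin{itemize}
\item $W''$ is open, as the complement of the closed set $D$.
\item $W''$ is connected. Every component of $A\setminus D$ meets $W'$, since otherwise its union with $D$ would be a larger connected subset of $A\setminus W'$.
\item $W''$ is $f$-invariant. The map $f$ permutes the components of $A\setminus W'$, and $f(V^-)\cap V^-\neq\emptyset$ forces $f(D)=D$.
\item $W''$ is disjoint from $V^-$, and $W''\cup\{+\infty\}$ is simply connected.
\end{itemize}
Define $L$ from $\overline{W''}$ as you do. Both complementary domains of the compact set $\Lambda$ in $\mathbb{A}\cup\{\pm\infty\}$ are then simply connected, so $\Lambda$ is connected and your step~4 applies verbatim. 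Alternatively, keep your $\Lambda$ and replace it by the filling of its component containing the connected invariant set $\partial L$.
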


\begin{proof}
	It is immediate that the first condition implies the second, since if $A$ contained an $f$-invariant circloid $\mathcal{C}$, then $\mathcal{C}$ would separate the ends of $A$. It remains to prove that the second condition implies the first.
	
	We identify $A$ with the standard open annulus $\mathbb{A}=S^1\times\mathbb{R}$. Let $V^{\pm}$ be disjoint neighborhoods of the ends $\pm\infty$, respectively. It is enough to prove that the forward orbit of one of these neighborhoods meets the other, since, by the non-wandering hypothesis, the converse relation follows automatically.

	Assume, by contradiction, that the forward orbit of $V^+$ does not intersect $V^-$. Define
	\[
	\mathcal{V}^+:=\cl\left[\operatorname{Fill}\left(\bigcup_{n\in\mathbb{N}} f^n(V^+)\right)\right].
	\]
	By our assumption, the interiors of $\mathcal{V}^+$ and $V^-$ are disjoint. Consider the annular continuum
	\[
	\mathcal{K}:=\operatorname{Fill}(\partial\mathcal{V}^+),
	\]
	and let $\mathcal{X}^+$ be its upper circloid.

	It follows immediately from the definition that $\mathcal{V}^+$ is forward invariant. Since $f$ is non-wandering, this inclusion cannot be strict, and therefore
	\[
	f(\mathcal{V}^+)=\mathcal{V}^+.
	\]
	Hence,
	\[
	f(\mathcal{K})=\mathcal{K}.
	\]
	Since the image of the upper circloid of an annular continuum is the upper circloid of its image, the uniqueness given by Lemma~\ref{lemaunicocircloidborde} implies that
	\[
	f(\mathcal{X}^+)=\mathcal{X}^+.
	\]

	Thus, $\mathcal{X}^+\subset A$ is an $f$-invariant circloid, contradicting assumption~(2). Hence, the ends of $A$ are Birkhoff-related, completing the proof.

\end{proof}

\bigskip

Recall that every instability region contains a rotational horseshoe. Therefore, it remains to prove the existence of an instability region meeting both $U_0$ and $U_1$. The proof of Theorem~\ref{maintheorem} follows.

\medskip

\begin{proof}

		We fix $F$ as the lift of $f$ for which every lift $\widetilde{U}_0$ of $U_0$ satisfies
		\[
		F(\widetilde{U}_0)\cap\widetilde{U}_0\neq\emptyset.
		\]
		We first prove that $U_0$ and $U_1$ are contained in a regular Birkhoff instability region. The estimates on the rotation set and the realization statement follow immediately and will be established at the end of the proof.
		
		\medskip
		
		First assume that no $f$-invariant circloid meets either $U_0$ or $U_1$, and define
		\[
		A:=
		\operatorname{Fill}\left(
		\bigcup_{k\in\mathbb{Z}}
		f^k\bigl(\operatorname{Int}(U_0\cup U_1)\bigr)
		\right).
		\]
		
		By construction, $A$ satisfies the following properties:
		
		\begin{itemize}
			\item $A$ is an essential annulus.
			
			\item Every $f$-invariant circloid contained in $A$ intersects
			\[
			\bigcup_{k\in\mathbb{Z}}
			f^k\bigl(\operatorname{Int}(U_0\cup U_1)\bigr).
			\]
		\end{itemize}
		
		In the present situation, the second property implies that $A$ contains no $f$-invariant circloid. Since the interiors of $U_0$ and $U_1$ are contained in $A$, which is not necessarily a regular annulus, we may apply the final part of Proposition~\ref{propbirkhoffrelated} to obtain two periodic points with different rotation numbers. It follows that $A$ is an instability region, completing the proof in this case.
		
		\medskip
		
		From now on, assume that there exists an $f$-invariant circloid meeting one of the two disks. By Theorem~\ref{primeendsrotationthmconservative}, every $f$-invariant circloid can intersect at most one of the disks $U_0$ and $U_1$. Hence, without loss of generality, assume that there exists an $f$-invariant circloid $\mathcal{C}$ intersecting $U_0$ such that
		\[
		U_1\subset\mathcal{U}^-(\mathcal{C}).
		\]
		
		Consider the nonempty set
		\[
		\Omega_0=
		\left\{
		\mathcal{C}:\mathcal{C}\text{ is an $f$-invariant circloid},
		\mathcal{C}\cap U_0\neq\emptyset,
		U_1\subseteq\mathcal{U}^-(\mathcal{C})
		\right\},
		\]
		endowed with the natural partial order $\preccurlyeq$ introduced in the preliminaries.
		
		Recall that, by Theorem~\ref{primeendsrotationthmconservative}, every $\mathcal{C}\in\Omega_0$ satisfies
		\[
		\rho_{\mathcal{C}}(F)\in[-1/n,1/n].
		\]
		
		\begin{claimsinnum}[1]
			$\Omega_0$ admits a minimal element.
		\end{claimsinnum}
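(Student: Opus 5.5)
We will obtain the minimal element of $\Omega_0$ through Zorn's lemma, applied to the reverse order. That is, we show that every chain in $(\Omega_0,\preccurlyeq)$ has a lower bound in $\Omega_0$.

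First we check that $\Omega_0$ is totally ordered, so that it is itself a chain. Take two elements $\mathcal{C},\mathcal{C}'\in\Omega_0$ that are not comparable. The annular continuum $\operatorname{Fill}(\mathcal{C}\cup\mathcal{C}')$ is invariant. By Lemma~\ref{lemaunicocircloidborde}, its lower circloid $\mathcal{C}^-$ is invariant and satisfies $\mathcal{C}^-\preccurlyeq\mathcal{C},\mathcal{C}'$. The difficulty is whether $\mathcal{C}^-$ still meets $U_0$. To avoid relying on this, we work directly with chains and define the lower bound as a limit.

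Let $\{\mathcal{C}_\alpha\}$ be a chain in $\Omega_0$ and set
\[
\mathcal{U}:=\bigcap_\alpha \overline{\mathcal{U}^+(\mathcal{C}_\alpha)}.
\]
Each $\mathcal{C}_\alpha$ meets the compact set $U_0$ and lies above $U_1$. Hence the closed sets $\overline{\mathcal{U}^+(\mathcal{C}_\alpha)}\cap U_0$ are nonempty and nested along the chain, since $\mathcal{C}_\alpha\preccurlyeq\mathcal{C}_\beta$ gives $\mathcal{U}^+(\mathcal{C}_\beta)\subseteq\mathcal{U}^+(\mathcal{C}_\alpha)$. By compactness, $\mathcal{U}\cap U_0\neq\emptyset$.

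Next we show that $\mathcal{U}$ is bounded below, uniformly along the chain. Every $\mathcal{C}_\alpha$ satisfies $U_1\subseteq\mathcal{U}^-(\mathcal{C}_\alpha)$, so $\mathcal{U}\cap\operatorname{Int}(U_1)=\emptyset$. Moreover, $\mathcal{U}$ contains a neighborhood of $+\infty$ and is $f$-invariant.

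Define $\mathcal{K}$ as the complement of the connected component of $\mathbb{A}\setminus\mathcal{U}$ containing $-\infty$, intersected with the complement of $\operatorname{Int}(\mathcal{U})$. More concretely, $\mathcal{K}=\operatorname{Fill}(\partial\mathcal{U})$, which is an invariant annular continuum. Let $\mathcal{C}_*$ be its lower circloid given by Lemma~\ref{lemaunicocircloidborde}. By uniqueness, $\mathcal{C}_*$ is $f$-invariant, and by construction $\mathcal{C}_*\preccurlyeq\mathcal{C}_\alpha$ for all $\alpha$. Also $U_1\subseteq\mathcal{U}^-(\mathcal{C}_*)$, because $U_1$ lies in the component of the complement containing $-\infty$.

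The main obstacle is to show that $\mathcal{C}_*\cap U_0\neq\emptyset$. A priori, the points of $U_0$ in $\mathcal{U}$ might lie strictly above $\mathcal{C}_*$, in which case $U_0$ would sit inside a region between $\mathcal{C}_*$ and the $\mathcal{C}_\alpha$ without touching $\mathcal{C}_*$.

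Here is what I would try first. Suppose $U_0\subset\mathcal{U}^+(\mathcal{C}_*)$. Since $U_0$ is compact and disjoint from $\mathcal{C}_*$, and the $\mathcal{C}_\alpha$ accumulate (in the Hausdorff sense along a cofinal sequence) onto a set whose lower part is $\mathcal{C}_*$, some $\mathcal{C}_\alpha$ would have to meet $U_0$ while $\mathcal{C}_*$ does not. Consider the annular continua $\mathcal{A}(\mathcal{C}_*,\mathcal{C}_\alpha)$. They are invariant, and all of them meet $U_0$. If $\mathcal{C}_*\cap\mathcal{C}_\alpha\neq\emptyset$, Corollary~\ref{corolariodiscoentrecircloids} together with the estimate from Theorem~\ref{primeendsrotationthmconservative} forces all rotation numbers to lie in $[-1/n,1/n]$.

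On the other hand, we can pick a point of $\mathcal{C}_*$ near $U_0$ and use the non-wandering hypothesis with the visiting of $U_1$ to derive a contradiction. Concretely, $U_0$ visits $U_1$, so some orbit from $\operatorname{Int}(U_0)$ crosses $\mathcal{C}_*$. If $U_0$ lay in $\mathcal{U}^+(\mathcal{C}_*)$, that open invariant set would contain $\operatorname{Int}(U_0)$ and hence could not reach $U_1\subset\mathcal{U}^-(\mathcal{C}_*)$. This contradiction shows that $U_0\not\subset\mathcal{U}^+(\mathcal{C}_*)$.

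Combined with $U_0\cap\mathcal{U}\neq\emptyset$ and the connectedness of $U_0$, this yields $U_0\cap\mathcal{C}_*\neq\emptyset$, provided $U_0$ does not lie entirely in $\mathcal{U}^-(\mathcal{C}_*)$. That case is excluded because $U_0$ meets every $\mathcal{C}_\alpha\subseteq\overline{\mathcal{U}^+(\mathcal{C}_*)}$.

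Hence $\mathcal{C}_*\in\Omega_0$ is a lower bound for the chain, and Zorn's lemma gives a minimal element of $\Omega_0$.

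Note that the visiting argument above shows, by the same reasoning, that no invariant circloid can separate $U_0$ from $U_1$. This is the key input of the whole argument. It also reveals a tension: $U_1\subseteq\mathcal{U}^-(\mathcal{C})$ does not contradict this, because $\mathcal{C}$ meets $U_0$.
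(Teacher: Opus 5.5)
Your proposal has a genuine gap, and it sits where the paper's proof does its real work. First, the limit is taken on the wrong side. Since $\mathcal{C}_\alpha\preccurlyeq\mathcal{C}_\beta$ gives $\mathcal{U}^+(\mathcal{C}_\beta)\subseteq\mathcal{U}^+(\mathcal{C}_\alpha)$, the set $\mathcal{U}=\bigcap_\alpha\overline{\mathcal{U}^+(\mathcal{C}_\alpha)}$ is the region above \emph{every} member of the chain. So $\operatorname{Fill}(\partial\mathcal{U})$ is disjoint from each $\mathcal{U}^-(\mathcal{C}_\alpha)$. Hence $\mathcal{U}^-(\mathcal{C}_\alpha)\subseteq\mathcal{U}^-(\mathcal{C}_*)$ for all $\alpha$, and your $\mathcal{C}_*$ is an upper bound of the chain, not a lower bound. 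This is why $U_1\subseteq\mathcal{U}^-(\mathcal{C}_*)$ came for free. It is also why your last step breaks down: the inclusion $\mathcal{C}_\alpha\subseteq\overline{\mathcal{U}^+(\mathcal{C}_*)}$ is unfounded, since the chain lies on the other side, so nothing excludes $U_0\subseteq\mathcal{U}^-(\mathcal{C}_*)$. A lower bound has to be extracted from $\bigcap_\alpha\overline{\mathcal{U}^-(\mathcal{C}_\alpha)}$, as the paper does.

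Second, and this is the missing idea: even after correcting the direction, that intersection contains a neighborhood of $-\infty$, and so yields a circloid, only if the chain avoids a fixed neighborhood of $-\infty$. You announce ``we show that $\mathcal{U}$ is bounded below'' but never do it. Disjointness from $\operatorname{Int}(U_1)$ and invariance cannot give it. Topologically it fails outright. Take graphs $\mathcal{G}_k$ of continuous $\psi_k\colon S^1\to\mathbb{R}$ with $\psi_{k+1}\le\psi_k$, all vanishing on a fixed arc around $[0]$, and with $\psi_k(t_0)=-k$ at a fixed $t_0$. Let $U_0,U_1$ be small disks centered at $([0],0)$ and $([0],-1)$. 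Then the $\mathcal{G}_k$ form a decreasing chain of circloids meeting $U_0$ with $U_1$ below, yet no circloid lies below all of them: it would have to meet $\{t_0\}\times(-\infty,-k]$ for every $k$.

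Excluding this is the entire content of the paper's argument, and it uses the hypothesis $\rho\ge1$, which your proof never invokes. The lower-end analogue of Lemma~\ref{lematecnicocircloidmaximal} for $U_1$ with $N=3$ gives a neighborhood $\mathcal{V}$ of $-\infty$ such that every invariant circloid $\mathcal{C}$ meeting $\mathcal{V}$ with $U_1\subseteq\mathcal{U}^-(\mathcal{C})$ has $\rho_{\mathcal{C}}(F)\in[\rho-1/3,\rho+1/3]$. Meanwhile Theorem~\ref{primeendsrotationthmconservative} gives $\rho_{\mathcal{C}}(F)\in[-1/n,1/n]$ for every $\mathcal{C}\in\Omega_0$, because it meets $U_0$. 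So no element of $\Omega_0$ meets $\mathcal{V}$. The non-separation of $U_0$ and $U_1$, which you call the key input, does not prevent such dips.

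That said, your visiting argument is worth keeping. Once you have a genuine invariant lower bound $\mathcal{C}_*$ with $U_1\subseteq\mathcal{U}^-(\mathcal{C}_*)$, it correctly shows $\mathcal{C}_*\cap U_0\neq\emptyset$. The paper settles this point only by a brief appeal to compactness, which alone does not suffice: a decreasing chain of graphs with spikes of fixed height and shrinking width through $U_0$ has as infimum a circle missing $U_0$. For the corrected construction you would, however, have to re-justify $U_1\subseteq\mathcal{U}^-(\mathcal{C}_*)$, since your reason for it relied on the upper-region intersection.
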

		
		To prove the claim, it is enough to show that $\Omega_0$ is bounded below. Indeed, in this case the minimal element $\mathcal{C}_0$ is the lower circloid of the fill of 
		\[
		\partial \operatorname{Fill}\left(
		\bigcap_{\mathcal{C}\in\Omega_0}
		\overline{\mathcal{U}^-(\mathcal{C})}
		\right).
		\]
		Since every element of $\Omega_0$ intersects the compact disk $U_0$ and contains $U_1$ in its lower complementary component, this limiting circloid also belongs to $\Omega_0$.
		
		Assume, by contradiction, that $\Omega_0$ is not bounded below. Then, for every neighborhood $\mathcal{V}$ of the lower end of the annulus, there exists a circloid $\mathcal{C}\in\Omega_0$ such that $\mathcal{C}\cap\mathcal{V}\neq\emptyset$. By definition of $\Omega_0$, every such circloid also satisfies $U_1\subseteq\mathcal{U}^-(\mathcal{C})$. The analogue of Lemma~\ref{lematecnicocircloidmaximal} for circloids meeting neighborhoods of the lower end, with $N=3$ and adapting its conclusion to the lift $F$, therefore yields a circloid $\mathcal{C}\in\Omega_0$ such that
		\[
		\rho_{\mathcal{C}}(F)\in[\rho-1/3,\rho+1/3],
		\]
		which is incompatible with the previous estimate
		\[
		\rho_{\mathcal{C}}(F)\in[-1/n,1/n],
		\]
		proving the claim.
		
		\bigskip
		
		By definition, the minimal element $\mathcal{C}_0$ satisfies
		\[
		U_1\subseteq \mathcal{U}^-(\mathcal{C}_0).
		\]
		
		Since $\mathcal{C}_0$ cannot separate $\operatorname{Int}(U_0)$ from $U_1$, it also follows that
		\[
		\operatorname{Int}(U_0)\cap\mathcal{U}^-(\mathcal{C}_0)\neq\emptyset.
		\]

		Now consider the set
		\[
		\Omega'=
		\left\{
		\mathcal{C}\in\Omega:
		\mathcal{C}\subset\mathcal{U}^-(\mathcal{C}_0)
		\right\}.
		\]
		
		If $\Omega'$ is empty, then Proposition~\ref{propbirkhoffrelated} implies that the regular subannulus $\mathcal{U}^-(\mathcal{C}_0)$ contains two periodic points with different rotation numbers. Since it contains no $f$-invariant circloid and we are in the non-wandering setting, it is an instability region and we are done.
		
		Assume now that $\Omega'$ is nonempty.
		
		\begin{claimsinnum}[2]
			$\Omega'$ admits a maximal element.
		\end{claimsinnum}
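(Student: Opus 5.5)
The plan is to mimic the proof of Claim~1, replacing the lower limit by an upper one. Set
\[
\mathcal{K}^*:=\operatorname{Fill}\Bigl(\partial\,\overline{\textstyle\bigcup_{\mathcal{C}\in\Omega'}\mathcal{U}^-(\mathcal{C})}\Bigr),
\]
and let $\mathcal{C}^*$ be its upper circloid, given by Lemma~\ref{lemaunicocircloidborde}.

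\textbf{Step 1: $\mathcal{C}^*$ is an invariant upper bound for $\Omega'$.} Each $\mathcal{C}\in\Omega'$ is $f$-invariant, so the union $\bigcup_{\mathcal{C}\in\Omega'}\mathcal{U}^-(\mathcal{C})$ is $f$-invariant. Hence $\mathcal{K}^*$ is an $f$-invariant annular continuum. By the uniqueness part of Lemma~\ref{lemaunicocircloidborde}, $f(\mathcal{C}^*)=\mathcal{C}^*$. By construction $\mathcal{C}\preccurlyeq\mathcal{C}^*$ for every $\mathcal{C}\in\Omega'$. Since every $\mathcal{C}\in\Omega'$ lies in $\mathcal{U}^-(\mathcal{C}_0)$, we also get $\mathcal{C}^*\subseteq\overline{\mathcal{U}^-(\mathcal{C}_0)}$, that is, $\mathcal{C}^*\preccurlyeq\mathcal{C}_0$. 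It then remains to show $\mathcal{C}^*\subset\mathcal{U}^-(\mathcal{C}_0)$, which gives $\mathcal{C}^*\in\Omega'$, and $\mathcal{C}^*$ is then the maximal element.

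\textbf{Step 2: $\mathcal{C}^*\cap\mathcal{C}_0=\emptyset$ (main obstacle).} I expect this to be the hard step. Suppose instead that $\mathcal{C}^*$ meets $\mathcal{C}_0$.

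\emph{Case $\mathcal{C}^*=\mathcal{C}_0$.} Then circloids of $\Omega'$ accumulate on $\mathcal{C}_0$ from below. Since $\mathcal{C}_0$ meets $U_0$ and $\operatorname{Int}(U_0)\cap\mathcal{U}^-(\mathcal{C}_0)\neq\emptyset$, I would argue that circloids $\mathcal{C}\in\Omega'$ close enough to $\mathcal{C}_0$ meet $U_0$. This is the delicate point: it should follow by choosing an arc in $\operatorname{Int}(U_0)$ from a point of $\mathcal{C}_0$ into $\mathcal{U}^-(\mathcal{C}_0)$, which must be crossed by circloids lying between its lower endpoint and $\mathcal{C}_0$. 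Such a $\mathcal{C}$ meets $U_0$, so by Theorem~\ref{primeendsrotationthmconservative} it cannot meet $U_1$, and $U_1$ lies on one side of it.
\begin{itemize}
\item If $U_1\subset\mathcal{U}^-(\mathcal{C})$, then $\mathcal{C}\in\Omega_0$ and $\mathcal{C}\preccurlyeq\mathcal{C}_0$ with $\mathcal{C}\neq\mathcal{C}_0$, contradicting the minimality of $\mathcal{C}_0$ from Claim~1.
\item If $U_1\subset\mathcal{U}^+(\mathcal{C})$, then $U_1$ lies in $\mathcal{R}(\mathcal{C},\mathcal{C}_0)$. Since both $U_0$ and $U_1$ meet $\mathcal{A}(\mathcal{C},\mathcal{C}_0)$, Lemma~\ref{lemacircloidsdisjuntosdosdiscos} makes $\mathcal{R}(\mathcal{C},\mathcal{C}_0)$ a regular annulus. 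Then Lemma~\ref{lematecnicocircloidmaximal}, or the estimate of Theorem~\ref{primeendsrotationthmconservative} applied to $U_1$ against the lower boundary $\mathcal{C}$ for circloids $\mathcal{C}$ approaching $\mathcal{C}_0\ni U_0$, forces the rotation number of $\mathcal{C}$ to be close to both $0$ and $\rho$. This is a contradiction for $n\ge3$.
\end{itemize}

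\emph{Case $\mathcal{C}^*\neq\mathcal{C}_0$ but $\mathcal{C}^*\cap\mathcal{C}_0\neq\emptyset$.} This case is excluded directly by Lemma~\ref{lemacircloidsdisjuntosdosdiscos}, provided both disks meet $\mathcal{A}(\mathcal{C}^*,\mathcal{C}_0)$. For $U_0$ this holds because $U_0$ meets $\mathcal{C}_0$. For $U_1$ I would use Step~2's case analysis to show that $U_1$ cannot lie strictly below all of $\Omega'$ while $\mathcal{C}^*$ touches $\mathcal{C}_0$. Alternatively, I would use Corollary~\ref{corolariodiscoentrecircloids}: it forces $\rho_{\mathcal{C}^*}(F)=\rho_{\mathcal{C}_0}(F)\in[-1/n,1/n]$, and I would then propagate the argument of the previous case.

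\textbf{Step 3: conclude.} With $\mathcal{C}^*\subset\mathcal{U}^-(\mathcal{C}_0)$ established, $\mathcal{C}^*\in\Omega'$ and it dominates every element of $\Omega'$. Hence $\mathcal{C}^*$ is the maximal element of $\Omega'$, which proves Claim~2.
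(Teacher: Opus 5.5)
Your overall plan matches the paper's. The paper also takes the supremum of $\Omega'$ (there called $\mathcal{E}$, the upper circloid of $\operatorname{Fill}(\partial V)$ with $V=\operatorname{Fill}(\bigcap_{\mathcal{C}\in\Omega'}\overline{\mathcal{U}^+(\mathcal{C})})$). It reduces the claim to excluding $\mathcal{E}\cap\mathcal{C}_0\neq\emptyset$, and splits into $\mathcal{E}=\mathcal{C}_0$ and $\mathcal{E}\neq\mathcal{C}_0$. Your treatment of $\mathcal{C}^*=\mathcal{C}_0$ is essentially the paper's Case~2: minimality of $\mathcal{C}_0$ excludes $U_1$ below $\mathcal{C}$, and Lemma~\ref{lematecnicocircloidmaximal} in $\mathcal{U}^-(\mathcal{C}_0)$, set against Theorem~\ref{primeendsrotationthmconservative}, excludes $U_1$ above. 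Drop the alternative ``Theorem~\ref{primeendsrotationthmconservative} applied to $U_1$ against $\mathcal{C}$''. There $U_1\subset\mathcal{U}^+(\mathcal{C})$ does not meet $\mathcal{C}$, which is exactly why the lemma, enlarging $U_1$ until it touches $\mathcal{C}$, is needed.

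The gap is the subcase $\mathcal{C}^*\neq\mathcal{C}_0$, $\mathcal{C}^*\cap\mathcal{C}_0\neq\emptyset$. You correctly reduce it, via Lemma~\ref{lemacircloidsdisjuntosdosdiscos}, to showing that $U_1$ meets $\mathcal{A}(\mathcal{C}^*,\mathcal{C}_0)$, but you never prove this.
\begin{itemize}
\item The target you name, that $U_1$ ``cannot lie strictly below all of $\Omega'$'', is the wrong one. $U_1$ may lie above some element of $\Omega'$ and still lie in $\mathcal{U}^-(\mathcal{C}^*)$, missing $\mathcal{A}(\mathcal{C}^*,\mathcal{C}_0)$.
\item Your Step~2 case analysis applies only to circloids known to meet $U_0$, and you have not shown that $\mathcal{C}^*$ meets $U_0$.
\item ``Propagating'' the previous case fails: here the elements of $\Omega'$ accumulate on $\mathcal{C}^*$, not on $\mathcal{C}_0$, so nothing forces them to meet $U_0$.
\end{itemize}

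The missing idea is to place the disks relative to $\mathcal{C}^*$ itself. Suppose $U_1\subset\mathcal{U}^-(\mathcal{C}^*)$. Then $U_0\not\subset\mathcal{U}^-(\mathcal{C}^*)$, since $U_0$ meets $\mathcal{C}_0$ and $\mathcal{U}^-(\mathcal{C}^*)\subseteq\mathcal{U}^-(\mathcal{C}_0)$. Also $U_0\not\subset\mathcal{U}^+(\mathcal{C}^*)$, because the invariant circloid $\mathcal{C}^*$ cannot separate $U_0$ from $U_1$ when $U_0$ visits $U_1$. Hence $\mathcal{C}^*$ meets $U_0$, so $\mathcal{C}^*\in\Omega_0$ with $\mathcal{C}^*\prec\mathcal{C}_0$, contradicting Claim~1. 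Therefore $U_1$ meets $\mathcal{A}(\mathcal{C}^*,\mathcal{C}_0)$, and Lemma~\ref{lemacircloidsdisjuntosdosdiscos} finishes the argument.

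The paper reaches the same conclusion through items (i)--(iii) and Case~1. First, Corollary~\ref{corolariodiscoentrecircloids} gives $\rho_{\mathcal{E}}(F)=\rho_{\mathcal{C}_0}(F)\in[-1/n,1/n]$. Next, $U_0$ must meet $\mathcal{E}$. Otherwise both disks lie above $\mathcal{E}$, and $U_1$ sits in a periodic disk component of $\mathcal{R}(\mathcal{E},\mathcal{C}_0)$, forcing a periodic point of rotation number $\rho$. Then minimality of $\mathcal{C}_0$ puts $U_1$ above $\mathcal{E}$, and the same disk-component contradiction applies. With this step supplied, your direct (non-contradiction) formulation is complete.
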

		
		Assume, by contradiction, that it does not.
		
		Define
		\[
		V:=
		\operatorname{Fill}\left(
		\bigcap_{\mathcal{C}\in\Omega'}
		\overline{\mathcal{U}^+(\mathcal{C})}
		\right).
		\]
		
		Let $\mathcal{E}$ be the upper circloid of the annular continuum $\operatorname{Fill}(\partial V)$.
		Since $V$ is $f$-invariant, so is $\operatorname{Fill}(\partial V)$, and therefore $\mathcal{E}$ is $f$-invariant. Observe that, by construction,
		\[
		\mathcal{E}\preccurlyeq\mathcal{C}_0.
		\]
		Moreover, since $\Omega'$ has no maximal element,
		\[
		\mathcal{C}_0\cap\mathcal{E}\neq\emptyset.
		\]
		Indeed, otherwise $\mathcal{E}$ would belong to $\Omega'$ and would be its maximal element.
		
		We first establish the following properties.
		
		\begin{enumerate}
			
			\item[(i)] The circloids $\mathcal{E}$ and $\mathcal{C}_0$ have the same rotation number, which belongs to $[-1/n,1/n]$. Moreover
			\[
			U_1\cap\mathcal{E}=\emptyset.
			\]
			
			Indeed, if the circloids coincide, there is nothing to prove. Otherwise, Corollary~\ref{corolariodiscoentrecircloids} implies, after enlarging $U_1$ slightly if necessary, that the annular continuum delimited by them
			$\mathcal{A}(\mathcal{E},\mathcal{C}_0)$ has trivial rotation set. Hence,
			\[
			\rho_{\mathcal{E}}(F)=\rho_{\mathcal{C}_0}(F)\in[-1/n,1/n].
			\]
			Theorem~\ref{primeendsrotationthmconservative} then implies that $U_1\cap\mathcal{E}=\emptyset$.
			
			\item[(ii)] The disk $U_0$ intersects $\mathcal{E}$. Indeed, assume, by contradiction, that $U_0\cap\mathcal{E}=\emptyset$. Suppose that
			\[
			U_0\subset\mathcal{U}^-(\mathcal{E}).
			\]
			Since $\mathcal{C}_0\cap U_0\neq\emptyset$, this contradicts
			\[
			\mathcal{E}\preccurlyeq\mathcal{C}_0.
			\]
			
			Therefore,
			\[
			U_0\subset\mathcal{U}^+(\mathcal{E}).
			\]
			As $\mathcal{E}$ does not separate $U_0$ from $U_1$, it follows from~(i) that $U_1\subset\mathcal{U}^+(\mathcal{E})$. Hence, $U_1$ is contained in a connected component of $\mathcal{R}(\mathcal{E},\mathcal{C}_0)=\mathcal{U}^+(\mathcal{E})\cap \mathcal{U}^-(\mathcal{C}_0)$, which is an open disk since $\mathcal{E}\cap\mathcal{C}_0\neq\emptyset$. The non-wandering hypothesis then implies the existence of a periodic point in this disk component whose rotation number is forced to be $\rho$, contradicting~(i).
			
			\item[(iii)] Let $\mathcal{C}\prec\mathcal{C}_0$ be an $f$-invariant circloid intersecting $U_0$. Then
			\[
			U_1\subset\mathcal{U}^+(\mathcal{C}).
			\]
			
			Indeed, $\mathcal{C}$ cannot intersect $U_1$ by Theorem~\ref{primeendsrotationthmconservative}. If $U_1\subset\mathcal{U}^-(\mathcal{C})$, then, since $\mathcal{C}$ intersects $U_0$, we would have $\mathcal{C}\in\Omega_0$, contradicting the minimality of $\mathcal{C}_0$.
		
	\end{enumerate}
		
	We now distinguish two cases, as illustrated in Figure~\ref{fig:CasesforcircloidE}.
	
	\medskip
	
	\noindent\textbf{Case 1.} $\mathcal{E}\neq\mathcal{C}_0$.
	
	By (ii), $\mathcal{E}$ intersects $U_0$. Since $\mathcal{E}\prec\mathcal{C}_0$, (iii) implies that
	\[
	U_1\subset\mathcal{U}^+(\mathcal{E}).
	\]
	As $U_1\subset\mathcal{U}^-(\mathcal{C}_0)$, it follows that $U_1$ is contained in a connected component of $\mathcal{R}(\mathcal{E},\mathcal{C}_0)$, which is an open disk since $\mathcal{E}\cap\mathcal{C}_0\neq\emptyset$. The non-wandering hypothesis then implies the existence of a periodic point in this disk component whose rotation number is forced to be $\rho$. This contradicts~(i).
	
	\medskip
	
	\noindent\textbf{Case 2.} $\mathcal{E}=\mathcal{C}_0$.
	
	Since $\mathcal{C}_0$ intersects $U_0$, after enlarging $U_0$ slightly if necessary, for every neighborhood $W_0$ of $\mathcal{C}_0$ there exists a circloid $\mathcal{C}\in \Omega'$ intersecting $\operatorname{Int}(U_0)$ and $W_0$. By (iii),
	\[
	U_1\subset\mathcal{U}^+(\mathcal{C})
	\]
	for every such circloid. Applying Lemma~\ref{lematecnicocircloidmaximal} to the annulus $\mathcal{U}^-(\mathcal{C}_0)$, with $U_1$ and $N=3$, and adapting its conclusion to the lift $F$, we obtain
	\[
	\rho_{\mathcal{C}}(F)\in[\rho-1/3,\rho+1/3]
	\]
	for every such circloid meeting a sufficiently small neighborhood $W_0$ of $\mathcal{C}_0$. On the other hand, since these circloids intersect $\operatorname{Int}(U_0)$, Theorem~\ref{primeendsrotationthmconservative} gives
	\[
	\rho_{\mathcal{C}}(F)\in[-1/n,1/n],
	\]
	yielding the desired contradiction.

		\bigskip

	Once the existence of a maximal element of $\Omega'$ has been established, we denote this maximal element by $\mathcal{C}_1$ and set
	\[
	\mathcal{I}:=\mathcal{R}(\mathcal{C}_0,\mathcal{C}_1),
	\]
	which is a regular annulus. By definition, $\mathcal{I}$ contains no $f$-invariant circloid. Moreover, since $\mathcal{C}_1$ does not separate $U_0$ from $U_1$, the annulus $\mathcal{I}$ intersects both disks. Applying Proposition~\ref{propbirkhoffrelated}, we obtain two periodic points in $\mathcal{I}$ with different rotation numbers. Hence, $\mathcal{I}$ is an instability region.
	
	\begin{figure}[h!]
		\centering
		\includegraphics[width=340 pt]{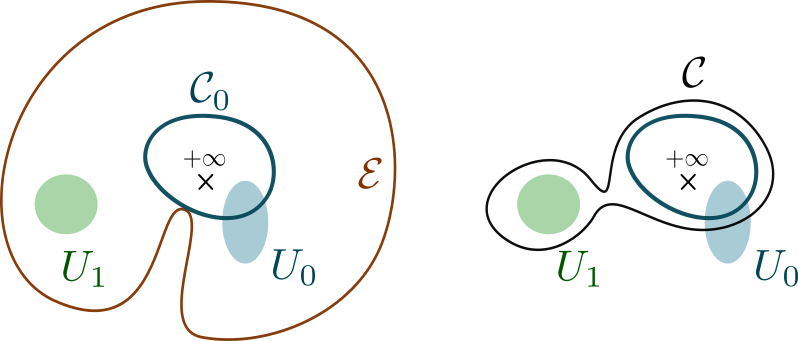}
		
		\caption{Illustration of the two cases in the proof of Claim~2. Left: the circloid $\mathcal{E}$ is different from $\mathcal{C}_0$. Right: $\mathcal{E}=\mathcal{C}_0$, and invariant circloids $\mathcal{C}\prec\mathcal{C}_0$ approach $\mathcal{C}_0$ while intersecting $U_0$.}
		\label{fig:CasesforcircloidE}
	\end{figure}
	
	\medskip
	
	In all cases, we obtain an instability region intersecting both $U_0$ and $U_1$. Finally, every instability region obtained above satisfies the curve intersection property by Remark~\ref{remarkCIPinstaregions}. Therefore, Proposition~\ref{propbirkhoffrelated} implies that every rational number in $(1/n,\rho-1/n)$ is realized by a periodic point in that instability region.
		
\end{proof}

\medskip

Let us remark that the conclusion of Theorem~\ref{maintheorem} remains valid when $U_0$ and $U_1$ intersect a regular $f$-invariant subannulus $\mathcal{A}\subset\mathbb{A}$, without necessarily being contained in it. Indeed, Proposition~\ref{propbirkhoffrelated} implies that this subannulus must contain two periodic points with different rotation number, and the proof carries over without further modifications.

This yields the following slight generalization of Theorem~\ref{maintheorem}.

\begin{thm}
	\label{thmA generalizado}
	Let $f \in \mathrm{Homeo}_{\,0,nw}(\mathbb{A})$ and $\mathcal{A}\subset\mathbb{A}$ be a regular $f$-invariant subannulus.
	Let $U_0$ and $U_1$ be an $n$-dpd, with $n\geq 3$, intersecting $\mathcal{A}$.
	Assume that each disk visits the other and that $\rho_f(U_0,U_1)=\rho\in\mathbb{N}^*$.
	Then $f$ exhibits rotational chaos occurring in an instability region contained in $\mathcal{A}$ that intersects both $U_0$ and $U_1$.
	
	Moreover, if $F$ is the lift of $f$ for which every lift of $U_0$ meets its image under $F$, then
	\[
	\big[ 1/n, \rho-1/n \big]\subseteq \rho_{\overline{\mathcal{I}}}(F)
	\]
	and every rational in the interval $(1/n,\rho-1/n)$ is realized by a periodic point in $\mathcal{I}$.
	
	Furthermore, if $\rho \geq 2$, the result remains valid for $n \geq 2$, and if $\rho \geq 3$, it holds for every $n \in \mathbb{N}$.
\end{thm}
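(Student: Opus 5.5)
The plan is to rerun the proof of Theorem~\ref{maintheorem} with the whole annulus $\mathbb{A}$ replaced by the regular invariant subannulus $\mathcal{A}$, using $\overline{\mathcal{A}}$ as the ambient space in which circloids are compared.

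Since $\mathcal{A}$ is regular, its boundary components are invariant circloids $\mathcal{C}^-_{\mathcal{A}}\preccurlyeq\mathcal{C}^+_{\mathcal{A}}$, or ends of $\mathbb{A}$. We restrict attention to invariant circloids $\mathcal{C}$ with $\mathcal{C}^-_{\mathcal{A}}\preccurlyeq\mathcal{C}\preccurlyeq\mathcal{C}^+_{\mathcal{A}}$. Throughout, $F$ is the lift fixed in the statement, and we use the following facts:
\begin{itemize}
\item Theorem~\ref{primeendsrotationthmconservative} gives $\rho_{\mathcal{C}}(F)\in[-1/n,1/n]$ for any such circloid meeting $U_0$, and $\rho_{\mathcal{C}}(F)\in[\rho-1/n,\rho+1/n]$ for any such circloid meeting $U_1$. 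So no invariant circloid meets both disks.
\item Proposition~\ref{propbirkhoffrelated}, which is stated for regular subannuli that merely \emph{intersect} $U_0,U_1$, supplies periodic points with different rotation numbers and the inclusion $[1/n,\rho-1/n]\subseteq\rho_{\cdot}(F)$ for each regular invariant subannulus we produce.
\item Proposition~\ref{equivinstabilityregion}, in the non-wandering setting, turns ``contains no invariant circloid'' into ``ends Birkhoff-related''.
\item Proposition~\ref{propDFabioPatrice} converts an instability region into a rotational horseshoe.
\end{itemize}

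\textbf{First case: no invariant circloid in $\mathcal{A}$ meets $U_0\cup U_1$.} Then $\mathcal{A}$ itself contains no invariant circloid. Indeed, any invariant circloid strictly inside $\mathcal{A}$ would separate the parts of $U_0,U_1$ lying in $\mathcal{A}$, contradicting the visiting hypothesis. So $\mathcal{A}$ is the instability region, and Proposition~\ref{propbirkhoffrelated} applied to it gives the periodic points and the rotation interval.

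\textbf{Second case: some invariant circloid in $\mathcal{A}$ meets a disk, say $U_0$, and leaves $U_1$ on a definite side, say below.} We define $\Omega_0$ as in the proof of Theorem~\ref{maintheorem}, but only with circloids in $\overline{\mathcal{A}}$. We then find a minimal element $\mathcal{C}_0$ and a maximal element $\mathcal{C}_1$ of $\Omega'$, the invariant circloids lying in $\mathcal{U}^-(\mathcal{C}_0)$ and above $\mathcal{C}^-_{\mathcal{A}}$.

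\textbf{Main obstacle: the compactness arguments.} In Claim~1, a decreasing family in $\Omega_0$ may now accumulate on the lower boundary circloid $\mathcal{C}^-_{\mathcal{A}}$ rather than on the lower end. Two sub-cases arise.
\begin{itemize}
\item If $\mathcal{C}^-_{\mathcal{A}}$ is an end of $\mathbb{A}$, Lemma~\ref{lematecnicocircloidmaximal}, with $N=3$ and applied to $U_1$, gives the same contradiction as in the proof of Theorem~\ref{maintheorem}.
\item If $\mathcal{C}^-_{\mathcal{A}}$ is a circloid, the family is bounded below by it. The limiting circloid lies in $\overline{\mathcal{A}}$, still meets the compact set $U_0$, and keeps $U_1$ below it. So it belongs to $\Omega_0$.
\end{itemize}
In Claim~2, if $\Omega'$ is empty, the region $\mathcal{R}(\mathcal{C}^-_{\mathcal{A}},\mathcal{C}_0)$ is the candidate instability region. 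It is a regular annulus by Lemma~\ref{lemacircloidsdisjuntosdosdiscos}, because both disks meet the corresponding annular continuum. If $\Omega'$ is nonempty, the argument with $\mathcal{E}$ and its cases (i)--(iii), Case~1 and Case~2, transfers verbatim. Both use only Corollary~\ref{corolariodiscoentrecircloids}, Theorem~\ref{primeendsrotationthmconservative}, and Lemma~\ref{lematecnicocircloidmaximal}, the last applied inside $\mathcal{U}^-(\mathcal{C}_0)\cap\mathcal{A}$.

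\textbf{Conclusion.} We obtain a regular annulus $\mathcal{I}\subseteq\mathcal{A}$ containing no invariant circloid and meeting both $U_0$ and $U_1$. By Proposition~\ref{equivinstabilityregion} its ends are Birkhoff-related. Proposition~\ref{propbirkhoffrelated} gives $[1/n,\rho-1/n]\subseteq\rho_{\overline{\mathcal{I}}}(F)$ and two periodic points with different rotation numbers, so $\mathcal{I}$ is an instability region and Proposition~\ref{propDFabioPatrice} yields the horseshoe. By Remark~\ref{remarkCIPinstaregions}, $\mathcal{I}$ has the curve intersection property, so Proposition~\ref{propbirkhoffrelated} also realizes every rational in $(1/n,\rho-1/n)$.

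For the refinements ($n\ge2$ when $\rho\ge2$, any $n$ when $\rho\ge3$), we only need that the intervals $[-1/n,1/n]$ and $[\rho-1/n,\rho+1/n]$ remain disjoint, together with the corresponding clauses of Proposition~\ref{propbirkhoffrelated} and Lemma~\ref{lemacircloidsdisjuntosdosdiscos}.
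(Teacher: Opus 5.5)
Your overall route is the paper's. The paper proves this statement only by remarking that the proof of Theorem~\ref{maintheorem} carries over once Proposition~\ref{propbirkhoffrelated} is available for regular invariant subannuli that merely intersect $U_0,U_1$, which is your plan. However, your first case contains a false step.

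\textbf{The false step.} From ``no invariant circloid of $\overline{\mathcal{A}}$ meets $U_0\cup U_1$'' you conclude that $\mathcal{A}$ itself contains no invariant circloid, on the grounds that any such circloid would separate the disks. This is wrong. An invariant circloid of $\mathcal{A}$ can lie entirely above both disks (or entirely below both). It then meets neither disk and separates nothing, so the visiting hypothesis gives no contradiction. For instance, take $\mathcal{A}=\mathbb{A}$ with $f$ an integrable twist for large $|y|$, and the disks sitting in a chaotic zone near $y=0$. No invariant circle meets the disks, yet $\mathbb{A}$ contains invariant circles, its ends are not Birkhoff-related, and $\mathcal{A}$ is not an instability region. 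This is exactly why the paper, in the first case of Theorem~\ref{maintheorem}, does not take the whole annulus.

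\textbf{How to repair it.} Localize around the disks, in either of two ways.
\begin{enumerate}
\item Do as the paper does and work with $\operatorname{Fill}\bigl(\bigcup_{k\in\mathbb{Z}}f^k(\operatorname{Int}(U_0\cup U_1))\bigr)$. By invariance, every invariant circloid in this set meets $\operatorname{Int}(U_0\cup U_1)$. Then obtain the periodic points from the last part of Proposition~\ref{propbirkhoffrelated}.
\item More in the spirit of your second case, let $\mathcal{D}^-$ be the highest invariant circloid of $\overline{\mathcal{A}}$ lying below both disks, or the lower boundary of $\mathcal{A}$ if there is none. Let $\mathcal{D}^+$ be the lowest one lying above both disks.
\begin{itemize}
\item These exist by the same limiting argument you use elsewhere. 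Under the case hypothesis, the limit circloid cannot touch the disks.
\item Then $\mathcal{R}(\mathcal{D}^-,\mathcal{D}^+)$ is regular, using Lemma~\ref{lemacircloidsdisjuntosdosdiscos} when both are circloids, and it contains both disks.
\item It contains no invariant circloid. Such a circloid would either meet a disk, separate the disks, or contradict the extremality of $\mathcal{D}^\pm$.
\end{itemize}
The rest of your conclusion, including the realization clause of Proposition~\ref{propbirkhoffrelated}, then applies to this region.
\end{enumerate}

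Your second case does not have this problem. There the region is bounded by $\mathcal{C}_0$ and by an extremal element of $\Omega'$, and by construction $\Omega'$ contains every invariant circloid below $\mathcal{C}_0$.
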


\subsection{Proof of Theorem~\ref{maintheorembr}}

The version of the main result for area-preserving maps requires additional assumptions on the behavior of the ends of the annulus. As before, we first focus on the part of the statement concerning the existence of rotational chaos and an instability region. Notice that Proposition~\ref{propbirkhoffrelated} guarantees the existence of two periodic points with different rotation numbers. When the ends are Birkhoff-related, the entire annulus is a Birkhoff instability region, and the conclusion follows immediately from Proposition~\ref{propDFabioPatrice}. It therefore remains to consider the case where the ends are strongly non-related.

We start by recalling the statement of Theorem~\ref{maintheorembr}.

\begin{thmsinnum}{\textbf{B}}
	Let $f \in \mathrm{Homeo}_{\,0,\lambda}(\mathbb{A})$, and let $U_0, U_1$ form an $n$--dpd for some $n \geq 3$, where $U_0$ visits $U_1$, $U_1$ visits $U_0$, and $\rho_f(U_0, U_1) = \rho \in \mathbb{N}^*$. Assume that either
\begin{enumerate}
	\item the ends of $\mathbb{A}$ are Birkhoff-related, or
	\item the ends of $\mathbb{A}$ are strongly non-related.
\end{enumerate}

Then $f$ exhibits rotational chaos and the rotational horseshoe for a power of $f$ can be considered included in an instability region $\mathcal I$ meeting both $U_0$ and $U_1$. Moreover, for some lift $F$ of $f$, the rotation set of $\mathcal{I}$ satisfies
\[
\bigl[1/n,\, \rho - 1/n\bigr]\subseteq  \rho_{\overline{\mathcal{I}}}(F),
\]
and every rational number in $(1/n,\rho-1/n)$ is realized by a periodic point in $\mathcal{I}$.

Furthermore, if $\rho \geq 2$, the result remains valid for $n \geq 2$, and if $\rho \geq 3$, it holds for every $n \in \mathbb{N}$.
\end{thmsinnum}

\begin{proof}
	
	Case~(1) follows immediately from Proposition~\ref{propbirkhoffrelated}, Remark~\ref{remarkCIPinstaregions}, and Proposition~\ref{propDFabioPatrice}. Assume now that case~(2) holds. If there exists an invariant bounded regular subannulus $\mathcal{A}$ intersecting both $U_0$ and $U_1$, then the restriction $f|_{\mathcal A}$ is non-wandering and Theorem~\ref{thmA generalizado} applies. Hence, throughout the remainder of the proof, we assume that
	
	\begin{center}
		\emph{there is no $f$-invariant bounded regular subannulus intersecting both $U_0$ and $U_1$.}
	\end{center}
	
	Consider the essential sets
	\[
	\mathcal{V}^+ := \operatorname{Fill}\!\left( \bigcup_{n\in \mathbb{N}} f^n(V^+) \right),
	\qquad
	\mathcal{V}^- := \operatorname{Fill}\!\left( \bigcup_{n\in \mathbb{N}} f^n(V^-) \right).
	\]
	
	Since the ends are assumed to be strongly non-related, these sets are disjoint. Let $\mathcal{X}^+$ be the \emph{lower circloid} of the annular continuum $\operatorname{Fill}(\partial\mathcal{V}^+)$, and let $\mathcal{X}^-$ be the \emph{upper circloid} of the annular continuum $\operatorname{Fill}(\partial\mathcal{V}^-)$. With a slight abuse of notation, write
	\[
	\mathcal{U}^+:=\mathcal{U}^+(\mathcal{X}^+),\qquad
	\mathcal{U}^-:=\mathcal{U}^-(\mathcal{X}^-).
	\]
	
	\begin{claimsinnum}[1]
		The circloids $\mathcal{X}^\pm$ are $f$-invariant.
	\end{claimsinnum}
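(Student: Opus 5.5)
We need to prove Claim 1: the circloids $\mathcal{X}^\pm$ are $f$-invariant. The argument mirrors the proof of Proposition~\ref{equivinstabilityregion}, with area preservation replacing non-wandering. We treat $\mathcal{X}^+$; the case of $\mathcal{X}^-$ is symmetric, exchanging upper and lower circloids.

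\emph{Step 1: forward invariance.} By construction $f\bigl(\bigcup_{n\in\mathbb{N}} f^n(V^+)\bigr)\subseteq\bigcup_{n\in\mathbb{N}} f^n(V^+)$. Since $f$ is a homeomorphism isotopic to the identity fixing the ends, it maps bounded complementary components of a closed set to bounded complementary components of its image. Hence $f(\mathcal{V}^+)\subseteq\mathcal{V}^+$, and the same holds for closures.

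\emph{Step 2: equality $f(\mathcal{V}^+)=\mathcal{V}^+$.} This is the main obstacle: $\mathcal{V}^+$ is a neighborhood of $+\infty$, so it has infinite $\lambda$-measure, and a naive measure comparison gives nothing. I would work instead with the complementary region $\mathcal{V}^+\setminus f(\mathcal{V}^+)$, using the hypotheses on the ends as follows.
\begin{itemize}
\item Strong non-relatedness keeps $\mathcal{V}^+$ disjoint from $V^-$.
\item Choose a horizontal circle $\gamma=S^1\times\{r\}$ inside $V^+$.
\item Let $B$ be the region between $\gamma$ and the lower boundary of $\mathcal{V}^+$. Then $B$ has finite measure: $\mathcal{V}^+$ misses $V^-$, so $B$ is bounded once $V^-$ is taken to be a standard neighborhood $S^1\times(-\infty,s)$.
\item Every $f^k(\gamma)$ lies in $\mathcal{V}^+$.
\end{itemize}
If $f(\mathcal{V}^+)\subsetneq\mathcal{V}^+$, then $D:=\mathcal{V}^+\setminus f(\mathcal{V}^+)$ is a set of positive measure (it contains an open set, after passing to interiors). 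Its iterates $f^k(D)$ are pairwise disjoint and all lie in $\mathcal{V}^+$. The task is to show infinitely many of them lie in a fixed bounded region, which contradicts finiteness of $\lambda$ there. For this I would bound the iterates of $D$ from above by the iterates of $\gamma$. This is the delicate point: iterates may escape toward $+\infty$. To handle it, I would restrict to the portion of $D$ below $\gamma$ and use Poincaré recurrence for the finite-measure piece, or equivalently replace $\mathcal{V}^+$ by $\mathcal{V}^+\setminus \mathcal{U}^+(\gamma')$ with $\gamma'$ an invariant-ish barrier. If this fails, a fallback is to argue directly that the lower boundary is invariant. Any point of $\partial\mathcal{V}^+$ not in $f(\partial\mathcal{V}^+)$ would yield an open set $O$ below $\partial \mathcal V^+$ with $f^{-1}(O)\cap \mathcal V^+=\emptyset$ but $O\cap\mathcal V^+\ne\emptyset$ after one step. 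This would create a wandering open set of finite measure inside the bounded region between $V^-$ and $\gamma$, contradicting Poincaré recurrence for $\lambda$.

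\emph{Step 3: invariance of the circloid.} Once $f(\mathcal{V}^+)=\mathcal{V}^+$, we get $f(\partial\mathcal{V}^+)=\partial\mathcal{V}^+$, hence $f(\operatorname{Fill}(\partial\mathcal{V}^+))=\operatorname{Fill}(\partial\mathcal{V}^+)$. The image of the lower circloid of an annular continuum is the lower circloid of the image. By the uniqueness in Lemma~\ref{lemaunicocircloidborde}, $f(\mathcal{X}^+)=\mathcal{X}^+$. Essentiality and compactness of $\operatorname{Fill}(\partial\mathcal{V}^+)$ follow from $\mathcal{V}^+$ being an essential closed set bounded away from $-\infty$, which is guaranteed by its disjointness from $V^-$.
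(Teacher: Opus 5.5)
Your Step 2 has a genuine gap, and it sits exactly at the point you call delicate. The facts you actually use there are that $\mathcal{V}^+$ is forward invariant and misses $V^-$, that the region $B$ between $\gamma$ and the lower boundary of $\mathcal{V}^+$ has finite measure, and that $f^k(\gamma)\subset\mathcal{V}^+$. These cannot give $f(\mathcal{V}^+)=\mathcal{V}^+$. The Lebesgue-preserving translation $f(x,y)=(x,y+1)$, with $V^+=S^1\times(1,+\infty)$ and $V^-=S^1\times(-\infty,0)$, satisfies all of them. Yet $\mathcal{V}^+$ is a half-annulus $S^1\times(c,+\infty)$ and $f(\mathcal{V}^+)=S^1\times(c+1,+\infty)\subsetneq\mathcal{V}^+$.

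The forward iterates of $D=\mathcal{V}^+\setminus f(\mathcal{V}^+)$ are layers pushed toward $+\infty$, so no bounded region traps them. Poincar\'e recurrence is not available either, since neither $B$ nor the region between $V^-$ and $\gamma$ is invariant. Your fallback with the open set $O$ has the same problem, and its description is not coherent. What the example violates, and what your argument never uses, is the other half of strong non-relatedness: $\mathcal{V}^-$ is forward invariant and disjoint from $\mathcal{V}^+$, so no point of $\mathcal{V}^-$ ever enters $\mathcal{V}^+$.

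That second barrier is the missing idea. The paper exploits it by treating both circloids simultaneously instead of $\mathcal{X}^+$ alone. Forward invariance of $\mathcal{V}^\pm$ gives $\mathcal{X}^+\preccurlyeq f(\mathcal{X}^+)$ and $f(\mathcal{X}^-)\preccurlyeq\mathcal{X}^-$. If $\mathcal{X}^+=\mathcal{X}^-$, this already forces invariance. Otherwise $f$ maps the bounded, hence finite-measure, open region $\mathcal{R}(\mathcal{X}^-,\mathcal{X}^+)$ onto $\mathcal{R}(f(\mathcal{X}^-),f(\mathcal{X}^+))$. By Lemma~\ref{lemmapreliminaries-regionentrecircloids} and its lower analogue, this strictly contains $\mathcal{R}(\mathcal{X}^-,\mathcal{X}^+)$ as soon as one of the circloids moves, contradicting $\lambda$-invariance.

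Within your own framework, the repair would be to iterate backwards. For $k\geq1$, the sets $f^{-k}(D)$ are pairwise disjoint and each has measure $\lambda(D)$. They also lie in the bounded set $\mathbb{A}\setminus(\mathcal{V}^+\cup\mathcal{V}^-)$, which forces $\lambda(D)=0$. You would then still need to pass from a null difference set to invariance of the lower circloid, a step the paper's direct comparison of circloids avoids. Your Step 3 is fine and mirrors the proof of Proposition~\ref{equivinstabilityregion}.
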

	
	To prove the claim, note that $\mathcal{V}^\pm\subseteq\mathcal{U}^\pm$. Since both $\mathcal{V}^\pm$ are forward invariant, we cannot have
	$f(\mathcal{X}^+)\preccurlyeq\mathcal{X}^+$ unless $f(\mathcal{X}^+)=\mathcal{X}^+$, as this would contradict the fact that $\mathcal{X}^+$ is the lower circloid of $\operatorname{Fill}(\partial\mathcal{V}^+)$. Analogously, we cannot have $\mathcal{X}^-\preccurlyeq f(\mathcal{X}^-)$ unless $f(\mathcal{X}^-)=\mathcal{X}^-$, since $\mathcal{X}^-$ is the upper circloid of $\operatorname{Fill}(\partial\mathcal{V}^-)$. Hence,
	\[
	f(\mathcal{X}^-)\preccurlyeq\mathcal{X}^-,
	\qquad
	\mathcal{X}^+\preccurlyeq f(\mathcal{X}^+).
	\]
	
	If $\mathcal{X}^+=\mathcal{X}^-$, the previous relations immediately imply that this circloid is $f$-invariant.
	
	Suppose now that $\mathcal{X}^+\neq\mathcal{X}^-$. Assume, by contradiction, that at least one of the circloids is not invariant. Then at least one of the previous relations cannot be an equality. Hence, by Lemma~\ref{lemmapreliminaries-regionentrecircloids} and its symmetric version for the lower circloid,
	\[
	\mathcal{R}(\mathcal{X}^-,\mathcal{X}^+)
	\subsetneq
	\mathcal{R}(f(\mathcal{X}^-),f(\mathcal{X}^+))
	=
	f\bigl(\mathcal{R}(\mathcal{X}^-,\mathcal{X}^+)\bigr).
	\]
	Since $\mathcal{R}(\mathcal{X}^-,\mathcal{X}^+)$ is open and bounded, this strict inclusion contradicts the fact that $f$ preserves area. Hence, both $\mathcal{X}^+$ and $\mathcal{X}^-$ are $f$-invariant, proving the claim.
	
	\bigskip
	
	Observe that neither $\mathcal{X}^+$ nor $\mathcal{X}^-$ can separate $U_0$ from $U_1$, and that, by Theorem~\ref{primeendsrotationthmconservative}, $U_0$ and $U_1$ cannot intersect the same invariant circloid.
	
	\begin{claimsinnum}[2]
		One of the sets $\mathcal{U}^\pm$ intersects both $U_0$ and $U_1$.
	\end{claimsinnum}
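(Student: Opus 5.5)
We argue by contradiction and assume that neither $\mathcal{U}^+$ nor $\mathcal{U}^-$ intersects both disks. The first step is to locate the disks relative to the two invariant circloids. By Theorem~\ref{primeendsrotationthmconservative}, the disks $U_0$ and $U_1$ cannot meet the same invariant circloid; indeed, the rotation number of such a circloid would have to lie in both $[-1/n,1/n]$ and $[\rho-1/n,\rho+1/n]$, and these intervals are disjoint. Moreover, neither $\mathcal{X}^+$ nor $\mathcal{X}^-$ separates $U_0$ from $U_1$, because each disk visits the other. Consequently, for each circloid $\mathcal{X}^\pm$, at most one disk meets it, and the other disk lies entirely on the same side as the interior of the first.

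We then split into cases according to which disks meet $\mathcal{X}^\pm$. If some disk meets $\mathcal{X}^+$, it intersects $\mathcal{U}^+$, since every neighborhood of a point of the circloid meets one of its complementary components. In fact we want to use that $\mathcal{X}^+$ is the lower circloid of $\operatorname{Fill}(\partial\mathcal{V}^+)$. This gives $\mathcal{X}^+\subset\overline{\mathcal{U}^+(\mathcal{X}^+)}$, and a point of $\mathcal{X}^+$ lies in the closure of $\mathcal{U}^+$. The other disk is then on the same side as the interior of the first, hence also inside $\mathcal{U}^+$, so both disks meet $\mathcal{U}^+$ and we are done. The same argument applies if some disk meets $\mathcal{X}^-$. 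Hence we may assume that neither disk meets $\mathcal{X}^+\cup\mathcal{X}^-$. Since neither circloid separates the disks, both disks lie in one of the three regions $\mathcal{U}^+$, $\mathcal{U}^-$, or $\mathcal{R}(\mathcal{X}^-,\mathcal{X}^+)$. The first two options give the claim directly.

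The main obstacle is the remaining case, in which both disks lie in the region $\mathcal{R}(\mathcal{X}^-,\mathcal{X}^+)$, which is bounded because $\mathcal{X}^\pm$ are compact. If $\mathcal{X}^-\cap\mathcal{X}^+=\emptyset$ (including the case $\mathcal{X}^-\preccurlyeq\mathcal{X}^+$ with the circloids distinct and disjoint), then $\mathcal{R}(\mathcal{X}^-,\mathcal{X}^+)$ is an invariant bounded regular subannulus meeting both disks. This contradicts our standing assumption that no $f$-invariant bounded regular subannulus intersects both $U_0$ and $U_1$. If instead the circloids meet, then $\mathcal{R}$ is a union of open disks permuted by $f$. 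Both disks would have to lie in a single disk component, because neither circloid separates them and the union of the orbits of the disks is connected by the visiting relation. By area preservation this component $D$ is periodic and carries a periodic point, and using the visiting property the rotation numbers forced by $U_0$ and $U_1$ inside $D$ would be $0$ and $\rho$ respectively. However, the lift of $D$ is invariant under the appropriate lift of $f$, which forces a single rotation number, as in the proof of Corollary~\ref{corolariodiscoentrecircloids}; this contradicts $\rho\neq0$. Alternatively, Corollary~\ref{corolariodiscoentrecircloids} applied to $\mathcal{A}(\mathcal{X}^-,\mathcal{X}^+)$ gives a trivial rotation set, which is incompatible with the rotation estimate $[1/n,\rho-1/n]$ from Proposition~\ref{propbirkhoffrelated} (whose final part applies since the disks avoid the boundary). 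The delicate point here is verifying that $\mathcal{X}^-\preccurlyeq\mathcal{X}^+$, so that $\mathcal{R}$ is well defined; this holds because $\mathcal{V}^-\cap\mathcal{V}^+=\emptyset$.
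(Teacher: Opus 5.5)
\textbf{There is a genuine gap in your second paragraph.} The error starts with ``the other disk lies entirely on the same side as the interior of the first''. From ``$U_0$ meets $\mathcal{X}^+$'' you conclude that $U_1$ lies in $\mathcal{U}^+$, but non-separation does not give this.

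For any circloid $\mathcal{C}$, minimality forces $\partial\mathcal{U}^+(\mathcal{C})=\partial\mathcal{U}^-(\mathcal{C})=\partial\mathcal{C}$. So if $\operatorname{Int}(U_0)$ meets $\partial\mathcal{X}^+$, then $U_0$ meets both $\mathcal{U}^+(\mathcal{X}^+)$ and $\mathcal{U}^-(\mathcal{X}^+)$. In that situation $\mathcal{X}^+$ separates $U_0$ neither from a disk $U_1\subset\mathcal{R}(\mathcal{X}^-,\mathcal{X}^+)$ nor from a disk $U_1$ touching $\mathcal{X}^-$. The rotation constraint from Theorem~\ref{primeendsrotationthmconservative} does not forbid these configurations either, and in them neither $\mathcal{U}^+$ nor $\mathcal{U}^-$ need meet both disks. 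They are exactly the cases that must be ruled out, not trivial ones.

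Your auxiliary assertion $\mathcal{X}^+\subset\overline{\mathcal{U}^+(\mathcal{X}^+)}$ is also unjustified. Circloids may have nonempty interior, which the paper allows, and interior points do not lie in that closure.

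Your last paragraph does not repair this, because it only treats disks lying strictly inside $\mathcal{R}(\mathcal{X}^-,\mathcal{X}^+)$. When the circloids intersect, the argument with a periodic disk component of $\mathcal{R}$ and an invariant lift says nothing about a disk that meets $\mathcal{X}^+$. Such a disk is not contained in a single component of $\mathcal{R}$.

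\textbf{How the paper avoids this.} There is no case split. Since neither circloid separates the disks, it suffices to show that $U_0$ and $U_1$ cannot both meet the annular continuum $\mathcal{A}(\mathcal{X}^-,\mathcal{X}^+)$; this covers all the configurations above at once. If they both meet it, Lemma~\ref{lemacircloidsdisjuntosdosdiscos} gives $\mathcal{X}^-\cap\mathcal{X}^+=\emptyset$. That lemma combines Corollary~\ref{corolariodiscoentrecircloids} with Theorem~\ref{primeendsrotationthmconservative} precisely to handle disks touching the boundary circloids. Then $\mathcal{R}(\mathcal{X}^-,\mathcal{X}^+)$ is a bounded regular invariant annulus. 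After slightly enlarging the disks, it meets both of them, contradicting the standing assumption.

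Replacing your second and third paragraphs with this argument closes the gap. Your disjoint-circloids subcase is already its final step, and your treatment of intersecting circloids becomes unnecessary.
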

	
	To prove the claim, consider the annular continuum delimited by the circloids $\mathcal{X}^\pm$
	\[
	\mathcal{A}:=\mathcal{A}(\mathcal{X}^-,\mathcal{X}^+).
	\]
	Since $\mathcal{X}^+$ and $\mathcal{X}^-$ cannot separate $U_0$ from $U_1$, it is enough to prove that $U_0$ and $U_1$ cannot both intersect $\mathcal{A}$. Assume, by contradiction, that they do.
	
	By Lemma~\ref{lemacircloidsdisjuntosdosdiscos},
	\[
	\mathcal{X}^-\cap\mathcal{X}^+=\emptyset.
	\]
	Hence, $\mathcal{R}(\mathcal{X}^-,\mathcal{X}^+)$ is a bounded regular $f$-invariant annulus. Moreover, after enlarging $U_0$ and $U_1$ slightly if necessary, both disks intersect $\mathcal{R}(\mathcal{X}^-,\mathcal{X}^+)$. This contradicts our standing assumption that no bounded regular $f$-invariant subannulus intersects both $U_0$ and $U_1$, proving the claim.
	
	\bigskip
	
	Suppose, without loss of generality, that both $U_0$ and $U_1$ meet $\mathcal{U}^+$. Consider the set
	\[
	\Omega := \bigl\{ \mathcal{C} :\ \mathcal{C} \subset \cl[\mathcal{U}^+] \ \text{is an $f$-invariant circloid} \bigr\},
	\]
	endowed with the natural partial order $\preccurlyeq$ introduced in the preliminaries.
	
	Note that $\mathcal{X}^+\in\Omega$. Given $\mathcal{C}\in\Omega$, the circloid $\mathcal{C}$ can neither meet both $U_0$ and $U_1$ simultaneously nor separate them. Moreover, if neither disk were contained in $\mathcal{U}^+(\mathcal{C})$, then both would intersect $\mathcal{A}(\mathcal{X}^+,\mathcal{C})$. By Lemma~\ref{lemacircloidsdisjuntosdosdiscos}, the circloids $\mathcal{X}^+$ and $\mathcal{C}$ are disjoint. Hence, after enlarging $U_0$ and $U_1$ slightly if necessary, the regular annulus $\mathcal{R}(\mathcal{X}^+,\mathcal{C})$ intersects both disks, contradicting our standing assumption. Therefore, every circloid $\mathcal{C}\in\Omega$ satisfies
	\[
	U_0\subset\mathcal{U}^+(\mathcal{C})
	\qquad\text{or}\qquad
	U_1\subset\mathcal{U}^+(\mathcal{C}).
	\]

	\begin{claimsinnum}[3]
		There exists a maximal element $Y$ of $\Omega$.
	\end{claimsinnum}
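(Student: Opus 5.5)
The plan is to mirror Claim~1 in the proof of Theorem~\ref{maintheorem}, but now looking for a maximal element with respect to $\preccurlyeq$, so the obstruction to boundedness lives at the upper end. It suffices to show two things. First, $\Omega$ is bounded above in the sense that all its elements avoid some neighborhood $\mathcal{V}$ of $+\infty$. Second, a limiting circloid built from $\Omega$ still belongs to $\Omega$.

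\emph{Boundedness.} Suppose no such neighborhood exists. Then for every neighborhood $\mathcal{V}$ of $+\infty$ there is $\mathcal{C}\in\Omega$ with $\mathcal{C}\cap\mathcal{V}\neq\emptyset$. By the dichotomy established just before the claim, each such $\mathcal{C}$ satisfies $U_0\subset\mathcal{U}^+(\mathcal{C})$ or $U_1\subset\mathcal{U}^+(\mathcal{C})$. Passing to a sequence $\mathcal{C}_k$ accumulating at $+\infty$, one of the two alternatives occurs infinitely often, say for $U_j$.

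Then we apply Lemma~\ref{lematecnicocircloidmaximal} twice.
\begin{itemize}
\item With $U_0$, the lift $F$ and $N=3$: if $j=0$, circloids meeting a small $\mathcal{V}$ have $\rho_{\mathcal{C}}(F)\in[-1/3,1/3]$.
\item With $U_1$ and $N=3$, translating the lift: if $j=1$, they have $\rho_{\mathcal{C}}(F)\in[\rho-1/3,\rho+1/3]$.
\end{itemize}
This alone gives no contradiction. I would also use that both disks meet $\mathcal{U}^+$ and lie below $\mathcal{C}_k$ for large $k$, since $\mathcal{C}_k$ escapes upward and the disks are compact. So $\mathcal{C}_k$ separates neither disk from the other, and both $U_0,U_1\subset\mathcal{U}^-(\mathcal{C}_k)$, which means both meet $\mathcal{A}(\mathcal{X}^+,\mathcal{C}_k)$. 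Lemma~\ref{lemacircloidsdisjuntosdosdiscos} then gives $\mathcal{X}^+\cap\mathcal{C}_k=\emptyset$, so $\mathcal{R}(\mathcal{X}^+,\mathcal{C}_k)$ is a bounded regular invariant annulus meeting both disks after slight enlargement. This contradicts the standing assumption, and so it yields boundedness directly. In fact this shows every $\mathcal{C}\in\Omega$ lies below both disks only if it is unbounded-free. Put differently, the dichotomy forces $U_0$ or $U_1$ into $\mathcal{U}^+(\mathcal{C})$, and this already bounds $\mathcal{C}$ above by the disks' positions: each $\mathcal{C}\in\Omega$ meets $\overline{\mathcal{U}^-(U_j)}$-type regions below the compact set $U_0\cup U_1$. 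So $\Omega$ is contained in a compact band $\operatorname{cl}[\mathcal{U}^+]\cap\{y\le M\}$.

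\emph{Limit.} Next take
\[
V:=\operatorname{Fill}\Bigl(\,\operatorname{cl}\bigcup_{\mathcal{C}\in\Omega}\overline{\mathcal{U}^-(\mathcal{C})}\Bigr).
\]
This is closed, $f$-invariant and bounded above, with complement a neighborhood of $+\infty$. Let $Y$ be the upper circloid of the annular continuum $\operatorname{Fill}(\partial V)$. By invariance of $V$ and the uniqueness in Lemma~\ref{lemaunicocircloidborde}, $f(Y)=Y$. Moreover, $Y\subset\operatorname{cl}[\mathcal{U}^+]$ because $\mathcal{X}^+\in\Omega$ lies below it. Hence $Y\in\Omega$, and by construction $\mathcal{C}\preccurlyeq Y$ for all $\mathcal{C}\in\Omega$, so $Y$ is the maximum.

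\emph{Main obstacle.} The delicate point is the topological verification that $\operatorname{Fill}(\partial V)$ is an annular continuum whose upper circloid dominates every element of $\Omega$. This matters because the union of the regions $\overline{\mathcal{U}^-(\mathcal{C})}$ need not be nested, since $\preccurlyeq$ is only partial. I would handle it as in Claim~1 of Theorem~\ref{maintheorem}: use that $\mathcal{C}\subset\overline{\mathcal{U}^-(Y)}$ follows from $\mathcal{C}\subset V$ together with the identity $\mathcal{U}^+(V)=\mathcal{U}^+(Y)$ from Corollary~3.3 of \cite{JagerToralHomeo}.
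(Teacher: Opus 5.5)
There is a genuine gap in your boundedness step. Negating ``all elements of $\Omega$ avoid some neighborhood of $+\infty$'' only gives circloids $\mathcal{C}_k\in\Omega$ that \emph{meet} every neighborhood of $+\infty$. It does not say that $\mathcal{C}_k$ is \emph{contained} in such neighborhoods. So your inference that $U_0,U_1\subset\mathcal{U}^-(\mathcal{C}_k)$ for large $k$ is unjustified, and it is false in general. An invariant circloid can pass below $U_0$ while sending a thin tongue up to arbitrary height, which leaves $U_0$ in a pocket of $\mathcal{U}^+(\mathcal{C}_k)$. This is precisely the configuration that Lemma~\ref{lematecnicocircloidmaximal} is built for. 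Your conclusion $U_0,U_1\subset\mathcal{U}^-(\mathcal{C}_k)$ also directly contradicts the dichotomy you invoked one line earlier. The detour through Lemma~\ref{lemacircloidsdisjuntosdosdiscos} therefore only re-derives that dichotomy and proves nothing new. The ``compact band'' assertion fails for the same reason. The condition $U_j\subset\mathcal{U}^+(\mathcal{C})$ forces $\mathcal{C}$ to separate $U_j$ from $-\infty$, but it places no upper bound on the height of $\mathcal{C}$.

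The missing idea is that your remark ``this alone gives no contradiction'' is wrong once you use that no $\mathcal{C}\in\Omega$ separates $U_0$ from $U_1$. Take $\mathcal{V}=\mathcal{V}_0\cap\mathcal{V}_1$ from the two applications of Lemma~\ref{lematecnicocircloidmaximal}, and take a single $\mathcal{C}\in\Omega$ meeting $\mathcal{V}$; no subsequence is needed. Suppose $U_0\subset\mathcal{U}^+(\mathcal{C})$, so that $\rho_{\mathcal{C}}(F)\in[-1/3,1/3]$. Since $\mathcal{C}$ does not separate the disks, there are two options:
\begin{itemize}
\item $U_1\subset\mathcal{U}^+(\mathcal{C})$ as well, and the lemma applied to $U_1$ gives $\rho_{\mathcal{C}}(F)\in[\rho-1/3,\rho+1/3]$;
\item $\mathcal{C}\cap U_1\neq\emptyset$, and Theorem~\ref{primeendsrotationthmconservative} gives the same estimate.
\end{itemize}
Either way the two intervals are incompatible because $\rho\geq1$, and the case $U_1\subset\mathcal{U}^+(\mathcal{C})$ is symmetric. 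This is the paper's argument, phrased there as ``no maximal element $\Rightarrow$ elements of $\Omega$ reach every neighborhood of $+\infty$ $\Rightarrow$ contradiction.'' Your limiting step matches the paper's once boundedness is secured. You take the union of the closed lower regions and then the upper circloid of the fill of its boundary; the paper does the same with the intersection of the closed upper regions.
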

	
	To prove the claim, note the following. By Lemma~\ref{lematecnicocircloidmaximal} applied to $U_0$, with $N=3$, there exists a neighborhood $\mathcal{V}_0$ of the upper end such that every $f$-invariant circloid $\mathcal{C}$ satisfying
	\[
	\mathcal{C}\cap\mathcal{V}_0\neq\emptyset
	\quad\text{and}\quad
	U_0\subset\mathcal{U}^+(\mathcal{C})
	\]
	verifies
	\[
	\rho_{\mathcal{C}}(F)\in[-1/3,1/3].
	\]
	
	By the analogue of Lemma~\ref{lematecnicocircloidmaximal} applied to $U_1$, and adapting its conclusion to the lift $F$, we obtain a neighborhood $\mathcal{V}_1$ of the upper end such that every $f$-invariant circloid $\mathcal{C}$ satisfying
	\[
	\mathcal{C}\cap\mathcal{V}_1\neq\emptyset
	\quad\text{and}\quad
	U_1\subset\mathcal{U}^+(\mathcal{C})
	\]
	verifies
	\[
	\rho_{\mathcal{C}}(F)\in[\rho-1/3,\rho+1/3].
	\]
	
	Set
	\[
	\mathcal{V}:=\mathcal{V}_0\cap\mathcal{V}_1.
	\]
	
	Suppose that $\Omega$ has no maximal element. Then every neighborhood of the upper end contained in $\mathcal{V}$ meets an element of $\Omega$. 
	Indeed, otherwise, by considering the fill of the intersection of the closures of the corresponding upper regions, and then the upper circloid of the fill of its boundary, one would obtain a maximal element of $\Omega$, as in the proof of Theorem~\ref{maintheorem}.
	In particular, there exists $\mathcal{C}\in\Omega$ such that
	\[
	\mathcal{C}\cap\mathcal{V}\neq\emptyset.
	\]
	By the previous observation, either $U_0\subset\mathcal{U}^+(\mathcal{C})$ or $U_1\subset\mathcal{U}^+(\mathcal{C})$.
	
	Assume first that
	\[
	U_0\subset\mathcal{U}^+(\mathcal{C}).
	\]
	Then
	\[
	\rho_{\mathcal{C}}(F)\in[-1/3,1/3].
	\]
	
	Since $\mathcal{C}$ does not separate $U_0$ from $U_1$, either $U_1\subset\mathcal{U}^+(\mathcal{C})$ or $\mathcal{C}\cap U_1\neq\emptyset$. In the first case, the second application of the lemma yields
	\[
	\rho_{\mathcal{C}}(F)\in[\rho-1/3,\rho+1/3],
	\]
	whereas in the second case the same estimate follows from Theorem~\ref{primeendsrotationthmconservative}, a contradiction.
	
	The case $U_1\subset\mathcal{U}^+(\mathcal{C})$ is symmetric. Hence, $\Omega$ admits a maximal element.
	
	\bigskip
	
Once the existence of a maximal element of $\Omega$ has been established, we denote it by $Y$ and define
\[
A:=\mathcal{U}^+(Y).
\]
Then both $U_0$ and $U_1$ intersect $A$. We claim that this reduces the proof to case~(1), already established. To prove this, it remains to show that the ends of $A$, namely $Y$ and $+\infty$, are Birkhoff-related. Since $A$ intersects both $U_0$ and $U_1$, Proposition~\ref{propbirkhoffrelated} will then imply that $A\simeq\mathbb{A}$ is a Birkhoff instability region. Observe that $A$ contains no $f$-invariant circloid. Although this characterization of Birkhoff instability regions was established for non-wandering homeomorphisms, it remains valid in the present setting. Let us verify this.

Consider a neighborhood $W\subset A$ of the lower end of $A$, identified with $Y=\partial A$. To prove that the ends of $A$ are Birkhoff-related, it is enough to show that both the forward and backward orbits of $W$ accumulate on $+\infty$, or equivalently, that the following sets are unbounded:
\[
\mathcal{W}^+:=\operatorname{Fill}\left(\bigcup_{n\in\mathbb{N}}f^n(W)\right),
\qquad
\mathcal{W}^-:=\operatorname{Fill}\left(\bigcup_{n\in\mathbb{N}}f^{-n}(W)\right).
\]

Assume, by contradiction, that $\mathcal{W}^+$ is bounded. Let $\mathcal{T}^+$ be the lower circloid of the annular continuum $\operatorname{Fill}(\partial\mathcal{W}^+)$. Since $\mathcal{W}^+$ is forward invariant and $\mathcal{T}^+$ was chosen as the lower circloid, $\mathcal{T}^+$ cannot be mapped strictly below itself. Hence,
\[
\mathcal{T}^+\preccurlyeq f(\mathcal{T}^+).
\]
If this relation is not an equality, Lemma~\ref{lemmapreliminaries-regionentrecircloids} would imply that
\[
\mathcal{R}(Y,\mathcal{T}^+)
\subsetneq
\mathcal{R}(Y,f(\mathcal{T}^+))
=
f\bigl(\mathcal{R}(Y,\mathcal{T}^+)\bigr),
\]
where the equality follows from the $f$-invariance of $Y$. Since $\mathcal{R}(Y,\mathcal{T}^+)$ is open and bounded, this contradicts the fact that $f$ preserves area. Therefore,
\[
f(\mathcal{T}^+)=\mathcal{T}^+.
\]
Since $\mathcal{T}^+\subset A=\mathcal{U}^+(Y)$, we have $Y\preccurlyeq\mathcal{T}^+$ and $Y\neq \mathcal{T}^+$. Thus, $\mathcal{T}^+\in\Omega$, contradicting the maximality of $Y$ in $\Omega$.

An analogous argument applied to $\mathcal{W}^-$ and $f^{-1}$ shows that $\mathcal{W}^-$ is also unbounded.

Finally, the estimate for the interval contained in the rotation set and the realization of every rational rotation number by periodic points follow directly from Proposition~\ref{propbirkhoffrelated}.
	
\end{proof}

\medskip

\section{Technical results}
\label{sec.toplem}

\subsection{Annular continuum delimited by circloids}

\begin{lemmaasterisco}[\ref{lemmapreliminaries-regionentrecircloids}]
		Consider two different circloids $\mathcal{C}^-$ and $\mathcal{C}^+$ with $\mathcal{C}^-\preccurlyeq\mathcal{C}^+$. Let $\mathcal{E}$ be a circloid such that $\mathcal{C}^+\preccurlyeq\mathcal{E}$ and $\mathcal{C}^+\neq\mathcal{E}$. Then
		\[
		\mathcal{R}(\mathcal{C}^-,\mathcal{C}^+)
		\subsetneq
		\mathcal{R}(\mathcal{C}^-,\mathcal{E}).
		\]	
\end{lemmaasterisco}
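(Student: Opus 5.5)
The inclusion comes from monotonicity of complementary components; strictness needs a point of $\mathcal{U}^-(\mathcal{E})\setminus\mathcal{U}^-(\mathcal{C}^+)$ lying in $\mathcal{U}^+(\mathcal{C}^-)$. Since $\mathcal{R}(\mathcal{C}^-,\cdot)=\mathcal{U}^+(\mathcal{C}^-)\cap\mathcal{U}^-(\cdot)$, it is enough to show that
\[
\mathcal{U}^-(\mathcal{C}^+)\subsetneq\mathcal{U}^-(\mathcal{E})
\]
and that some point of $\mathcal{U}^-(\mathcal{E})\setminus\mathcal{U}^-(\mathcal{C}^+)$ lies in $\mathcal{U}^+(\mathcal{C}^-)$.

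\emph{Inclusion.} From $\mathcal{C}^+\preccurlyeq\mathcal{E}$ we have $\mathcal{C}^+\subseteq\overline{\mathcal{U}^-(\mathcal{E})}$, and we want $\mathcal{U}^-(\mathcal{C}^+)\subseteq\mathcal{U}^-(\mathcal{E})$. Apply Corollary~3.3 of \cite{JagerToralHomeo} to $\mathcal{E}$: $\mathcal{U}^+(\mathcal{E})=\mathcal{U}^+(\overline{\mathcal{U}^-(\mathcal{E})})$, equivalently via the closure. The set $\overline{\mathcal{U}^-(\mathcal{E})}\cup\{-\infty\}$ contains $\mathcal{C}^+$. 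Since $\mathcal{U}^+(\mathcal{E})$ is connected, contains a neighborhood of $+\infty$, and is disjoint from $\overline{\mathcal{U}^-(\mathcal{E})}\supseteq\mathcal{C}^+$, it lies in $\mathcal{U}^+(\mathcal{C}^+)$. Taking complements, $\mathcal{U}^-(\mathcal{C}^+)\cup\mathcal{C}^+\subseteq\mathcal{A}\setminus\mathcal{U}^+(\mathcal{E})=\mathcal{U}^-(\mathcal{E})\cup\mathcal{E}$. To remove $\mathcal{E}$, note that $\mathcal{U}^-(\mathcal{C}^+)$ is open, connected and contains a neighborhood of $-\infty$. If it met $\mathcal{E}$, then, since $\mathcal{E}\subseteq\partial\mathcal{U}^\pm(\mathcal{E})$ up to its interior, we would use minimality of circloids. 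Concretely, $\mathcal{E}\cap\mathcal{U}^-(\mathcal{C}^+)\neq\emptyset$ together with $\mathcal{C}^+\subseteq\mathcal{U}^-(\mathcal{E})\cup\mathcal{E}$ should produce a proper annular subcontinuum of $\mathcal{E}$, namely $\mathcal{E}\cap\overline{\mathcal{U}^+(\mathcal{C}^+)}$ filled appropriately. This contradicts $\mathcal{E}$ being a circloid unless $\mathcal{E}=\mathcal{C}^+$, which is excluded. I expect this to be the main obstacle, since circloids may have interior and wild boundaries. I would first try to do it by applying Lemma~\ref{lemaunicocircloidborde} to the annular continuum $\mathcal{A}(\mathcal{C}^+,\mathcal{E})$, or to $\operatorname{Fill}(\mathcal{C}^+\cup\mathcal{E})$, whose upper circloid must be $\mathcal{E}$.

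\emph{Strictness.} Suppose, for contradiction, that $\mathcal{R}(\mathcal{C}^-,\mathcal{C}^+)=\mathcal{R}(\mathcal{C}^-,\mathcal{E})$. Since $\mathcal{C}^+\neq\mathcal{E}$, the nonemptiness argument in the Preliminaries (the proof of Proposition~3.7 of \cite{JagerToralHomeo}) shows that $\mathcal{R}(\mathcal{C}^+,\mathcal{E})=\mathcal{U}^+(\mathcal{C}^+)\cap\mathcal{U}^-(\mathcal{E})\neq\emptyset$. Take a point $z$ in it. Then $z\in\mathcal{U}^-(\mathcal{E})\setminus\mathcal{U}^-(\mathcal{C}^+)$, so it only remains to check that $z\in\mathcal{U}^+(\mathcal{C}^-)$. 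By the inclusion step applied to the pair $\mathcal{C}^-\preccurlyeq\mathcal{C}^+$ (with the roles of the ends swapped), $\mathcal{U}^+(\mathcal{C}^+)\subseteq\mathcal{U}^+(\mathcal{C}^-)$. Hence $z\in\mathcal{U}^+(\mathcal{C}^-)\cap\mathcal{U}^-(\mathcal{E})=\mathcal{R}(\mathcal{C}^-,\mathcal{E})$ but $z\notin\mathcal{R}(\mathcal{C}^-,\mathcal{C}^+)$, which gives the strict inclusion.
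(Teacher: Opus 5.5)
Your strictness argument is the paper's: pick $z\in\mathcal{R}(\mathcal{C}^+,\mathcal{E})$, which is nonempty by the Preliminaries, and observe that it lies in $\mathcal{R}(\mathcal{C}^-,\mathcal{E})$ but not in $\mathcal{R}(\mathcal{C}^-,\mathcal{C}^+)$. The inclusion $\mathcal{U}^+(\mathcal{C}^+)\subseteq\mathcal{U}^+(\mathcal{C}^-)$ you use there is fully justified. It is the half you did prove: a connected open set disjoint from $\overline{\mathcal{U}^-(\cdot)}$ and reaching $+\infty$ lies in the upper component. Here you apply it to $\mathcal{C}^-\preccurlyeq\mathcal{C}^+$, and no swapping of ends is needed.

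The gap is the non-strict inclusion $\mathcal{U}^-(\mathcal{C}^+)\subseteq\mathcal{U}^-(\mathcal{E})$. The paper takes it as immediate from $\mathcal{C}^+\preccurlyeq\mathcal{E}$, and you leave it unresolved. You correctly reduce it to $\mathcal{U}^-(\mathcal{C}^+)\cap\mathcal{E}=\emptyset$. However, the minimality argument is only gestured at. Your candidate set $\mathcal{E}\cap\overline{\mathcal{U}^+(\mathcal{C}^+)}$ is not shown to be connected, so ``filling it appropriately'' is not known to give an annular continuum.

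The missing step is the \emph{other} identity of Corollary~3.3 of \cite{JagerToralHomeo}, namely $\mathcal{U}^-(\mathcal{U}^+(\mathcal{E}))=\mathcal{U}^-(\mathcal{E})$. The half you quote is not the useful one here and plays no role in your first step. The argument runs as follows:
\begin{itemize}
\item $\mathcal{U}^-(\mathcal{C}^+)$ is open and disjoint from $\mathcal{U}^+(\mathcal{C}^+)\supseteq\mathcal{U}^+(\mathcal{E})$, hence disjoint from $\overline{\mathcal{U}^+(\mathcal{E})}$.
\item Being connected and containing a neighborhood of $-\infty$, it lies in the lower complementary component $V$ of $\overline{\mathcal{U}^+(\mathcal{E})}$.
\item Corollary~3.3 identifies $V$ with $\mathcal{U}^-(\mathcal{E})$.
\end{itemize}

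If you want to see the minimality directly, the correct ``filled'' set is $\mathbb{A}\setminus(\mathcal{U}^+(\mathcal{E})\cup V)$. It is an annular continuum, and it is contained in $\mathcal{E}$ because $V\supseteq\mathcal{U}^-(\mathcal{E})$. It misses every point of $\mathcal{U}^-(\mathcal{C}^+)\subseteq V$. Since $\mathcal{E}$ is a circloid, this set must equal $\mathcal{E}$, which forces $\mathcal{U}^-(\mathcal{C}^+)\cap\mathcal{E}=\emptyset$.
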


\begin{proof}
	The relation $\mathcal{C}^+\preccurlyeq\mathcal{E}$ implies
	\[
	\mathcal{U}^-(\mathcal{C}^+)\subseteq\mathcal{U}^-(\mathcal{E}),
	\]
	and hence
	\[
	\mathcal{R}(\mathcal{C}^-,\mathcal{C}^+)
	\subseteq
	\mathcal{R}(\mathcal{C}^-,\mathcal{E}).
	\]

Since $\mathcal{C}^+$ and $\mathcal{E}$ are different, the region $\mathcal{R}(\mathcal{C}^+,\mathcal{E})$ is nonempty, as commented in the Preliminaries. Moreover,
\[
\mathcal{R}(\mathcal{C}^+,\mathcal{E})
\subseteq
\mathcal{R}(\mathcal{C}^-,\mathcal{E}),
\]
while
\[
\mathcal{R}(\mathcal{C}^+,\mathcal{E})
\cap
\mathcal{R}(\mathcal{C}^-,\mathcal{C}^+)
=
\emptyset.
\]
Therefore, the inclusion is strict.

\end{proof}

\subsection{Existence of an ends connector}

We call an \emph{ends--connector} any connected set accumulating on both ends of the annulus $\mathbb{A}$ whose complement consists of an open topological disk admitting an arc that also connects one end to the other (referred to throughout this article as a \emph{vertical line}). As in Section~\ref{seccionRotationsets}, we denote by $\mathcal{W}$ the union of the maximal cross-sections determined by $U\cap A$, where $A$ is a subannulus of $\mathbb{A}$.

\medskip

This section is devoted to the proof of Proposition~\ref{lematecnicoseparacionpancitas}, which asserts the existence of an ends--connector that does not separate the maximal cross-sections in $A$. It is worth noting that, in simple examples, such an ends--connector can be chosen to be a curve. However, this is not always possible, as illustrated in Figure~\ref{fig:endsconnector}.

\begin{figure}[b]
	\centering
	\includegraphics[width=180 pt]{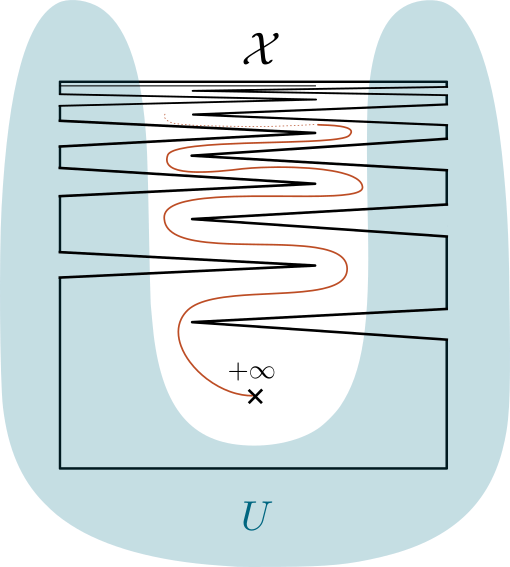}
	
	\caption{Observe that any attempt to construct a curve from $+\infty$ to $-\infty$ that does not separate the maximal crosscuts and avoids intersecting $U$ (represented in blue) must necessarily accumulate on a segment of $\mathcal{X}$.}
	\label{fig:endsconnector}
\end{figure}

\medskip

The construction of such an ends--connector begins with the following lemma, which is almost the desired statement of Proposition~\ref{lematecnicoseparacionpancitas}.

\begin{lemma}
	\label{lemma ends connector}
For a sub-annulus $A\subset \mathbb{A}$ bounded below, $\mathcal{X}=\partial A$ and a closed disk $U$ whose interior intersects $\mathcal{X}$, there exists an ends-connector $K$ such that
\begin{itemize}
    \item $K$ does not meet the interior of $U$
    \item $K\cap (\mathbb{A}\setminus A)$ does not meet $U$
    \item Every pair of maximal crosscuts can be connected by an arc in $\left( A \setminus \mathcal{W}\right) \setminus K$.
    \item $K\cap A$ is connected
\end{itemize}
\end{lemma}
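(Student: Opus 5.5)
The plan is to build $K$ in two pieces: an upper piece $K^+\subset A$ that joins the ``outer'' region of $A\setminus\mathcal{W}$ to $+\infty$, and a lower piece that joins $\mathcal{X}$ to $-\infty$ inside the closed set $\mathbb{A}\setminus A$.

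\textbf{Reduction.} Since $U$ is inessential, $\operatorname{Fill}(U)$ is a closed disk. So $A\setminus \operatorname{Fill}(U\cup \mathcal{W})$ still contains a neighborhood of $+\infty$. Let $O$ be the connected component of $A\setminus(U\cup\mathcal{W})$ that accumulates at $+\infty$, namely the annular region above all the maximal cross-sections. By maximality, every maximal crosscut $C_i$ lies in $\partial O$, and it is accessible from $O$ from the side opposite to $W_i$. Indeed, if some $C_i$ were not adjacent to $O$, it would be enclosed by another crosscut, contradicting maximality. Hence any two maximal crosscuts are joined by arcs in $O$; the point is to remove a set from $O$ without disconnecting these accesses.

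\textbf{Upper piece.} Fix a vertical line $\ell$ for the disk $\mathbb{A}\setminus\operatorname{Fill}(U\cup f\ldots)$; concretely, take an arc $\gamma$ in $O\cup\{+\infty\}$ from $+\infty$ landing at an accessible point $x_0\in\mathcal{X}\setminus \operatorname{Fill}(U)$. Such a point exists because $\mathcal{X}$ is essential while $\operatorname{Fill}(U)$ is not, and accessible points are dense. Choose $\gamma$ to avoid $U$ and $\mathcal{W}$.

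The risk is that $\gamma$ separates maximal crosscuts in $O$. To prevent this, work in the prime-ends compactification of $A\cup\{+\infty\}$, as in the preliminaries. The maximal crosscuts correspond to disjoint arcs $I_i$ of prime ends, and the union of $\overline{I_i}$ together with the prime ends of $\partial U\cap A$ is contained in a closed sub-arc $J$ of the prime-ends circle. $J$ is proper, again because $U$ is inessential. Choose the landing prime end of $\gamma$ in the complementary open arc $\partial\mathbb{D}\setminus J$. Then $O\setminus\gamma$, viewed in $\mathbb{D}$, is still connected and adjacent to every $I_i$, which gives the third property.

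The main obstacle is that this prime end need not be accessible, or its impression may meet $\operatorname{Fill}(U)$. This is exactly the situation of Figure~\ref{fig:endsconnector}. To handle it, I would not insist on a landing ray. Instead, take $K^+=\overline{\varphi(\ell)}\cap A$ for a radial segment $\ell$ ending at a prime end $e\in\partial\mathbb{D}\setminus J$, and add its impression $\mathrm{Imp}(e)\subset\mathcal{X}$. The impression is a continuum, disjoint from $\operatorname{Int}(U)$ because $e$ lies outside the arc containing all prime ends touched by $U$. Then $K^+\cup\mathrm{Imp}(e)$ is connected, and $K\cap A=\varphi(\ell)$ is connected.

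\textbf{Lower piece.} Complete downward through $\mathbb{A}\setminus A$, which is closed and contains $-\infty$. Since $U\cap(\mathbb{A}\setminus A)$ is contained in a disk, choose a point $p\in\mathrm{Imp}(e)$ not separated from $-\infty$ by $\operatorname{Fill}(U)$ in $\mathbb{A}\setminus A$, and a connected set $L\subset(\mathbb{A}\setminus A)\setminus U$ from $p$ to $-\infty$. For example, take the lower-side impression argument symmetrically in $\mathcal{U}^-$, or use a lower ray landing at an accessible point. Set $K=K^+\cup\mathrm{Imp}(e)\cup L$.

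\textbf{Checks.} $K$ accumulates at both ends and is connected. Its complement is a disk: it is a simply connected open set, since $K$ is a connected set joining the ends, and it admits a vertical line, since $U$ itself together with arcs in $O$ gives one. This yields all four items.
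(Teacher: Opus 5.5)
\textbf{Genuine gap: the lower piece.} This is where the real content of the lemma lies. You choose $e$ using only the prime ends of $A$: any point off a closed arc $J$ containing the $\overline{I_i}$ and the prime ends touched by $U$. You then assert that some $p\in\mathrm{Imp}(e)$ can be joined to $-\infty$ by a connected set in $(\mathbb{A}\setminus A)\setminus U$. Nothing links the choice of $e$ to such an escape, and it can fail.

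Take $A=S^1\times(0,+\infty)$ and let $U$ be a thick cup: arms $[0.18,0.22]\times[-1,\eta]$ and $[0.38,0.42]\times[-1,\eta]$ joined by a bar $[0.18,0.42]\times[-1.2,-1]$. The maximal cross-sections are the two arm tops. The untouched prime ends form two open arcs: $(0.22,0.38)$, and the long arc from $0.42$ around to $0.18$. Your description of $J$ allows either one as its complement. If $e$ corresponds to $x=0.3$, then $\mathrm{Imp}(e)=\{(0.3,0)\}$ lies in the pocket $(0.22,0.38)\times(-1,0]$. That pocket is a bounded component of $(\mathbb{A}\setminus A)\setminus U$, so no $L$ exists.

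So you must first decide which side is the escape side, and then still produce the downward connection. This is exactly what the paper's proof does:
\begin{itemize}
\item Claim~1 shows that, for each pair of maximal crosscuts, the region $D_{i,j}$ adjacent to the subarc of $\partial U$ bounding a disk free of $\operatorname{Int}(U)$ cannot reach $-\infty$ without crossing the rays.
\item $K^+$ is defined as the nested intersection of the complementary (escape) regions.
\item The lower piece is not built inside $\mathbb{A}\setminus A$ from a pre-chosen point. It is taken from a vertical line $v$ avoiding $U\cup\gamma$, truncated at $t_\infty=\lim t_n$, the first time it meets $K^+$.
\end{itemize}

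\textbf{Secondary issues.}
\begin{itemize}
\item Inessentiality alone does not justify that $J$ is proper. A $U$-avoiding vertical line does give a landing prime end outside every $\overline{I_i}$. But the $I_i$ may accumulate on it, and its impression may meet $U$ although its landing point does not. So you have not shown that an open arc of clean prime ends exists, let alone on the escape side. The paper's nested construction of $K^+$ never needs such an arc.
\item The risk you single out, a ray from $+\infty$ separating crosscuts in $O$, does not actually occur: an end-cut from $+\infty$ never disconnects $O$. The real constraint is reaching $-\infty$ inside $\mathbb{A}\setminus A$ while avoiding $U$.
\item The second bullet requires $\mathrm{Imp}(e)\cap U=\emptyset$, not only disjointness from $\operatorname{Int}(U)$.
\item A connected set joining the ends may have bounded complementary components. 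Your final check that $\mathbb{A}\setminus K$ is an open disk containing a vertical line therefore needs an actual argument (compare the paper's Claims~2 and~3).
\end{itemize}
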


We seek an ends-connector disjoint from $U$, whereas the one provided by the previous lemma may intersect $\partial U$. It is not difficult to adjust $K$ to obtain the desired ends-connector. We recall below the statement of Proposition~\ref{lematecnicoseparacionpancitas}, followed by its proof assuming Lemma~\ref{lemma ends connector}.

\begin{propasterisco}[\ref{lematecnicoseparacionpancitas}]
	For a sub-annulus $A$ bounded below and a closed disk $U$  whose interior meets $\mathcal{X}=\partial A$, there exists an ends-connector set $K$ satisfying the following properties
	\begin{itemize}
		\item is disjoint from $U$
		\item every pair of maximal crosscuts can be connected by an arc in $\left(A \setminus \mathcal{W} \right)\setminus K $
		\item $K\cap A$ is connected
	\end{itemize}
\end{propasterisco}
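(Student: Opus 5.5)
We start from the ends-connector $K_0$ given by Lemma~\ref{lemma ends connector}. It already avoids $\operatorname{Int}(U)$, its part outside $A$ avoids $U$, $K_0\cap A$ is connected, and it does not separate maximal crosscuts in $A\setminus\mathcal W$. The only defect is that $K_0\cap A$ may touch $\partial U\cap A$. Since $K_0\cap A$ avoids $\operatorname{Int}(U)$ and lies in $A\setminus\mathcal W$ except possibly along the crosscuts, these contacts occur along the maximal crosscuts (or non-maximal pieces of $\partial U$ lying outside $\mathcal W$). The plan is therefore to push $K_0$ slightly off $U$ inside $A$, toward the side away from $U$, while keeping the three properties.

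Concretely, we work in $A\setminus U$, which is open. Let $\Sigma$ be the component of $A\setminus(U\cup\mathcal W)$ accumulating at $+\infty$. By the third bullet of the Lemma, every maximal crosscut lies in $\partial\Sigma$, and $K_0\cap A\subset\overline{\Sigma}$. We choose an exhaustion of $\Sigma$ by compact sets, or equivalently a continuous function $\delta>0$ on $\Sigma$ tending to $0$ at $\partial\Sigma$. We then replace $K_0\cap A$ by a connected set $K_1\subset\Sigma$ obtained by the following procedure.
\begin{enumerate}
\item Take a closed $\epsilon$-neighbourhood $N$ of $U\cup\mathcal W$ inside $A$, with $\epsilon$ small and allowed to shrink toward $\mathcal X$.
\item Set $K_1:=(K_0\cap A\setminus N)\cup(\partial N\cap \text{pieces})$, rerouting along $\partial N$ each portion of $K_0$ that enters $N$. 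We go around the contact region on the side facing $+\infty$, choosing which way around so that no maximal crosscut is cut off.
\end{enumerate}
Near $\mathcal X$ the rerouting must accumulate on $\mathcal X\setminus U$, as in Figure~\ref{fig:endsconnector}. That is harmless, because ends-connectors are only required to have disk complement containing a vertical line, not to be arcs. We finally set $K:=K_1\cup\bigl(K_0\setminus A\bigr)$ together with the accumulation set on $\mathcal X$. The piece $K_0\setminus A$ already misses $U$, and the accumulation set lies in $\mathcal X\setminus U$ provided $\epsilon\to0$ suitably. So $K$ is disjoint from the closed set $U$.

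It remains to verify the three properties and the ends-connector property. First, $K\cap A=K_1$ is connected, since rerouting along the connected boundary arcs of $N$ preserves connectedness. Second, the non-separation property: any arc in $(A\setminus\mathcal W)\setminus K_0$ joining two maximal crosscuts can be modified near its contacts with $K_1\setminus K_0\subset N$. Because $N$ is a thin collar of $U\cup\mathcal W$, an arc running inside the collar between the crosscut and $K_1$ avoids $K_1$. Third, the complement of $K$ must still be a disk admitting a vertical line. This holds because $K$ differs from $K_0$ by a small isotopic-type deformation supported in $N$, and the vertical line for $K_0$ can be pushed off $N$ correspondingly.

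The main obstacle is the behaviour near $\mathcal X$. There the collar width must shrink, $K_1$ may accumulate on a continuum in $\mathcal X$, and one must ensure three things simultaneously. First, the accumulation set avoids $U$, using that $K_0$'s accumulation points already avoid $\operatorname{Int}(U)$ and we can stay on the non-$U$ side. Second, $K$ stays connected and closed. Third, the vertical line in the complement survives. We would handle this by choosing $\epsilon$ as a function decreasing along $K_0$ toward $\mathcal X$, and by taking closures at the end, checking these three points directly.
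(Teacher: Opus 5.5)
Your argument depends on a ``push-off'' of $K_0$ along a collar $N$ of $U\cup\mathcal W$, but none of the required properties of the modified set is actually established.

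First, the construction itself is not well defined. The set $K_0$ supplied by Lemma~\ref{lemma ends connector} is not an arc. Its part in $A$ comes from an intersection of a decreasing sequence of closed disks and can be a wild continuum, possibly with interior. So ``rerouting each portion that enters $N$ along $\partial N$'' and ``choosing which way around'' are not defined operations, and they are exactly where the work lies.

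Second, the ends-connector property is justified only by calling the change an ``isotopic-type deformation'', which it is not. Depending on how $K_0\cap N$ looks, attaching arcs of $\partial N$ to $K_0\setminus N$ can close up loops and create new bounded complementary components. Nothing in your argument excludes this.

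Third, the vertical line cannot simply be pushed off $N$. The one the Lemma provides, $\gamma$, crosses $\mathcal X$ through $\operatorname{Int}(U)$ (the arc $\gamma_2$ in Claim~2 of its proof). So it must traverse $N$, and you would have to show it can leave $N$ through a part of $\partial N$ not used by the rerouting.

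Fourth, the non-separation step has the same defect: the strip of the collar next to a crosscut may be sealed off by the arcs you added.

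Incidentally, the ``main obstacle near $\mathcal X$'' is misdiagnosed. $K_0$ is closed (its complement is open), it misses $\operatorname{Int}(U)$, and $K_0\setminus A$ misses $U$. Hence $K_0\cap U$ is a compact subset of $\partial U\cap A$, at positive distance from $\mathcal X$.

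The idea you are missing is that no surgery on $K_0$ is needed. Apply Lemma~\ref{lemma ends connector} not to $U$ but to a slightly larger closed disk $U'$ with $U\subset\operatorname{Int}(U')$.

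\begin{itemize}
\item The resulting $K$ avoids $\operatorname{Int}(U')\supset U$, so $K\cap U=\emptyset$ comes for free.
\item $K\cap A$ is connected by the Lemma.
\item For non-separation, each maximal cross-section of $U$ lies inside a maximal cross-section of $U'$, since maximal crosscuts are the ones accessible from $+\infty$. Also, $K$ does not enter $\operatorname{Int}(\mathcal W')$.
\item So each maximal crosscut of $U$ can be joined to a maximal crosscut of $U'$ by an arc in $\mathcal W'\setminus\mathcal W$ that misses $K$.
\item Concatenating these with the arcs in $(A\setminus\mathcal W')\setminus K$ given by the Lemma for $U'$ joins any two maximal crosscuts of $U$ inside $(A\setminus\mathcal W)\setminus K$.
\end{itemize}

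This is the paper's argument, and it avoids every control problem your construction runs into.
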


\begin{proof}
	Consider a slightly larger disk $U'$ containing $U$ which defines a set of maximal cross-sections whose union is denoted by $\mathcal{W}'$. Since maximal crosscuts are characterized as those accessible by a ray from $+\infty$, it follows that every maximal cross-section of $\partial U \cap A$ is contained in a maximal cross-section of $\partial U' \cap A$. In particular $\mathcal{W}\subsetneq \mathcal{W}'$.
	
	Applying Lemma~\ref{lemma ends connector} for $U'$ instead of $U$, we obtain an ends-connector $K$ as in the previous lemma, such that if $K^+$ denotes the connected component of $K \cap \cl[(A \setminus \mathcal{W}')]$ meeting $+\infty$, then $K^+ \cap \mathcal{W} = \emptyset$. Notice that, as $K$ cannot meet the interior of $\mathcal{W}'$, any crosscut of $\partial U\cap A$ can be connected with a crosscut of $\partial U'\cap A$ by an arc contained in $\mathcal{W}' \setminus \mathcal{W}$ without meeting $K$. This shows that any pair of maximal crosscuts of $\partial U\cap A$ can be connected by an arc in $(A\setminus \mathcal{W})\setminus K$. Finally, since $U\subseteq \operatorname{Int}(U')$, we have
	\[
	K \cap U = \emptyset.
	\]
	This yields the desired ends-connector.
\end{proof}

\medskip

Now, the proof of Lemma~\ref{lemma ends connector}

\begin{proof}
	We consider the case in which there is more than one maximal crosscut; otherwise, the result is trivial. Our goal is to escape from $A \setminus \mathcal{W}$ by means of a continuum $K$ that avoids ``covering''	any crosscut, in the sense that all crosscuts are contained in the same connected component of $A \setminus K$.	The main idea is therefore to identify a suitable escape region.

	For an arc $C$ we denote by $\operatorname{int}(C)$ the arc without its endpoints. For each crosscut $C_i$, consider a ray $r_i$ from $+\infty$ landing at a point $c_i \in \operatorname{int}(C_i)$, such that $\operatorname{int}(r_i) \subset A \setminus \mathcal{W}$. We may choose these rays to be pairwise disjoint, with indices $i$ belonging to a countable subset $I \subset \mathbb{S}^1$ inheriting a cyclic order.
	
	Take two different indices $i_1, i_2 \in I$, and denote by $r_1, r_2$ the corresponding rays. Observe that the rays $r_1$ and $r_2$ divide $A \setminus \mathcal{W}$ into two regions.	The following claim states that only one of these regions is an ``escape'' region.
	
	\begin{claimsinnum}[1]
		One of the regions in $A \setminus \mathcal{W}$ bounded by $r_1,r_2$ has the property that every curve connecting a point $x$ in its interior with $-\infty$ without meeting $U$ must intersect at least one of the rays $r_1,r_2$. We call this region $D_{1,2}$. On the other hand, if $x \in (A \setminus \mathcal{W}) \setminus D_{1,2}$, it is possible to connect $x$ to $-\infty$ without meeting $U \cup D_{1,2}$.
	\end{claimsinnum}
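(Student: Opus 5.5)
The plan is to work in the topological disk $D=A\cup\{+\infty\}$, or equivalently in $\widehat D$ via the prime-ends compactification, and to read $r_1\cup r_2$ as a single cross-cut of $A\setminus\mathcal W$ through $+\infty$. Concretely, $r_1\cup\{+\infty\}\cup r_2$, closed up by the crosscuts $C_{i_1}$, $C_{i_2}$ and their cross-sections $\overline{W_{i_1}}$, $\overline{W_{i_2}}$, separates $A\setminus\mathcal W$ into exactly two components. This follows from the Jordan curve theorem in the sphere $\mathbb{A}\cup\{\pm\infty\}$. Take the simple closed curve $\Gamma$ formed by $r_1$, $r_2$, a subarc of $C_{i_1}$ from $c_{i_1}$, a subarc of $C_{i_2}$ from $c_{i_2}$, and an arc through $\operatorname{Int}(U)$ joining the endpoints of these subarcs. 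Such an arc exists because $U$ is a closed disk, the crosscuts lie in $\partial U$, and $\operatorname{Int}(U)$ is connected. Exactly one of the two complementary components of $\Gamma$ contains $-\infty$. The regions of $A\setminus\mathcal W$ bounded by $r_1,r_2$ are the intersections of $A\setminus\mathcal W$ with these two components. I define $D_{1,2}$ as the one lying in the component of $S^2\setminus\Gamma$ that does \emph{not} contain $-\infty$.

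For the first assertion, let $x\in\operatorname{Int}(D_{1,2})$ and let $\gamma$ be a curve from $x$ to $-\infty$ avoiding $U$. Since $\gamma$ starts inside $\Gamma$ and ends outside, it must cross $\Gamma$. The parts of $\Gamma$ lying in $C_{i_1}\cup C_{i_2}$ or in $\operatorname{Int}(U)$ are contained in $U$, so they are forbidden. The point $+\infty$ cannot be used by a curve in $\mathbb{A}$. Hence $\gamma$ meets $r_1\cup r_2$.

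For the second assertion, let $x\in (A\setminus\mathcal W)\setminus D_{1,2}$, so $x$ lies in the same complementary component of $\Gamma$ as $-\infty$. I would argue in two steps.

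\begin{enumerate}
\item Join $x$ to $-\infty$ inside that component while avoiding $U$. The component $\Omega$ of $S^2\setminus\Gamma$ containing $-\infty$ is open and connected. The set $U\cap\Omega$ is closed in $\Omega$, and I would argue that it does not disconnect $x$ from $-\infty$. The point is that $U$ is a closed disk meeting $\Gamma$ along a connected set (two crosscut subarcs joined through $\operatorname{Int}(U)$). By Janiszewski's theorem, $U\cup\Gamma$ separates no more points of $\Omega$ than $\Gamma$ alone does, apart from points of $\operatorname{Fill}$-type holes.
\item Handle those holes. Any bounded component of $\Omega\setminus U$ is a hole of $U$ rel $\Gamma$. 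Such a hole would sit inside some cross-section of $U\cap A$, hence inside $\mathcal W$, or below $\mathcal X$. Neither can contain $x\in A\setminus\mathcal W$. A path in the complement then reaches $-\infty$ without touching $\overline{D_{1,2}}$, because it stays in $\Omega$.
\end{enumerate}

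The main obstacle is step (2). One must show that the holes of $U\cup\Gamma$ inside $\Omega$ are all contained in $\mathcal W$ or in $\mathbb{A}\setminus A$. This requires maximality of the crosscuts and the fact that the rays were chosen with interiors in $A\setminus\mathcal W$. I would try first to characterize points of $A\setminus\mathcal W$ as those accessible from $+\infty$ in $A$ without meeting $U$. I would then use that $-\infty$ lies below $\mathcal X$, together with the Janiszewski/unicoherence argument in the sphere.
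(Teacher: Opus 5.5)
Your proof of the first assertion is correct and is essentially the paper's argument. The paper closes $r_1\cup\{+\infty\}\cup r_2$ with the subarc $\sigma\subset\partial U$ between $c_1,c_2$ chosen so that the resulting disk $B_{1,2}$ (the side not containing $-\infty$) misses $\operatorname{Int}(U)$, and takes $D_{1,2}$ to be the region meeting $B_{1,2}$. Closing instead with an arc $\beta\subset U$, as you do, gives the same region, and the crossing argument is identical.

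The second assertion is not established as written. In step (1) you cite Janiszewski's theorem but attach an exception for ``Fill-type holes'' that the theorem does not contain. Step (2), which is meant to dispose of these holes, is only a plan resting on unproved claims about maximal crosscuts. In fact there are no holes, and Janiszewski already finishes the proof once applied to the compact sets $U$ and $\Gamma$ in the sphere:
\begin{itemize}
\item $U\cap\Gamma=\beta$ is an arc, hence connected, since the interiors of $r_1,r_2$ lie in $A\setminus\mathcal W$, which contains no point of $U$;
\item $U$ is a closed disk, so it separates no pair of points;
\item $\Gamma$ does not separate $x$ from $-\infty$ when $x\in\Omega$.
\end{itemize}
Hence $x$ and $-\infty$ lie in one component $Q$ of $S^2\setminus(U\cup\Gamma)$. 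Since $Q\subset\Omega$, it misses $D_{1,2}$, and any path from $x$ to $-\infty$ inside $Q$ is the one required. Equivalently, $S^2\setminus U$ is an open disk in which $r_1\cup\{+\infty\}\cup r_2$ is a crosscut, so $S^2\setminus(U\cup\Gamma)$ has exactly two components, and $\Omega\setminus U$ is one of them. Points of $\operatorname{int}(r_1)\cup\operatorname{int}(r_2)$ are handled by first pushing them slightly into $\Omega$. Maximality of the crosscuts plays no role here; this is pure plane topology.

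The paper argues differently. It takes any path from $-\infty$ to $x$ avoiding $U$, notes that the path can enter $B_{1,2}$ only through $r_1\cup r_2$, and reroutes just before that crossing through the other region. That rerouting uses the connectedness of the other region, which comes from $\hat A\setminus\mathcal W$ being an open disk in which $r_1\cup\{+\infty\}\cup r_2$ is a crosscut. The same fact is what your opening sentence needs to get ``exactly two components'': the Jordan curve theorem alone shows the two intersections are separated, not that each is connected. Your Janiszewski argument for the claim itself does not depend on this.
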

	
	By adding $+\infty$, the annulus $\mathbb{A}$ becomes a disk. Setting $\hat{A} = A \cup \{+\infty\}$, we observe that $\hat{A} \setminus \mathcal{W}$ is an open disk.	To prove the claim, note that the endpoints $c_1,c_2$ of the rays $r_1,r_2$ can be connected by one	of the two subarcs of $\partial U$ between them in such a way that the disk bounded by $r_1$, $r_2$, and this subarc, contains no points of $\operatorname{Int}(U)$ in its interior. Let $B_{1,2}$ denote this disk. On the other hand, 
	\[
	\left( \hat{A} \setminus \mathcal{W}\right)\setminus (r_1\cup r_2)
	\]
	 has two connected components, which are topological disks and only one meets $B_{1,2}$. Define $D_{1,2}$ to be the disk component of  $\left( \hat{A} \setminus \mathcal{W}\right)\setminus (r_1\cup r_2)$ intersecting the $B_{1,2}$.  
	
	Now let $x \in D_{1,2}$ and consider a curve joining $x$ to $-\infty$ that avoids $U$ (recall that $U$ is inessential).	By extending the curve inside $D_{1,2}$ if necessary, we may assume that $x$ lies arbitrarily close	to $+\infty$. The curve must then leave $B_{1,2}$ without meeting $U$,	which forces it to exit through either $r_1$ or $r_2$.
	
	For the final assertion of the claim, let
	
	$$
	x \in (A \setminus \mathcal{W}) \setminus D_{1,2}.
	$$
	
	Note that $(A \setminus \mathcal{W}) \setminus D_{1,2}$ must be a disk.
	Since $U$ is inessential, there exists a ray from $-\infty$ to $x$ that is disjoint from $U$. If this ray does not meet $D_{1,2}$, there is nothing to prove. Otherwise, it enters $B_{1,2}$ and,	since it avoids $U$, it must cross the boundary of $B_{1,2}$ through the interior of either $r_1$	or $r_2$. Consequently, the ray intersects $(A \setminus \mathcal{W}) \setminus D_{1,2}$ before	crossing $r_1$ or $r_2$. Cutting the ray at such an intersection point and joining it to $x$ by an arc	contained in $(A \setminus \mathcal{W}) \setminus D_{1,2}$ completes the construction.	In particular, the connected component of this ray inside $(A \setminus \mathcal{W}) \setminus D_{1,2}$ connects $x$ to $\mathcal{X}$.

	 \begin{figure}[h!]
		
		\centering
		\includegraphics[width=250 pt]{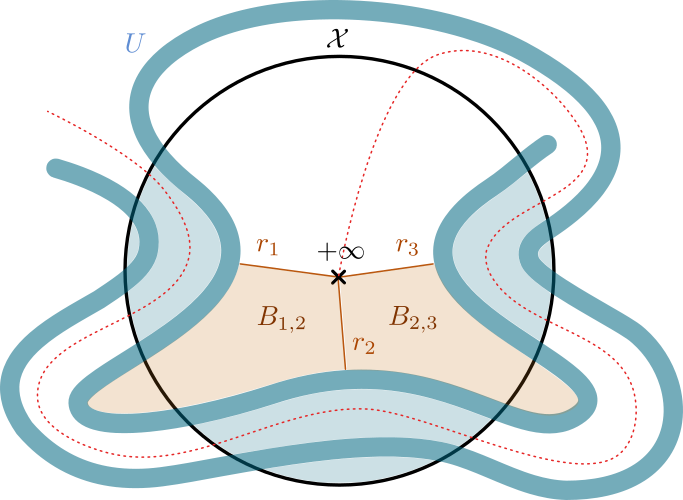}
		
		\caption{A simple case in which the escape region is the complement of two disks in $A\setminus \mathcal{W}$, and an ends-connector can be chosen as a curve, a dashed curve in the figure.}
		
		\label{escaperegion}
	\end{figure}

	\begin{claimsinnum}[2]
		There exists a vertical line $\gamma$ such that it does not intersect $\mathcal{X} \setminus (U \cup D_{1,2})$.
	\end{claimsinnum}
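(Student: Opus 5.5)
The plan is to build $\gamma$ in the two-point compactification $\mathbb{A}\cup\{\pm\infty\}\simeq S^2$. There, the closed set $E:=U\cup\overline{D_{1,2}}$ (closure taken in $\mathbb{A}$) is what is allowed to meet $\mathcal{X}$. Off $E$, the line must stay away from $\mathcal{X}$, hence inside $A$ or inside $\mathbb{A}\setminus\overline{A}$.

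The line will be a concatenation of three pieces. The first is a ray $\gamma^-$ from $-\infty$, disjoint from $U$, ending at a point $x\in(A\setminus\mathcal{W})\setminus D_{1,2}$; this ray comes from the final assertion of Claim~1. The second is an arc inside $A$, joining $x$ to a point of $\partial D_{1,2}\cap A$ (for instance a point of $r_1$), and then running through $D_{1,2}$ to $+\infty$. Since $D_{1,2}\subset A$ is an open disk containing $+\infty$ in its closure, such an arc exists. The concatenation can be made injective and proper by taking first-intersection truncations.

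Two steps require care.

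First, the ray $\gamma^-$ will cross $\mathcal{X}$ at least once, since it passes from $\mathbb{A}\setminus\overline A$ into $A$. We need every such crossing to lie in $U\cup D_{1,2}$ (in closure). The intended fix is the following. In the proof of Claim~1, the ray reaches $(A\setminus\mathcal{W})\setminus D_{1,2}$ only after crossing $\mathcal{X}$. We reroute it so that it enters $A$ through $\partial U\cap\mathcal{X}$, or through the boundary arc of $B_{1,2}$ on $\partial U$. Concretely, follow a ray from $-\infty$ until it first hits $U$. Then slide along $\partial U$ on its outer side, which stays in $\mathbb{A}\setminus\overline A$ or on $\partial U$ itself, until reaching an endpoint of a maximal crosscut. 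The maximal crosscuts have endpoints on $\mathcal{X}\cap\partial U$, so the line then enters $A$ exactly at a point of $U$. From there it continues in $A\setminus\mathcal{W}$ along the crosscut side, which is accessible from $+\infty$ because the crosscut is maximal.

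Second, we must check that the arcs lying in $A$ never touch $\mathcal{X}$. This holds because arcs in the open set $A$ can be chosen with closure in $A\cup\{\text{points of }E\}$: the only accumulation on $\partial A$ is at the chosen entry point in $U$.

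I expect the main obstacle to be the first step, namely arranging that the unique passage through $\mathcal{X}$ happens inside $U\cup\overline{D_{1,2}}$, given that $\mathcal{X}$ may be wild (Figure~\ref{fig:endsconnector}). The tool to try first is the accessibility of crosscut endpoints from $\mathbb{A}\setminus\overline{A}$ along $\partial U$ when $U$ is slightly enlarged, as in the proof of Proposition~\ref{lematecnicoseparacionpancitas}. If exact landing on $\partial U$ cannot be guaranteed, I would replace $U$ by a slightly larger disk and allow $\gamma$ to meet $\mathcal{X}$ only in the interior of $U$. This is harmless, because the claim only forbids intersections with $\mathcal{X}\setminus(U\cup D_{1,2})$.
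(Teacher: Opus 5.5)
There is a gap, and it sits where you expected it. First, $D_{1,2}$ is an open subset of $A$ and $\mathcal{X}=\partial A$, so $D_{1,2}\cap\mathcal{X}=\emptyset$. The claim therefore really demands $\gamma\cap\mathcal{X}\subseteq U$.

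Your main construction begins with the ray from Claim~1, which is disjoint from $U$ and ends at $x\in A$. Such a ray must meet $\mathcal{X}$, and every such meeting point lies in $\mathcal{X}\setminus(U\cup D_{1,2})$. So this piece cannot be kept. Passing to $\overline{D_{1,2}}$ changes the statement, and Claim~1 gives no control over where the crossing happens anyway.

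Your ``intended fix'' is on the right track but, as written, still fails in two places:
\begin{itemize}
\item Following an arbitrary ray from $-\infty$ until it first hits $U$ allows it to cross $\mathcal{X}$ outside $U$ before reaching $U$. For instance, it may enter $A$ and then hit $U$ from above.
\item The outer side of $\partial U$ is not contained in $\mathbb{A}\setminus\overline{A}$. Any passage along the outer side, from the part of $\partial U$ outside $\overline{A}$ to a crosscut in $A$, must cross $\mathcal{X}$ at a point not in $U$. This is precisely the wild situation of Figure~\ref{fig:endsconnector}.
\end{itemize}
The only safe way to pass from below $\mathcal{X}$ into $A$ is through $U$ itself, on $\partial U$ or in its interior. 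Your fallback of enlarging $U$ to $U'$ is not harmless either. Crossings in $\mathcal{X}\cap(U'\setminus U)$ are forbidden by the claim, and $D_{1,2}$ is built from $U$, not $U'$.

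The missing idea, which is the paper's entire proof, is to concatenate three pieces:
\begin{itemize}
\item $\gamma_3$, a ray from $-\infty$ landing at some $b\in\partial U$ \emph{without meeting $\mathcal{X}$}, i.e.\ running in the complementary domain of $\overline{A}$ that contains $-\infty$. Its existence requires $U$ to reach that domain, which the paper takes for granted.
\item $\gamma_2$, an arc from $b$ to a point $a\in\partial U$ accessible from $D_{1,2}$, with $a\neq c_1,c_2$ and interior in $\operatorname{Int}(U)$.
\item $\gamma_1\subset D_{1,2}$, a ray from $a$ to $+\infty$.
\end{itemize}
Then $\gamma\cap\mathcal{X}\subseteq U$ at once. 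You need neither the point $x$ from Claim~1, nor crosscut endpoints, nor any sliding along $\partial U$.

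This route also gives $\gamma\cap A\subseteq D_{1,2}\cup U$. That is the form actually used in Claim~3, where $K^+$ is shown to miss $\operatorname{Int}(U)$ and $D_{1,2}$. A path through $r_1$, or through $(A\setminus\mathcal{W})\setminus D_{1,2}$, as in your proposal, would not provide it.
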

	
	Take $\gamma_1$ as a ray from $+\infty$ inside $D_{1,2}$ landing at an accessible point $a \in \partial U$ different from $c_1,c_2$. Further, take $\gamma_3$ as a ray from $-\infty$ landing at $b \in \partial U$ without meeting $\mathcal{X}$, and let $\gamma_2$ be an arc in $U$ joining $a$ and $b$, where the interior of the arc $\gamma_2$ is contained in $\operatorname{Int}(U)$. Concatenating $\gamma_1, \gamma_2, \gamma_3$ gives the desired vertical line of the claim.
	
	\medskip
	
	Having defined $D_{1,2}$ for the pair $i_1,i_2$, we similarly define $D_{i,j}$ for each pair of rays $r_i,r_j$, with $i,j \in I$. Observe that if $r_3$ is another ray, then one of $D_{1,2}$, $D_{1,3}$, or $D_{2,3}$ contains the other two. Thus
	\[
	\mathcal{D} := \bigcup_{i,j \in I} D_{i,j}
	\]
	can be written as an increasing sequence of open disks. Moreover, its complement
	\[
	K^+:=(\hat{A} \setminus \operatorname{Int}(\mathcal{W})) \setminus \mathcal{D}
	\]
	is the intersection of a decreasing sequence of closed disks in $\hat{A}$, each connecting $\partial \mathcal{X}$ to $+\infty$.

	\begin{claimsinnum}[3]
		The set $K^+$ is inessential.
	\end{claimsinnum}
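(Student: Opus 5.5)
We must show that $K^+$ is inessential. Recall that $K^+=(\hat{A}\setminus \operatorname{Int}(\mathcal{W}))\setminus\mathcal{D}$, where $\mathcal{D}$ is an increasing union of the escape disks $D_{i,j}$.

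The plan is to exhibit one vertical line, or more precisely a closed curve in the two-point compactification through both ends, that avoids $K^+$. Since $\hat{A}\setminus\operatorname{Int}(\mathcal{W})$ lies in $A\cup\mathcal{X}\cup\{+\infty\}$, any essential subset of it would have to meet every path from $-\infty$ to $+\infty$. Claim~2 supplies such a path for each individual disk $D_{1,2}$. The task is therefore to produce a single path compatible with the whole increasing family.

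First, I fix a ray $\gamma_1$ from $+\infty$ contained in $D_{1,2}$ and landing at an accessible point $a\in\partial U$. By the nesting property, for each pair $i,j$ one of $D_{1,2}$ and $D_{i,j}$ contains the other, up to passing through a third disk containing both. Hence $D_{1,2}\subset\mathcal{D}$, and therefore $\gamma_1\cap K^+=\emptyset$ apart from the point $+\infty$.

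Second, I concatenate $\gamma_1$ with an arc $\gamma_2$ whose interior lies in $\operatorname{Int}(U)$, and then with a ray $\gamma_3$ from $b\in\partial U$ to $-\infty$ avoiding $\mathcal{X}$, exactly as in Claim~2. The arcs $\gamma_2$ and $\gamma_3$ avoid $K^+$:
\begin{itemize}
\item $\gamma_3$ lies in $\mathbb{A}\setminus \overline{A}$, while $K^+\subset\overline{A}\cup\{+\infty\}$;
\item $\gamma_2$ meets $\overline A$ only inside $\operatorname{Int}(U)\cap\overline{A}$, which is contained in $\operatorname{Int}(\mathcal{W})$ together with $\mathcal{X}\cap\operatorname{Int}(U)$, and $K^+$ excludes $\operatorname{Int}(\mathcal{W})$.
\end{itemize}
One must check that points of $\mathcal{X}\cap \operatorname{Int}(U)$ do not belong to $K^+$. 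If needed, I will perturb $\gamma_2$ so that it crosses $\mathcal{X}$ only at points in the interior of maximal cross-sections' closures, or simply note that $\mathcal{X}\cap\operatorname{Int}(U)\subset\operatorname{Int}(\mathcal{W}\cup\mathcal{X})$ relative to $\hat A$.

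Third, the path $\gamma=\gamma_1\gamma_2\gamma_3$ runs from $+\infty$ to $-\infty$ and is disjoint from $K^+\setminus\{+\infty\}$. Since $K^+$ is a decreasing intersection of closed connected sets, it is connected, so it lies in $\mathbb{A}\setminus\gamma$, which is an open disk (compactness lets us first thicken $\gamma$ if necessary). Hence $K^+$ is inessential.

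The main obstacle is the boundary bookkeeping at the endpoints $a$ and $b$ and along $\mathcal{X}$: $K^+$ may accumulate on $\partial U\cap\mathcal{X}$. To handle this, I would choose $a$ as an accessible point of $\partial U$ inside $A$, away from $\mathcal{X}$, which is possible because $\gamma_1\subset D_{1,2}$ and $D_{1,2}$ meets $B_{1,2}$, whose boundary contains a subarc of $\partial U$ in $A$. Then $a$ lies in the closure of $\mathcal{D}$ but off $K^+$, because a small neighbourhood of $a$ in $\hat A\setminus\operatorname{Int}\mathcal{W}$ is contained in $D_{1,2}$.
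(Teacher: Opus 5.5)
Your route is the paper's: take the vertical line $\gamma=\gamma_1\gamma_2\gamma_3$ of Claim~2 and check that $K^+$ misses it. The paper's two-line argument says that $K^+$ avoids $\operatorname{Int}(U)$ and $D_{1,2}$ and lies in $\overline{A}$. Your checks for $\gamma_1$ away from its endpoint, for $\gamma_2$ (via $\operatorname{Int}(U)\cap A\subset\operatorname{Int}(\mathcal W)$) and for $\gamma_3\subset\mathbb A\setminus\overline A$ are fine.

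There is a gap at the junction point $a$, the step you yourself identified as the main obstacle, and your resolution goes the wrong way. With $K^+=(\hat A\setminus\operatorname{Int}(\mathcal W))\setminus\mathcal D$, every point of a maximal crosscut lies in $K^+$. Such a point is not interior to $\mathcal W$, and it is not in $\mathcal D$, because each $D_{i,j}$ is a component of $(\hat A\setminus\mathcal W)\setminus(r_i\cup r_j)$ and so misses $\mathcal W$. Now suppose $a\in\partial U\cap A$ is the landing point of a ray in $D_{1,2}$. Then $a\in U\cap A\subset\mathcal W$, and $a$ is a limit of points outside $\mathcal W$. So $a$ lies on a maximal crosscut, and therefore $a\in K^+\cap\gamma$. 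Your justification fails: you claim a small neighbourhood of $a$ in $\hat A\setminus\operatorname{Int}(\mathcal W)$ is contained in $D_{1,2}$, but that neighbourhood contains $a$ itself and the adjacent piece of crosscut, and neither lies in $D_{1,2}$. Choosing $a$ ``inside $A$, away from $\mathcal X$'' is exactly what puts the junction into $K^+$.

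The fix is to place the junction on $\mathcal X$. Let $a$ be the endpoint of $C_1$ on the side of $c_1$ that faces $D_{1,2}$. This point has three properties:
\begin{enumerate}
\item It lies in $\partial U$, since $\partial U$ is closed.
\item It is accessible from $D_{1,2}$ by a ray that follows that half of $C_1$ from outside $W_1$. This works because near each point of $C_1$ the only part of $\partial U$ is $C_1$ itself, and $r_2$ stays a positive distance from that compact half-arc.
\item It is not in $\hat A=A\cup\{+\infty\}$, hence not in $K^+$.
\end{enumerate}
With this choice, $\gamma\cap\hat A\subset D_{1,2}\cup(\operatorname{Int}(U)\cap A)\cup\{+\infty\}$. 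This set misses $K^+\setminus\{+\infty\}$, so $K^+\setminus\{+\infty\}$ lies in the open disk $\mathbb A\setminus\gamma$. This is what the paper's argument tacitly needs.

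Separately, drop the connectedness step. A decreasing intersection of non-compact closed connected sets need not be connected. More to the point, inessentiality only requires containment in the disk $\mathbb A\setminus\gamma$, so neither connectedness nor any thickening of $\gamma$ is needed.
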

	
	By definition, $K^+$ meets neither $\operatorname{Int}(U)$ nor $D_{1,2}$. As $K^+ \subset \cl[A]$, it cannot meet the vertical $\gamma$ from the previous claim. Therefore, $K^+$ is inessential.
	
	\medskip
	
	Take a vertical line $v \colon (-\infty,+\infty) \to \mathbb{A} \setminus (U \cup \gamma)$, i.e., a curve joining one end to the other. If $r_i, r_j$ are two rays and $D_{i,j}$ is the associated disk, then whenever $v$ meets $D_{i,j}$ it must first cross one of the rays $r_i$ or $r_j$, which implies that it also meets $(A \setminus \mathcal{W}) \setminus D_{i,j}$. Hence, the vertical $v$ meets every escape region. Therefore, for each $i,j$ there exists $t_{i,j}$ such that $v(t_{i,j}) \in (A \setminus \mathcal{W}) \setminus D_{i,j}$ and the restriction $v|_{(-\infty,t_{i,j})}$ does not intersect $D_{i,j}$.
	
	Writing $\mathcal{D}$ as an increasing sequence of disks $(D_n)_{n \in \mathbb{N}}$, choose $t_n$ such that $t_{n+1} \leq t_n$, $v(t_n) \in (A \setminus \mathcal{W}) \setminus D_n$, and
	\[
	v|_{(-\infty,t_n)} \ \text{does not intersect } D_n.
	\]
	
	If $t_\infty = \lim_{n \to \infty} t_n$, then $v(t_\infty) \in K^+$ and $v|_{(-\infty,t_\infty)}$ avoids all $D_n$. By truncating $v$ if necessary, we may assume that it meets $K^+$ for the first time at $v(t_\infty)$. Define
	\[
	K = K^+ \cup v|_{(-\infty,t_\infty]}.
	\]
	
 	This set is connected, inessential (since it is disjoint from $\gamma$), and accumulates on $-\infty$ and $+\infty$. Moreover, since $K^+$ is obtained as the intersection of a decreasing sequence of closed disks, its complement is an increasing nested union of open disks and is therefore simply connected. Attaching the ray $v|_{(-\infty,t_\infty]}$, which meets $K^+$ only at its endpoint, does not create any new complementary component. Hence, the complement of $K$ is connected and simply connected. Since the complement also contains the vertical line $\gamma$, it is an open disk containing a vertical line. Therefore, $K$ is an ends-connector.
	
	Moreover, $K \cap A$ does not separate maximal crosscuts in $A \setminus \mathcal{W}$. Indeed, if $C_1, C_2$ are two different maximal crosscuts, there exist rays $r_1, r_2$ landing on them and defining a disk $D_{1,2}$. The set $K$ is disjoint from this disk, and within it one can join $C_1$ and $C_2$ by an arc, which completes the proof.
\end{proof}

\subsection{Disjoint pair of disks and non-trivial rotation set}

Let us first recall the following result by Franks, which can be found as Proposition 1.3 in \cite{Franks(PoincBirkh)}.

\begin{prop}[\cite{Franks(PoincBirkh)}]
	\label{proposicionFranks}
	Let $H:\mathbb{R}^2 \to \mathbb{R}^2$ be an orientation-preserving homeomorphism that possesses a periodic disk chain. Then there exists a simple closed curve $\gamma$ such that the index of $\gamma$ with respect to $H$ is $1$. In particular, $H$ has a fixed point.
\end{prop}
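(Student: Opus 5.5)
The plan is to follow Franks' original strategy: reduce to Brouwer's theory of fixed-point-free plane homeomorphisms by a local modification supported in the disks of the chain. Recall that a periodic disk chain for $H$ is a finite family of pairwise disjoint open disks $B_1,\dots,B_m$ such that $H(B_i)\cap B_i=\emptyset$ for every $i$, and for each $i$ there is $k_i\geq 1$ with $H^{k_i}(B_i)\cap B_{i+1}\neq\emptyset$, indices taken mod $m$. The fact we will lean on is the refined form of Brouwer's plane translation theorem, as used by Franks: if an orientation-preserving homeomorphism $g$ of $\mathbb{R}^2$ has a periodic point, then there is a simple closed curve whose index with respect to $g$ is $1$. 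In particular, a fixed-point-free orientation-preserving plane homeomorphism has no periodic points.

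First, pick points $x_i\in B_i$ with $H^{k_i}(x_i)\in B_{i+1}$. For each $i$, choose a homeomorphism $h_{i+1}$ supported in a closed disk inside $B_{i+1}$ that sends $H^{k_i}(x_i)$ to $x_{i+1}$. Such an $h_{i+1}$ exists because $B_{i+1}$ is an open disk. Set $h=h_1\circ\cdots\circ h_m$ and $g=h\circ H$. If the $k_i$ are chosen minimal, and one first shrinks to a subchain if an intermediate iterate re-enters some $B_j$, then $x_1$ becomes a periodic point of $g$. This is the standard reduction: passing to a shorter chain whenever some $H^{j}(x_i)$ with $0<j<k_i$ lands in a disk $B_l$. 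Since each $B_i$ is free, $g(B_i)\cap B_i=\emptyset$ still holds. Hence $\operatorname{Fix}(g)=\operatorname{Fix}(H)$, because $g=H$ outside $\bigcup B_i$ and no point of $\bigcup B_i$ can be fixed by either map.

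Next, apply the Brouwer-type statement to $g$. This gives a simple closed curve $\gamma$ of index $1$ for $g$, and in particular a fixed point of $g$, hence of $H$. The remaining point is to transfer the index statement from $g$ to $H$. The index of $\gamma$ depends only on the vector field $x\mapsto g(x)-x$ along $\gamma$. Moreover, $g$ and $H$ are joined through the family $g_t=h_t\circ H$, where $h_t$ is an isotopy from the identity to $h$ supported in $\bigcup B_i$. Every $g_t$ maps each $B_i$ off itself, so no $g_t$ has fixed points in $\bigcup B_i$, and all of them agree with $H$ elsewhere. Thus no $g_t$ has a fixed point on $\gamma$, and the index of $\gamma$ is constant in $t$. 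It follows that $\gamma$ has index $1$ for $H$ as well.

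I expect the main obstacle to be the Brouwer part itself. The clean way is to cite Brouwer's lemma on translation arcs in the form: if $g$ has no curve of index $1$, then $g$ has no periodic disk chain. That form avoids the perturbation altogether. If it must be proved, the approach I would take is to assume no fixed point, build a translation arc through a free disk, and derive a contradiction from $g^k(\alpha)$ returning to $\alpha$. The combinatorial shrinking to a minimal chain is routine.
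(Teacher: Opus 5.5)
Your argument is correct and is essentially Franks' own proof of his Proposition~1.3, which the paper only cites without proof: pass to a minimal chain, perturb by an $h$ supported in $\bigcup B_i$ to create a periodic point, apply the Brown--Fathi form of Brouwer's lemma to $g=h\circ H$, and transfer the index along the isotopy $h_t\circ H$. One justification needs fixing: $g=h\circ H$ need not agree with $H$ outside $\bigcup B_i$, since it differs wherever $H(x)\in\operatorname{supp} h$; what is true is that for $x\notin\bigcup B_i$ one has $h_t^{-1}(x)=x$, so $h_t(H(x))=x$ iff $H(x)=x$, and together with the freeness of the $B_i$ this gives $\operatorname{Fix}(h_t\circ H)=\operatorname{Fix}(H)$ for all $t$, which is exactly what the index argument needs.
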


Recall that for a homeomorphism $H$ of a surface, a \textit{chain of disks} is a finite collection of pairwise disjoint embedded open disks $V_1,\dots,V_n$ satisfying:
\begin{itemize}
	\item For each $i=1,\dots,n$, $H(V_i)\cap V_i = \emptyset$;
	\item For each $i=1,\dots,n-1$, there exists $m_i\in \mathbb{N}$ such that $H^{m_i}(V_i)\cap V_{i+1}\neq \emptyset$.
\end{itemize}

A \textit{periodic chain of disks} is a chain of disks in which the last disk eventually meets the first one; explicitly, there exists $m_n$ such that
\[
H^{m_n}(V_n)\cap V_1 \neq \emptyset.
\]

The positive rotational difference, the $3$-dpd condition, and the fact that one disk visits the other allow us, for two suitable lifts of $f^3$, to find a periodic chain in the plane.

Also, we need the following proposition due to Le Calvez, which summarizes Theorem~9.1 and Corollary~9.2 of the article~\cite{LeCalvez2005IHES}. At this point, it is important to recall that for a lift $F$ of a map $f \in \text{Homeo}_{\,0}(\mathbb{A})$ although for a given lift $F$ of $f$ the limit
\[
\lim_{n\to+\infty}\frac{\mathrm{pr}_1(F^n(x)-x)}{n}
\]
may exist for a point $x\in\mathbb{R}^2$, the existence of this limit is not preserved under conjugacy. The difficulty arises when the orbit of $\pi(x)$ is unbounded. Nevertheless, if $\pi(x)$ is positively recurrent, its rotation number can always be defined by considering subsequences of the orbit returning arbitrarily close to $\pi(x)$. More generally, if $\pi(x)$ is positively non-escaping---that is, its $\omega$-limit set is not reduced to one of the ends---then its rotation number is well defined, and its existence is preserved under conjugacy. For further details, we refer the reader to \cite{LeCalvezInfiniteannulus,fabiopatricehorseshoes}.

We also say that $f \in \text{Homeo}_{\,0}(\mathbb{A})$ satisfies the \emph{curve intersection property} if every essential simple curve $\gamma \subset \mathbb{A}$ satisfies $f(\gamma)\cap \gamma \neq \emptyset$.

\begin{prop}[\cite{LeCalvez2005IHES}]
	\label{realizacionPatrice}
	Let $f\in\text{Homeo}_{\,0}(\mathbb{A})$, let $F$ be a lift of $f$, and let $z_0,z_1$ be positively recurrent points with $\rho_i:=\rho_{z_i}(F)$ for $i=0,1$. Assume that $\rho_0<\rho_1$. If some rational number in $(\rho_0,\rho_1)$ is not realized by a periodic point of $f$, then there exists an essential simple closed curve $\gamma$ separating $z_0$ from $z_1$ such that $f(\gamma)\cap\gamma=\emptyset$. Consequently, if $f$ satisfies the curve intersection property, then every rational number in $(\rho_0,\rho_1)$ is realized by a periodic point.
\end{prop}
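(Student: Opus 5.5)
Since this is Le Calvez's result, the plan is to follow the strategy of \cite{LeCalvez2005IHES} and argue by contraposition. Fix a rational $p/q\in(\rho_0,\rho_1)$, written in lowest terms, that is not realized by any periodic point of $f$. Set $g:=f^q$ and $G:=F^q-(p,0)$, a lift of $g$. Then $G$ has no fixed point in $\mathbb{R}^2$: a fixed point of $G$ would project to a periodic point of $f$ with rotation number $p/q$. Moreover, for $G$ the positively recurrent points $z_0,z_1$ (which remain positively recurrent for $g$) have rotation numbers $q\rho_0-p<0<q\rho_1-p$.

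\textbf{Step 1: transverse foliation.} Since $G$ is fixed point free, I would choose an isotopy from the identity to $g$ whose lift ends at $G$ and has no fixed points. I would then invoke Le Calvez's equivariant Brouwer foliation theorem. It provides an oriented singular-free foliation $\mathcal{F}$ of $\mathbb{A}$ whose lift $\widetilde{\mathcal{F}}$ consists of Brouwer lines for $G$, each leaf $\widetilde\phi$ satisfying $G(\widetilde\phi)\subset R(\widetilde\phi)$ and $G^{-1}(\widetilde\phi)\subset L(\widetilde\phi)$. Every orbit then admits a transverse trajectory crossing leaves from left to right.

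\textbf{Step 2: locate a closed leaf.} This is the step I expect to be the main obstacle. The goal is to show that $\mathcal{F}$ has an essential closed leaf $\gamma$ separating $z_0$ from $z_1$. I would first use positive recurrence: the transverse trajectory of a recurrent point returns near itself. If that point lies in a region where leaves lift to lines that are not invariant under the deck translation, one can close up the trajectory into a transverse loop. A transverse loop in a Brouwer foliation of the plane with no fixed point forces, via Proposition~\ref{proposicionFranks}-type index arguments, either a fixed point or an essential transverse loop. The nonzero rotation of $z_i$ rules out the inessential case, so the trajectory of $z_0$ closes into an essential transverse loop winding negatively, and that of $z_1$ into one winding positively. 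Two essential loops transverse to the same foliation with opposite orientations cannot cross. Hence the region between them is a $\mathcal{F}$-invariant annulus whose boundary-to-boundary behaviour forces a closed essential leaf, that is, a Reeb-type or compact-leaf component. That closed leaf separates $z_0$ from $z_1$. To make this rigorous, I would first try to follow Le Calvez's classification of leaves in the annulus (closed leaves, leaves going end to end, leaves from an end to itself); the delicate point is excluding the configurations without a closed leaf.

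\textbf{Step 3: from $g$-free to $f$-free, and the consequence.} Being a Brouwer leaf, $\gamma$ satisfies $g(\gamma)\cap\gamma=\emptyset$. I would then pass to an $f$-free curve. Consider the essential annular continuum obtained by filling $\bigcup_{i=0}^{q-1}f^i(\overline{\mathcal{U}^-(\gamma)})$. Since $\gamma$ is $f^q$-free, $g$ moves it strictly to one side, and the boundary of the upper complementary component of this continuum is mapped strictly inside itself by $f$. Smoothing this boundary yields an essential simple closed curve $\gamma'$ with $f(\gamma')\cap\gamma'=\emptyset$. I would check that it still separates $z_0$ from $z_1$: recurrent points cannot lie in the strictly wandering strip swept between $\gamma'$ and $f(\gamma')$, so they stay on their respective sides. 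Finally, the last assertion is immediate by contraposition: under the curve intersection property no such $\gamma'$ exists, so every rational in $(\rho_0,\rho_1)$ is realized by a periodic point.
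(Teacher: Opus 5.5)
The paper does not prove this statement. It quotes it from Le Calvez (Theorem~9.1 and Corollary~9.2 of \cite{LeCalvez2005IHES}), and your Steps~1--2 follow his route. Step~2 is only sketched and partly misstated, although the mechanism is right:
\begin{itemize}
\item Inessential transverse loops cannot occur at all, because a positively transverse path in $\mathbb{R}^2$ meets each leaf of $\widetilde{\mathcal F}$ at most once. No index argument is needed.
\item The loops obtained by closing up the trajectories must first be replaced by \emph{simple} essential transverse loops, and this requires a lemma.
\item The annulus between a simple negative and a simple positive transverse loop is not $\mathcal F$-invariant. What happens is that leaves cross both boundary loops outward (or both inward), so half-leaves are trapped and Poincar\'e--Bendixson produces a closed essential leaf.
\end{itemize}
Repaired this way, Step~2 gives an $f^q$-free essential curve $\gamma$ separating $z_0$ from $z_1$.

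The genuine gap is the separation claim in Step~3. By construction $z_0$ lies in $E=\operatorname{Fill}(\bigcup_{i=0}^{q-1}f^i(\overline{\mathcal{U}^-(\gamma)}))$, but nothing keeps $z_1$ out of $E$. It enters $E$ as soon as some $f^{-i}(z_1)$ with $0<i<q$ lies below $\gamma$, or if it sits in a bounded pocket added by the filling. The $f^q$-freeness of $\gamma$ gives no control over these intermediate iterates. Your justification only shows that each recurrent point lies on a definite side of $\gamma'$, not on which side.

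The inference cannot be right as stated. Let $R_{1/2}(x,y)=(x+1/2,y)$. Let $\psi$ be the time-$1/2$ map of the bounded field $\phi(x,y)\,\partial_y$, where $\phi\ge0$, $\phi(x+1/2,y)=\phi(x,y)$, and $\phi$ vanishes only at $a=(0,0)$ and $b=(1/2,0)$. Put $f=R_{1/2}\circ\psi$. Then:
\begin{itemize}
\item $f^2$ is the time-one map of this field, and $f$ swaps $a$ and $b$.
\item A graph passing above $a$ and below $b$ is $f^2$-free and separates these two recurrent points.
\item Every $f$-free essential curve has a strictly forward-invariant side, so it leaves $a$ and $b$ on the same side. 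Indeed your set $E$ contains both points.
\end{itemize}
So separation must be extracted from the hypothesis $\rho_0<p/q<\rho_1$, or from a different construction, and your Step~3 never uses it.

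A smaller issue: $\bigcup_i f^i(\overline{\mathcal{U}^-(\gamma)})$ is only weakly invariant, since the curves $f^i(\gamma)$ with $0<i<q$ can bound both the set and its image. You need nested curves between $\gamma$ and $f^q(\gamma)$ to get strict invariance. Once that is fixed you do obtain \emph{some} $f$-free essential curve, which is all the final ``consequently'' requires. What remains unproved is that this curve separates $z_0$ from $z_1$.
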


As observed in Remark~\ref{remarkCIPinstaregions}, any $f \in \text{Homeo}_{\,0}(\mathbb{A})$ restricted to a Birkhoff instability region $A$ satisfies the curve intersection property in $A$. As an immediate consequence of the proposition above, although this particular result will not be used in the article, we can isolate the following property.

\begin{cor}	
 Let $f \in \text{Homeo}_{\,0}(\mathbb{A})$, and let $A$ be an open invariant annulus whose ends are Birkhoff-related. If there exist two periodic points $z_0,z_1\in A$ with different rotation numbers $\rho_0$ and $\rho_1$, then every rational number between $\rho_0$ and $\rho_1$ is realized by a periodic point in $A$.
\end{cor}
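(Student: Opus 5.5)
This follows directly from Proposition~\ref{realizacionPatrice} once its hypotheses are placed on the annulus $A$ itself. First identify $A$ with the standard annulus $\mathbb{A}$ through a homeomorphism conjugating $f|_A$ to a map $g\in\mathrm{Homeo}_{\,0}(\mathbb{A})$. Fix a lift $F$ of $f$ and take the corresponding lift $G$ of $g$. Periodic points of $f$ in $A$ are positively recurrent, and in particular non-escaping. By the discussion preceding Proposition~\ref{realizacionPatrice}, their rotation numbers are therefore well defined and preserved under this conjugacy. So $z_0,z_1$ become positively recurrent points of $g$ whose rotation numbers are $\rho_0<\rho_1$, after relabeling if necessary.

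Next I show that $g$ has the curve intersection property. This is Remark~\ref{remarkCIPinstaregions}. Suppose some essential simple closed curve $\gamma\subset A$ satisfied $f(\gamma)\cap\gamma=\emptyset$. Then $f(\gamma)$ lies strictly on one side of $\gamma$, say in the component containing the upper end. The closure of that component would then be mapped into itself, and so would all its forward images. Hence a neighborhood of the lower end of $A$ whose forward orbit stays away from a neighborhood of the upper end can be chosen; one first shrinks the neighborhoods so they lie on opposite sides of $\gamma$ and $f(\gamma)$. Applying the same reasoning to $f^{-1}$ on the other side shows the relation also fails in the other direction. This contradicts the assumption that the ends of $A$ are Birkhoff-related.

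Finally, apply Proposition~\ref{realizacionPatrice} to $g$, $G$, $z_0$ and $z_1$. Since $g$ has the curve intersection property, every rational number in $(\rho_0,\rho_1)$ is realized by a periodic point of $g$. Pulling back by the conjugacy gives periodic points of $f$ in $A$ with the same rotation numbers for $F$. The only delicate point is the compatibility of rotation numbers under the conjugacy, since $A$ may be unbounded and orbits need not stay bounded. This is handled by working only with periodic, hence non-escaping, points, as noted before Proposition~\ref{realizacionPatrice}.
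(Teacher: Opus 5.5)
Your proposal is correct and follows the paper's own route: the curve intersection property from Remark~\ref{remarkCIPinstaregions}, which only uses that the ends of $A$ are Birkhoff-related, is fed into Le Calvez's Proposition~\ref{realizacionPatrice}, exactly as the paper does. One slip in your curve-intersection step: if $f(\gamma)$ lies above $\gamma$, the forward-invariant upper side shows that the forward orbit of a neighborhood of the \emph{upper} end misses a neighborhood of the lower end (passing to $f^{-1}$ gives this same direction again, not the opposite one), and this single failure already contradicts Birkhoff-relatedness, which in the paper requires the forward orbit of \emph{each} neighborhood to meet the other.
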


It is possible to generalize Proposition~\ref{realizacionPatrice} by allowing the prime-ends rotation number of an invariant circloid to play the role of the rotation number of one of the recurrent points $z_0,z_1$.

\begin{lemma}
	\label{lemmacorolariorealizacionPatrice}
	Let $f\in\mathrm{Homeo}_{\,0}(\mathbb{A})$, let $F$ be a lift of $f$, let $z_0$ be a positively recurrent point, and let $C_1$ be an $f$-invariant circloid. Set $\rho_0:=\rho_{z_0}(F)$ and $\rho_1:=\rho^+_{C_1}(F)$, where $\rho^+_{C_1}(F)$ denotes the prime-ends rotation number associated to $\mathcal{U}^+(C_1)$,
	and assume that $\rho_0<\rho_1$ and that $z_0\in\mathcal{U}^+(C_1)$.
	
	If some rational number in $(\rho_0,\rho_1)$ is not realized by a periodic point in $\mathcal{U}^+(C_1)$, then there exists an essential simple closed curve $\gamma\subset\mathcal{U}^+(C_1)$
	separating $z_0$ from $C_1$ such that $f(\gamma)\cap\gamma=\emptyset.$
	In particular, if $f|_{\mathcal{U}^+(C_1)}$ satisfies the curve intersection property, then every rational number in $(\rho_0,\rho_1)$ is realized by a periodic point in $\mathcal{U}^+(C_1)$.
\end{lemma}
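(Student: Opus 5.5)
The plan is to reduce the lemma to Proposition~\ref{realizacionPatrice}. The idea is to collapse the circloid $C_1$ to an end, so that the prime-ends circle supplies a recurrent point with rotation number $\rho_1$.

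First, consider the open disk $D=\mathcal{U}^+(C_1)\sqcup\{+\infty\}$ and its prime-ends compactification $\widehat D$. Let $\widehat f$ be the extension of $f$, and let $G$ be the lift associated with $F$ as in the Preliminaries, so that $\rho_{\partial\mathbb{D}}(G)=\rho_1$. Glue the closed disk $\widehat D$ to a second copy of itself along $\partial\widehat D$, or equivalently attach a collar $S^1\times[0,1)$ and extend $\widehat f$ as a product of the circle homeomorphism $\widehat f|_{\partial \widehat D}$ with the identity. This yields an open annulus $\mathbb{A}'\supset \mathcal{U}^+(C_1)$ and a homeomorphism $f'\in\mathrm{Homeo}_0(\mathbb{A}')$ extending $f|_{\mathcal{U}^+(C_1)}$, with a lift $F'$ that agrees with $F$ on $\pi^{-1}(\mathcal{U}^+(C_1))$ via $\Phi$. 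The rotation numbers of points in $\mathcal{U}^+(C_1)$ are unchanged, because they are defined through $F$ and $\Phi$ commutes with the deck translation. In particular, $\rho_{z_0}(F')=\rho_0$, since $z_0$ is positively recurrent and the rotation number is then conjugacy invariant, as recalled before Proposition~\ref{realizacionPatrice}.

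Second, produce a positively recurrent point $z_1$ in $\partial\widehat D$ (or in the collar) with $\rho_{z_1}(F')=\rho_1$. Any circle homeomorphism has a recurrent point, for instance a point of a minimal set or a periodic point, and every point of the circle has rotation number $\rho_1$. If $\rho_1$ is rational, some periodic point realizes it. Now apply Proposition~\ref{realizacionPatrice} to $f'$ with $z_0$ and $z_1$. Suppose a rational number in $(\rho_0,\rho_1)$ is not realized in $\mathcal{U}^+(C_1)$. Before applying the proposition we must rule out that it is realized by a periodic point in the added part. The added part is the circle times an interval, and every point there has rotation number exactly $\rho_1$, which is not in the open interval. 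Hence the rational is not realized anywhere for $f'$, and Proposition~\ref{realizacionPatrice} gives an essential simple closed curve $\gamma'\subset\mathbb{A}'$ separating $z_0$ from $z_1$ with $f'(\gamma')\cap\gamma'=\emptyset$.

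The main obstacle is the last step: moving $\gamma'$ into $\mathcal{U}^+(C_1)$ while keeping it free and separating. Since $\gamma'$ is free, either $f'(\gamma')$ lies strictly above $\gamma'$ or strictly below it. The set $\partial\widehat D\cup$ collar is $f'$-invariant and essential, so $\gamma'$ cannot meet it. If it did, $\gamma'$ would have to separate $z_1$ from that invariant set; otherwise the region between $\gamma'$ and $f'(\gamma')$, which is a wandering annulus, would intersect an invariant essential continuum. More concretely, I would argue as follows. The invariant compact essential set $\partial\widehat D$ cannot be crossed by a curve that is mapped strictly to one side of itself: the closure of the upper component of $\gamma'$ is mapped into itself strictly, so invariant compact essential sets lie entirely on one side. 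Hence $\gamma'$ is disjoint from $\partial\widehat D\cup$ collar and lies in $\mathcal{U}^+(C_1)$. It separates $z_0$ from $\partial\widehat D$, and therefore from $C_1$, and we take $\gamma=\gamma'$. The final sentence of the lemma is then immediate: under the curve intersection property such a $\gamma$ cannot exist, so every rational in $(\rho_0,\rho_1)$ is realized in $\mathcal{U}^+(C_1)$.
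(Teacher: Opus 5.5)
Your construction matches the paper's: a product collar below the prime-ends circle, a recurrent $z_1$ on that circle, the observation that the added part only carries rotation number $\rho_1$, and then Proposition~\ref{realizacionPatrice}. The ``doubling'' you mention first is not equivalent to the collar: its mirror half contains a copy of $z_0$ rotating by $\rho_0$ and has no product structure, so only the collar supports what follows.

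The gap is in the step you flagged as the main obstacle. The principle you invoke there, that an essential curve mapped strictly to one side of itself cannot cross an invariant compact essential set, is false. Take $f(\theta,y)=(\phi(\theta),\psi(y))$, where:
\begin{itemize}
\item $\phi$ fixes $0$ and $1/2$ and moves every other point strictly closer to $1/2$;
\item $\psi$ is an increasing homeomorphism of $\mathbb{R}$ with $\psi(0)=0$ and $\psi(y)>y$ for $y\neq 0$.
\end{itemize}
Let $\gamma$ be the graph of $h(\theta)=H(d(\theta,1/2))$, with $H$ strictly increasing and $H(0)<0<H(1/2)$. Then $\psi(h(\theta))>h(\phi(\theta))$ for every $\theta$, so $f(\gamma)\subset\mathcal{U}^+(\gamma)$. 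Yet $\gamma$ crosses the invariant circle $S^1\times\{0\}$.

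Your other heuristic fails for the same reason. A wandering open annulus can meet an invariant essential continuum. In your own collar this already happens as soon as $\widehat f|_{\partial\widehat D}$ has wandering points.

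What is true is a one-sided version: a free essential curve $\gamma$ cannot meet an invariant essential continuum $S$ contained in $\overline{\mathcal{U}^-(\gamma)}$. To see this, suppose first that $f(\gamma)\subset\mathcal{U}^+(\gamma)$. Then $S\subset\overline{\mathcal{U}^-(\gamma)}\subset\mathcal{U}^-(f(\gamma))$, so $S\cap f(\gamma)=\emptyset$. But $S\cap\gamma\neq\emptyset$ gives $S\cap f(\gamma)=f(S\cap\gamma)\neq\emptyset$, a contradiction. If instead $f(\gamma)\subset\mathcal{U}^-(\gamma)$, run the same argument with $f^{-1}$.

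To apply this you need the product structure, which your final step never uses. Write the collar as $S^1\times(-1,0]$ with $\partial\widehat D=S^1\times\{0\}$. If $\gamma'$ met the collar, its minimal height would satisfy $y_{\min}\le 0$. The level circle $S^1\times\{y_{\min}\}$ is then $f'$-invariant, because heights are preserved in the collar. It is contained in $\overline{\mathcal{U}^-(\gamma')}$ and touches $\gamma'$, contradicting the one-sided statement.

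This is exactly the paper's minimal-second-coordinate argument. With it replacing your final step, the rest of your proof goes through.
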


\begin{proof}
Consider the prime-ends compactification of $\mathcal{U}^+(C_1)$, where $\mathcal{U}^+(C_1)$ is identified via a homeomorphism with the interior of
\[
A=S^1\times[0,+\infty).
\]

As explained in the Preliminaries, under this identification the map $f$ induces a homeomorphism which extends to $A$ and is conjugate to the original one in $S^1\times(0,+\infty)$.

Once a lift $F$ of $f$ is fixed, the dynamics on the boundary $S^1\times\{0\}$, together with a suitable choice of the lift of the corresponding map on $A$, gives rise to the definition of the prime-ends rotation number $\rho_{C_1}^+(F)$. With an abuse of notation, here we also write $f$ for the corresponding map on $A$, since, up to this identification, it satisfies the same hypotheses as $f$ in the interior of $\mathcal{U}^+(C_1)$.

In particular, note that under this identification neighborhoods of the end of $\mathcal{U}^+(C_1)$ corresponding to $C_1$ correspond to neighborhoods of $S^1\times\{0\}$. For further details, see the Preliminaries or the proof of Proposition~\ref{primeendsrotationproposition}.

If, for the point corresponding to $z_0$ under this identification---which we still denote by $z_0$---we obtain a simple closed curve $\gamma\subset S^1\times(0,+\infty)$ separating $z_0$ from $S^1\times\{0\}$, then, under the same identification, we obtain the conclusion of the lemma.

	Extend $f$ to a homeomorphism
	\[
	g:S^1\times\mathbb{R}\longrightarrow S^1\times\mathbb{R}
	\]
	defined by
	\[
	\begin{array}{ll}
		g(x,y)=f(x,y), & \text{if } y\geq0,\\[2pt]
		g(x,y)=(f(x,0),y), & \text{if } y<0.
	\end{array}
	\]
	
	Let $G$ be the lift of $g$ that coincides with $F$ on $\mathbb{R}\times[0,+\infty)$. If $z_1$ is a recurrent point in $S^1\times\{0\}$, then
	\[
	\rho_{z_0}(G)=\rho_0, \qquad \rho_{z_1}(G)=\rho_1
	\]
	
	Observe that every circle $S^1\times\{r\}$ with $r<0$ is $g$-invariant. Moreover, every recurrent point in $S^1\times(-\infty,0]$ has rotation number exactly $\rho_1$. Consequently, any periodic point realizing a rotation number in $(\rho_0,\rho_1)$ must belong to
	\[
	S^1\times(0,+\infty)\subset A.
	\]
	Assume that some rational number in $(\rho_0,\rho_1)$ is not realized by a periodic point in $A=S^1\times(0,+\infty)$. As observed above, it is therefore not realized by any periodic point in $S^1\times\mathbb{R}$ either. By Proposition~\ref{realizacionPatrice}, there exists an essential simple closed curve $\gamma$ satisfying
	\[
	g(\gamma)\cap\gamma=\emptyset
	\]
	and separating $z_0$ from $z_1$.
	
	It is therefore enough to prove that $\gamma\subset A$
	since this implies that $\gamma$ separates $z_0$ from $C_1$.
	Suppose, by contradiction, that
	\[
	\gamma\cap\bigl(S^1\times(-\infty,0]\bigr)\neq\emptyset.
	\]
	
	Choose
	\[
	p\in\gamma\cap\bigl(S^1\times(-\infty,0]\bigr)
	\]
	with minimal second coordinate. Since $g$ preserves the second coordinate on $S^1\times(-\infty,0]$, the point $g(p)$ also has minimal second coordinate on $g(\gamma)$. It follows that both $p$ and $g(p)$ are accessible from $-\infty$ in
	\[
	\bigl(S^1\times(-\infty,0]\bigr)\setminus\bigl(\gamma\cup g(\gamma)\bigr).
	\]
	
	In particular, $\gamma$ does not separate $g(\gamma)$ from $-\infty$, and $g(\gamma)$ does not separate $\gamma$ from $-\infty$. However, since $\gamma$ and $g(\gamma)$ are disjoint essential simple closed curves, either
	$g(\gamma)\subset\mathcal{U}^-(\gamma)$ or $g(\gamma)\subset\mathcal{U}^+(\gamma)$. In the first case, $\gamma$ separates $g(\gamma)$ from $-\infty$, contradicting the first observation. In the second case, $g(\gamma)$ separates $\gamma$ from $-\infty$, contradicting the second one. Thus, both possibilities lead to a contradiction.

\end{proof}

We obtain, as a corollary of Proposition~\ref{realizacionPatrice} and Lemma~\ref{lemmacorolariorealizacionPatrice}, a realization result for non-wandering or area-preserving maps restricted to regular annuli under a sort of twist condition. Recall that, by Proposition~\ref{propcircloidsrotaciontrivial}, any $f$-invariant circloid in this setting has a trivial rotation set, which coincides with its prime-ends rotation number.

\begin{cor}
	\label{corolariorealizacionPatrice}
	Consider $f \in \text{Homeo}_{\,0,nw}(\mathbb{A}) \cup \text{Homeo}_{\,0,\lambda}(\mathbb{A})$ and $A$ a regular $f$-invariant annulus. Assume that one of the following conditions holds:
	\begin{enumerate}
		\item $A$ contains two positively recurrent points $z_0$ and $z_1$ such that $\rho_0:=\rho_{z_0}(F)\neq \rho_{z_1}(F)=:\rho_1$ and $f|_A$ satisfies the curve intersection property;
		
		\item $A$ is bounded by two circloids $C_0$ and $C_1$ such that $\rho_0:=\rho_{C_0}(F)\neq \rho_{C_1}(F)=:\rho_1$;
		
		\item $A$ contains a positively recurrent point $z_0$ and has lower boundary $C_1$, where $C_1$ is a circloid such that $\rho_0:=\rho_{z_0}(F)\neq \rho_{C_1}(F)=:\rho_1$.
	\end{enumerate}
	Then every rational number between $\rho_0$ and $\rho_1$ is realized by a periodic point in $A$.
\end{cor}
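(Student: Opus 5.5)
The plan is to handle the three cases separately. In each one we reduce to Proposition~\ref{realizacionPatrice} or to Lemma~\ref{lemmacorolariorealizacionPatrice}, and then we exclude the invariant separating curve $\gamma$ that those results would otherwise produce.

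\textbf{Case (1).} This follows directly from Proposition~\ref{realizacionPatrice} applied to $f|_A$. Identify $A$ with $\mathbb{A}$. The lift $F$ restricts to a lift of $f|_A$, and the rotation numbers of the positively recurrent points $z_0,z_1$ are unchanged under this conjugacy, as discussed before Proposition~\ref{realizacionPatrice}. If some rational in $(\rho_0,\rho_1)$ were not realized, we would obtain an essential curve $\gamma\subset A$ with $f(\gamma)\cap\gamma=\emptyset$, which contradicts the curve intersection property.

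\textbf{Cases (2) and (3).} We reduce to the prime-ends version, Lemma~\ref{lemmacorolariorealizacionPatrice}. By Proposition~\ref{propcircloidsrotaciontrivial}, the rotation set of each boundary circloid is a singleton. That singleton equals both prime-ends rotation numbers, so $\rho_{C_1}(F)=\rho^+_{C_1}(F)$, which is the number used in the lemma.

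In case (3) the lemma applies directly with $z_0\in A=\mathcal{U}^+(C_1)\cap(\text{rest of }A)$. If $A$ also has an upper boundary circloid, we may either use the symmetric version of the lemma or first compactify the upper end. Since the map $g$ built in the proof of Lemma~\ref{lemmacorolariorealizacionPatrice} only alters things below $C_1$, nothing at the upper end is affected.

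In case (2) we do not have a recurrent point at hand. We blow up both ends by prime ends, giving a closed annulus $S^1\times[0,1]$ whose boundary circles rotate by $\rho_0$ and $\rho_1$ respectively. We then extend to $S^1\times\mathbb{R}$ by the rigid product extension used in the lemma, at both ends. Recurrent points on the two boundary circles (which exist, for instance points of minimal sets of the circle homeomorphisms) have rotation numbers $\rho_0$ and $\rho_1$. Every recurrent point outside the open annulus has rotation number $\rho_0$ or $\rho_1$, so periodic points realizing rationals strictly between them must lie in the interior. Proposition~\ref{realizacionPatrice} then gives either realization or a free essential curve $\gamma$ separating the two boundary circles. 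The argument with the minimal second coordinate from the proof of Lemma~\ref{lemmacorolariorealizacionPatrice}, applied at both ends, forces $\gamma\subset\operatorname{Int}$ of the compactified annulus, that is, $\gamma\subset A$.

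\textbf{Main obstacle.} The step I expect to be hardest is ruling out this free curve $\gamma\subset A$ in cases (2) and (3), since no curve intersection property is assumed there. Here the non-wandering or area-preserving hypothesis is essential. If $f(\gamma)\subset\mathcal{U}^+(\gamma)$, say, then the region of $A$ between $\gamma$ and its lower boundary ($C_0$ or $C_1$) is an open set $R$ with $R\subsetneq f(R)$, or $f(R)\subsetneq R$. The difference $f(R)\setminus R$ contains the nonempty open region between $\gamma$ and $f(\gamma)$, and it is wandering. In the area-preserving case $R$ is bounded: it lies above the lower boundary circloid, which is compact, so $\lambda(R)<\infty$ and this contradicts invariance of $\lambda$. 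In the non-wandering case the open set between $\gamma$ and $f(\gamma)$ is disjoint from all of its iterates, which is again a contradiction. Once $\gamma$ is excluded, every rational between $\rho_0$ and $\rho_1$ is realized by a periodic point in $A$.
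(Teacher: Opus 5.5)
Your argument is correct, but it takes a different route from the paper.

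\textbf{The paper's route.} The paper splits according to the hypothesis on $f$.
\begin{itemize}
\item Non-wandering case: it notes that the curve intersection property holds automatically, and refers all three cases to Franks' periodic-disk-chain method (Theorem~2.1 of \cite{Franks(PoincBirkh)}).
\item Area-preserving case: it uses Proposition~\ref{realizacionPatrice} for (1). For (2) it notes that $f|_A$ is non-wandering because $A$ is bounded, and falls back on Franks. For (3) it proves the curve intersection property in $\mathcal{U}^+(C_1)$ by exactly your area argument, and then invokes Lemma~\ref{lemmacorolariorealizacionPatrice}.
\end{itemize}

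\textbf{Your route.} You send every case through Le Calvez's Proposition~\ref{realizacionPatrice}:
\begin{itemize}
\item (1) directly;
\item (3) through the lemma;
\item (2) through a two-sided version of the lemma: prime-ends blow-up at both ends, rigid extensions on both sides, and the minimal/maximal-height argument at each end.
\end{itemize}
The only dynamical input is then that a free essential curve in $A$ would produce either a wandering open set or a bounded region $R$ with $R\subsetneq f(R)$ (or the reverse).

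\textbf{Comparison.} Your approach treats the non-wandering and area-preserving settings by one mechanism, using only tools already present in the paper. It also avoids having to check that Franks' theorem applies to the prime-ends compactified map. The cost is that you rely on Le Calvez's deeper theorem instead of Franks' more elementary chain-of-disks argument. You also need the prime-ends compactification at both ends simultaneously, which is standard, but the paper only sets it up one end at a time.

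\textbf{A gap in case (3) with an upper boundary circloid.} Suppose $A=\mathcal{R}(C_1,C_2)$ and $\rho_{C_2}(F)$ is a rational strictly between $\rho_0$ and $\rho_1$.
\begin{itemize}
\item If you compactify the upper end and extend rigidly, the extension above $C_2$ itself carries periodic points of rotation number $\rho_{C_2}(F)$. So Proposition~\ref{realizacionPatrice} gives no contradiction for that rational and does not place a realizing point in $A$.
\item Applying the lemma to $\mathcal{U}^+(C_1)$ instead only gives points in $\mathcal{U}^+(C_1)$, possibly on or above $C_2$.
\end{itemize}
The repair is short. Your argument does realize, inside $A$, every rational $r$ strictly between $\rho_{C_2}(F)$ and $\rho_1$, because the two extensions only carry rotation numbers $\rho_1$ and $\rho_{C_2}(F)$. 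Now apply case (1) to $z_0$ and such a periodic point $w\in A$, with the curve intersection property in $A$ supplied by your free-curve exclusion. This realizes $\rho_{C_2}(F)$ in $A$. The paper sidesteps the issue by identifying $A$ with $\mathcal{U}^+(C_1)$.
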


\begin{proof}
	
	In the non-wandering case, $f$ automatically satisfies the curve intersection property. Moreover, all three cases are classical and can be proved using the strategy of Franks, constructing a periodic chain of disks for a suitable lift. For further details, we refer the reader to Theorem~2.1 of \cite{Franks(PoincBirkh)}.
	
	Assume now that $f\in\text{Homeo}_{\,0,\lambda}(\mathbb{A})$. Under the first assumption, the result follows directly from Proposition~\ref{realizacionPatrice}. Under the second assumption, the restriction of $f$ to $A$, the subannulus bounded by the two circloids, is non-wandering, so the same argument based on Franks' strategy applies. Finally, under the third assumption, we may identify $A$ with $\mathcal{U}^+(C_1)$. Since $f$ is area-preserving and $A$ is bounded below, $f$ satisfies the curve intersection property in $A$. Indeed, since $C_1$ is invariant, if $f(\gamma)\cap\gamma=\emptyset$, then $f(\gamma)$ lies strictly above or below $\gamma$. Hence $\mathcal{R}(C_1,f(\gamma))=f(\mathcal{R}(C_1,\gamma))$ would have strictly larger or smaller area than $\mathcal{R}(C_1,\gamma)$, contradicting the fact that $f$ preserves area. Therefore, the result follows directly from Lemma~\ref{lemmacorolariorealizacionPatrice}.
	
\end{proof}

We are ready to prove the main proposition of this subsection.

\begin{propasterisco}[\ref{propbirkhoffrelated}]
Consider $f \in \text{Homeo}_{\,0,nw}(\mathbb{A}) \cup \text{Homeo}_{\,0,\lambda}(\mathbb{A})$. Let $\overline{A} \subset \mathbb{A}$ be the closure of a regular $f$--invariant subannulus $A$, and let $U_0, U_1$ be an $n$--dpd with $n \geq 3$ and $\rho_f(U_0,U_1) = \rho \in \mathbb{N}^*$, such that both $U_0$ and $U_1$ intersect $A$.
Assume that $U_0$ visits $U_1$ and $U_1$ visits $U_0$. If $F$ is the lift of $f$ for which $F(\tilde{U}_0)\cap \tilde{U}_0 \neq \emptyset$, then
\[
[1/n,\, \rho - 1/n] \subseteq \rho_{A}(F),
\]
and $A$ contains at least two periodic points $z_1$ and $z_2$ with different rotation numbers. Furthermore
\begin{itemize}
	\item If $A$ is bounded below or above or,
	\item if $A$ satisfies the curve intersection property
\end{itemize}
then every rational in $(1/n,\rho-1/n)$ is realized by a periodic point in $A$.
\medskip

Moreover, if $U_0$ and $U_1$ do not meet $\partial A$, the existence of such periodic points does not require the non-wandering or area-preserving assumption, nor the regularity of $A$. More precisely, for any $f\in\mathrm{Homeo}_{\,0}(\mathbb{A})$ and any $f$-invariant open subannulus $A$ containing $U_0$ and $U_1$, we can ensure the existence of periodic points $z_1,z_2\in A$ realizing the rotation numbers $1/n$ and $\rho-1/n$, respectively, for the lift $F$.
\medskip

If $\rho \geq 2$, the result remains valid for $n \geq 2$, and if $\rho \geq 3$, it is valid for $n \geq 1$.

\end{propasterisco}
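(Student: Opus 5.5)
The proof will follow Franks' periodic disk chain strategy, as announced in the outline and in Remark~\ref{r.poinbirk}. The core is the ``moreover'' part, where $U_0,U_1\subset A$ and no conservativity is used. The boundary-meeting case and the realization statement are then obtained from Theorem~\ref{primeendsrotationthmconservative}, Corollary~\ref{corolariorealizacionPatrice} and Lemma~\ref{lemmacorolariorealizacionPatrice}.

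\textbf{Step 1 (periodic chains for two lifts).} Fix $F$ with $F(\tilde U_0)\cap\tilde U_0\neq\emptyset$. Then $F(\tilde U_1)\cap(\tilde U_1+\rho)\neq\emptyset$. We consider the lifts $H_1:=F^n-(1,0)$ and $H_2:=F^n-(n\rho-1,0)$ of $f^n$.

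First, we shrink to small open disks $V_0\subset \operatorname{Int} U_0\cap f^{-1}(\operatorname{Int} U_0)$ and similarly $V_1$. Each such disk is displaced by $F^n$ by an amount that is controlled modulo the fundamental domain, since $\bigcup_{i\le n}f^i(U_j)$ is inessential. For the lift $\tilde U_0$, this displacement lies in $[-?,?]$ integer translations, essentially $0$, but not $-1$ after subtracting. Hence $H_1(\tilde V_0)$ is disjoint from $\tilde V_0$ and lies to its left, while $H_1(\tilde V_1)$ lies to the right of $\tilde V_1$ by roughly $n\rho-1>0$. Symmetrically, for $H_2$ the copy of $U_0$ moves right and the copy of $U_1$ moves left.

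The visiting hypotheses give $m,m'$ with $f^m(U_0)\cap U_1\neq\emptyset$ and $f^{m'}(U_1)\cap U_0\neq\emptyset$. Since translates of $\tilde U_j$ drift monotonically in opposite directions under $H$, one can pick translates of $\tilde V_0,\tilde V_1$ so that the orbit of a translate of $\tilde V_0$ reaches $\tilde V_1+k$, and the orbit of that translate returns to $\tilde V_0$ itself. This yields a periodic disk chain for $H_1$, and likewise one for $H_2$. The ``disjoint'' requirement on the chain disks comes from the $n$-dpd condition $f^i(U_0)\cap f^j(U_1)=\emptyset$.

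\textbf{Step 2 (fixed points).} Proposition~\ref{proposicionFranks} gives fixed points of $H_1$ and $H_2$. These are periodic points of $f$ with rotation numbers $1/n$ and $\rho-1/n$ for $F$. They lie in $A$, because the chains are inside $\pi^{-1}(A)$ and the index-one curve can be taken there.

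The main obstacle is exactly here: the counts $n\ge3$, $n\ge2$ when $\rho\ge2$, and $n\ge1$ when $\rho\ge3$ must guarantee that $H_i(\tilde V_j)\cap\tilde V_j=\emptyset$. This requires bookkeeping of the possible displacement of $F^n(\tilde U_j)$, which is at most $[-1,1]$ per step from inessentiality of consecutive unions. I would first attempt the cruder choice of displacements $\pm1$ after $n$ iterates, and check that the inequalities $1<n\rho-1$ etc.\ hold precisely in the stated ranges.

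\textbf{Step 3 (disks meeting $\partial A$ and realization).} If $U_j$ meets a boundary circloid $\mathcal C$ of $A$, Theorem~\ref{primeendsrotationthmconservative} gives $\rho_{\mathcal C}(F)\in[-1/n,1/n]$ for $j=0$, or $\in[\rho-1/n,\rho+1/n]$ for $j=1$. Moreover, Proposition~\ref{propcircloidsrotaciontrivial} shows that this is the whole rotation set of $\mathcal C$. In that case I would replace the endpoint by the circloid: either the other disk lies inside $A$ and Step 1 gives a periodic point there, or both disks meet the two boundaries. In both situations Corollary~\ref{corolariorealizacionPatrice}, cases (2) and (3), provides periodic points in $A$ with rotation numbers filling $(1/n,\rho-1/n)$, hence two with distinct rotation numbers.

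If instead both disks lie in $A$, we have two periodic points with rotation numbers $1/n$ and $\rho-1/n$. When $A$ is bounded below or above, Lemma~\ref{lemmacorolariorealizacionPatrice} together with area/non-wandering gives the curve intersection property as in the proof of Corollary~\ref{corolariorealizacionPatrice}; when $A=\mathbb{A}$ we assume it. Proposition~\ref{realizacionPatrice} then realizes all rationals in between. Finally, the inclusion $[1/n,\rho-1/n]\subseteq\rho_A(F)$ follows because the rotation set contains these realized values and is a closed interval.
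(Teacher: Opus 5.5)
Your overall architecture matches the paper's: Franks chains for the lifts $F^n-(1,0)$ and $F^n-(n\rho-1,0)$, then Theorem~\ref{primeendsrotationthmconservative} and Corollary~\ref{corolariorealizacionPatrice} for the boundary cases and for realization. The threshold $1<n\rho-1$ you anticipate is also what the stated ranges encode. But Step~1 has a genuine gap.

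To absorb the unknown integer offset produced by the two visiting orbits, the chain needs links such as $H_1(\tilde V_0)\cap(\tilde V_0-(1,0))\neq\emptyset$ and $H_1(\tilde V_1)\cap(\tilde V_1+(n\rho-1,0))\neq\emptyset$, and their analogues for $H_2$. That is, the disks must return at time $n$ with a prescribed displacement. The $n$-dpd hypothesis gives only the one-step overlaps $F(\tilde U_0)\cap\tilde U_0\neq\emptyset$ and $F(\tilde U_1)\cap(\tilde U_1+(\rho,0))\neq\emptyset$, and $f^n(U_0)$ may well be disjoint from $U_0$. Inessentiality only localizes $F^n(\tilde U_0)$ inside a lift of $\bigcup_{i\le n}f^i(U_0)$; this gives freeness, not return. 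Shrinking to $V_0\subset\operatorname{Int}U_0\cap f^{-1}(\operatorname{Int}U_0)$ changes nothing. So your ``monotone drift in opposite directions'' is unjustified, and in the ``moreover'' part there is no recurrence hypothesis to replace it.

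The missing idea is a perturbation. Compose $f$ with a homeomorphism supported in small disks $S_0\subset U_0$ and $S_1\subset U_1$ that avoid the finite visiting orbit segments. This gives $g$ with fixed points $z_0\in U_0$ and $z_3\in U_1$ satisfying $G(\tilde z_0)=\tilde z_0$ and $G(\tilde z_3)=\tilde z_3+(\rho,0)$, and these supply the links. Franks' proposition then gives fixed points of $G^n-(1,0)$ and $G^n-(n\rho-1,0)$ in $\pi^{-1}(A)$. You must still show they are periodic for $f$ itself. If $G^n(\tilde z)=\tilde z+(1,0)$ with $\tilde z\in\tilde U_0$, then $G^n(\tilde z)$ lies both in $\bigcup_{i=1}^nG^i(\tilde U_0)$ and in its translate by $(1,0)$, contradicting inessentiality. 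The same argument works for $U_1$, using $n\rho\neq1$. Hence no orbit point meets the support of the perturbation.

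Second, in Step~3 the mixed case ($U_0\subset A$, $U_1$ meeting a boundary circloid $\mathcal X$) is not handled by ``Step~1 gives a periodic point there,'' even after perturbing. The chain must live in $\pi^{-1}(A)$ for the fixed point to lie in $A$, and $U_1\cap A$ may be disconnected. The orbit from $U_0$ may enter through one maximal cross-section $W_0$ while the orbit back to $U_0$ starts in another one $W_1$. So you need a single free disk in $\pi^{-1}(A)$ containing both, with controlled displacement, and no piece of $U_1$ provides it.

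The paper builds $M=\tilde B\cup\operatorname{Int}(\tilde W_0\cup\tilde W_1)$, where $B$ is a thin collar along $\mathcal X$ in prime-ends coordinates. It proves $H(M)\cap M=\emptyset$ and $H(M)\cap(M+(2,0))\neq\emptyset$ for $H=G^3-(1,0)$ in the case $\rho=1$, $n=3$. This uses Proposition~\ref{proppancitasincluidas} and the ends-connector, which pin each $G^i(\widetilde{\mathcal W}')$ inside $\tilde A_r+(i,0)$. This is where the non-wandering or area-preserving hypothesis is genuinely used. Only the $1/n$ periodic point is obtained this way; the other endpoint is the circloid's rotation number, which lies in $[\rho-1/n,\rho+1/n]$ and is fed into case~(3) of Corollary~\ref{corolariorealizacionPatrice}.
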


\begin{proof}
Let us first deal with the case where $\rho=1$ and the disks form a $3$-dpd. The remaining cases follow by analogous arguments, and the details are given at the end of the proof. We denote by $A$ the interior of $\overline{A}$.
	
If $U_0$ and $U_1$ meet the boundary $\partial A$, then by Proposition~\ref{primeendsrotationthmconservative} they must intersect two different boundary components of $\partial A$, each corresponding to a different rotation. In particular, $A$ is bounded, $f$ restricted to $A$ is non-wandering, and the rotational constraints on the circloids allow us to ensure that the rotation set $\rho_F(\overline{A})$ contains the interval $[1/3,\,2/3]$. The conclusions in this case are a direct application of Corollary~\ref{corolariorealizacionPatrice}, where the circloids delimit a subannulus with a twist condition.
	
	Then, from now on, we consider the other case where one of the two disks is strictly contained in $A$; without loss of generality, let us assume that $U_0 \subseteq A$.
	
	Note that the definition of $n-dpd$ for $n\geq 2$ allows that $U_0 \cap f^2(U_0) = \emptyset$. In order to construct a periodic chain of disks, it will be useful to ensure a nontrivial intersection between $U_0$ and its first three iterates under $f$. To this end, we perform a small perturbation to create a fixed point in $U_0$. The next claim states that this perturbation does not create any new periodic points with rotation number $1/3$, so we may find such a periodic point for the perturbed map instead of the original one.
	
	\begin{claimsinnum}[1]
		Assume that, for a small perturbation $g$ of $f$ supported inside $U_0$, there exists a periodic point $z_1$ such that $\rho_G(z_1)=1/3$, where $G$ is the lift of $g$ satisfying $G(\tilde{U}_0)\cap \tilde{U}_0 \neq \emptyset$. Then $z_1$ must lie outside $U_0$, and hence it is also a periodic point for $f$ with $\rho_F(z_1)=1/3$.
		
		An analogous statement holds for $U_1$ in the case $U_1 \subset A$, yielding a periodic point $z_2$ with rotation $2/3$.
	\end{claimsinnum}
	
Take $p\in U_0$, let $\tilde{U}_0$ be a lift of $U_0$, and $\tilde{p}\in \tilde{U}_0$ a lift of $p$. Then $G^3(\tilde{p})\in \bigcup_{i=1}^3 G^i\left(\tilde{U}_0\right)$. If $\tilde{z}_1$ is a lift of $z_1$, we have $G^3(\tilde{z}_1)=\tilde{z}_1+(1,0)\in \bigcup_{i=1}^3 G^i\left(\tilde{U}_0\right)+(1,0)$. Since these two sets are disjoint by the $3$-dpd property, $z_1$ cannot belong to $U_0$, proving the claim.
		
	\bigskip
		
	Then, as mentioned above, for the proof of this proposition we will consider a perturbation $g$ of $f$ supported in $U_0$ --- or in $U_0 \cup U_1$ when $U_1 \subset A$ --- and find fixed points in the plane for the maps $G^3 - (1,0)$ and $G^3 - (n\rho-1,0)=G^3-(2,0)$. 
	
	To obtain these fixed points for $g$ which are $3$-periodic for $f$ we consider two different cases:
	
	\smallskip
	\[
	 \mathrm{Case}\ (1):\ U_1 \subset A, \qquad  \mathrm{Case}\ (2):\ U_1 \cap \partial A \neq \emptyset
	\]
	\smallskip
	
	Since each disk visits the other, there exist $x_0 \in U_0$, $x_1 \in U_1 \cap A$, and $n_0, n_1 \in \mathbb{N}$ such that $f^{n_0}(x_0) \in U_1 \cap A$ and $f^{n_1}(x_1) \in U_0$. From here, we proceed by discussing the two cases mentioned above.

		
	\bigskip	
		
\textbf{Case (1):} $U_1\subset A$.

	One may try to perform two small perturbations supported in $U_0\cup U_1$ in order to create two fixed points, one with rotation number $0$ and the other with rotation number $1$. If, in addition, we can guarantee the curve intersection property for the perturbed map, then the result follows from Claim~(1) and Corollary~\ref{corolariorealizacionPatrice}. The difficulty is that this strategy fails without additional assumptions, or even in the non-wandering setting, since the perturbation may destroy the non-wandering property. We shall see, however, that in this case no non-wandering or area-preserving assumption is actually needed to guarantee the existence of two periodic points with different rotation numbers. Indeed, the hypotheses on $U_0$ and $U_1$ allow us to construct suitable periodic chains of disks for different lifts of $f$.
	
	\medskip
	
	Consider a homeomorphism $\varphi$ that coincides with the identity outside two closed disks $S_0 \subset U_0$ and $S_1 \subset U_1$, with $f^n(x_0)$ and $f^n(x_1)$ outside of $(S_0 \cup S_1)$ for $n = 0, \dots, \max\{n_0, n_1\}+2$, such that the new map $g := \varphi \circ f \in \mathrm{Homeo}_{\,0}(\mathbb{A})$ satisfies:
	\begin{itemize}
		\item There exist fixed points $z_0, z_3$ of $g$ in $U_0, U_1$, respectively, with
		\[
		\rho_g(z_0, z_3) = 1.
		\]
		\item $U_0$ and $U_1$ form a $3$-dpd and each disk visits the other for the map $g$.
	\end{itemize}
	
	\medskip
	
	Observe that necessarily
	\[
	\bigcup_{i=1}^3 f^i(U_0)=\bigcup_{i=1}^3 g^i(U_0), \qquad
	\bigcup_{i=1}^3 f^i(U_1)=\bigcup_{i=1}^3 g^i(U_1).
	\]
	
	In this case, define
	\[
	h:=g^3.
	\]
	Let $G$ be the lift of $g$ fixing every lift of $z_0$. In particular, $G$ coincides with $F$ outside the lifted support of the perturbation. Let $H$ be the lift of $h$ defined by
	\[
	H:=G^3-(1,0).
	\]
	
	In particular, $\rho_H(z_0)=-1$ and $\rho_H(z_3)=2$. The next step is to show that $H$ admits a periodic chain of disks.
	
	Since the support of the perturbation does not meet the truncated orbits of $x_0$ and $x_1$, we can extract from the truncated orbits $f^n(x_0)$ for $n=0,\dots,n_0+2$ and $f^n(x_1)$ for $n=0,\dots,n_1+2$ two truncated orbits of $h$ such that, for certain $m_0,m_1\in\mathbb{N}$,
	\[
	h^{m_0}(x_0)\in\bigcup_{i=0}^2g^i(U_1), \qquad
	h^{m_1}(x_1)\in\bigcup_{i=0}^2g^i(U_0).
	\]
	
	Let $V_1$ be the element of $\{U_1,g(U_1),g^2(U_1)\}$ containing $h^{m_0}(x_0)$, and let $V_0$ be the element of $\{U_0,g(U_0),g^2(U_0)\}$ containing $h^{m_1}(x_1)$. For the construction of the periodic chain of disks, it will be convenient to assume that $x_1$ belongs to the same disk $V_1$ as $h^{m_0}(x_0)$. Recall that the $g$-orbit of $x_1$ splits into three $h=g^3$-orbits, corresponding to iterates of the form $3k$, $3k+1$, and $3k+2$. Thus, up to replacing $x_1$ by $g(x_1)$ or $g^2(x_1)$, we may assume that both $h^{m_0}(x_0)$ and $x_1$ belong to $V_1$.
	
	Note that the perturbations used to create fixed points inside $U_0$ and $U_1$ ensure that
	$h(V_0)\cap (U_0\cap V_0)\neq\emptyset$ and
	$h(V_1)\cap (U_1\cap V_1)\neq\emptyset$, respectively. These intersections will be used to close the periodic chain of disks constructed below.
	
	\medskip
	
	If $\tilde{U}_0$ is a lift of $U_0$, denote by $\tilde{V}_0$ the lift of $V_0$ contained in $\bigcup_{i=0}^2G^i(\tilde{U}_0)$. Let $\tilde{V}_1$ be the lift of $V_1$ such that
	\[
	H^{m_0}(\tilde{U}_0)\cap\tilde{V}_1\neq\emptyset,
	\]
	and let $\tilde{U}_1$ be the lift of $U_1$ that intersects $\tilde{V}_1$.
	
Because of the particular lift $H$ of $h$ we chose and the $3$-dpd property, we have
\begin{equation}
	\begin{array}{c}
		H(\tilde{U}_0)\cap \tilde{U}_0=\emptyset, \\[2pt]
		H(\tilde{V}_0)\cap\tilde{V}_0=\emptyset, \\[2pt]
		H(\tilde{V}_1)\cap\tilde{V}_1=\emptyset.
	\end{array}
	\label{eq:freedisks}
\end{equation}

Moreover, the fixed points $z_0 \in U_0\cap V_0$ and $z_3 \in U_1\cap V_1$ ensure that
\begin{equation}
	\begin{array}{c}
		H(\tilde{V}_1)\cap\big((\tilde{V}_1\cap\tilde{U}_1)+(2,0)\big)\neq\emptyset, \\[2pt]
		H(\tilde{V}_0)\cap\big((\tilde{V}_0\cap\tilde{U}_0)-(1,0)\big)\neq\emptyset.
	\end{array}
	\label{eq:nonemptyintersectiondisks}
\end{equation}
	
	Moreover, there exists $k\in\mathbb{Z}$ such that
	\[
	H^{m_1}(\tilde{V}_1)\cap\big(\tilde{V}_0+(k,0)\big)\neq\emptyset.
	\]
	
	We are now ready to define a collection of disks in order to obtain a periodic chain of disks, following Franks' construction in \cite{Franks(PoincBirkh)} and distinguishing two cases.
	
	\medskip
	
	Case $k\geq1$. Consider:
	\begin{itemize}
		\item $B_0=\tilde{U}_0$,
		\item $B_1=\tilde{V}_1$,
		\item $B_i=\tilde{V}_0+(k+2-i,0)$ for $i=2,\dots,k+1$.
	\end{itemize}
	
	\medskip
	
	Case $k<1$. Consider:
	\begin{itemize}
		\item $B_0=\tilde{U}_0$,
		\item $B_1=\tilde{V}_1$,
		\item $B_i=\tilde{V}_1+(2(i-1),0)$ for $i=2,\dots,-k+2$,
		\item $B_{-k+2+j}=\tilde{V}_0+(-k+3-j,0)$ for $j=1,\dots,-k+2$.
	\end{itemize}
	
	\medskip
	
	Because of the intersection properties in \eqref{eq:freedisks} and \eqref{eq:nonemptyintersectiondisks}, we obtain a periodic chain of disks in both cases. More precisely, if $k\geq 1$, setting $N=k+1$, we have $B_N=\tilde{V}_0+(1,0)$, and the collection $B_0,\dots,B_N$ is a periodic chain of disks. The same holds in the case $k<1$, with $N=-2k+4$. See Figure~\ref{fig:periodicchaincase1} for an illustration of the case $k=-1$.
	
	\medskip
	
	\begin{figure}[h!]
		\centering
		\includegraphics[width=380 pt]{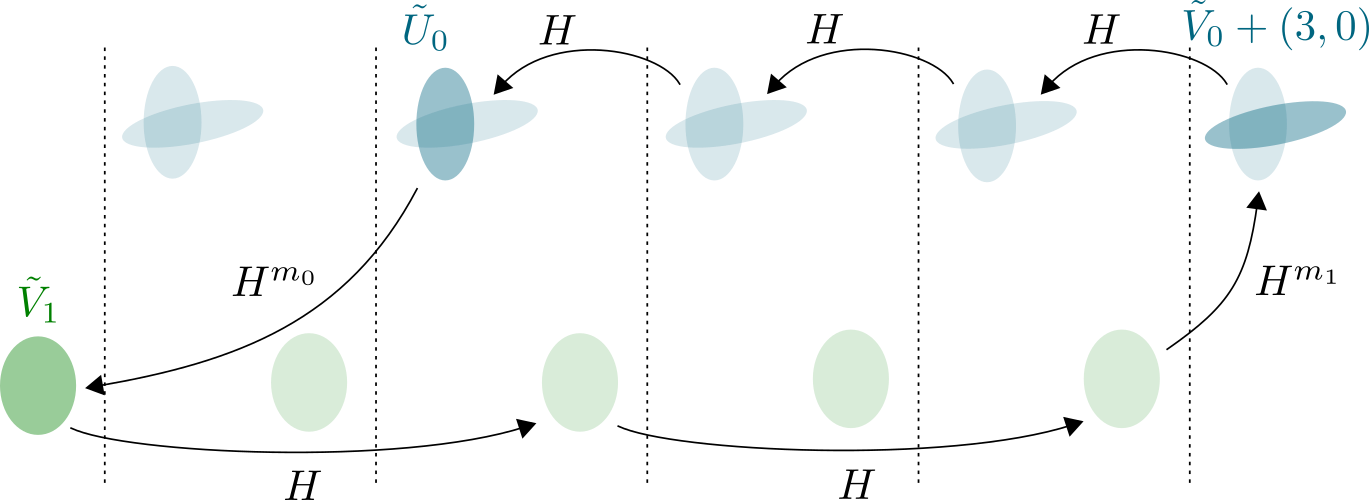}
		\caption{\small{Case (1): construction of a periodic chain of disks when $k=-1$.}}
		\label{fig:periodicchaincase1}
	\end{figure}
	
	\medskip
	
	Applying Proposition~\ref{proposicionFranks}, we obtain a fixed point $z_1$ for $H$. In particular,
	\[
	\rho_{z_1}(H)=0,
	\]
	which implies that $\rho_{z_1}(G^3)=1$. Therefore, $z_1$ is a $3$-periodic point of $g$ satisfying
	\[
	\rho_{z_1}(G)=\frac13.
	\]
	
	As observed before, $z_1$ is also a periodic point of $f$, and
	\[
	\rho_{z_1}(F)=\frac13.
	\]
	
	By the same argument, considering instead
	\[
	H=G^3-(2,0),
	\]
	we obtain another $3$-periodic point $z_2$ satisfying
	\[
	\rho_{z_2}(G)=\rho_{z_2}(F)=\frac23.
	\]
	
	Hence, the rotation set of $A$ for $F$ contains the interval $[1/3,2/3]$ and contains two periodic points realizing the boundary points of this interval. Finally, if $A$ satisfies the curve intersection property or if it is bounded below or above, we can apply Corollary~\ref{corolariorealizacionPatrice} in order to guarantee that any rational point in $(1/3,2/3)$ is realized by a point in $A$. This proves the proposition for $n=3$, $\rho=1$, in Case~(1).
	
	\bigskip


\textbf{Case (2):} $U_1\cap \partial A \neq \emptyset$.

As mentioned above, one can try to perform a perturbation supported in $U_0$ to create a fixed point and then apply Corollary~\ref{corolariorealizacionPatrice}. The difficulty arises again in the non-wandering setting, since after performing a perturbation we cannot guarantee that the perturbed map still satisfies the curve intersection property. Therefore, in order to apply Corollary~\ref{corolariorealizacionPatrice}, we need to find a periodic point of the original map $f$.

	\medskip
	Consider a homeomorphism $\varphi$ which is equal to the identity outside a closed disk $S_0 \subset U_0$, with $f^n(x_0)$ and $f^n(x_1)$ disjoint from $S_0$ for $n=0,\dots,\max\{n_0,n_1\}$, such that the new map $g := \varphi \circ f \in \mathrm{Homeo}_{\,0}(\mathbb{A})$ has a fixed point $z_0\in U_0$. The new map $g$ satisfies:
	\begin{itemize}
		\item if $G$ is the lift of $g$ which coincides with $F$ outside the lifts of $S_0$, then
		\[
		\rho_G(z_0)=0;
		\]
		\item $U_0$ and $U_1$ form a $3$-dpd, and each disk visits the other for the map $g$;
		\item the union of the first iterates of $U_0$ and $U_1$ by $g$ remains inessential and, moreover,
		\[
		\bigcup_{i=1}^3 f^i(U_0)=\bigcup_{i=1}^3 g^i(U_0), \qquad
		\bigcup_{i=1}^3 f^i(U_1)=\bigcup_{i=1}^3 g^i(U_1).
		\]
	\end{itemize}
	
	\medskip
	
	Let $\mathcal{X}$ be the boundary component of $A$ meeting $U_1$, which must be a circloid. By Proposition~\ref{primeendsrotationthmconservative}, it has a singleton rotation set satisfying
	\[
		\rho_\mathcal{X}(G)=\rho_\mathcal{X}(F)\in[2/3,\,4/3].
	\]
	\medskip
	
	In order to apply Corollary~\ref{corolariorealizacionPatrice} it is enough to show that $f$ presents a periodic point whose rotation number for $F$ is $1/3$. 
	
	\medskip
	
	In view of Lemma~\ref{lematecnicoimagenpancitas}, there exists an ends-connector $K$ such that, if $K^+=K\cap\overline{A}$ and $\tilde{A}_K$ is a lift of $\overline{A}\setminus K$, then the family of maximal cross-sections defined by $\partial U_1\cap A$, namely $\mathcal{W}$, can be lifted so as to be contained in $\tilde{A}_K+(\ell,0)$ for a unique integer $\ell$. Up to relabeling the lifts of $K$, we may assume that $\ell=0$.

	By Corollary~\ref{corodisjointK} and Corollary~\ref{coropancitasencerradas}, it is possible to choose such an ends-connector $K$ so that, simultaneously, each of the sets $U_1$, $g(U_1)$, and $g^2(U_1)$ defines a family of maximal cross-sections whose lifts are all contained in a single copy of $\tilde{A}_K$. Recall that $g(\mathcal{W})$ corresponds to the family of maximal cross-sections of $g(U_1)$. By Proposition~\ref{proppancitasincluidas}, we know that, for $i=0,1,2,3$,
	\[
	G^i(\widetilde{\mathcal{W}})\subseteq\tilde{A}_K+(i,0),
	\]
	where $G$ is the lift of $g$ fixing every lift of $z_0$. Define
	\[
	h:=g^3.
	\]
	
The support of the perturbation $\varphi$ does not meet the truncated orbits of $x_0$ and $x_1$. Hence, we can again extract from the truncated orbits $f^n(x_0)$ for $n=0,\dots,n_0+2$ and $f^n(x_1)$ for $n=0,\dots,n_1+2$ two truncated orbits of $h$ such that, for certain $m_0,m_1\in\mathbb{N}$,
\[
h^{m_0}(x_0)\in\bigcup_{i=0}^2 g^i(U_1 \cap A), \qquad
h^{m_1}(x_1)\in\bigcup_{i=0}^2 g^i(U_0).
\]

Similarly to Case~(1), up to replacing $x_0$ by $g(x_0)$ or $g^2(x_0)$, we may assume that the orbit of $x_0$ under $h$ passes through $U_1$, namely,
\[
h^{m_0}(x_0)\in U_1.
\]

In this context, we denote by $V_0^0$ and $V_0^1$ the elements of $\{U_0,g(U_0),g^2(U_0)\}$ containing $x_0$ and $h^{m_1}(x_1)$, respectively.

Note that $U_1\cap A$ need not be connected, which introduces additional difficulties compared with Case~(1). In particular, the points $h^{m_0}(x_0)$ and $x_1$ may lie in different connected components of $U_1\cap A$. Let $W_0$ and $W_1$ be the maximal cross-sections in $\mathcal{W}$ such that
\[
h^{m_0}(x_0)\in W_0,\qquad
x_1\in W_1.
\]
	
	Once the cross-sections involved in the trajectories of $x_0$ and $x_1$, namely $W_0$ and $W_1$, have been identified, we can improve the choice of the ends-connector $K$. Consider $\mathcal{W}'\subseteq\mathcal{W}$ given by the union of $g^i(W_0)$ and $g^j(W_1)$ for $i,j=0,1,2,3$. Choose a ray $r$ in $\mathcal{U}^+(\mathcal{X})$ from $+\infty$ landing on $\mathcal{X}$ such that $r$ lies in a small neighborhood of $K$ and satisfies
	\begin{itemize}
		\item $r\cap A$ neither intersects nor separates the cross-sections $g^i(W_0)$ and $g^j(W_1)$ for $i,j=0,1,2,3$, and
		\item if $\tilde{A}_r$ is the lift of $A\setminus r$ corresponding to $\tilde{A}_K$, and $\widetilde{\mathcal{W}}'$ is the lift of $\mathcal{W}'$ in $\tilde{A}_r$, then
		\[
		G^i(\widetilde{\mathcal{W}}')\subseteq\tilde{A}_r+(i,0).
		\]
		
	\end{itemize}
	
	We write $A_r:= \pi(\tilde{A}_r)$ and $\tilde{W}_0,\tilde{W}_1$ the respective lifts of $W_0,W_1$ in $\widetilde{\mathcal{W}}'$. 
	
	The following claim provides the free disk that will play the role of the disk translated with displacement $(2,0)$ in the construction of the periodic chain of disks. See Figure~\ref{fig:setBclaim} for an illustration of the construction.
	
	\begin{claimsinnum}[2]
		For any neighborhood of $\mathcal{X}\setminus r$ intersected with $A_r$, there exists an open disk $B$ included in that neighborhood such that
		\begin{itemize}
			\item $B\cap g^i(W_s)\neq\emptyset$ for $i=0,1,2,3$ and $s=0,1$;
			\item the interior of $B\cup W_0\cup W_1$ is an open topological disk;
			\item $B\cup W_0\cup W_1$ is disjoint from $U_0\cup g(U_0)\cup g^2(U_0)\cup g^3(U_0)$;
			\item if $\tilde{B}$ is a lift of $B$ included in $\tilde{A}_r$, then
			\[
			G^3(\tilde{B}\cup \tilde{W}_0 \cup \tilde{W}_1) \cap (\tilde{B}\cup \tilde{W}_0 \cup \tilde{W}_1) = \emptyset.
			\]
		\end{itemize}
	\end{claimsinnum}
	
	To prove the claim, we can assume that $A=\mathcal{U}^+(\mathcal{X})$. As done previously, we consider the prime-ends compactification of $A$, where it is identified with $S^1\times (0,+\infty)$. Recall that under the prime-ends compactification, the interiors of the annuli are identified, crosscuts of $\partial U_1 \cap A$ correspond to crosscuts in $S^1\times (0,+\infty)$, rays landing on $\mathcal{X}$ correspond to rays landing on $S^1\times\{0\}$, and neighborhoods of the end of $A$ associated with $\mathcal{X}$ correspond to neighborhoods of $S^1\times\{0\}$ in $S^1\times [0,+\infty)$. The map obtained on $S^1\times (0,+\infty)$ is conjugate to $g$ on $A$ and extends to the boundary. As we have done this construction before, for instance in Proposition~\ref{primeendsrotationproposition}, Lemma~\ref{lemmacorolariorealizacionPatrice}, or the Preliminaries, we do not present here the entire formal construction and identifications. As commented in the mentioned proofs, with an abuse of notation supported by the corresponding identification, we may assume $A=S^1\times (0,+\infty)$.
	
	For simplicity, also assume that the lifts $\tilde r$ and $\tilde r+(1,0)$ of $r$, which bound $\tilde A_r$, land at $(0,0)$ and $(1,0)$, respectively.
	
	Since the lift of $\partial\mathcal{W}'\cap\partial A$ is contained in $[0,1]\times\{0\}$, the condition
	\[
	G^i(\widetilde{\mathcal{W}}')\subseteq \tilde{A}_r+(i,0)
	\]
	implies that
	\[
	G^3\big([0,1]\times\{0\}\big)\cap\big([3,4]\times\{0\}\big)\neq\emptyset.
	\]
	
	Notice that $G^3\big([0,1]\times\{0\}\big)$ is an interval of length $1$. In particular, the image of one of the boundary points under $G^3$ must belong to $[3,4)\times\{0\}$. Therefore, it holds that
	\[
	G^3\big([0,1]\times\{0\}\big) \subseteq (2,5)\times \{0\}.
	\]

	\begin{figure}[h!]
		\centering
		\includegraphics[width=140 pt]{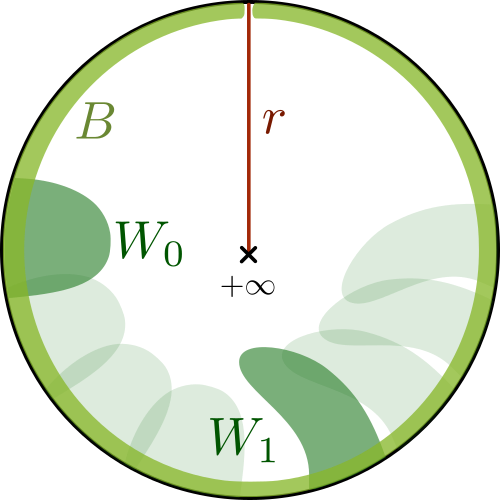}
		\caption{\centering{\small{Construction of the set $B$. \\
					The iterates of the cross-sections $W_0$ and $W_1$ are shown in light green.}}}
		\label{fig:setBclaim}
	\end{figure}

	Define the following topological disk:
	\[
	B:=\left(S^1\times (0,\epsilon)\right) \cap A_r \subset S^1\times [0,+\infty).
	\]
	
	Let $\tilde{B}$ be the lift of $B$ included in $\tilde{A}_r$. Notice that $[0,1]\times \{0\} \subset \partial \tilde{B}$. Since $\tilde{W}_0$,$\tilde{W}_1$ and their first three iterates have their endpoints in this interval, it follows that $\tilde{B}$ meets the first three iterates of $\tilde{W}_0$ and $\tilde{W}_1$.
	
	Given any neighborhood of $S^1\times \{0\}$, there exists $\epsilon>0$ small enough such that $B$ is included in it and satisfies
	\[
	G^3(\tilde{B}) \subseteq (2,5)\times [0,+\infty), \qquad G^3(\tilde{B})\cap \tilde{B}= \emptyset.
	\]
	
	Together with the fact that $G^3(\tilde{W}_0)$ and $G^3(\tilde{W}_1)$ do not intersect $\tilde{A}_r$, we obtain
	\[
	G^3(\tilde{B}\cup \tilde{W}_0 \cup \tilde{W}_1) \cap (\tilde{B}\cup \tilde{W}_0 \cup \tilde{W}_1) = \emptyset.
	\]

	Finally, note that $\epsilon$ can be taken small enough to guarantee that the interior of $B\cup W_0\cup W_1$ is also an open topological disk and that $B$ does not meet $U_0\cup g(U_0)\cup g^2(U_0)\cup g^3(U_0)$.
	This completes the proof of the claim.
\bigskip

Now, using the set $B$ given by the claim above, we are going to build a chain of disks for suitable lifts of $g^3$ in order to obtain a periodic orbit of rotation number $1/3$. It only remains to complete the chain of disks by choosing disks playing the role of the lifts of $V_1$ in case~(1).

Let $H$ be the lift of $h$ defined by
\[
H:=G^3-(1,0).
\]

Fix a lift $\tilde{U}_0$ of $U_0$, and let $\tilde{V}_0^0$ and $\tilde{V}_0^1$ denote the lifts of $V_0^0$ and $V_0^1$, respectively, that meet $\tilde{U}_0$.

We denote by $\tilde{U}_1$ the lift of $U_1$ such that
\[
H^{m_0}(\tilde{V}_0^0)\cap\tilde{U}_1\neq\emptyset.
\]
Let $\tilde{W}_0$ and $\tilde{W}_1$ be the lifts of $W_0$ and $W_1$, respectively, contained in $\widetilde{\mathcal{W}}'$. 
Also, let $\tilde{B}$ be the lift of the set $B$ contained in $\tilde{A}_r$ given by Claim~(2) above. We set
\[
M:=\tilde{B}\cup \mathrm{Int}\left(\tilde{W}_0\cup\tilde{W}_1\right) \subset \tilde{A}_r.
\]

Claim~(2) ensures that $M$ is a topological open disk, $\pi(M)$ is disjoint from $U_0 \cup g(U_0) \cup g^2(U_0)$ and, for the lift $H$, we have
\begin{equation}
	\begin{array}{c}
		H(M) \cap M = \emptyset, \\[2pt]
		H(M) \cap \left(M+(2,0)\right) \neq \emptyset.
	\end{array}
	\label{eq:Mcase2}
\end{equation}
\smallskip

By the choice of the lifts and of the disk $M$, we have
\begin{equation}
	\begin{array}{c}
		H^{m_0}(\tilde{V}_0^0)\cap M\neq\emptyset, \\[2pt]
		H^{m_1}(M)\cap\big(\tilde{V}_0^1+(k,0)\big)\neq\emptyset
		\quad\text{for some } k\in\mathbb{Z}.
	\end{array}
	\label{eq:connectionscase2}
\end{equation}

By the $3$-dpd property and since $U_0\cap V_0^1$ contains the fixed point $z_0$, we have
\begin{equation}
	\begin{array}{c}
		H(\tilde{V}_0^1)\cap\tilde{V}_0^1=\emptyset, \\[2pt]
		H(\tilde{V}_0^1)\cap
		\big((\tilde{V}_0^1\cap\tilde{U}_0)-(1,0)\big)
		\neq\emptyset.
	\end{array}
	\label{eq:propertiesV0case2}
\end{equation}

With these definitions in place, we are ready to construct the periodic chain of disks, again distinguishing the cases $k\geq1$ and $k<1$.

\medskip

Case $k\geq1$. Consider:
\begin{itemize}
	\item $B_0=\tilde{V}_0^0$,
	\item $B_1=M$,
	\item $B_i=\tilde{V}_0^1+(k+2-i,0)$ for $i=2,\dots,k+1$.
\end{itemize}

\medskip

Case $k<1$. Consider:
\begin{itemize}
	\item $B_0=\tilde{V}_0^0$,
	\item $B_1=M$,
	\item $B_i=M+(2(i-1),0)$ for $i=2,\dots,-k+2$,
	\item $B_{-k+2+j}=\tilde{V}_0^1+(-k+3-j,0)$ for $j=1,\dots,-k+2$.
\end{itemize}

\medskip

Because of the intersection properties in \eqref{eq:connectionscase2}, \eqref{eq:Mcase2}, and \eqref{eq:propertiesV0case2}, we obtain a periodic chain of disks in both cases. More precisely, if $k\geq1$, setting $N=k+1$, we have $B_N=\tilde{V}_0^1+(1,0)$, and the collection $B_0,\dots,B_N$ is a periodic chain of disks. The same holds in the case $k<1$, with $N=-2k+4$. See Figure~\ref{chaindiskscase2} for an illustration of the case $k=-1$.
		
	\begin{figure}[h!]
			
			\centering
			\includegraphics[width=380 pt]{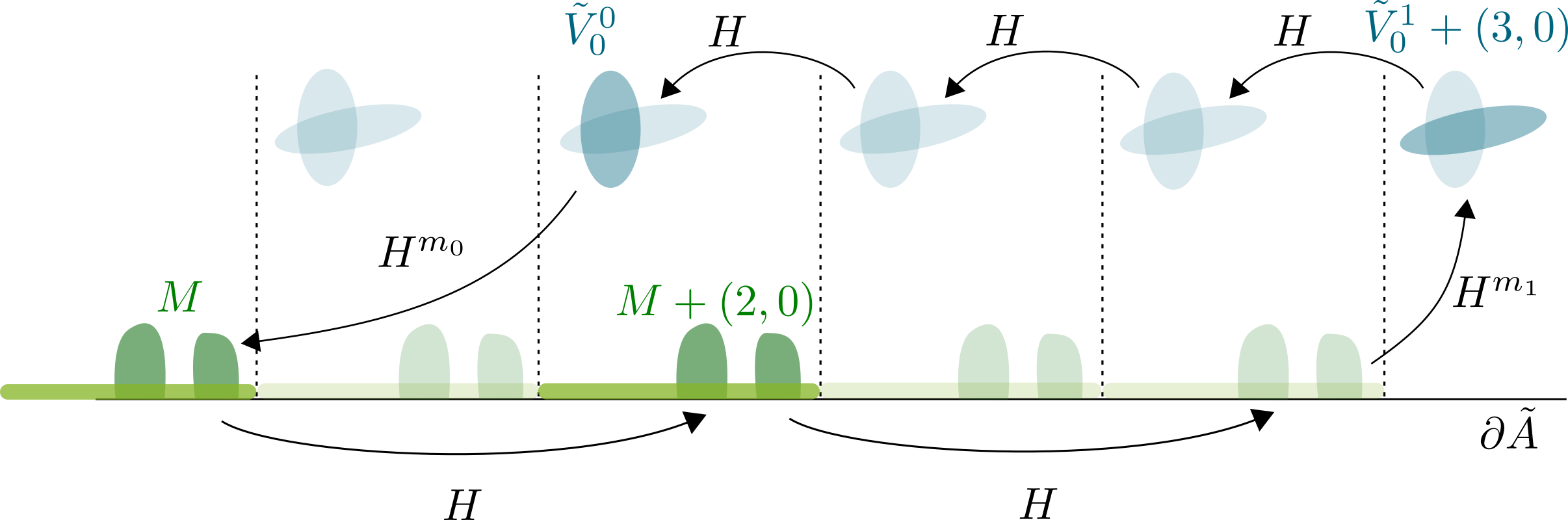}
			
			\caption{\small{Case (2): construction of a periodic chain of disks when $k=-1$}}
			\label{chaindiskscase2}
	\end{figure}

	\medskip
		
Applying Proposition~\ref{proposicionFranks}, which applies in the same way to $\mathbb{H}^2$ as it does to $\mathbb{R}^2$, we obtain a fixed point $z_1$ of $H$ such that $\rho_{z_1}(H)=0$. This implies that $\rho_{z_1}(G^3)=1$. Therefore, $z_1$ is a $3$--periodic point of $g$ satisfying
\[
\rho_{z_1}(G)=\tfrac{1}{3}.
\]

Consequently, by Claim~(1), $z_1$ is also a periodic point of $f$ with
\[
\rho_{z_1}(F)=\tfrac{1}{3}.
\]

\bigskip

Observe that we cannot apply the same construction to the lift $G^3-(2,0)$. The problem is that we cannot ensure that the disk $M$ is free for the map $G^3-(2,0)$. Nevertheless, we know that
\[
\rho_{\mathcal{X}}(F)\in [2/3,\,4/3].
\]

Hence, we are in the third situation of Corollary~\ref{corolariorealizacionPatrice}. Applying the corollary, we conclude that every rational number in $(1/3,2/3)$ is realized in $A$, thus completing the proof of the proposition for $n=3$, $\rho=1$, in Case~(2).

\bigskip

If $U_0$ and $U_1$ form an $n$--dpd with $\rho>1$ and $n\geq3$, then, when $U_1\cap\mathcal{X}=\emptyset$, we argue exactly as in Case~(1), using the lifts $G^n-(1,0)$ and $G^n-(n\rho-1,0)$, to obtain two $n$--periodic points with rotation numbers
\[
\rho_{z_1}(F)=\tfrac{1}{n}, \qquad
\rho_{z_2}(F)=\rho-\tfrac{1}{n}.
\]
If instead $U_1$ meets $\mathcal{X}$, then
\[
\rho_{\mathcal{X}}(F)\in\left(\rho-\tfrac{1}{n},\rho+\tfrac{1}{n}\right),
\]
and we conclude exactly as in Case~(2).

\medskip

The same constructions cover the remaining cases in the statement. If $n=2$ and $\rho\geq2$, they yield two periodic points with rotation numbers
\[
\frac{1}{2}
\qquad\text{and}\qquad
\rho-\frac{1}{2}.
\]
If $n=1$ and $\rho\geq3$, they yield two periodic points with rotation numbers
\[
1
\qquad\text{and}\qquad
\rho-1.
\]
Therefore, in all cases, the rotation set of $A$ contains
\[
\left[\frac{1}{n},\rho-\frac{1}{n}\right],
\]
with both endpoints realized by periodic points.

\end{proof}

\begin{rmk}
In Case (2) of the previous proposition, we use the assumption that $f$ is non-wandering or area-preserving to guarantee that the rotation set of $\mathcal{X}$ is a singleton, which can then be estimated using Theorem~\ref{primeendsrotationthmconservative}, and to control the behavior of the maximal crosscuts of $U_1 \cap A$.
\end{rmk}

\bibliography{biblio}
\bibliographystyle{plain}

\end{document}